\newtheorem{theorem}{Theorem}[section]
\newtheorem{assumption}{Assumption}[section]
\newtheorem{corollary}{Corollary}[section]
\newtheorem{definition}{Definition}[section]
\newtheorem{example}{Example}[section]
\newtheorem{exercise}{Exercise}[section]
\newtheorem{lemma}{Lemma}[section]
\newtheorem{proposition}{Proposition}[section]
\newtheorem{remark}{Remark}[section]
\newenvironment {proof}[1][Proof]{\noindent \textbf {#1.} }{\ \rule {0.5em}{0.5em}}
\begin{document}

\title{A Course in Dynamic Optimization}
\author{Bar Light\protect\thanks{Business School and Institute of Operations Research and Analytics, National University of Singapore, Singapore. e-mail: \textsf{barlight@nus.edu.sg} }}  
\maketitle

\noindent \noindent \textsc{Abstract}:
\begin{quote}

These lecture notes are derived from a graduate-level course in dynamic optimization, offering an introduction to techniques and models extensively used in management science, economics, operations research, engineering, and computer science. The course emphasizes the theoretical underpinnings of discrete-time dynamic programming models and advanced algorithmic strategies for solving these models. Unlike typical treatments, it provides a proof for the principle of optimality for upper semi-continuous dynamic programming, a middle ground between the  simpler countable state space  case \cite{bertsekas2012dynamic}, and the involved universally measurable case \cite{bertsekas1996stochastic}. This approach is sufficiently rigorous to include important examples such as dynamic pricing, consumption-savings, and inventory management models. The course also delves into the properties of value and policy functions, leveraging classical results \cite{topkis1998supermodularity} and recent developments. Additionally, it offers an introduction to reinforcement learning, including a formal proof of the convergence of Q-learning algorithms. Furthermore, the notes delve into policy gradient methods for the average reward case, presenting a convergence result for the tabular case in this context. This result is simple and similar to the discounted case but appears to be new.

\end{quote}

\newpage
This course, based  on a graduate-level course taught at Tel Aviv University in 2024, provides an introduction to dynamic optimization theory and algorithms. 
This course places a strong emphasis on exploring the theoretical underpinnings of dynamic programming models and advanced algorithmic strategies for solving these models. The main objectives of the course are to:
\begin{itemize}
    \item Understand the fundamental principles of dynamic optimization.
    \item Apply dynamic programming to solve problems.
    \item Explore algorithms to solve dynamic optimization problems.
\end{itemize}

To fully benefit from this course, students are expected to have a solid background in  optimization, Markov chains, basic probability theory, basic real analysis, as well as some background in machine learning and Python programming. These prerequisites ensure that students are prepared to grasp the advanced concepts that will be covered.

There are a few notable differences from standard treatments of dynamic optimization, which I will describe below:

\subsection*{1. Presentation of the Principle of Optimality}

The principle of optimality for dynamic programming is generally applicable, but it is often taught for problems with finite or countable state spaces to avoid measure-theoretic complexities, as seen in the important textbook by Bertsekas (2012). This approach cannot be applied to classic models such as inventory management, consumption-savings models, and dynamic pricing models that have continuous state spaces, leaving students unaware of the possible challenges involved in studying dynamic programming models with general state spaces.

Conversely, the typical general state space treatment of universally measurable dynamic programming presented by \cite{bertsekas1996stochastic} is quite involved. In this course, we provide conditions and a self-contained simple proof that establish when the principle of optimality for discounted dynamic programming is valid, based on \cite{light2024principle}. We then focus on upper semi-continuous dynamic programming, which serves as a middle ground between the universally measurable case and the countable state space case. Importantly, the upper semi-continuous case is sufficiently rigorous to include important examples such as consumption-saving problems, inventory management, and dynamic pricing.

\subsection*{2. Focus on Properties of the Value and Policy Functions}

A portion of the course is dedicated to exploring the properties of the value and policy functions. I provide a systematic way to prove properties of the value function such as concavity, monotonicity, supermodularity and differentiability, and derive monotonicity conditions regarding the policy function. Some of these results are based on the foundational works  of \cite{topkis1978minimizing,topkis1998supermodularity}, while others are more recent developments from \cite{light2021stochastic}.  

\subsection*{3. Introduction to Reinforcement Learning}

The course also includes an introduction to reinforcement learning, focusing on  theoretical aspects. I provide the proof of the convergence of Q-learning type algorithms to the optimal Q-function.  This technical analysis follows the classic treatment of \cite{bertsekas1996neuro} but offers a simplified and more focused perspective. I also introduce Q-learning with function approximation, and with online search. Additionally, I present policy gradient methods using the average reward case, which is somewhat more natural and easier to analyze  than the discounted case. I discuss policy gradient theorem and the performance difference lemma for the average reward case and simulation methods with policy gradient using the average reward case. Furthermore, I provide the proof of convergence of the policy gradient method to the optimal policy in the tabular case. This result appears to be new in the average reward case, albeit similar (and simpler) than the well-studied discounted case.

\vspace{3mm}

* The lectures may have mistakes and typos. Please contact me if you found any.

* Exercises are given in the end of each lecture. Partial solutions are given in the end of the document. 
\newpage

\tableofcontents

\newpage

\section{Lecture 1: Introduction, Metric Spaces, Probability Spaces}

\subsection{Introduction}

Dynamic optimization plays a crucial role in various domains, offering powerful tools to tackle complex, time-dependent problems. Below, we discuss seven applications of dynamic optimization:

1. \textbf{Inventory problems}: Dynamic optimization is essential for inventory management, helping businesses determine optimal inventory levels to minimize holding, ordering, and shortage costs. Techniques like dynamic programming and stochastic models are used to make informed decisions on when and how much to order, ensuring that businesses run efficiently while satisfying demands.

2. \textbf{Dynamic pricing:} Dynamic pricing is an important application of dynamic optimization that involves adjusting prices for goods or services based on factors such as demand, supply, inventories, and time. Many businesses with fluctuating demand  such as in the airline industry, ride-sharing platforms like Uber, accommodation platforms like Airbnb, and retailers in marketplaces like Amazon use dynamic pricing methods. 

For example, in the airline industry, dynamic pricing is used to manage seat inventory and maximize revenues. Airlines consider numerous factors, such as available capacity, demand, time until departure, competition, and historical data, to optimize the prices of their tickets. As a result, ticket prices change over time, reflecting the dynamic nature of these factors. For instance, prices may increase as the departure date approaches and the demand for the remaining seats rises. Alternatively, airlines may lower prices temporarily to stimulate demand during periods of low interest. Dynamic optimization techniques and algorithms help airlines make data-driven pricing decisions that maximize their profits.

Similarly, Airbnb hosts can utilize dynamic pricing to optimize their rental income. Hosts can adjust the price of their listings based on factors such as seasonality, local events, competition, and booking patterns. For example, they might increase prices during peak travel seasons or major events when demand for accommodations is high. On the other hand, they could lower prices during off-peak periods to attract more bookings.

3. \textbf{Playing games}: Dynamic optimization has impacted applied game theory and artificial intelligence, enabling the development of highly advanced game-playing algorithms. AlphaZero, an example of such algorithms, employs a combination of deep neural networks and dynamic programming algorithms to master games like Chess, Shogi, and Go. It has not only outperformed traditional engines but also demonstrated a creative and human-like approach to gameplay.

4. \textbf{Recommendation system}: Dynamic optimization is employed to balance exploration and exploitation, leading to more effective content recommendations and personalized learning. A natural framework for this is contextual bandits where the algorithm not only learns from past actions but also takes into account additional information or context (like user preferences or item features) to make decisions. This approach is particularly effective in recommendation systems where each user-item interaction provides a unique context that can significantly influence the recommendations' effectiveness (contextual bandits algorithms are widely used in practice).

5. \textbf{Consumption-savings problems}: In economics, dynamic optimization is used to model and analyze consumption-savings problems, where individuals must decide how much to consume and save over time. Techniques such as the Bellman equation and Euler equations are employed to derive optimal consumption, offering insights into the effects of various factors like interest rates, income, and uncertainty on economic behavior.

6. \textbf{Shortest path problems}: Dynamic optimization is integral to solving shortest path problems, where the goal is to find the most efficient route between two points in a graph or network. Applications range from transportation and logistics, where optimal routes minimize travel time and cost, to computer science and telecommunications, where data packets are sent through the most efficient paths in networks. Dijkstra's and Bellman-Ford algorithms are prominent examples of dynamic optimization methods used for shortest path problems.

7. \textbf{Experimentation}: Dynamic optimization techniques can be used instead of traditional A/B testing. 
As an example, a news website can dynamically adjust the placement of articles. Each layout option is a possible action, and a dynamic optimization algorithm aims to maximize click through rate by learning from user behavior.

\subsection{A Few Key Notions in Dynamic Optimization}

\textbf{Computational.} Dynamic programming techniques break complex problems into smaller, overlapping subproblems and store their solutions for reuse. This exploits the problem's structure, often reducing the computational complexity and time required to find the optimal solution.

As an example, consider the very simple optimization problem $\max _{ \boldsymbol{x} \in A  } x_{1} \cdot x_{2} \cdot \ldots \cdot x_{n}$
where $A = \{ \boldsymbol{x} \in \mathbb{R}^{n} : \sum x_{i} = a, x_{i} \geq 0 \}$. It is easy to see that the solution is $x_{i} = a/n$. How this problem can relate to dynamic optimization? Can we decompose the problem into subproblems? 

Let $v_{n}(a) = \max _{ \boldsymbol{x} \in A  } x_{1} \cdot x_{2} \cdot \ldots \cdot x_{n}$. Note that $v_{n}(a) = \max _{0 \leq y \leq a } yv_{n-1}(a-y) $. Hence, we have reduced the computation of $v_{n}$ ``the value function" to the computation of a sequence of functions in one variable.

\textbf{Analytical properties of the optimal value function.} 
Structural properties of the optimal value function (which is the maximum expected cumulative reward that can be achieved, following the optimal sequence of actions) and of the optimal actions play a crucial role in dynamic optimization, as they provide insights into the underlying problem structure, enable the development of efficient algorithms, and facilitate the analysis of optimal solutions. Some of the key structural properties of the optimal value function are:

1. Optimal Substructure: This property implies that the optimal solution to a problem can be constructed from optimal solutions to its smaller subproblems. In the context of dynamic optimization, this means that the optimal value function for a given state can be found by combining the best possible actions for the current state and the optimal value functions for subsequent states. The optimal substructure property facilitates the use of dynamic programming techniques, such as value iteration, to solve complex optimization problems.

2. Monotonicity and/or Convexity: In some dynamic optimization problems, the optimal value function exhibits monotonicity and/or convexity properties. These properties provide theoretical insights regarding decisions and can be leveraged to simplify computations, develop more efficient algorithms, and ensure the convergence of iterative methods. For instance, if the optimal value function is convex, one can apply techniques from convex analysis to solve the problem more efficiently.

3. Uniqueness and Existence: The uniqueness and existence of the optimal value function are critical for ensuring that a well-defined optimal solution exists for the problem at hand. Establishing these properties helps guarantee that the dynamic optimization methods employed will converge to a unique and meaningful solution.

\textbf{The exploration and exploitation trade-off.} In dynamic optimization problems there is a  challenge of balancing between two conflicting objectives: acquiring new information to make better-informed decisions in the future (exploration) and using the existing knowledge to maximize immediate rewards (exploitation).

Exploration involves taking actions that may not appear optimal based on current information but can potentially be optimal. It is essential for discovering better solutions and improving decision-making over time. However, excessive exploration can lead to suboptimal short-term performance, as the algorithm may forego immediate rewards in pursuit of uncertain future gains.

Exploitation, on the other hand, focuses on leveraging the current knowledge to maximize immediate rewards. It involves selecting actions that appear to be the best based on available information, thus ensuring optimal performance in the short term. However, overemphasis on exploitation may result in suboptimal long-term outcomes, as the algorithm may miss out on better solutions that could be discovered through exploration.

\newpage 

\subsection{Metric Spaces} \label{Sec:metricspace}
We first recall the definition of a metric space and the basic concepts of open sets and Cauchy sequences. 

\begin{definition}
    A metric $d$ on a nonempty set $X$ is a function $d:X \times X \rightarrow \mathbb{R}$ satisfying three properties:

    1. Positivity: $d(x,y) \geq 0$ for all $x,y \in X$, and $d(x,y)=0$ if and only if $x=y$. 

    2. Symmetry: $d(x,y) = d(y,x)$ for all $x,y \in X$. 

    3. Triangle inequality: $d(x,y) \leq d(x,z) + d(z,y)$ for all $x,y,z \in X$. 

    The pair $(X,d)$ is called a metric space. 
\end{definition}

For $x \in X$, the open ball around $x$ with radius $r$ is given by $B(x,r) = \{y \in X: d(x,y) < r \}$. A subset $A$ of $X$ is called \textbf{open} if for every $x \in A$ there exists $r>0$ such that $B(x,r) \subseteq A$. $A$ is \textbf{closed} if the complement of $A$ given by $X \setminus A$ is open. 

A sequence  $\{x_{n}\}$ of a metric space $(X,d)$ is said to \textbf{converge}  to $x \in X$ (typically denoted by $x_{n} \rightarrow x$ or $\lim x_{n} = x$) if $ \lim d(x_{n},x) =0$. 

A function $f:(X,d_{x}) \rightarrow (Y,d_{y})$ between two metric spaces is called \textbf{continuous} if $f^{-1} (O)  := \{ x \in X : f(x) \in O \}$ is an open subset of $X$ whenever $O$ is an open subset of $Y$.\footnote{You are probably more aware of  the equivalent definitions: $f$ is continuous if $\lim x_{n} = x$ holds in $X$ implies that $\lim f(x_{n}) = f(x)$ holds in $Y$ or $f$ is continuous if for every point $a \in X$ and every $\epsilon >0$ there exists $\delta >0$ such that $d_{y} (f(x) , f(a) ) < \epsilon$ whenever $d_{x}(x,a) < \delta$.}

A family of sets $\{A_{i} \}_{i \in I}$ such that $A_{i} \subseteq X$ is said to be a \textbf{cover} of $X$ if $X \subseteq \cup _{i \in I} A_{i} $. If $A_{i}$ are open for each $i$, then the cover is called an open cover. If a subfamily of $\{A_{i} \}_{i \in I}$ also covers $X$ then it is called a subcover. 

A set $A \subseteq X$ is \textbf{compact} if every open cover of $A$ can be reduced to a finite subcover.  
Recall that $A$ is compact in a metric space if and only if every sequence in $A$ has a subsequence which converges to a point of $A$. 
In addition, if $A \subseteq \mathbb{R}^{n}$ then $A$ is compact if and only if it is closed and bounded.

For every set $A$ in a metric space, we let $\operatorname{cl} (A)$ to be the smallest closed set that includes $A$. 
A metric space $(X,d)$ is called \textbf{separable} if there exists a countable subset $S$ such that $cl(S) = X$ (e.g., $\mathbb{R}$ with $d(x,y) = 
|x-y| $ is a separable metric space as we can let $S$ to be set of rational numbers). 

Let $B(S)$ be the space of all real-valued bounded functions on $S$. We define for each $f,g \in B(S)$ the metric $d_{\infty} (f,g):= \sup _{s \in S} |f(s) - g(s)|$. 
Unless otherwise stated, we will endow $B(S)$ with the metric $d_{\infty} (f,g)$. 

A sequence $\{x_{n}\}$ of a metric space $(X,d)$ is called a \textbf{Cauchy} sequence if for all $\epsilon >0$, there exists $N$ such that $d(x_{n},x_{m}) < \epsilon$ for all $n,m > N$. 

A metric space $(X,d)$ is called \textbf{complete} if every Cauchy sequence converges in $X$ (e.g., $\mathbb{R}^{n}$). 
 $B(S)$ is a complete metric space (see exercises).

Recall that a \textbf{fixed-point} of a mapping $T : X \rightarrow X$ is a $x \in S$ such that $T(x) = x$.  We will need the following result which is sometimes called the Banach fixed-point theorem. 

\begin{proposition}\label{prop:Banach}
Let $(X,d)$ be a complete metric space. Let $T : X \rightarrow X$ be a mapping that is $L$-contraction, i.e., $d(T(x),T(y)) \leq L d(x,y)$ for some $0<L<1$ and for all $x,y \in X$. Then $T$ has a unique fixed point. 

\end{proposition}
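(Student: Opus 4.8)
The plan is to prove the Banach fixed-point theorem by the classical iteration argument: pick an arbitrary starting point, iterate $T$, show the resulting sequence is Cauchy, invoke completeness to get a limit, and then verify that this limit is a fixed point and that it is unique. Concretely, fix any $x_0 \in X$ and define $x_{n+1} = T(x_n)$ for $n \geq 0$.

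First I would establish the basic estimate $d(x_{n+1}, x_n) \leq L^n d(x_1, x_0)$ by induction on $n$, using the $L$-contraction property: $d(x_{n+1},x_n) = d(T(x_n),T(x_{n-1})) \leq L\, d(x_n,x_{n-1})$. Next, for $m > n$, I would apply the triangle inequality repeatedly to get
\[
d(x_m, x_n) \leq \sum_{k=n}^{m-1} d(x_{k+1}, x_k) \leq \sum_{k=n}^{m-1} L^k d(x_1,x_0) \leq \frac{L^n}{1-L}\, d(x_1,x_0),
\]
where the last step sums the geometric tail (valid since $0 < L < 1$). Since $L^n \to 0$, given $\epsilon > 0$ one can choose $N$ so that $\frac{L^N}{1-L} d(x_1,x_0) < \epsilon$, which shows $\{x_n\}$ is a Cauchy sequence. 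By completeness of $(X,d)$, there exists $x^* \in X$ with $x_n \to x^*$.

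To see that $x^*$ is a fixed point, note that a contraction is continuous (take $\delta = \epsilon$ in the $\epsilon$–$\delta$ definition, or just observe $d(T(x_n),T(x^*)) \leq L\, d(x_n,x^*) \to 0$), so $T(x_n) \to T(x^*)$. But $T(x_n) = x_{n+1} \to x^*$ as well, and limits in a metric space are unique, hence $T(x^*) = x^*$. For uniqueness, suppose $T(y) = y$ and $T(z) = z$; then $d(y,z) = d(T(y),T(z)) \leq L\, d(y,z)$, so $(1-L)d(y,z) \leq 0$, forcing $d(y,z) = 0$ and thus $y = z$.

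None of the steps is a serious obstacle — this is a standard argument — but the part requiring the most care is the geometric-tail estimate for $d(x_m,x_n)$ and making sure the bound is uniform in $m$ so that the Cauchy property is genuinely established (a common slip is to bound only consecutive terms). Everything else reduces to the triangle inequality, the contraction hypothesis, and the definition of completeness stated earlier in the excerpt.
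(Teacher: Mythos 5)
Your proposal is correct and follows essentially the same route as the paper's proof: iterate $T$ from an arbitrary point, bound $d(x_m,x_n)$ by the geometric tail $\frac{L^n}{1-L}d(x_1,x_0)$ to get a Cauchy sequence, pass to the limit by completeness, and verify the fixed point and uniqueness via the contraction inequality. The only cosmetic difference is that you invoke continuity of $T$ and uniqueness of limits to show $T(x^*)=x^*$, whereas the paper writes out the equivalent triangle-inequality estimate $d(x,T(x)) \leq d(x,x_n) + L\,d(x_{n-1},x)$ directly.
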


\begin{proof} First, $T$ has at most one fixed-point. If $T$ has two fixed points $x,y$, then $d(x,y) = d(T(x),T(y)) \leq Ld(x,y)$ which implies $d(x,y)=0$, i.e., $x=y$.

It remains to establish the existence of a fixed point. 

Let $x_{0} \in X$ and consider the sequence $\{x_{n}\}$ given by $x_{n+1}=T(x_{n})$. We claim that  $\{x_{n}\}$ is a Cauchy sequence.  

We have $d(x_{m+1},x_{m}) = d(T(x_{m}),T(x_{m-1})) \leq L d(x_{m},x_{m-1}) $ so continuing recursively we conclude that $d(x_{m+1},x_{m}) \leq L^{m}d(x_{1},x_{0}) $. Hence, for $n \geq m$ we have 
$$ d(x_{n},x_{m}) \leq \sum _{i=m}^{n-1} d(x_{i+1},x_{i}) \leq \sum _{i=m}^{n-1} L^{i} d(x_{1},x_{0}) \leq \frac{L^{m}}{1-L} d(x_{1},x_{0}).$$
Because $0<L<1$, for all $\epsilon>0$ there exists an $N$ such that for all $n,m \geq N$ we have $d(x_{n},x_{m}) \leq \epsilon$.  Thus, $\{x_{n}\}$ is a Cauchy sequence. 

Because $(X,d)$ is complete the sequence $\{x_{n}\}$ converges to some $x \in X$. We have 
$$ d(x,T(x)) \leq d(x,x_{n}) + d(x_{n},T(x)) = d(x,x_{n}) + d(T(x_{n-1}),T(x)) \leq d(x,x_{n}) + Ld(x_{n-1},x). $$
Taking the limit $n \rightarrow \infty$ in the last inequality implies that $d(x,T(x)) = 0$, i.e., $T(x) = x$ so $x$ is a fixed point of $T$.
\end{proof}

\newpage 

\subsection{Probability Spaces} \label{Sec:Probspace} A probability space is defined by $(\Omega, \mathcal{F}, \mathbb{P})$ where $\Omega$ is the sample space or the space of all possible outcomes, $\mathcal{F}$ is a sigma-algebra and $\mathbb{P}$ is a probability measure.   

The sigma-algebra $\mathcal{F}$ contains the sets (sometimes called events) that can be measured. Formally, $\mathcal{F} \subseteq 2^{\Omega}$ where $2^{\Omega}$ is the power set of $\Omega$  is called a sigma-algebra if the following three properties hold: (1) $B \in \mathcal{F}$ implies $\Omega \setminus B \in \mathcal{F}$, i.e., it is closed under complements. (2) $\Omega \in \mathcal{F}$. (3) If $B_{1},B_{2}\ldots$ is a countable collection of sets in $\mathcal{F}$ then $ \cup _{i} B_{i} \in \mathcal{F}$, i.e., it is closed under countable unions.

The probability measure $\mathbb{P}$ is a function that assigns to any set in $\mathcal{F}$ a number between $0$ and $1$, and satisfies  (1) $\mathbb{P}(\Omega)=1$, $\mathbb{P}(\emptyset) =0$. (2)  If $B_{1},B_{2}\ldots$ is a countable collection of pairwise disjoint sets in $\mathcal{F}$, i.e., $B_{i} \cap B_{j} = \emptyset$ for all $i,j$, then $ \mathbb{P} (\cup _{i} B_{i} ) = \sum _{i} \mathbb{P} (B_{i})$. 

For the rest of these lectures, $\Omega$ will be endowed with a metric $d$ and $(\Omega,d)$ will be a separable metric space.

Let $\mathfrak{B}$ be a collection of subsets of $\Omega$. The smallest sigma-algebra containing all the sets of $\mathfrak{B}$ is called the sigma-algebra that is \textbf{generated} by  $\mathfrak{B}$. For any collection of sets $\mathfrak{B}$ the sigma-algebra that is generated by  $\mathfrak{B}$ exists (see exercises) and is denoted by $\sigma (\mathfrak{B}) $. 

We are now ready to define the Borel sigma-algebra.

 \begin{definition}
 Let $(\Omega,d)$ be a metric space. The \textbf{Borel sigma-algebra} on $\Omega$ is the sigma-algebra that is generated by all the open sets of $\Omega$. A set is called a \textbf{Borel measurable set} if it belongs to the Borel sigma-algebra. \\ A function $f: \Omega \rightarrow \mathbb{R}$ is called a \textbf{Borel measurable function} if $f^{-1}(V) := \{ \omega \in \Omega : f(\omega) \in V \}$ is a Borel  measurable set in $\Omega$ for every open set $V$ in $\mathbb{R}$. 
  \end{definition} 

  %When $\Omega$ is finite we will  use the sigma-algebra $2^{\Omega}$ and when $\Omega \subseteq \mathbb{R}^{n}$ we will typically use the Borel sigma-algebra. In this case, the sigma-algebra $2^{\Omega}$ leads to inconsistencies (e.g., the  Banach–Tarski paradox) but this is beyond the scope of this class. 

 \begin{remark}
 When $\Omega $ is countable then we will assume it is endowed with the discrete distance, i.e., $d(x,y)=1$ if $x \neq y$ and $d(x,x)=0$. In this case every set is open (why?), and hence, the Borel sigma-algebra is $2^{\Omega}$ and every function is Borel measurable. 
 \end{remark}

 \begin{remark}
   The Borel sigma-algebra is the smallest sigma-algebra such that every open set is measurable, so it is the smallest sigma-algebra such that every continuous function is measurable. 
 \end{remark}

\begin{remark}
    Suppose that $\Omega = \mathbb{R}$ with the standard distance $d(x,y) = |x-y|$. Then the cardinality of the Borel measurable sets is the same as the cardinality of $\mathbb{R}$, not $2^{\mathbb{R}}$. Consequently, many sets are not Borel measurable..\footnote{However, to construct a non Borel measurable set we have to apply the axiom of choice. }  
\end{remark}

A \textbf{random variable} $X$ is defined as a real-valued measurable function $X: \Omega \rightarrow \mathbb{R}$. The \textbf{law} of a random variable $X$ (also called the probability distribution of $X$) denoted by $\mu$ is defined by $ \mu (B) := \mathbb{P} (\omega \in \Omega: X(\omega) \in B)$.

\textbf{Lebesgue Integral} In a probability space $(\Omega, \mathcal{F}, \mathbb{P})$, the integral of a random variable $X: \Omega \rightarrow \mathbb{R}$ with respect to the probability measure $\mathbb{P}$ is defined using the concept of the Lebesgue integral. The integral is denoted by
\[
\int_{\Omega} X(\omega) \mathbb{P}(d\omega).
\]
which is also called the expected value of the random variable $X$ denoted by $\mathbb{E}(X)$ or $\mathbb{E}_{\mathbb{P}}(X)$. 

\textbf{Simple Functions}
A \textit{simple function} $X$ that assumes finite positive values $c_1, \ldots, c_n$ can be represented as
\[
X = \sum_{i=1}^n c_i \mathbf{1}_{A_i},
\]
where $A_i = \{\omega \in \Omega : X(\omega) = c_i\}$ are measurable sets. The integral of a simple function is then defined by
\[
\int_{\Omega} X(\omega) \mathbb{P}(d\omega) = \sum_{i=1}^n c_i \mathbb{P}(A_i).
\]
For example, if $\Omega = [0,1]$, $\mathbb{P}([a,b) ) = b-a$ for any $1 \geq b>a \geq 0$, and $X (w)= 1$ for $w \in [0,0.5)$ and $0$ otherwise, then $\mathbb{E}(X) = 0.5$.

\textbf{General Random Variables.} For a general non-negative measurable function $X \geq 0$, the Lebesgue integral is defined as the limit of integrals of simple functions that approximate $X$. Specifically,
\[
\int_{\Omega} X(\omega) \mathbb{P}(d\omega) = \lim_{n \rightarrow \infty} \int_{\Omega} \phi_n(\omega) \mathbb{P}(d\omega),
\]
where $\phi_n$ is a sequence of simple function such that $\phi_{n}$ converges monotonically to $X$. For example, $\phi_n$ can be constructed as
\[
\phi_n = \sum_{i=1}^{n 2^n} 2^{-n}(i-1) \mathbf{1}_{B_n^i},
\]
with $B_n^i = \{\omega \in \Omega : (i-1)2^{-n} \leq X(\omega) < i2^{-n}\}$ (see exercises).

We note that it can be shown that the value of the integral of $X$ is independent of the sequence of step functions that approximate $X$, i.e., if $\phi_{n}$ and $\zeta_{n}$ are sequences of step functions that converges monotonically to $f$ with probability 1, then $\lim _{n \rightarrow \infty}  \int  \phi_{n}  (\omega)  \mathbb{P(d \omega) } = \lim _{n \rightarrow \infty} \int \zeta_{n} (\omega) \mathbb{P (d\omega) } $.

Any random variable $X$ can be decomposed into its positive and negative parts:
\[
X^+(\omega) = \max\{0, X(\omega)\}, \quad X^-(\omega) = -\min\{0, X(\omega)\}.
\]
The integral of $X$ exists if at least one of the integrals of $X^+$ or $X^-$ is finite. It is defined as
\[
\int_{\Omega} X(\omega) \mathbb{P}(d\omega) = \int_{\Omega} X^+(\omega) \mathbb{P}(d\omega) - \int_{\Omega} X^-(\omega) \mathbb{P}(d\omega).
\]

\textbf{Bibliography note:} The technical material in Sections \ref{Sec:metricspace} and \ref{Sec:Probspace} be found in most real analysis textbooks (e.g., \cite{aliprantis1998principles}).

\subsection{Exercise 1}

\begin{exercise}
    Consider the example from Lecture 1 of the optimization problem $\max _{ \boldsymbol{x} \in A  } x_{1} \cdot x_{2} \cdot \ldots \cdot x_{n}$
where $A = \{ \boldsymbol{x} \in \mathbb{Z}^{n} : \sum x_{i} = a, x_{i} \geq 0 \}$. Assume that $n$ and $a$ are non-negative integers. 

1. Write in Python a dynamic programming algorithm to solve this problem for general non-negative integers $n,a$.

Hint: Start with the following edge cases: 

\begin{verbatim}
    def max_product(n, a):
    # Edge case: if the total is zero, the product of any numbers must be zero
    if a == 0:
        return 0
    # Edge case: if we only have one number to adjust, it must be 'a'
    if n == 1:
        return a

    # Initialize table where v[i][j] is the max product for i elements summing to j
    v = [[0] * (a + 1) for _ in range(n + 1)]

      # Base case: product of one part is the number itself
    for j in range(a + 1):
        v[1][j] = j
\end{verbatim}

Now fill in the table given the discussion in Lecture 1. 

2.  Explain, intuitively, why this method is more efficient than a brute-force approach that would involve generating all possible ways to partition the sum 
$a$ into $n$
non-negative parts and then calculating the product for each partition. 

If you studied computer science, can you make your argument more formal (what is the (time) complexity of both methods)?
\end{exercise}

\textbf{Metric Spaces}

\begin{exercise} \label{Ex:B(S)_Complete}
  (a)  Show that $(B(S),d_{\infty})$ is indeed a metric space. 

  (b) Show that the metric space $(B(S),d_{\infty})$ is complete. 
\end{exercise}

\begin{exercise}
    Let $A \subseteq X$ be a closed subset of a metric space $(X,d)$. Then $x \in A$ if and only if there exists a sequence $\{x_{n} \}$ such that $\lim x_{n} = x$. 
\end{exercise}

\begin{exercise}
    Let $(X,d)$ be a complete metric space. Then a subset $A \subseteq X$ is closed if and only if $(A,d)$ is a complete metric space. 
\end{exercise}

\begin{exercise}
   In this exercise we will provide an alternative proof for Banach fixed-point theorem that is based on Cantor's intersection theorem.

Define the diameter of a set $A$ by $d(A) = \sup \{ d(x,y): x,y \in A\}$.

   (a) Let $(X,d)$ be a complete metric space and let $\{A_{n}\}$ be a sequence of closed, non-empty subsets of $X$ such that $A_{n+1} \subseteq A_{n}$ for each $n$ and $\lim d(A_{n}) = 0 $. Then $\cap _{n=1}^{\infty} A_{n}$ consists of precisely one element.

(b) Use part (a) to prove Banach fixed-point theorem. 

Hint: Define the function $f(x) = d(x,Tx)$ and consider the sets $A_{n} = \{x \in X: f(x) \leq 1/n \} $ where $T$ is the contraction operator defined in the statement of the Banach fixed-point theorem. 
\end{exercise}

\begin{exercise}
     Let $X$ be a non-empty set and let $T: X \rightarrow X$, $L \in (0,1)$. Then the following statements are equivalent: 

     (i) There exists a complete metric $d$ for $X $ such that $d(Tx,Ty) \leq L d(x,y)$ for all $x,y \in X$. 

     (ii) There exists a function $f:X \rightarrow \mathbb{R}_{+}$ such that $f^{-1}(0)$ is a singleton and $f(Tx) \leq L f(x)$. 

     Hint: For $(i) \rightarrow (ii)$ use Banach fixed-point theorem. 

     For $(ii) \rightarrow (i)$ consider $d(x,y) = f(x) + f(y)$ for $x \neq y$ and $d(x,x) = 0$. 
\end{exercise}

\textbf{Probability Spaces}

For the next exercises we let $\Omega \subseteq \mathbb{R}^{n}$ for simplicity (but $\Omega$ can be any separable metric space more generally). Measurable means Borel measurable.

 \begin{exercise} Show that for any collection of sets $\mathfrak{B}$ the sigma-algebra that is generated by  $\mathfrak{B}$ exists. 
 \end{exercise}

   \begin{exercise} 
 Find a Borel measurable function that is not continuous 
 
 Hint: consider the indicator function on the set of rational numbers. 
 \end{exercise}

\begin{exercise}
Show that the law of a random variable $X$ defined in Lecture 1 is indeed a probability measure. 
\end{exercise}

\begin{exercise}
    Let $f:\Omega \rightarrow \mathbb{R}$ be a measurable function such that $f(\omega) \geq 0$ for all $\omega$. A measurable function $\phi : \Omega \rightarrow \mathbb{R}$ is called simple if it assumes only finite number of values. 

Show that there exists a sequence of non-negative increasing simple $\phi_{n}$ functions that converges to $f$ pointwise and satisfies $\phi _{n} \leq f$ for all $n$.

Hint: Consider $B_{n}^{i} = \{ \omega \in \Omega: (i-1)2^{-n} \leq f(\omega) < i2^{-n} \} $ and $\phi _{n} = \sum _{i=1}^{n2^n} 2^{-n}(i-1) 1_{B_{n}^{i} } $. 

\end{exercise}

\begin{exercise} 
Suppose that $f,g$ are measurable. 

(1) Show that $ \{ \omega \in \Omega: f(\omega) > g(\omega) \}$, $ \{ \omega \in \Omega: f(\omega) \geq g(\omega) \}$ and $\{ \omega \in \Omega: f(\omega) = g(\omega) \}$ are all measurable. 

(2) Show that $f + g$ is measurable. 

(3) Show that $fg$ is measurable.

Hint: use  $fg = 0.5 ( (f+g)^2 - f^2 - g^2)$.

(4) Show that $f \circ g$ is measurable, i.e.,  the composition of Borel measurable functions is Borel measurable. 

(5)  Suppose that $\{k_{n} \}$ is a sequence of measurable functions. Show that if $k_{n}$ converges to $k$ pointwise then $k$ is measurable. 
\end{exercise}

\begin{exercise}
    Suppose that $(X,F)$ and $(Y,G)$ are two measurable spaces. The product sigma-algebra denoted by $F \otimes G$ is given by 
    $$ F \otimes G = \sigma  ( \{ A \times B: A \in F, B \in G \} ) $$

where $\sigma$ is defined in Lecture 1. 
    Show that $\mathcal{B} (\mathbb{R}^{n+m}) =\mathcal{B} (\mathbb{R}^{n}) \otimes 
 \mathcal{B} (\mathbb{R}^{m})   $ for $n , m \geq 1$ 
where   $\mathcal{B} (\mathbb{R}^{k})$ is the Borel sigma-algebra on $\mathbb{R}^{k}$. 
\end{exercise}

\newpage

\section{Lecture 2: The Principle of Optimality in Dynamic Programming}

\subsection{Discounted Dynamic Programming}

In this course, we will primarily explore discounted dynamic programs (in the exercises we will see that the key ideas of this note also apply to positive dynamic programming models). We describe a discounted dynamic programming model as a tuple \((S, A, \Gamma, p, r, \beta, T)\), where (measurable means Borel measurable throughout the lectures):
\begin{itemize}
    \item \(T\) represents the number of periods, typically assumed to be infinite (\(T = \infty\)).
    \item \(S\) is a measurable separable complete metric space, defining the possible states of the system.
    \item \(\mathcal{B}(S)\) denotes the Borel \(\sigma\)-algebra on \(S\).
    \item \(A \subseteq \mathbb{R}^k\) is a measurable set of possible actions. 
    \item $\Gamma: S \rightarrow 2^{A}$ is a correspondence where \(\Gamma(s)\) is the  measurable set of feasible actions in state \(s \in S\).
    \item \(p: S \times A \times \mathcal{B}(S) \rightarrow [0, 1]\) is a transition probability function, where \(p(s, a, \cdot)\) is a probability measure on \(S\) for each \((s, a)\), and \(p(\cdot, \cdot, B)\) is measurable for each \(B \in \mathcal{B}(S)\).
    \item \(r: S \times A \rightarrow \mathbb{R}\) is a measurable single-period payoff function.
    \item \(0 < \beta \leq 1\) is the discount factor, with \(0 < \beta < 1\) when \(T = \infty\).
\end{itemize}

The process initiates in some state \(s(1) \in S\). At time \(t\), given the state \(s(t)\), the decision maker (DM) selects an action \(a(t) \in \Gamma(s(t))\) and receives a payoff \(r(s(t), a(t))\). The probability that the state in the next period, \(s(t+1)\), lies in \(B \in \mathcal{B}(S)\) is given by \(p(s(t), a(t), B)\).

\begin{remark}
   (i) For every  measurable function $f:S \rightarrow \mathbb{R}$, $z(s,a) = \int f(s')p(s,a,ds')$ is measurable (why? use simple functions first..). 

(ii) In many applications the next period's state $s'$ is given by a transition function $s' =w(s,a,\zeta)$ where $\zeta$ is a random variable on some set $E$ and has a law $ \phi $. In this case $p(s,a,B) = \phi (\zeta \in E: w(s,a,\zeta) \in B)  $ for  \(B \in \mathcal{B}(S)\) and for every measurable function $f:S \rightarrow \mathbb{R}$ we have 
$$  \int _{S} f(s')p(s,a,ds') =  \int _{E }f (w(s,a,\zeta)) \phi (d \zeta ) $$
  (why)? 
\end{remark}

\begin{remark} \label{remark:directly}
    In some settings, the payoff function also depends directly on the transition probability function and is given by $r(s,a) = \int R(s,a,s') p(s,a,ds')$. 
\end{remark}

\textbf{Examples.} This framework is widely applicable in fields such as economics, operations research, management science, and engineering. Below are two classical examples.

\begin{example}
\textbf{Consumption-savings problems}: Consider an agent deciding how much to save and consume each period, possessing an initial wealth \(s(1)\) and receiving a periodic income \(y(t)\). The agent determines consumption \(c(t)\) and savings \(a(t)\) for future consumption in a risk-free bond. The savings grow at a rate \(R > 0\). The next period's wealth is given by
\[
s(t+1) = R a(t) + y(t).
\]
We assume no borrowing, with \(\Gamma(s) = [0, s]\), \(S = [0, \infty)\), and \(A = [0, \infty)\). The utility from consumption is \(r(s, a) = u(s-a)\), where \(u\) is strictly increasing, and \(y(t)\) are I.I.D. random variables with law \(\phi\). The transition probability function is
\[
p(s, a, B) = \phi(y : Ra + y \in B).
\]
\end{example}

\begin{example}
\textbf{Inventory management}: A retailer's state \(s(t) \in S = [0,\infty) \) indicates the current inventory level. The retailer decides the total units \(a(t) \in [s(t), \infty)\) available to sale in period $t$ (here $a(t) - s(t)$ is amount of units the retail produce or purchase). If the retailer's state at time  $t $ is $s$, the retailer takes an action $a$, then the retailer's state in the next period $s'$ is given by
\begin{equation*} s' =\max \{a  - D,0\}
\end{equation*}
where $D$ is a random variable that represents the random demand for the retailer's  product.  We assume that $D$ has the same law $\phi$ in each period for simplicity (we can easily accommodate Markovian demand or a more general demand structure in the discounted dynamic programming framework). 

The retailer faces production costs $c$ per unit, and holding costs \(h\) per unit, and \(w\) is the revenue per unit. The single period  expected  payoff is
\[
r(s, a) = w \mathbb{E}_{D}[\min(a, D)] - c(a - s) - h \mathbb{E}_{D}[(a - D)^+],
\]
where \(x^+ = \max(x, 0)\). The transition probability function is
\[
p(s, a, B) = \phi(D : (a-D)^{+} \in B).
\]
\end{example}

\subsection*{Remark: Special Cases of Interest}
We highlight two significant special cases within the framework of dynamic programming that are particularly relevant real world applications. 

\begin{enumerate}
    \item \textbf{Multi-Armed Bandit Problem:}
    In the multi-armed bandit problem, the state space consists of a single element, thereby simplifying the state dynamics. However, the rewards associated with each action (or``arm") are initially unknown. The primary objective is to identify the most rewarding action through repeated experimentation, balancing the trade-off between exploration of untested actions and exploitation of actions known to yield high rewards. 
    
    \item \textbf{Contextual Bandit Framework:}
    Popular in industrial applications, such as personalization,  recommendation systems, online advertising, clinical trials, and dynamic pricing, the contextual bandit framework extends the multi-armed bandit problem by incorporating a vector of context features that describe each decision instance. Here, the state \(s\) represents a feature vector (e.g., demographics, age of consumers) that is assumed to be independently and identically distributed over time. Hence,  the state dynamics are simple but as in the standard multi-armed bandit problem the rewards are unknown. The goal is to learn the best action for each possible context. 
\end{enumerate}

These special cases are instrumental in guiding the design of algorithms that efficiently balance learning and decision-making in dynamic environments, with widespread applications. 

\begin{example}
    \textbf{Bayesian multi-armed-bandit as a dynamic programming model}:
 Consider a multi-armed bandit problem as described above from a Bayesian point of view. We have a set of \( k \) arms, each, for simplicity, associated with a Bernoulli distribution. The payoff from each arm \( i \) is either 1 or 0, with the probability of receiving a 1 being \( p_i \), which is initially unknown. The belief about each arm's success probability \( p_i \) is modeled as a Beta distribution with parameters \(\alpha_i\) and \(\beta_i\) (the Beta distribution has a probability density function $cx^{\alpha-1}(1-x)^{\beta-1}$ on $[0,1]$ where $c$ is a normalization factor). 

The state at any decision epoch \( t \), denoted by \( s(t) \), is the set of parameters $$ s(t) = (\alpha_1(t), \beta_1(t), \ldots, \alpha_k(t), \beta_k(t)) $$ for the Beta distributions of each arm. These parameters represent the posterior beliefs about the probabilities \( p_i \) based on prior observations.
    
    The action \( a(t) \) at time \( t \) is choosing which arm to pull, where \( a(t) \in \{1, 2, \ldots, k\} \).

  The expected payoff \( r(s, a) \) after taking action \( i \) in state \( s \) is $\alpha_{i} / (\alpha_{i} + \beta_{i})$. 

   The probability function updates the belief state based on the observed outcome from pulling an arm. If arm \( a \) is pulled and (random) outcome \( x \) (0 or 1) is observed, the state updates from \( (\alpha_a, \beta_a) \) to \( (\alpha_a + x, \beta_a + 1 - x) \) for arm \( a \) according to Bayesian updating, while other arms' parameters remain unchanged.

\end{example}

\textbf{Strategies.} Let $H^{t}:= S \times (A \times S)^{t-1} $ be the set of finite histories up to time $t$. A strategy $\sigma$ is a sequence $(\sigma _{1} ,\sigma _{2} , \ldots )$ of Borel measurable functions $\sigma _{t} : H^{t} \rightarrow A$ such that $\sigma _{t}(s(1) ,a(1) ,\ldots  ,s(t)) \in \Gamma (s(t))$ for all $t \in \mathbb{N}$ and all $(s(1) ,a(1) ,\ldots  ,s(t))\in H^{t}$. For each initial state $s\left (1\right )$, a strategy $\sigma $ and a transition probability function $p$ induce a probability measure 
 over the measurable space of all infinite histories $(H^{\infty },\mathcal{B}(H^{\infty }))$.\protect\footnote{
The probability measure on the space of all infinite histories $H^{\infty }$ is uniquely defined by Kolmogorov extension theorem.   Let $\mu _{{1}, \ldots , t }$ be the probability measure on $H^{t}$ that is generated by the strategy and the transition probability function. Kolmogorov extension theorem asserts that there exists a unique probability $\mu$ on $H^{\infty}$ with marginals $\mu _{{1}, \ldots , t }$ for any finite $t$. That is, $\mu (E \times \prod _{k=t+1}^{\infty} H^{k} ) = \mu _{{1}, \ldots , t }(E)$ for every $E$ in $\mathcal{B}(H^{t})$ and all $t \geq 1$. In simpler terms, the Kolmogorov extension theorem guarantees that if we have a family of consistent finite-dimensional distributions, we can uniquely extend these to a probability measure on the infinite-dimensional product space. The proof of this theorem is beyond the scope of this course (see  the textbook \cite{dudley2018real} for a proof). 
} We denote the expectation with respect to that probability measure by $\mathbb{E}_{\sigma }$, and the associated stochastic process by $\{s(t) ,a(t)\}_{t =1}^{\infty }$.

\textbf{Objective function.} The decision maker's goal is to find a strategy that maximizes her expected discounted payoff. When the decision maker follows a strategy $\sigma$ and the initial state is $s \in S$ the expected discounted payoff is given by
\begin{equation*}V_{\sigma}(s) =\mathbb{E}_{\sigma }\sum \limits_{t =1}^{\infty }\beta^{t -1}r(s(t),a(t)).
\end{equation*}Define \begin{equation*}V(s) =\sup _{\sigma }V_{\sigma }(s).
\end{equation*}
We call $V :S \rightarrow \mathbb{R}$ the value function. A strategy is said to be optimal if $V_{\sigma} (s) = V(s)$ for all $s$.  A strategy $\sigma$ is said to be $\epsilon$-optimal if for every $\epsilon >0$, every strategy $\sigma'$, and any $s\in S$ we have $V_{\sigma'}(s) \leq V_{\sigma}(s) + \epsilon$. A strategy is said to be stationary if $\sigma (h(t)) = \lambda(s(t))$ where $h(t) = (s(1),a(1),\ldots,s(t) )$ for any history $h(t) \in H^{t}$ and some measurable function $\lambda:S \rightarrow A$ which is called a stationary policy. 

We let $B_{m}(S)$ to be the space of all bounded measurable functions from $S$ to $\mathbb{R}$

\textbf{The Bellman operator.} Define the operator (called the Bellman operator or Bellman equation) $T$ from measurable functions from $S$ to $\mathbb{R}$ to functions from  $S$ to $\mathbb{R}$ by
\begin{equation*}Tf(s) =\sup _{a \in \Gamma (s)}Q(s ,a ,f), 
\end{equation*}
where 
\begin{equation}Q(s ,a ,f) =r(s ,a) +\beta \int _{S}f(s^{ \prime })p(s ,a ,ds^{ \prime }). \label{eq:h}
\end{equation}

$Tf$ is the best an agent in state $s$ can achieve by choosing an action $a$ at the current period and the expected value of $f$ at the next period.  We denote by $U(X,Y)$ the set of Borel measurable functions from a metric space $X$ to a metric space $Y$ 
and  the set of optimal measurable solutions for the optimization problem above $\Lambda _{f}$ by
$$\Lambda _{f} = \{ \lambda \in U(S,A): \sup _{a \in \Gamma(s)} \ Q(s,a,f) = Q(s,\lambda(s),f), \  \forall s \in S \} .$$

 For a stationary policy $\lambda \in U(S,A)$ and $f \in U(S,\mathbb{R})$ we also define $T_{\lambda}$ by
\begin{equation}
    T_{\lambda}f(s) =  r(s,\lambda(s)) + \beta \int _{S} f(s') p(s,\lambda(s),ds') = Q(s,\lambda(s),f).
\end{equation}

%We now present the main theorem relating to discounted dynamic programming. We show that the value function satisfies the Bellman equation under certain conditions. We will assume for simplicity that $\Gamma(s) = A$ for all $s \in S$. We first need to recall some basic results related to metric spaces. 

\subsection{The Dynamic Programming  Principle}
 
We are now ready to state the dynamic programming principle (recall that we endow $B_{m}(S)$ with the supremum metric $d_{\infty}$ and it is a complete metric space).

\begin{theorem} \label{Thm:mainDP} Let $D \subseteq B_{m}(S)$ be a complete metric space with the metric $d_{\infty}$ that contains the constant functions. 

Suppose that $r$ is bounded. Assume that $f \in D$ implies $\Lambda_{f} \neq \emptyset$, and $Tf \in D$.  

Then the following holds: 

1. $V$ is the unique function in $D$ that satisfies the Bellman equation, i.e., $TV = V$.

2. There is a stationary policy $\lambda \in \Lambda_{V}$ that is $\epsilon$-optimal. 

3. If $\lambda \in \Lambda_{V}$ achieves the supremum then $\lambda$ is an optimal stationary policy. 

\end{theorem}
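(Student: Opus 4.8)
The plan is to reduce everything to the Banach fixed-point theorem (Proposition~\ref{prop:Banach}) applied to two contractions: the Bellman operator $T$ restricted to $D$, and the policy-evaluation operator $T_\lambda$ on $B_m(S)$. First I would check the routine fact that, in the $d_\infty$ metric, both are $\beta$-contractions: for $T$ this is $|\sup_{a}Q(s,a,f)-\sup_{a}Q(s,a,g)|\le\sup_{a}\beta|\int(f-g)\,dp|\le\beta\,d_\infty(f,g)$, and for $T_\lambda$ it is the same estimate without the supremum. Since $D$ is complete and $T$ maps $D$ into $D$ by hypothesis, Proposition~\ref{prop:Banach} yields a unique fixed point $v^*\in D$ of $T$. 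Since $\lambda$ is measurable and $s\mapsto\int f(s')p(s,\lambda(s),ds')$ is measurable (by the measurability remark together with standard composition facts), $T_\lambda$ maps $B_m(S)$ into itself and so has a unique fixed point there; a short induction on the truncated values $V_\lambda^{(n)}=T_\lambda V_\lambda^{(n-1)}$, $V_\lambda^{(0)}\equiv 0$, identifies this fixed point with $V_\lambda$, the value of the stationary policy $\lambda$ (measurability of $V_\lambda$ coming from it being a pointwise limit of measurable functions).

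The heart of the proof is to show $v^*=V$, via two inequalities. For $V\le v^*$: since $v^*=Tv^*$, for every $s$ and every $a\in\Gamma(s)$ we have $v^*(s)\ge r(s,a)+\beta\int v^*(s')p(s,a,ds')$; unrolling this along an arbitrary strategy $\sigma$ and taking $\mathbb{E}_\sigma$, with the tower property applied to the Kolmogorov-extension measure on $H^\infty$ (legitimate because $a(t)$ is $h(t)$-measurable), yields the telescoping bound $v^*(s)\ge\mathbb{E}_\sigma\sum_{t=1}^{n}\beta^{t-1}r(s(t),a(t))+\beta^{n}\mathbb{E}_\sigma v^*(s(n+1))$; letting $n\to\infty$, the last term vanishes since $v^*$ is bounded and $\beta<1$, while the sum converges to $V_\sigma(s)$ by dominated convergence (the discounted stream is dominated by $\|r\|_\infty/(1-\beta)$). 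Hence $V_\sigma\le v^*$ for every $\sigma$, so $V\le v^*$. For $v^*\le V$: because $v^*\in D$, the hypothesis furnishes some $\lambda^*\in\Lambda_{v^*}$, and then $T_{\lambda^*}v^*=Q(s,\lambda^*(s),v^*)=\sup_{a}Q(s,a,v^*)=Tv^*=v^*$, so $v^*$ is the unique fixed point of $T_{\lambda^*}$, i.e. $v^*=V_{\lambda^*}\le V$. Combining gives $V=v^*=V_{\lambda^*}$, which is the first statement (uniqueness inherited from the uniqueness of $v^*$ in $D$) and already exhibits one optimal stationary policy.

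The second and third statements then drop out. Since $V=v^*\in D$, the set $\Lambda_V=\Lambda_{v^*}$ is nonempty, and for any $\lambda\in\Lambda_V$ the computation above gives $T_\lambda V=TV=V$, so $V$ is the unique fixed point of $T_\lambda$; hence $V_\lambda=V$ and $\lambda$ is an optimal stationary policy (third statement), which is in particular $\epsilon$-optimal (second statement). The main obstacle is not any single deep step but the careful execution of the $V\le v^*$ direction: justifying the interchange of limit and expectation, handling the conditional-expectation bookkeeping against the infinite-history measure, and verifying the measurability facts that let Banach be applied to $T_\lambda$ — all of which lean on boundedness of $r$ and $0<\beta<1$.
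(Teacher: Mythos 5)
Your proof is correct and follows the same skeleton as the paper's (Banach fixed point for $T$ on $D$, then the two inequalities $V\le v^*$ and $v^*\le V$), but both halves are executed a little differently, and the second half differs in a way worth flagging. For $V\le v^*$ the paper does not unroll along trajectories: it shows that $V_\sigma\le Ty$ for any $y\in D$ dominating $V$, via the one-step decomposition $V_\sigma(s)=r(s,\sigma_1(s))+\beta\int V_{\sigma|(s,\sigma_1(s))}\,dp$, and then iterates $V\le T^n y\to v^*$; your forward telescoping with the vanishing tail $\beta^n\mathbb{E}_\sigma v^*(s(n+1))$ is an equivalent and, if anything, more self-contained version of the same idea (both versions need the conditional-expectation bookkeeping you acknowledge). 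The genuine divergence is in $v^*\le V$: you take $\lambda^*\in\Lambda_{v^*}$ to be an exact maximizer, so $T_{\lambda^*}v^*=Tv^*=v^*$ and $v^*=V_{\lambda^*}$ at once; under the paper's literal definition of $\Lambda_f$ (exact attainment of the supremum) this is legitimate and in fact yields the stronger conclusion that \emph{every} $\lambda\in\Lambda_V$ is optimal, collapsing statements 2 and 3. The paper instead runs an $\epsilon$-argument: it only uses a selector with $T_\lambda x\ge x-\epsilon(1-\beta)$, proves $T_\lambda^n x\ge x-\epsilon$ by induction using monotonicity and discounting of $T_\lambda$, and concludes $V\ge V_\lambda\ge x-\epsilon$ for every $\epsilon>0$. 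That version is robust to the weaker reading of $\Lambda_f$ as a set of selectors that only approximately achieve the supremum — which is evidently what the authors have in mind, since statement 3 is conditioned on ``$\lambda$ achieves the supremum.'' If that weaker reading is intended, your step $T_{\lambda^*}v^*=Tv^*$ is precisely where you would need to substitute the paper's $\epsilon$-argument; everything else (the contraction estimates, the identification of the fixed point of $T_\lambda$ with $V_\lambda$, the measurability facts) matches the paper.
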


We first prove a few simple properties of the operators $T$ and $T_{\lambda}$. We will write $\Vert f - g \Vert = \sup _{s \in S} |f(s) - g(s) | = d_{\infty} (f,g)$.

\begin{lemma} \label{lemma:T-lambda-closed}
     For each measurable $\lambda \in U(S,A)$, $f \in U(S,\mathbb{R})$ implies $T_{\lambda}f \in U(S,\mathbb{R})$. In addition, for each strategy $\sigma$, we have $V_{\sigma} \in U(S,\mathbb{R})$.   
\end{lemma}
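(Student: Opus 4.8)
The plan is to reduce everything to two measurability facts that are essentially already recorded in the excerpt. The first is the Remark after the model: for any measurable $f:S\to\mathbb{R}$ the map $(s,a)\mapsto\int_S f(s')\,p(s,a,ds')$ is measurable on $S\times A$, proved by checking it for indicators (where it is just $(s,a)\mapsto p(s,a,B)$), then extending by linearity and monotone convergence; I will use the obvious generalization in which the integrand and the kernel also depend measurably on an auxiliary variable. The second is that if $\lambda:S\to A$ is Borel measurable and $g$ is Borel measurable on $S\times A$, then $s\mapsto g(s,\lambda(s))$ is Borel measurable: indeed $s\mapsto(s,\lambda(s))$ is measurable into $S\times A$ coordinatewise, and for separable metric spaces the Borel $\sigma$-algebra of the product equals the product of the Borel $\sigma$-algebras (the content of one of the Lecture~1 exercises), so this is a composition of measurable maps. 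I will also use that finite sums and pointwise limits of measurable functions are measurable, and that coordinate projections $H^\infty\to S$ and $H^\infty\to A$ are measurable.

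For the first claim, write $T_\lambda f(s)=r(s,\lambda(s))+\beta\, z(s,\lambda(s))$ with $z(s,a)=\int_S f(s')\,p(s,a,ds')$. By the Remark, $z$ is measurable on $S\times A$; since $r$ is measurable on $S\times A$ and $\lambda$ is measurable, the two maps $s\mapsto r(s,\lambda(s))$ and $s\mapsto z(s,\lambda(s))$ are measurable by the composition fact above, and a finite linear combination of measurable functions is measurable. Hence $T_\lambda f\in U(S,\mathbb{R})$ (here, as throughout the development, $f$ is bounded so the integral is well defined and finite).

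For the second claim, I would introduce the finite-horizon truncations $V_\sigma^n(s)=\mathbb{E}_\sigma\big[\sum_{t=1}^n\beta^{t-1}r(s(t),a(t))\big]$ and prove by induction on $n$ the stronger statement: for every bounded measurable $g:H^n\to\mathbb{R}$, the map $s\mapsto\int_{H^n}g\,d\mu^s_{1,\dots,n}$ is measurable, where $\mu^s_{1,\dots,n}$ is the law on $H^n$ induced by $\sigma$, $p$, and initial state $s$. The case $n=1$ is trivial, the map being $g$ itself. For the step, peel off the last coordinate: given the history $h^n$, the action $\sigma_n(h^n)$ is deterministic and $s_{n+1}\sim p(s_n(h^n),\sigma_n(h^n),\cdot)$, so $\int_{H^{n+1}}g\,d\mu^s_{1,\dots,n+1}=\int_{H^n}\tilde g\,d\mu^s_{1,\dots,n}$ where $\tilde g(h^n)=\int_S g(h^n,\sigma_n(h^n),s')\,p(s_n(h^n),\sigma_n(h^n),ds')$. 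The kernel version of the Remark shows $(h^n,a)\mapsto\int_S g(h^n,a,s')\,p(s_n(h^n),a,ds')$ is measurable on $H^n\times A$, and composing with the measurable $\sigma_n$ gives that $\tilde g$ is bounded measurable on $H^n$; the induction hypothesis then applies. Taking $g(h^n)=\sum_{t=1}^n\beta^{t-1}r(s(t),a(t))$ shows each $V_\sigma^n$ is measurable. Finally, boundedness of $r$ (say $|r|\le M$) gives $\big|\sum_{t=n+1}^\infty\beta^{t-1}r(s(t),a(t))\big|\le M\beta^n/(1-\beta)$, so by dominated convergence $V_\sigma^n\to V_\sigma$ pointwise (indeed uniformly), and a pointwise limit of measurable functions is measurable; hence $V_\sigma\in U(S,\mathbb{R})$.

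The main obstacle I expect is the inductive measurability claim for the finite-horizon laws, specifically verifying that at each stage the object $\nu(h^n,a,\cdot)=p(s_n(h^n),a,\cdot)$ really is a measurable transition kernel on $H^n\times A$ and that $g(h^n,a,s')$ is jointly measurable, so that the kernel integral stays measurable — this is precisely the point where one leans on the product Borel $\sigma$-algebra identification for separable metric spaces and the simple-function argument behind the Remark. Everything else is routine bookkeeping with compositions, finite sums, and the single dominated-convergence passage to the infinite horizon.
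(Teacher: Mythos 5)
Your proof is correct and follows essentially the same route as the paper: for $T_\lambda f$ the paper likewise invokes the measurability of $(s,a)\mapsto\int_S f(s')\,p(s,a,ds')$ (its Remark proved via simple functions) and then composes with the measurable map $s\mapsto(s,\lambda(s))$. For $V_\sigma$ the paper only says ``similar argument, see exercises''; your finite-horizon induction on the laws $\mu^s_{1,\dots,n}$ together with the uniform tail bound and passage to the pointwise limit is exactly the standard way to discharge that deferred step, and it is sound.
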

\begin{proof}
Let $f \in U(S,\mathbb{R})$ and $\lambda \in U(S,A)$. Then 
 $\int _{S} f(y) p(s,a,ds')$ is Borel measurable whenever $f$ is Borel measurable. Hence,  $Q(s,a,f) $ is Borel measurable as the sum of Borel measurable functions. Thus, $T_{\lambda}f = Q(s,\lambda(s),f)  $ is Borel measurable as the composition of  Borel measurable functions. 

Using a similar argument, $V_{\sigma}$ is Borel measurable for each strategy  $\sigma$ (see exercises). 
\end{proof}

\begin{lemma} \label{lemm:Tf-prop}
The operator $T$ satisfies the following two properties.\footnote{For two functions $f,g$ from $S$ to $\mathbb{R}$, $f \geq g$ means $f(s) \geq g(s)$ for each $s \in S$. }

(a) Monotone: $Tf \geq Tg$ whenever $f \geq g$. 

(b) Discounting: $T(f+c) = Tf + \beta c$ for any constant function $c$. 

In addition, $T_{\lambda}$ satisfies $(a)$ and $(b)$ for any stationary policy $\lambda \in U(S,A)$. 
\end{lemma}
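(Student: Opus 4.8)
The plan is to verify each property directly from the definitions of $T$ and $T_\lambda$, using only elementary facts about suprema and the Lebesgue integral. For part (a), suppose $f \geq g$. Then for every fixed $(s,a)$ we have $\int_S f(s')p(s,a,ds') \geq \int_S g(s')p(s,a,ds')$ by monotonicity of the integral, and hence $Q(s,a,f) = r(s,a) + \beta\int_S f(s')p(s,a,ds') \geq r(s,a) + \beta\int_S g(s')p(s,a,ds') = Q(s,a,g)$, using $\beta > 0$. Taking the supremum over $a \in \Gamma(s)$ on both sides preserves the inequality (if $h_1(a) \geq h_2(a)$ for all $a$ in a set, then $\sup h_1 \geq \sup h_2$), so $Tf(s) \geq Tg(s)$ for every $s$, i.e. $Tf \geq Tg$. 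For $T_\lambda$, one simply specializes $a = \lambda(s)$ in the pointwise inequality $Q(s,a,f) \geq Q(s,a,g)$, with no supremum needed.

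For part (b), let $c$ denote a constant function. Since $p(s,a,\cdot)$ is a probability measure, $\int_S (f(s') + c)\,p(s,a,ds') = \int_S f(s')\,p(s,a,ds') + c$, so $Q(s,a,f+c) = r(s,a) + \beta\int_S f(s')p(s,a,ds') + \beta c = Q(s,a,f) + \beta c$. Because adding a constant commutes with taking a supremum, $T(f+c)(s) = \sup_{a\in\Gamma(s)} \big(Q(s,a,f) + \beta c\big) = \big(\sup_{a\in\Gamma(s)} Q(s,a,f)\big) + \beta c = Tf(s) + \beta c$, which is the claim. The identical computation with $a = \lambda(s)$ gives $T_\lambda(f+c) = T_\lambda f + \beta c$.

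There is no real obstacle here; the lemma is a bookkeeping exercise and the only points requiring a word of care are (i) that $\beta > 0$ is needed so that multiplying the integral inequality by $\beta$ does not flip its direction, and (ii) that the integral identity in (b) uses the fact that $p(s,a,\cdot)$ has total mass $1$ — a subprobability or a signed measure would break the "$+\beta c$" form. One might also note, for completeness, that these manipulations make sense pointwise even when $Tf$ takes the value $+\infty$ at some states, though under the boundedness hypotheses invoked later (e.g. in Theorem \ref{Thm:mainDP}) this does not arise.
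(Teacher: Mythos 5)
Your proof is correct and follows essentially the same route as the paper's: verify $Q(s,a,f)\geq Q(s,a,g)$ pointwise and pass to the supremum for (a), and use that $p(s,a,\cdot)$ integrates a constant to itself together with the fact that adding a constant commutes with the supremum for (b). The paper's version is terser but identical in substance; your added remarks about $\beta>0$ and total mass $1$ are accurate and harmless.
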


\begin{proof}
    (a) We have $Q(s,a,f) \geq Q(s,a,g)$ whenever $f \geq g$. Hence, $Tf \geq Tg$.

    (b) For each $s \in S$ we have
    $$T(f+c)(s) =  \sup _{a \in \Gamma(s)} r(s ,a) +\beta \int _{S}f(s^{ \prime })p(s ,a ,ds^{ \prime }) + \beta c = Tf(s) + \beta c.$$

   The proof for the operator $T_{\lambda}$ is similar. 
\end{proof}

A key property of the operators $T$ and $T_{\lambda}$ is the contraction property. The operator $T$ on $B_{m}(S)$ is $L$-contraction if $\Vert Tf - Tg \Vert \leq L \Vert f - g \Vert$ for all $f,g \in B_{m}(S)$. 

\begin{lemma} \label{lemm:Tf-contraction}
    The operator $T$ is $\beta$-contraction on $B_{m}(S)$, and the operator $T_{\lambda}$ is $\beta$-contraction on $B_{m}(S)$ for any stationary policy $\lambda \in U(S,A)$. 
\end{lemma}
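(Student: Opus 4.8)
## Proof Plan for Lemma~\ref{lemm:Tf-contraction}

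The plan is to use the standard ``N. Porteus'' trick of bounding the sup-difference using monotonicity and discounting (Lemma~\ref{lemm:Tf-prop}), which avoids having to estimate the supremum directly. First I would fix $f, g \in B_m(S)$ and set $c = \Vert f - g \Vert = \sup_{s \in S} |f(s) - g(s)|$, which is finite since both functions are bounded. The key pointwise observation is that $g - c \leq f \leq g + c$, i.e., treating $c$ as a constant function, $f \leq g + c$ and $g \leq f + c$ hold everywhere on $S$.

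Next I would apply the two properties from Lemma~\ref{lemm:Tf-prop} in sequence: by monotonicity (property (a)), $f \leq g + c$ implies $Tf \leq T(g + c)$, and by discounting (property (b)), $T(g + c) = Tg + \beta c$. Hence $Tf \leq Tg + \beta c$ pointwise, i.e., $Tf(s) - Tg(s) \leq \beta c$ for all $s$. By the symmetric argument starting from $g \leq f + c$, I get $Tg(s) - Tf(s) \leq \beta c$ for all $s$. Combining, $|Tf(s) - Tg(s)| \leq \beta c = \beta \Vert f - g \Vert$ for every $s \in S$, and taking the supremum over $s$ yields $\Vert Tf - Tg \Vert \leq \beta \Vert f - g \Vert$. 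The identical argument applies verbatim to $T_\lambda$ since Lemma~\ref{lemm:Tf-prop} states it also satisfies (a) and (b). One small point to check along the way is that $T$ and $T_\lambda$ indeed map $B_m(S)$ into itself (boundedness is inherited because $r$ is not assumed bounded in this lemma's hypotheses — actually it is needed, so I would either invoke the standing assumption that $r$ is bounded from Theorem~\ref{Thm:mainDP} or note that the contraction inequality is the only claim and interpret $\Vert \cdot \Vert$ as possibly extended-real; cleaner to just assume $r$ bounded as in the surrounding development).

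The argument is essentially routine and I do not anticipate a genuine obstacle; the only thing requiring a moment's care is making sure the constant-function manipulation in property (b) is applied to the right object — namely that $g + c$ with $c = \Vert f - g\Vert$ is itself in $B_m(S)$ and dominates $f$, so that monotonicity is legitimately applicable. Everything else is bookkeeping with the supremum norm.
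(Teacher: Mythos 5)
Your proof is correct, but it takes a genuinely different route from the paper's. You derive the contraction purely from the two abstract properties in Lemma~\ref{lemm:Tf-prop} — monotonicity and discounting — via the standard Blackwell sufficient-conditions argument: $f \leq g + c$ with $c = \Vert f-g\Vert$, hence $Tf \leq T(g+c) = Tg + \beta c$, and symmetrically. The paper instead works directly with the integrals: for $T_{\lambda}$ it bounds $|T_{\lambda}f(s) - T_{\lambda}g(s)| = \beta\,|\int (f-g)\,p(s,\lambda(s),ds')| \leq \beta \Vert f-g\Vert$, and for $T$ it shows $Q(s,a,f) \leq Q(s,a,g) + \beta\Vert f-g\Vert \leq Tg(s) + \beta\Vert f-g\Vert$ and takes the supremum over $a$. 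Your version is more abstract and reusable — it shows that \emph{any} operator satisfying (a) and (b) of Lemma~\ref{lemm:Tf-prop} is a $\beta$-contraction, with no further reference to the integral structure — while the paper's direct estimate is self-contained and does not rely on having proved the discounting identity first. Your side remarks are also well taken: the contraction inequality itself is what the lemma asserts, and $Tf \in B_m(S)$ (boundedness via $r$ bounded, measurability via the hypotheses of Theorem~\ref{Thm:mainDP}) is handled separately in the surrounding development, exactly as you note. One tiny point of care you already flag correctly: $g + c$ must be a legitimate argument for $T$, which it is since Lemma~\ref{lemm:Tf-prop}(b) is stated for constant shifts of arbitrary functions from $S$ to $\mathbb{R}$.
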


\begin{proof}
Let $f,g \in B_{m}(S) = U(S,\mathbb{R}) \cap B(S)$ and  $\lambda \in U(S,A)$. From  Lemma \ref{lemma:T-lambda-closed}, $T_{\lambda}f \in U(S,\mathbb{R})$ and it is immediate that $T_{\lambda}f \in B(S)$ so $T_{\lambda}f$ is indeed in $B_{m}(S)$. 

For any $s \in S$ we have 
$$ | T_{\lambda}f(s) - T_{\lambda} g(s) | = |\beta \int_{S}(f(s') - g(s'))p(s,\lambda (s),ds') | \leq \beta \int _{S} | f(s') - g(s')|p(s,\lambda(s),ds') \leq \beta \Vert f - g \Vert. $$
Taking the supremum over $s \in S$ yields $\Vert T_{\lambda}f - T_{\lambda} g \Vert \leq \beta \Vert f - g \Vert$ so $T_{\lambda}$ is  $\beta$-contraction on $B_{m}(S)$.

Hence, it follows that for all $s \in S$ and $a \in \Gamma(s)$ we have 
$$Q(s,a,f)  \leq Q(s,a,g) + \beta \Vert f - g \Vert  \leq Tg (s)+ \beta \Vert f - g \Vert.  $$
Taking the supremum over $a \in \Gamma(s)$ yields 
$$ Tf(s) \leq Tg(s) + \beta \Vert f - g \Vert.$$
By a symmetrical argument, we can deduce $Tg(s) \leq Tf(s) + \beta \Vert f - g \Vert$ for all $s \in S$ so $T$ is $\beta$-contraction on $B_{m}(S)$. 
\end{proof}

\begin{proof} [Proof of Theorem \ref{Thm:mainDP}]
From the Banach fixed-point theorem (see Proposition \ref{prop:Banach}) and Lemma \ref{lemm:Tf-contraction} the operator $T:D \rightarrow D$ has a unique fixed-point, say, $x$. We will show that $V=x$.  We first show that $V \leq x$. 

 Because $r$ is bounded by, say, $M$ we have $ V \leq M/(1-\beta)$. Hence, there exists a $y \in D $ such that $V \leq y$.

 For a strategy $\sigma$ let $\sigma | (s,a)$ be the strategy that is induced by $\sigma$ given that the first period's state-action pair was $(s,a)$, i.e., $\sigma | (s,a) _{t} (h_{t}) = \sigma _{t+1} (s,a,h_{t}) $ for all $t$ and any history $h_{t} \in H^{t}$.  
For any strategy $\sigma$, a function $y \in D$ with $V \leq y$, and $s \in S$ we have 
\begin{align*}
    V_{\sigma}(s) & = \mathbb{E}_{\sigma } \left ( r(s,a(1)) + \sum \limits_{t =2}^{\infty }\beta^{t -1}r(s(t),a(t)) \right )   \\
    & = r(s,\sigma_{1}(s)) + \beta \int _{S} V_{\sigma | (s,\sigma_{1}(s))} (s') p(s, \sigma_{1}(s),ds') \\
    & \leq  r(s,\sigma_{1}(s)) + \beta \int _{S} y (s') p(s, \sigma_{1}(s),ds') \\
    & \leq \sup _{a \in \Gamma(s)} \ r(s,a) + \beta \int _{S} y (s') p(s, a,ds') \\
    & = Ty(s). 
\end{align*}
   Thus, taking the supremum over all strategies $\sigma$ implies that $V \leq Ty$.  Repeating the same argument, we deduce that $V \leq T^{n}y$ for all $n \geq 1$. From the Banach fixed-point theorem $T^{n}y$ converges to the unique fixed-point of $T$ on $D$. We conclude that $V \leq x$.  

We now show that $V \geq x$. 
   Let $\epsilon >0$. Because $x \in D$, the Theorem's assumption implies that $\Lambda_{x} \neq \emptyset $. Hence, because $Tx=x$, there is a function $\lambda \in U(S,A)$ such that $T_{\lambda}x \geq x - \epsilon(1-\beta) $. 
   
   We will now show that $T_{\lambda}^{n}x \geq x - \epsilon$ for all $n$. For $n=1$ the result holds from the construction of $\lambda$. Assume it holds for $n>1$. Then 
   $$ T_{\lambda}^{n+1}x = T_{\lambda} (T_{\lambda}^{n}x) \geq T_{\lambda} (x - \epsilon) = T_{\lambda}x - \beta \epsilon \geq  x - \epsilon$$
   where we used Lemma \ref{lemm:Tf-prop} to derive the first inequality. 
Thus, $T_{\lambda}^{n}x \geq x - \epsilon$ for all $n$. From Lemma \ref{lemm:Tf-contraction}  and the Banach fixed-point theorem  
 $T_{\lambda}^{n}$ converges to the unique fixed-point of $T_{\lambda}$ in $B_{m}(S)$ which we denote by $W$ so $W \geq x - \epsilon$. 
 
 Hence, $W:S \rightarrow \mathbb{R}$ is Borel measurable and we have 
$$W (s) = r(s,\lambda(s)) + \beta \int_{S}W(s')p(s,\lambda(s),ds') = \mathbb{E}_{\sigma }\sum \limits_{t =1}^{\infty }\beta^{t -1}r(s(t),a(t)) $$
with $s=s(1)$ and $\sigma$ is the stationary plan that plays according to $\lambda$, i.e., $\sigma_{n}(h_{n}) = \lambda(s(n))$ for all $h_{n}(s(1),a(1),\ldots,s(n))$. Hence, $V \geq W \geq x - \epsilon$. Thus, $V \geq x$.  We conclude that $V=x$ so $V$ is the unique fixed point of $T$ on $D$ and $\lambda \in \Lambda_{V}$ is an $\epsilon$-optimal strategy.  

In addition, if $\lambda \in \Lambda_{V}$ achieves the supremum then 
$T_{\lambda} V = V$ so $\lambda$ is the optimal stationary policy.
\end{proof}

Theorem \ref{Thm:mainDP} implies the following Corollary which is important for proving properties of the value function such as concavity, monotonicity, etc. 

\begin{corollary} \label{Cor:prop}
    Assume that the conditions of Theorem \ref{Thm:mainDP} hold. Suppose that $D'$ is a closed set in $B_{m}(S)$ such that $D' \subseteq D$ and $f \in D'$ implies that $Tf \in D'$. Then $V \in D'$.   
\end{corollary}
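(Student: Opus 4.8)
The plan is to run a Banach iteration inside $D'$ and use closedness to capture the limit. First I would observe that since $D'$ is a closed subset of the complete metric space $(B_m(S),d_\infty)$, it is itself a complete metric space under $d_\infty$ (this is one of the exercises in Lecture 1). Assuming $D'$ is nonempty, pick any $f_0 \in D'$. By the hypothesis that $f \in D'$ implies $Tf \in D'$, an immediate induction gives $T^n f_0 \in D'$ for every $n \geq 1$.

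Next I would identify the limit of this sequence. Since $D' \subseteq D$, we have $f_0 \in D$, and by Lemma \ref{lemm:Tf-contraction} the operator $T$ is a $\beta$-contraction on $B_m(S)$, hence on $D$; since $T$ maps $D$ into $D$ (an assumption of Theorem \ref{Thm:mainDP}) and $D$ is complete, Proposition \ref{prop:Banach} applies and the iterates $T^n f_0$ converge in $D$ to the unique fixed point of $T$ on $D$. By part 1 of Theorem \ref{Thm:mainDP}, that fixed point is exactly $V$. Therefore $T^n f_0 \to V$ in $d_\infty$.

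Finally, since $T^n f_0 \in D'$ for all $n$ and $D'$ is closed in $B_m(S)$, the limit $V$ must lie in $D'$, which is the claim.

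The argument is essentially routine; the only points requiring a moment's care are (i) that the contraction and the fixed-point theorem may be invoked on the possibly smaller complete space while still knowing the limit equals $V$ — handled by noting $D' \subseteq D$ so the fixed point found via iteration from $f_0$ is the same $V$ guaranteed by Theorem \ref{Thm:mainDP} — and (ii) the harmless implicit assumption that $D'$ is nonempty, without which one has no starting point for the iteration.
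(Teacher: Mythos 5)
Your proposal is correct and follows essentially the same route as the paper: take $f_0 \in D'$, note $T^n f_0 \in D'$ for all $n$ by the invariance hypothesis, use the contraction property to conclude $T^n f_0 \to V$, and invoke closedness of $D'$. Your added remarks about nonemptiness of $D'$ and the identification of the fixed point via $D' \subseteq D$ are reasonable clarifications of points the paper leaves implicit, but they do not change the argument.
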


\begin{proof}
   Suppose that $f \in D'$. Then $T^{n}f \in D'$ for every $n$. From Theorem \ref{Thm:mainDP}, $T^{n}f$ converges to the value function $V$ (recall that $V$ is the fixed point of the $\beta$-contraction operator $T$). Because $D'$ is closed, we have $V \in D'$. 
\end{proof}

Another immediate Corollary from Theorem  \ref{Thm:mainDP} provides an algorithm to find the value function which is called value function iteration. 

\begin{corollary}
    Assume that the conditions of Theorem \ref{Thm:mainDP} hold. Then for every $f \in D$, the sequence of functions $T^{n}f$ converges uniformly to the value function $V$. 
\end{corollary}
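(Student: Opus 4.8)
The plan is to observe that this corollary is essentially an immediate consequence of the Banach fixed-point theorem together with Theorem \ref{Thm:mainDP}, and that ``uniform convergence'' is literally convergence in the metric $d_{\infty}$. First I would recall that, under the hypotheses of Theorem \ref{Thm:mainDP}, the operator $T$ maps $D$ into $D$ (this is part of the theorem's assumptions), that $(D,d_{\infty})$ is complete, and that by Lemma \ref{lemm:Tf-contraction} the operator $T$ is a $\beta$-contraction on all of $B_{m}(S)$, hence in particular on the subset $D$. Moreover, by conclusion 1 of Theorem \ref{Thm:mainDP}, the value function $V$ is the unique fixed point of $T$ in $D$, so in particular $V \in D$ and $T^{n}V = V$ for every $n$.

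Next I would write the quantitative estimate: for any $f \in D$ and any $n \geq 1$,
\begin{equation*}
d_{\infty}(T^{n}f, V) = d_{\infty}(T^{n}f, T^{n}V) \leq \beta^{n} d_{\infty}(f,V),
\end{equation*}
where the inequality follows by iterating the $\beta$-contraction property of $T$ on $D$ (or $B_{m}(S)$). Since $0 < \beta < 1$ and $d_{\infty}(f,V) < \infty$ because $f, V \in B_{m}(S)$, the right-hand side tends to $0$ as $n \to \infty$. I would then conclude by noting that $\sup_{s \in S}|T^{n}f(s) - V(s)| = d_{\infty}(T^{n}f, V) \to 0$ is precisely the statement that $T^{n}f \to V$ uniformly on $S$.

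There is essentially no obstacle here; the only point worth stating explicitly is the identification of $d_{\infty}$-convergence with uniform convergence, and the fact that the fixed point to which the Banach iterates converge has already been shown in Theorem \ref{Thm:mainDP} to coincide with $V$. If desired, one could alternatively invoke the Corollary more directly by citing the construction in the proof of Proposition \ref{prop:Banach}, which shows the iterates $\{T^{n}f\}$ form a Cauchy sequence converging to the unique fixed point; but the explicit geometric bound above is cleaner and also yields the rate of convergence.
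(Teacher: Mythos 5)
Your proposal is correct and follows essentially the same route as the paper, which simply notes that the corollary follows immediately from $V$ being the unique fixed point of the $\beta$-contraction $T$; you have merely spelled out the explicit estimate $d_{\infty}(T^{n}f,V)\leq \beta^{n}d_{\infty}(f,V)$ and the identification of $d_{\infty}$-convergence with uniform convergence. The added quantitative bound is a nice bonus but not a different argument.
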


\begin{proof}
    This follows immediately from the fact that $V$ is the unique fixed point of the $\beta$-contraction operator $T$. 
\end{proof}

In particular, we can take the constant function $f=0$ and the sequence $T^{n}f$ converges to the value function. 

The conditions of Theorem \ref{Thm:mainDP} holds immediately for the important case that $S$ is countable. We summarize this in the following Corollary. 

\begin{corollary}
   (i) Suppose that the sets $S$ and $A$ are finite.  We can choose $D=B_{m}(S)=B(S)$ and the conditions of Theorem \ref{Thm:mainDP} hold. Hence, 1 2 3 of Theorem \ref{Thm:mainDP} hold and the supremum is achived. 

   (ii) Suppose that $S$ is countable, $A$ is a set in $\mathbb{R}^{n}$,\footnote{A prominent choice is to have $A$ be the set all probability measures  on some finite action set $A'$.} $r$ is bounded. We can choose $D=B_{m}(S)=B(S)$ and the conditions of Theorem \ref{Thm:mainDP} hold. Hence, 1 2 3 of Theorem \ref{Thm:mainDP} hold. 
\end{corollary}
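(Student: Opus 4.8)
The plan is to verify, item by item, that the hypotheses of Theorem~\ref{Thm:mainDP} hold with the choice $D = B_m(S) = B(S)$, exploiting that a countable $S$ carries the discrete metric. First I would recall that under the discrete metric every subset of $S$ is open (each singleton $\{x\}$ equals the ball $B(x,1/2)$), so the Borel $\sigma$-algebra is all of $2^{S}$ and hence \emph{every} function $S \to \mathbb{R}$ is measurable; consequently $B_m(S) = B(S)$, and by Exercise~\ref{Ex:B(S)_Complete} this is a complete metric space under $d_\infty$. It obviously contains the constant functions, and $r$ is bounded by hypothesis (automatically so when $S$ and $A$ are finite).

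Next I would check the two closure conditions. For $Tf \in D$: if $f \in B(S)$ and $|r| \le M$, then $|Q(s,a,f)| \le M + \beta \Vert f \Vert$ for every $(s,a)$, so $|Tf(s)| \le M + \beta \Vert f \Vert < \infty$; measurability of $Tf$ is free since $S$ is countable, hence $Tf \in B(S) = D$. For $\Lambda_f \neq \emptyset$: in case (i), $A$ and therefore each $\Gamma(s)$ is finite, so the supremum $\sup_{a \in \Gamma(s)} Q(s,a,f)$ is attained; picking any maximizer $\lambda(s) \in \Gamma(s)$ for each $s$ gives a map $\lambda : S \to A$, which is measurable because every function on a countable space is, so $\lambda \in \Lambda_f$. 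In case (ii) the same pointwise selection produces an element of $\Lambda_f$ whenever the supremum over $\Gamma(s)$ is attained for each $s$ --- e.g. when $\Gamma(s)$ is finite, or compact with $a \mapsto Q(s,a,f)$ upper semi-continuous, as in the footnoted choice of $A$ equal to the simplex of mixed actions over a finite set with $r$ and $p$ affine in the mixed action; again measurability of the selector is automatic.

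With all hypotheses in place, conclusions 1--3 of Theorem~\ref{Thm:mainDP} follow immediately. For the additional claim in (i), observe that the selection above shows the supremum defining $Tf$ is always attained in the finite-$A$ case; applying this with $f = V$ yields $\lambda \in \Lambda_V$ that achieves the supremum, so part~3 of the theorem makes $\lambda$ an optimal stationary policy, and in particular $V(s) = \sup_\sigma V_\sigma(s)$ is attained.

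I expect no serious obstacle. The only step carrying genuine content in the general theorem --- exhibiting a measurable maximizing selector, i.e. showing $\Lambda_f \neq \emptyset$ --- collapses on a countable state space, since measurability of the selector comes for free and one is left only with the elementary fact that a supremum over a finite set is a maximum; the single point deserving a word of care is that the literal statement of (ii) with an arbitrary $A \subseteq \mathbb{R}^n$ implicitly needs attainment of $\sup_{a \in \Gamma(s)} Q(s,a,f)$, which is what the footnote's example is designed to guarantee.
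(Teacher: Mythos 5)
Your verification is correct and is exactly the argument the paper intends; the corollary is stated there without proof, on the grounds that for a countable $S$ with the discrete metric every function is Borel measurable, so $B_{m}(S)=B(S)$ is complete, $Tf$ is automatically measurable and bounded, and any pointwise maximizer yields a measurable selector. Your closing caveat is also well taken: part (ii) as literally stated implicitly assumes that $\sup_{a\in\Gamma(s)}Q(s,a,f)$ is attained for each $s$ (otherwise $\Lambda_{f}=\emptyset$ by its very definition), which holds in the footnote's example of mixed actions over a finite set by compactness and continuity, but not for an arbitrary $A\subseteq\mathbb{R}^{n}$.
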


\subsection{Upper Semi-Continuous Dynamic Programming}

For the general case, to verify the conditions of Theorem \ref{Thm:mainDP}, as a starting point it is natural to assume that the dynamic programming problem's primitives  are Borel measurable and  choose $D$ to be the sets of Borel measurable functions. But unfortunately this choice does not work. 
Let $S=A=[0,1]$. Then there is a Borel measurable set $B \subseteq S \times A$ such that its projection $E$ to $S$ is not Borel measurable.  Consider $f$ to be the zero function and $r = 1_{B}(s,a)$. Then $f$ and $r$ are Borel measurable. But $Tf (s) = \sup_{a \in A} 1_{B}(s,a) = 1_{E}(s)$ is not Borel measurable. Hence, we cannot apply Theorem \ref{Thm:mainDP} for this case. 

Thus, we will need other assumptions on the problem's primitives. We will focus on upper semi-continuous dynamic programming. 

\begin{definition}
Let $X$ be a metric space. 

    (i) A function $f:X \rightarrow \mathbb{R}$ is upper semi-continuous (u.s.c) if for all $\alpha \in \mathbb{R}$ the set $\{x: f(x) < \alpha \}$ is open. Equivalently, $f$ is u.s.c if and only if $\limsup_{x \rightarrow x_{0} } f(x) \leq f(x_{0}) $ for all $x_{0}$. 

    (ii) A transition kernel $p$ is called upper semi-continuous  if $k(s,a) = \int _{S} f(s')p(s,a,ds')$ is u.s.c and bounded whenever $f$ is u.s.c and bounded.
\end{definition}

\begin{remark}
     A more familiar notion in the literature is that $p$ is continuous (sometimes called Feller or Feller continuous) in the sense that $k(s,a) = \int _{S} f(s')p(s,a,ds')$ is continuous and bounded whenever $f$ is continuous. It is not hard to see that if $p$ is continuous then $k(s,a) = \int _{S} f(s')p(s,a,ds')$ is u.s.c and bounded whenever $f$ is. Hence, if $p$ is continuous it is also u.s.c. 
\end{remark}

As an example, $f (x) = -2 1_{ \{x < 0 \} } + 2 1_{ \{x \geq 0 \} }$ is u.s.c at $0$ but not continuous. 

We will assume in the rest of this section that $D$ is the set of all u.s.c and bounded functions. 
Clearly, from the definition of u.s.c functions, every u.s.c function is Borel measurable so $D \subseteq B_{m}(S)$. To use Theorem \ref{Thm:mainDP} we first need the following Lemma. 

\begin{lemma} \label{Lem:complete}
    The metric space $D$ is a complete metric space (endowed with $d_{\infty}$). 
\end{lemma}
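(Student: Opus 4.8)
The plan is to realize $D$ as a closed subset of the complete metric space $(B(S),d_{\infty})$ and then invoke completeness of closed subsets. Since every u.s.c.\ bounded function is in particular bounded, $D \subseteq B(S)$; and by Exercise~\ref{Ex:B(S)_Complete}, $(B(S),d_{\infty})$ is complete. So the whole content of the lemma reduces to showing that $D$ is closed in $B(S)$, i.e.\ that a uniform limit of bounded u.s.c.\ functions is again bounded (trivial) and u.s.c.\ (the real point).

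Concretely, I would take a sequence $\{f_{n}\} \subseteq D$ with $f_{n} \to f$ in $d_{\infty}$ for some $f \in B(S)$, and verify that $f$ is u.s.c.\ using the local $\varepsilon$--$\delta$ characterization (equivalently the $\limsup$ form): for every $s_{0} \in S$ and every $\varepsilon>0$ there is $\delta>0$ with $f(s) < f(s_{0}) + \varepsilon$ whenever $d(s,s_{0})<\delta$. Fix $s_{0}$ and $\varepsilon$. Choose $N$ with $\Vert f_{N}-f\Vert < \varepsilon/3$. Since $f_{N}$ is u.s.c.\ at $s_{0}$, pick $\delta>0$ so that $f_{N}(s) < f_{N}(s_{0}) + \varepsilon/3$ for all $s$ with $d(s,s_{0})<\delta$. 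Then for such $s$,
\[
f(s) \le f_{N}(s) + \tfrac{\varepsilon}{3} < f_{N}(s_{0}) + \tfrac{2\varepsilon}{3} \le f(s_{0}) + \varepsilon ,
\]
which gives $\limsup_{s \to s_{0}} f(s) \le f(s_{0})$. Hence $f$ is u.s.c., so $f \in D$ and $D$ is closed; by the exercise that a closed subset of a complete metric space is complete (or, equivalently, by running the same $\varepsilon/3$ estimate directly on a Cauchy sequence in $D$, whose $d_{\infty}$-limit exists in $B(S)$ and then lies in $D$), the conclusion follows.

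I do not anticipate a genuine obstacle; the only subtlety worth flagging is choosing the right equivalent definition of upper semi-continuity — the pointwise $\limsup$ / local-$\varepsilon$-$\delta$ form makes the three-epsilon estimate transparent, whereas the open-sublevel-set form is awkward to manipulate under uniform limits — and noting that boundedness of the limit is automatic, so membership in $B(S)$ is never in question.
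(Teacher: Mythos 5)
Your proof is correct and follows essentially the same route as the paper's: reduce to showing that $D$ is closed in the complete space $(B(S),d_{\infty})$, then verify that a uniform limit of u.s.c.\ functions is u.s.c.\ via a three-term estimate (the paper runs the same computation with $\limsup$ along sequences and a $2\epsilon$ slack where you use the local $\varepsilon/3$ form). No gaps.
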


\begin{proof}
    It is enough to show that $D$ is closed under uniform convergence (see exercises). 

    Consider a sequence $f_{k}$ of bounded u.s.c functions that converges uniformly to $f$. We need to show that $f$ is u.s.c.   
    For $\epsilon >0$, consider $K_{\epsilon} $ such that $|f_{k}(s) - f(s)| < \epsilon$ for all $k \geq K_{\epsilon}$ and all $s \in S$. 

    Let $s_{n} \rightarrow s_{0}$. We have $f(s_{n}) < f_{K_{\epsilon} }(s_{n}) + \epsilon$ for all $n$ and $ f_{K_{\epsilon} } (s_{0} ) < f(s_{0}) + \epsilon$. Hence, 
    $$ \limsup f(s_{n}) \leq  \limsup f_{K_{\epsilon} } (s_{n}) + \epsilon \leq  f_{K_{\epsilon} } (s_{0}) + \epsilon < f(s_{0}) + 2\epsilon. $$
Because $\epsilon$ is arbitrary, this proves that $f$ is u.s.c 
\end{proof}

The next Lemma shows that every upper semi-continuous function has a maximum on a compact set. This is useful to show that the policy function attains its maximum in Theorem \ref{Thm:mainDP} for upper semi-continuous dynamic programming. 

\begin{lemma} \label{Lemma:USC_MAX}
     Let $K$ be a compact subset of some metric space. Let $f:K \rightarrow \mathbb{R}$ be u.s.c. 
     Then $f$ has a maximum on $K$. 
\end{lemma}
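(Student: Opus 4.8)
The plan is to use the sequential characterization of compactness together with the $\limsup$ characterization of upper semi-continuity, both of which are recorded earlier in the excerpt. First I would dispose of the trivial case $K = \emptyset$ and then set $M := \sup_{x \in K} f(x) \in (-\infty, +\infty]$, noting $M > -\infty$ since $K$ is nonempty. The goal is to produce a point $x^\ast \in K$ with $f(x^\ast) = M$; this will simultaneously show $M < \infty$.

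Next I would pick a maximizing sequence: choose $x_n \in K$ with $f(x_n) \to M$, i.e. $f(x_n) > M - 1/n$ when $M$ is finite, and $f(x_n) > n$ when $M = +\infty$. Since $K$ is compact in a metric space, it is sequentially compact, so there is a subsequence $x_{n_k}$ converging to some $x^\ast \in K$. By upper semi-continuity, $\limsup_{k \to \infty} f(x_{n_k}) \le \limsup_{x \to x^\ast} f(x) \le f(x^\ast)$. On the other hand $f(x_{n_k})$ is a subsequence of a sequence tending to $M$, so $\lim_{k} f(x_{n_k}) = M$, giving $M \le f(x^\ast)$. Combined with $f(x^\ast) \le M$ from the definition of the supremum, we get $f(x^\ast) = M$, which is therefore finite and attained.

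I do not anticipate a genuine obstacle here; the only points requiring a little care are (a) handling the possibility $M = +\infty$ up front by choosing the maximizing sequence appropriately, so that the contradiction is absorbed cleanly into the final chain of inequalities rather than argued separately, and (b) invoking the correct equivalent forms — sequential compactness for $K$ and the $\limsup$ condition for $f$ — both of which are already available. An alternative, purely topological proof via open covers is possible (cover $K$ by the open sets $\{x : f(x) < c_n\}$ for a sequence $c_n \uparrow M$, $c_n < M$, and extract a finite subcover to contradict $\sup f = M$), and I would mention it as a remark, but the sequential argument is shorter and fits the tools emphasized in the text.
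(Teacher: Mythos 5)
Your argument is correct and is essentially the same as the paper's proof: take a maximizing sequence, extract a convergent subsequence by sequential compactness, and use the $\limsup$ characterization of upper semi-continuity to conclude that the limit point attains the supremum. The only difference is that you explicitly handle the possibility $M=+\infty$, which the paper's proof glosses over by writing $f(x_n)\geq m-\epsilon_n$; your added care there is a minor improvement, not a different route.
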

\begin{proof}
     Let $m = \sup _{x \in K} f(x)$. Consider a sequence $x_{n}$ such that $f(x_{n}) \geq m - \epsilon_{n}$ for some $\epsilon_{n} $ that converges to $0$ when $n \rightarrow \infty$. Because $K$ is a compact, every sequence has a convergent subsequence. Thus, there is a subsequence of $x_{n}$, say $x_{n_{j}}$ that converges to some $x^{*}$. Hence, 
$$ m \geq f(x^{*}) \geq \limsup f(x_{n_{j}}) \geq m $$
which proves that $m = f(x^{*})$, i.e., $f$ attains its maximum on $K$. 
\end{proof}

We will also need a continuity assumption on the set of feasible actions. 

A neighborhood of a set $X$ is any set $U$ such that there exists an open set $O$ satisfying $X \subseteq O \subseteq U$. An open set $O$ that satisfies $X \subseteq O$ is called an open neighborhood of $X$. 

\begin{definition}
    The correspondence $\Gamma:S \rightarrow 2^{A}$ is upper semi-continuous (u.s.c) if $\{s \in S: \Gamma(s) \subseteq U \}$ is an open set whenever $U$ is an open set in $A$. 
\end{definition}

The following simple example demonstrates the definition. 

\begin{example}
     Suppose that $A=S=[0,1]$. Then $G(s) = [0,s]$ is u.s.c. But $G(s) = [0,1]$ for $s<1$ and $G(1) = \{0\}$ is not u.s.c.  
On the other hand, 
     $G(1) = [0,1]$ and $G(s) = \{0\}$ for $s<1$ is u.s.c.  
\end{example}

Recall that  $\operatorname{Gr} \Gamma = \{ (s,a) \in S \times A : a \in \Gamma(s) \}$ is called the graph of  a correspondence $\Gamma:S \rightarrow 2^{A}$. 
The key lemma to show that $f \in D$ implies $Tf \in D$ is the following:

\begin{lemma} \label{Lemma:max}
    Let $\Gamma:S \rightarrow 2^{A}$ be a u.s.c correspondence such that $\Gamma(s)$ is compact and non-empty for each $s \in S$.  Let $h: \operatorname{Gr} \Gamma \rightarrow \mathbb {R}$ be a u.s.c function. Define $ m(s) = \max _{a \in \Gamma(s)} h(s,a) $. Then $m$ is u.s.c 
\end{lemma}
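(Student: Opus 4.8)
The plan is to verify the sequential characterization of upper semi-continuity: I will take an arbitrary sequence $s_n \to s_0$ in $S$ and show that $\limsup_{n} m(s_n) \leq m(s_0)$. Passing to a subsequence, I may assume that $\limsup_n m(s_n) = \lim_n m(s_n) =: L$, and it suffices to produce a point in $\Gamma(s_0)$ at which $h$ is at least $L$. For each $n$, Lemma \ref{Lemma:USC_MAX} (applied to the u.s.c.\ function $a \mapsto h(s_n,a)$ on the compact set $\Gamma(s_n)$) gives a maximizer $a_n \in \Gamma(s_n)$ with $h(s_n, a_n) = m(s_n)$.

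The main work is to extract a convergent subsequence of $\{a_n\}$ whose limit lies in $\Gamma(s_0)$. This is where upper semi-continuity of $\Gamma$ enters, and I expect it to be the principal obstacle, since u.s.c.\ correspondences need not have closed graph in general without a compactness hypothesis near $s_0$. The argument I have in mind: pick any open neighborhood $U$ of the compact set $\Gamma(s_0)$ in $A$; by definition of u.s.c.\ correspondence, $\{s : \Gamma(s) \subseteq U\}$ is open and contains $s_0$, hence contains $s_n$ for all large $n$, so $a_n \in \Gamma(s_n) \subseteq U$ eventually. Choosing $U$ to have compact closure (which requires knowing such a neighborhood exists — this holds when $A$, or at least a neighborhood of $\Gamma(s_0)$, is locally compact, e.g.\ $A \subseteq \mathbb{R}^k$; I will note this is the relevant setting), the tail of $\{a_n\}$ lives in the compact set $\operatorname{cl}(U)$ and thus has a subsequence $a_{n_j} \to a^* \in \operatorname{cl}(U)$. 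To pin down $a^* \in \Gamma(s_0)$: if $a^* \notin \Gamma(s_0)$, then since $\Gamma(s_0)$ is compact (hence closed) there is an open neighborhood $U_0$ of $\Gamma(s_0)$ with $a^* \notin \operatorname{cl}(U_0)$; but then $a_{n_j} \in \Gamma(s_{n_j}) \subseteq U_0$ for large $j$, forcing $a^* \in \operatorname{cl}(U_0)$, a contradiction.

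Once I have $a_{n_j} \to a^*$ with $a^* \in \Gamma(s_0)$, I finish by using u.s.c.\ of $h$ on $\operatorname{Gr}\Gamma$: since $(s_{n_j}, a_{n_j}) \to (s_0, a^*)$ and all these points lie in $\operatorname{Gr}\Gamma$,
\[
L = \lim_j m(s_{n_j}) = \limsup_j h(s_{n_j}, a_{n_j}) \leq h(s_0, a^*) \leq m(s_0),
\]
which is exactly the inequality needed. This establishes $\limsup_{n} m(s_n) \leq m(s_0)$ for every sequence $s_n \to s_0$, so $m$ is u.s.c. I would also remark that $m$ is well-defined (the max is attained) precisely by Lemma \ref{Lemma:USC_MAX}, so the statement is not vacuous, and that in the dynamic programming application $A \subseteq \mathbb{R}^k$ supplies the local compactness used above.
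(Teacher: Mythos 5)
Your proof is correct in the paper's setting, but it takes a genuinely different route from the one in the text. The paper works directly with the topological definition: it fixes $c$ and shows $\{s:m(s)<c\}$ is open by covering the compact set $\Gamma(s_0)$ with finitely many boxes $U_{a}\times V_{a}$ contained in the open set $\{h<c\}$, then invoking u.s.c.\ of $\Gamma$ to force $\Gamma(s)$ into the union $V$ for $s$ near $s_0$. You instead run a Berge-maximum-theorem style sequential argument: take maximizers $a_n\in\Gamma(s_n)$, extract a convergent subsequence with limit in $\Gamma(s_0)$, and pass to the $\limsup$ using u.s.c.\ of $h$. Both are valid; the trade-off is that the paper's argument needs no local compactness of $A$ and never has to produce converging maximizers, whereas your extraction step, as written, leans on a relatively compact neighborhood of $\Gamma(s_0)$ — harmless here since $A\subseteq\mathbb{R}^k$, and you correctly flag this. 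Two small remarks: (i) the local compactness crutch is actually avoidable — for a compact-valued u.s.c.\ correspondence, any sequence $a_n\in\Gamma(s_n)$ with $s_n\to s_0$ automatically has a cluster point in $\Gamma(s_0)$ (if not, each $a\in\Gamma(s_0)$ has a ball missed by the tail of $\{a_n\}$, finitely many such balls cover $\Gamma(s_0)$, and their union is an open neighborhood $U\supseteq\Gamma(s_0)$ eventually containing all $a_n$ by u.s.c., a contradiction) — so your argument generalizes to arbitrary metric $A$ with a one-line change; (ii) your identification of $a^*\in\Gamma(s_0)$ and the final $\limsup$ step are exactly right, and the observation that the same argument yields convergence of maximizers is a genuine bonus the paper's proof does not provide.
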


\begin{proof}
    Note that $h$ has a maximum and $m$ is well-defined (see Lemma \ref{Lemma:USC_MAX}). Let $c \in \mathbb{R}$ and we want to show that $C = \{s \in S: m(s) < c\}$ is an open set. 
 Let $s_{0} \in C$ (if $C$ is the empty set it is open) and let $W = \{ (s,a) \in \operatorname{Gr} \Gamma: h(s,a) < c \}$. Then $W$ is open since $h$ is u.s.c, and hence, for each $a \in \Gamma (s_{0})$, by noting that $(s_{0},a) \in W$,  there are open neighborhoods $U_{a}$ of $s_{0}$ and $V_{a}$ of $a$, such that $(U_{a} \times V_{a} ) \cap \operatorname{Gr} \Gamma \subseteq W$. 
 
 The family $\{V_{a}:a \in \Gamma(s_{0}) \}$ is an open cover of $\Gamma(s_{0})$ so by compactness, there is a finite subcover $\{V_{a_{1}}, \ldots , V_{a_{n} } \}$  of $\Gamma(s_{0})$. Let $U = \cap _{i=1}^{n} U_{a_{i}} $ and $V = \cup _{i=1}^{n} V_{a_{i}} $, so $\Gamma(s_{0}) \subseteq V$. Note that  $(U \times V ) \cap \operatorname{Gr} \Gamma \subseteq W$. 

    Using the fact that $\Gamma$ is u.s.c implies that $\{ s \in S : \Gamma(s) \subseteq V \}$ is open so $E = U \cap \{ s \in S : \Gamma(s) \subseteq V \}$ is an open neighborhood of $s_{0}$. 
    
    For each $s \in E$, if $a \in \Gamma(s)$, then $h(s,a) < c$ as $(U \times V ) \cap \operatorname{Gr} \Gamma \subseteq W$ so $m(s) < c$. We conclude that $E \subseteq \{s \in S: m(s) < c \}= C$ so $C$ is an open set. 
\end{proof}

We also need the following Measurable Selection theorem to ensure that $\Lambda_{f}$ is non-empty when $f \in D$. The proof is beyond the scope of this course. 

\begin{proposition} (Measurable Selection Theorem) \label{prop:selector}
     Let $\Gamma:S \rightarrow 2^{A}$ be a u.s.c correspondence such that $\Gamma(s)$ is compact and non-empty for each $s \in S$.  Let $h: \operatorname{Gr} \Gamma \rightarrow \mathbb {R}$ be a u.s.c function. Then there is a Borel measurable function $\phi: S \rightarrow A$ such that $\phi(s) \in \Gamma(s)$ and 
    $$ h(s, \phi(s)) = \max _{a \in \Gamma(s) } h(s,a) $$
    for each $s \in S$. 
\end{proposition}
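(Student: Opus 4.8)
The plan is to reduce the statement to finding a Borel selection of the argmax correspondence, show that this correspondence is Borel measurable, and then select its lexicographically smallest point explicitly. First I would set $m(s):=\max_{a\in\Gamma(s)}h(s,a)$; this is well defined and attained by Lemma \ref{Lemma:USC_MAX}, and u.s.c.\ (hence Borel) by Lemma \ref{Lemma:max}. Let $\Gamma^{*}(s):=\{a\in\Gamma(s):h(s,a)=m(s)\}$. Each $\Gamma^{*}(s)$ is non-empty (Lemma \ref{Lemma:USC_MAX}) and, since $a\mapsto h(s,a)$ is u.s.c., is a closed subset of the compact set $\Gamma(s)$; thus it is a non-empty compact subset of the Polish space $\mathbb{R}^{k}$. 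Any Borel map $\phi$ with $\phi(s)\in\Gamma^{*}(s)$ for all $s$ proves the proposition, because then $\phi(s)\in\Gamma(s)\subseteq A$ and $h(s,\phi(s))=m(s)=\max_{a\in\Gamma(s)}h(s,a)$.

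The key step is to show that $\Gamma^{*}$ is a measurable correspondence, i.e.\ $\{s:\Gamma^{*}(s)\cap G\neq\emptyset\}$ is Borel for every open $G\subseteq\mathbb{R}^{k}$. Writing an open set as a countable union of closed sets reduces this to showing $\{s:\Gamma^{*}(s)\cap C\neq\emptyset\}$ is Borel for each closed $C\subseteq\mathbb{R}^{k}$. Fix such a $C$. The set $S_{C}:=\{s:\Gamma(s)\cap C\neq\emptyset\}$ is closed, since its complement $\{s:\Gamma(s)\subseteq A\setminus C\}$ is open by u.s.c.\ of $\Gamma$ (here $A\setminus C$ is open in $A$). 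On $S_{C}$ the correspondence $s\mapsto\Gamma(s)\cap C$ is u.s.c.\ with non-empty compact values, because $\{s:\Gamma(s)\cap C\subseteq U\}=\{s:\Gamma(s)\subseteq U\cup(A\setminus C)\}$ for open $U$; so Lemma \ref{Lemma:max} gives that $m_{C}(s):=\max_{a\in\Gamma(s)\cap C}h(s,a)$ is u.s.c.\ on $S_{C}$, hence Borel. Since $h(s,\cdot)$ attains its maximum on the compact set $\Gamma(s)\cap C$ and $m_{C}\le m$ on $S_{C}$, one has $\Gamma^{*}(s)\cap C\neq\emptyset$ precisely when $s\in S_{C}$ and $m_{C}(s)=m(s)$; hence $\{s:\Gamma^{*}(s)\cap C\neq\emptyset\}=\{s\in S_{C}:m_{C}(s)\ge m(s)\}$, a Borel set.

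Given measurability of $\Gamma^{*}$, I would construct the selection coordinatewise by lexicographic minimization, bypassing a general selection theorem. Set $\phi_{1}(s):=\min\{a_{1}:a\in\Gamma^{*}(s)\}$, attained by compactness; then $\{s:\phi_{1}(s)\le t\}=\{s:\Gamma^{*}(s)\cap\{a:a_{1}\le t\}\neq\emptyset\}$ is Borel by the previous step, so $\phi_{1}$ is Borel. Inductively, assuming $\phi_{1},\dots,\phi_{j-1}$ are Borel, for rational $\bar q=(q_{1},\dots,q_{j-1})$ put $g_{\bar q}(s):=\min\{a_{j}:a\in\Gamma^{*}(s),\,a_{i}\le q_{i}\ (i<j)\}$ (with value $+\infty$ if the set is empty); each $g_{\bar q}$ is Borel, since $\{s:g_{\bar q}(s)\le t\}=\{s:\Gamma^{*}(s)\cap C'\neq\emptyset\}$ for the closed set $C'=\{a:a_{i}\le q_{i}\ (i<j),\ a_{j}\le t\}$. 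Because on $\Gamma^{*}(s)$ the constraint $a_{i}\le\phi_{i}(s)$ forces $a_{i}=\phi_{i}(s)$ for $i<j$, and the minimum of $a_{j}$ over a decreasing sequence of non-empty compacta equals the minimum over their intersection, we get $\phi_{j}(s):=\min\{a_{j}:a\in\Gamma^{*}(s),\,a_{i}=\phi_{i}(s)\ (i<j)\}=\sup\{g_{\bar q}(s):\bar q\in\mathbb{Q}^{j-1},\ q_{i}>\phi_{i}(s)\}$, which is Borel. Then $\phi:=(\phi_{1},\dots,\phi_{k})$ is the lexicographically least point of $\Gamma^{*}(s)$, which lies in $\Gamma^{*}(s)$, finishing the proof.

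The hard part is the second paragraph: proving that the argmax correspondence $\Gamma^{*}$ is Borel measurable. It is not itself u.s.c., and is defined through the merely-u.s.c.\ data $h$ and $m$, so the substantive trick is to pass from hitting open sets to hitting closed sets, intersect with $\Gamma$, and compare the constrained maximum $m_{C}$ with $m$ using Lemma \ref{Lemma:max}. The remaining work is mostly bookkeeping — in particular the monotone-limit fact for minima over nested compacta used in the lexicographic construction; alternatively, once $\Gamma^{*}$ is known to be a measurable, closed-valued correspondence into $\mathbb{R}^{k}$, one may simply invoke the Kuratowski--Ryll-Nardzewski selection theorem instead of the explicit construction.
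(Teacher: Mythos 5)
The paper itself does not prove Proposition \ref{prop:selector}: it states that the proof is beyond the scope of the course and defers to \cite{savage1965gamble}, so there is no in-text argument to compare yours against. That said, your proof is correct and, pleasantly, self-contained given the tools already developed in the lecture: the only external inputs are Lemmas \ref{Lemma:USC_MAX} and \ref{Lemma:max}, and the one genuinely clever move — establishing Borel measurability of the argmax correspondence $\Gamma^{*}$ by testing against closed sets $C$, restricting $\Gamma$ to $\Gamma(\cdot)\cap C$ on the closed set $S_{C}$, and comparing the constrained maximum $m_{C}$ with $m$ — is exactly the standard route in the literature (Schäl, Himmelberg et al.), here executed with the paper's own maximum lemma. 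The two points that deserve the most care both check out: (a) the identity $\{s:\Gamma^{*}(s)\cap C\neq\emptyset\}=\{s\in S_{C}:m_{C}(s)\ge m(s)\}$ uses that $h(s,\cdot)$ attains its maximum on the compact set $\Gamma(s)\cap C$, which follows from Lemma \ref{Lemma:USC_MAX}; and (b) the identity $\phi_{j}(s)=\sup_{\bar q}g_{\bar q}(s)$ requires the limit of minimizers over the nested compacta $\{a\in\Gamma^{*}(s):a_{i}\le q_{i}\}$ to land in the intersection, which holds because $\Gamma^{*}(s)$ is compact (a closed superlevel set of the u.s.c.\ function $h(s,\cdot)$ inside $\Gamma(s)$). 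Note also that for the lexicographic construction you only ever hit closed sets, so the reduction from open to closed sets at the start of your second paragraph is not actually needed unless you want to invoke Kuratowski--Ryll-Nardzewski instead. As a replacement for the paper's omitted proof, this argument would do the job.
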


With this result, we can prove the main result of this section. 

\begin{theorem}
    Suppose that $r$ is bounded and u.s.c, $p$ is u.s.c, and $\Gamma$ is u.s.c such that $\Gamma(s)$ is non-empty and compact for each $s \in S$. 

    Then, $V$ is the unique bounded and u.s.c function that satisfies the Bellman equation and there exists a Borel measurable stationary policy function that is optimal. 
\end{theorem}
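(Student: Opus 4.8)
The plan is to apply Theorem \ref{Thm:mainDP} with $D$ taken to be the set of all bounded upper semi-continuous functions from $S$ to $\mathbb{R}$. By Lemma \ref{Lem:complete} this $D$ is a complete metric space under $d_\infty$, it evidently contains the constant functions, and $r$ is bounded by hypothesis. So the only substantive things to check are the two closure conditions: $f \in D$ implies $\Lambda_f \neq \emptyset$, and $f \in D$ implies $Tf \in D$. Both will follow once I establish the key auxiliary fact that, for every $f \in D$, the map $Q(\cdot,\cdot,f)$ given by $Q(s,a,f) = r(s,a) + \beta \int_S f(s')p(s,a,ds')$ is bounded and u.s.c.\ on $S \times A$. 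Indeed, the integral term is bounded and u.s.c.\ precisely because $p$ is an u.s.c.\ transition kernel and $f$ is bounded and u.s.c.; $r$ is bounded and u.s.c.\ by assumption; and a sum of two bounded u.s.c.\ functions is bounded and u.s.c. Restricting to $\operatorname{Gr}\Gamma$, the function $h(s,a) := Q(s,a,f)$ is then bounded and u.s.c.\ on $\operatorname{Gr}\Gamma$.

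With this in hand, first I would note that for each fixed $s$, the function $h(s,\cdot)$ is u.s.c.\ on the compact set $\Gamma(s)$, so by Lemma \ref{Lemma:USC_MAX} the supremum defining $Tf(s) = \sup_{a \in \Gamma(s)} Q(s,a,f)$ is attained, i.e.\ $Tf(s) = \max_{a \in \Gamma(s)} h(s,a)$. Since $\Gamma$ is u.s.c.\ with non-empty compact values and $h$ is u.s.c.\ on $\operatorname{Gr}\Gamma$, Lemma \ref{Lemma:max} gives that $Tf$ is u.s.c.; boundedness of $Tf$ is immediate from boundedness of $h$. Hence $Tf \in D$. For $\Lambda_f \neq \emptyset$, I apply the Measurable Selection Theorem (Proposition \ref{prop:selector}) to the same $\Gamma$ and $h$: it yields a Borel measurable $\phi : S \to A$ with $\phi(s) \in \Gamma(s)$ and $h(s,\phi(s)) = \max_{a \in \Gamma(s)} h(s,a) = Tf(s)$ for all $s$, so $\phi \in \Lambda_f$.

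Thus all hypotheses of Theorem \ref{Thm:mainDP} are verified, and conclusions 1--3 of that theorem hold for this choice of $D$. Conclusion 1 says $V$ is the unique function in $D$ satisfying $TV = V$, i.e.\ $V$ is the unique bounded u.s.c.\ solution of the Bellman equation. To get the optimal policy, I apply the selection argument above with $f = V \in D$: this produces a Borel measurable $\lambda$ with $\lambda(s) \in \Gamma(s)$ and $Q(s,\lambda(s),V) = TV(s) = V(s)$, so $\lambda \in \Lambda_V$ and $\lambda$ achieves the supremum; by part 3 of Theorem \ref{Thm:mainDP}, $\lambda$ is an optimal stationary policy.

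I do not expect a genuine obstacle here: the hard analytic work has been front-loaded into Lemmas \ref{Lemma:USC_MAX} and \ref{Lemma:max} and into Proposition \ref{prop:selector}, and the proof is essentially an assembly of these pieces into the framework of Theorem \ref{Thm:mainDP}. The one point that needs care is the joint upper semi-continuity of $Q(\cdot,\cdot,f)$ on $S \times A$ --- this is exactly where the hypothesis that $p$ is an u.s.c.\ transition kernel (and not merely Borel measurable) is used, and it is the reason the naive ``$D = B_m(S)$'' approach described just before the theorem fails.
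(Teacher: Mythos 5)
Your proposal is correct and follows essentially the same route as the paper: take $D$ to be the bounded u.s.c.\ functions, use the u.s.c.\ kernel assumption to show $Q(\cdot,\cdot,f)$ is bounded and u.s.c., invoke Lemma \ref{Lemma:max} for $Tf\in D$ and Proposition \ref{prop:selector} for $\Lambda_f\neq\emptyset$, and then apply Theorem \ref{Thm:mainDP}. Your write-up is in fact slightly more careful than the paper's in spelling out that the measurable selector for $f=V$ attains the supremum, so that part 3 of Theorem \ref{Thm:mainDP} yields an optimal (not merely $\epsilon$-optimal) stationary policy.
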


\begin{proof}
Let $D$ be the set of all bounded upper semi-continuous functions.  From Lemma \ref{Lem:complete}, $D$ is a complete metric space. 

Let $f \in D$. Then $ \int f(s') p(s,a,ds')$ is in $D$ because $p$ is u.s.c. Hence $h(s,a) = r(s,a) + \beta  \int f(s') p(s,a,ds')$ is in $D$ as the sum two u.s.c bounded functions. Now from Lemma \ref{Lemma:max}, $Tf \in D$ and from Proposition \ref{prop:selector}, $\Lambda_{f} \neq \emptyset$. Thus, the conditions of Theorem \ref{Thm:mainDP} hold and the result follows. 
\end{proof}

\textbf{Bibliography note}: The principle of optimality in  discounted dynamic programming  was studied in \cite{blackwell1965discounted}. The proof of Theorem \ref{Thm:mainDP} is adopted from \cite{light2024principle}. Upper semi-continuous dynamic programming is studied in \cite{maitra1968discounted}. The proof of Lemma \ref{Lemma:max} and the example before the lemma  is adopted from Chapter 17 in \cite{aliprantis2006infinite}. A proof of Proposition \ref{prop:selector} can be found in   \cite{savage1965gamble}.

\subsection{Exercise 2}

\begin{exercise}
    What is the Bellman Equation for the Bayesian multi-armed bandit example we introduced in Lecture 2? 

      Does the value function satisfy the Bellman Equation given the results in Lecture 2?
\end{exercise}

\begin{exercise}
    Consider the problem of deciding when to sell your house. Assume that the decision process can be modeled as a discounted dynamic programming model where at each period, you receive an offer to buy the house. The process is defined on a finite set \( S \), with a transition probability matrix \( P_{ij} \), where \( P_{ij} \) represents the probability of moving from state \( i \) to state \( j \). Each state in \( S \) could represent different market conditions or other factors influencing the offer prices.

At each period, you can either accept the offer and receive a payoff of \( R(i) \), or reject the offer, receive a payoff of \( 0 \), and wait for the next period's offer. There is a discount factor \( \beta \) (where \( 0 < \beta < 1 \)) applied to future payoffs, reflecting the time preference of money.

\begin{enumerate}
  \item Formulate this as a discounted dynamic programming model:
  Define the state space, action space, transition probabilities, reward function, and discount factor. 

  \item Write down the Bellman equation for this problem. Explain why the Bellman Equation holds (use the results in Lecture 2). 

  \item Assume that $S=\{ 1, \ldots, 10 \}$, $R(i) = i$. For each $i=2,\ldots,9$ suppose that $P_{ij} = 1/3$ for $j=i-1,i,i+1$ and $P_{ii}=1/3$ for $i=1,10$ and  $P_{10,9} = 2/3$, $P_{1,2}=2/3$. 

  Write a code in Python to find the value function using  the value function iteration algorithm. Plot the value function and the optimal policy function as a function of the state for $\beta = 0.9$ and $\beta = 0.99$. Explain the differences in the results. 

  \item Assume that $S=\{ 1, \ldots, 10 \}$, $R(i) = i$ as before. Unfortunately, you don't know $P_{ij}$. However, you have following the market for quite some time before deciding to sell and saw a previous string of states $(i(1),\ldots,i(M))$ with $i(j) \in S$ for $j=1,\ldots,M$ and a large $M$. 

  How would you solve the problem? 
  (this is quite open but maybe try an estimate then optimize approach).  
\end{enumerate}

\end{exercise}

\begin{exercise}
    Consider a finite discounted dynamic programming model ($S$ and $A$ are finite). Prove that the value function $(V(s_{1}),\ldots))$ solves the following linear programming problem (with variables $x_{1},\ldots)$

(i)    $$ \min \sum _{i \in S} x_{i} \text{  s.t  } x_{i} \geq r(s_{i},a) + \beta \sum_{s_{j} \in S}p(s_{i},a,s_{j}) x_{j}  \text{ for all } a \in \Gamma(s_{i}), \text { } s_{i} \in S. $$

Hint: show and use the fact that if $f \geq Tf$ then $f \geq V$. 

(ii) Thus, we can find the value function by solving a linear program. How many variables and constraints this linear programming has?

(iii) Unfortunately, the number of variables can be very large which makes the linear program approach not practical. One way to mitigate the problem is to consider an approximation to the value function $\hat{V} (s,w) = \sum _{i=1}^{n} w_{k} \phi _{k} (s)$ where $\phi_{k}$ are known and $w_{k}$ is a vector of parameters. This is a liner approximation of the value function with the idea that $n$ is much smaller than $|S|$. 

Show how we can slightly modify part (i) to provide a linear program to determine the weights $w_{1},\ldots,w_{n}$ with $n$ variables and the same number of constraints as part (i).

Note that while the number of constraints may be very large, there are methods (e.g., cutting plane methods) that may solve a  linear program with many constraints.

\end{exercise}

\begin{exercise}
    Consider the conditions of Theorem 1 in Lecture 2  but assume that there is no discounting ($\beta = 1$) and a finite time horizon $T$. Use the proof of Theorem 1 in Lecture 2 to show that the Bellman equation holds in the sense that 
    $$V_{t} (s) = \sup _{a \in \Gamma(s)} r(s,a) + \int V_{t+1}(s') p(s,a,ds') $$
    for all $t=1,\ldots,T$ where $V_{T+1} = 0$ and $V_{t}$ is value function starting from period $t$, i.e., the optimal reward the agent can achieve from period $t$ onwards (can you define $V_{t}$ formally)?

(i) Do you need to use the Banach fixed-point theorem?

(ii) Do you need to assume that $r$ is bounded?

\end{exercise}

\begin{remark}
    When payoffs are non-negative, i.e., $r (s,a) \geq 0$ for all $(s,a)$, then the sequence of functions $V_{t}$ is monotone as payoffs are always non-negative. In this case we can use a monotone convergence argument (we won't provide the exact details in the class) to show that the Bellman equation holds
$$V (s) = \sup _{a \in \Gamma(s)} r(s,a) + \int V(s') p(s,a,ds') $$ 
where $V$ is the value function with no discounting (it can be the case that $V=\infty$). 
\end{remark}

\begin{exercise}
    In class, we discussed the value function iteration algorithm. Now assume that you can only run approximate value function iteration (because the state space is infinite or the state space is very large, etc). That is, we have a sequence $f_{k}$ such that $d (f_{k+1},Tf_{k}) \leq \delta$ of functions $f_{k} \in B(S)$ for some $\delta >0$ where as usual $d = d_{\infty}$ is the sup metric and $T$ is the Bellman operator defined in Lecture  2. Note that value function iteration corresponds to  $\delta = 0$.  

     Show that 
    $$ \limsup _{k} d (f_{k},V) \leq \frac{\delta}{1-\beta}.$$
\end{exercise}

\begin{exercise}
    You are given a set of positive integers $X = \{x_{1}, x_{2}, \ldots, x_{n}\}$ and a target sum $Y$. Your task is to determine the minimum number of integers from $X$ that can be summed to exactly match $Y$. If no combination of these integers can sum to $Y$, return `-1`.

    Write a dynamic programming algorithm in Python to solve this problem (assume you have an unlimited number of each integer in $X$ available for use). 

    Hint: Let $V(y)$ as the minimum number of numbers needed to make the sum $y$. Initialize $V(0) = 0$. For each $y=1,\ldots,Y$ find $V(y) = $.... recursively. 
\end{exercise}

   \begin{exercise}
    Given two strings `word1` and `word2`, write a function in Python that computes the minimum number of single-character edits (insertions, deletions, or substitutions) required to change `word1` into `word2` using a dynamic programming algorithm.

    Example 1:
    Input: word1 = "disel"
word2 = "daniel"
    Output: 3  (Explanation: insert 'a' after 'd', insert 'n' after 'a', delete 's' after 'i'). 

   Example 2: Input: word1 = "dan"  word2 = "fang"  Output: 2  (Explanation: replace $d$ and add $g$ after $n$).

    Hint: Let $V(i,j)$ be the minimum number of operations required to convert the first $i$ characters of word1 to the first $j$ characters of word2.
For example, $V(0,j) = j$
as we need to insert all those $j$ characters to change word1 into word2. Similarly, $V(i,0)=i$. Now find $V(i,j)$ recursively. 
\end{exercise}

\begin{exercise}
 (i)   Show that $V_{\sigma}$ is Borel measurable under the assumptions of Theorem 1 in Lectures Notes 2. 

 (ii)    Justify the following equality from the proof of Theorem 1 in Lectures Notes 2. 
    \begin{align*}
    V_{\sigma}(s) & = \mathbb{E}_{\sigma } \left ( r(s,a(1)) + \sum \limits_{t =2}^{\infty }\beta^{t -1}r(s(t),a(t)) \right )   \\
    & = r(s,\sigma_{1}(s)) + \beta \int _{S} V_{\sigma | (s,\sigma_{1}(s))} (s') p(s, \sigma_{1}(s),ds'). \\
\end{align*}
\end{exercise}

\begin{exercise}
Consider the inventory management problem we studied in Lecture 2. 
Suppose that $s$, $a$ and $D$ are positive integers and $\Gamma(s) = [s,\overline{a}]$ for some positive integer $\overline{a}$. So $S = A = \{0, \ldots , \overline{a} \}$ and $D$ is a discrete random variable that can take values in $\{0, \ldots , \overline{a} \}$. 

1. Write the Bellman equation for this inventory management model and provide a justification for its validity.

2. Let $g(s)$ be the optimal policy and assume it is unique. Suppose that  $g(0) > 0$.  Show that there exists an $s^{*} > 0$ such that $g(s) = s^{*}$ for all $0 \leq s \leq s^{*}$ and $V(s_{2})-V(s_{1}) = c(s_{2} - s_{1})$ for all $s^{*} \geq s_{2} \geq s_{1} \geq 0$.

3. Suppose that $\overline{a}=7$ so the state space is $S=A=\{0,\ldots,7\}$, $\beta=0.95$, and the demand is uniform on $\{1,\ldots,7\}$. 

Write a Python code that implements the value function iteration algorithm to find the value function and the optimal policy function. Plot it as a function of the inventory level $s$ for the case that $w=2$, $h=c=1$.

Hint: you can start with the following parameters:  
\begin{verbatim}
    import numpy as np
import matplotlib.pyplot as plt

# Parameters
max_inventory = 7  # \overline{a}
c = 1  # cost per unit produced
h = 1  # holding cost per unit
w = 2  # revenue per unit
beta = 0.95  # discount factor
demand_range = np.array([1, 2, 3, 4, 5, 6, 7])
demand_prob = 1.0 / len(demand_range)  # Uniform distribution probability for  demand


\end{verbatim}

4. Assume you are the newly appointed Data Scientist at a retail company. You know the values for sale price per unit $w$, holding costs $h$, 
 and production costs $c$, but the distribution of customer demand remains unknown. However, you have access to historical data of past demands. Provide three different models (without implementing just discuss) to solve the dynamic inventory problem. 

Hint: Consider the use of empirical distribution, Bayesian updating models, or auto-regressive processes as potential approaches.

5. After further analysis, you realize that the demand is not I.I.D as it appears to be influenced by a variety of features such as market trends, promotional campaigns, and seasonal variations.

You have managed to access important data points regarding demand. This data comprises three key features: competitor behavior, market sentiment, and season, each categorized into binary states ${0,1}$. From this dataset, you have gathered 20 distinct periods of these features alongside the actual demand observed during each period.

You have decided to employ a two-step ``estimate and optimize" approach:

Estimation: Apply a machine learning technique to predict the demand based on the feature vectors. 

Optimization: Utilize value function iteration to solve the inventory management problem. 

The first step is done in the predict demand function below that employs the RandomForest algorithm to model and estimate the demand. 

Extend the code below to complete the second step.

*You will need to add the feature vectors to the state space and define the state  dynamics.

\begin{lstlisting}[language=Python]
    import numpy as np
import pandas as pd
from sklearn.ensemble import RandomForestRegressor

def predict_demand(features):
    # Seed for reproducibility
    np.random.seed(42)

    # Generate synthetic data for training
    num_samples = 20
    competitor_behavior = np.random.choice([0, 1], size=num_samples)
    market_sentiment = np.random.choice([0, 1], size=num_samples)
    season = np.random.choice([0, 1], size=num_samples)
    demand = np.random.randint(1, 8, size=num_samples)  
    
    # Stack features in a matrix for training
    training_features = np.column_stack((competitor_behavior, market_sentiment, season))

    # Initialize and train RandomForestRegressor
    rf_model = RandomForestRegressor(n_estimators=100, random_state=42)
    rf_model.fit(training_features, demand)

    # Predict on the provided input features
    predicted_demand = rf_model.predict([features])


\end{lstlisting}

\end{exercise}

\newpage 

\section{Lecture 3: Properties of the Value and Policy Functions}

Throughout Lecture 3, we will assume the following:
\begin{assumption}
Suppose that $r$ is bounded and upper semi-continuous (u.s.c), $p$ is u.s.c, $\Gamma$ is u.s.c, and $\Gamma(s)$ is nonempty and compact for each $s \in S$. 
\end{assumption}
Under this assumption, and as discussed in Lecture 2, $V$ is the unique bounded and u.s.c function that satisfies the Bellman equation, and there exists a Borel measurable stationary policy function that is optimal.

We will focus on proving structural properties of the value function and the optimal policy functions in this note. Properties of the value function can be proven by utilizing Corollary 1 from Lecture 2.

We will assume that $S$ and $A$ are subsets of $\mathbb{R}^{n}$ and $\mathbb{R}^{k}$, respectively, endowed with the standard product order on $\mathbb{R}^{n}$, i.e., $x \geq y$, $x,y \in \mathbb{R}^{n}$ if $x_{i} \geq y_{i}$ for each $i=1,\ldots,n$.

As usual, a function $f:S \rightarrow \mathbb{R}$ is called increasing if $f(s_{2}) \geq f(s_{1})$ whenever $s_{2} \geq s_{1}$.

\subsection{Stochastic Dominance}
We now introduce the notion of stochastic dominance, a probabilistic concept used to compare random variables, particularly useful in decision making under uncertainty, economics, and operations.

\begin{definition}
A random variable $X$ with a law $\mu_{X}$ is said to first-order stochastically dominate another random variable $Y$ with a law $\mu_{Y}$ if for every increasing function $f:S \rightarrow \mathbb{R}$ we have 
$\int f(s) \mu_{X} (ds) \geq \int f(s)  \mu_{Y} (ds)$
whenever the integrals exist, and we write $\mu_{X} \succeq_{FSD} \mu_{Y} $ or $X \succeq_{FSD} Y$. 
\end{definition}

We have the following characterization of stochastic dominance. A set $B \in \mathcal{B}(S)$ is called an upper set if $x \in B$ and $y \geq x$ imply $y \in B$. 

\begin{proposition} \label{prop:FSD}
   Let $X$ and $Y$ be two random variables on $S$. We have $\mu_{X} \succeq_{FSD} \mu_{Y} $ if and only if  $\mu_{X} (B) \geq \mu_{Y} (B) $ for every upper set $B $. 
\end{proposition}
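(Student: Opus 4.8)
The plan is to prove both implications. The direction ($\Rightarrow$) is immediate: if $\mu_X \succeq_{FSD} \mu_Y$ and $B$ is an upper set, then the indicator function $\mathbf{1}_B$ is increasing (if $x \in B$ and $y \geq x$ then $y \in B$, so $\mathbf{1}_B(y) = 1 \geq \mathbf{1}_B(x)$), so applying the definition of first-order stochastic dominance with $f = \mathbf{1}_B$ gives $\mu_X(B) = \int \mathbf{1}_B \, d\mu_X \geq \int \mathbf{1}_B \, d\mu_Y = \mu_Y(B)$.

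The substantive direction is ($\Leftarrow$). Assume $\mu_X(B) \geq \mu_Y(B)$ for every upper set $B$, and let $f$ be an arbitrary increasing function for which the integrals exist. First I would handle the case of a nonnegative increasing $f$ and use the layer-cake (horizontal slicing) representation: for $f \geq 0$,
\[
\int f \, d\mu = \int_0^\infty \mu(\{s : f(s) > t\}) \, dt.
\]
The key observation is that for each $t$, the superlevel set $\{s : f(s) > t\}$ is an upper set, because $f$ is increasing: if $f(x) > t$ and $y \geq x$ then $f(y) \geq f(x) > t$. Hence $\mu_X(\{f > t\}) \geq \mu_Y(\{f > t\})$ for every $t \geq 0$, and integrating this inequality over $t \in [0,\infty)$ yields $\int f \, d\mu_X \geq \int f \, d\mu_Y$. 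For a general increasing $f$ (not necessarily nonnegative), I would decompose using $f = f^+ - f^-$; here $f^+ = \max\{f,0\}$ is increasing and nonnegative so the above applies directly, while $f^- = -\min\{f,0\}$ is \emph{decreasing} and nonnegative, so $\{s : f^-(s) > t\}$ is a \emph{lower} set, i.e. its complement is an upper set, giving $\mu_X(\{f^- > t\}) \leq \mu_Y(\{f^- > t\})$ and hence $\int f^- \, d\mu_X \leq \int f^- \, d\mu_Y$. Combining, $\int f \, d\mu_X = \int f^+ d\mu_X - \int f^- d\mu_X \geq \int f^+ d\mu_Y - \int f^- d\mu_Y = \int f \, d\mu_Y$, provided the integrals are finite so the subtraction is legitimate (which is exactly the hypothesis that the integrals exist).

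The main obstacle, and the only real subtlety, is measurability: to invoke the layer-cake formula I need the superlevel sets $\{f > t\}$ to be Borel measurable, i.e. $f$ should be a Borel measurable increasing function. An increasing function on $\mathbb{R}^n$ need not be continuous, but monotonicity on each coordinate still makes superlevel sets well-behaved enough (they are upper sets, and one can check they are Borel); alternatively one simply restricts attention, as the surrounding text implicitly does, to measurable increasing $f$, which is the relevant class for applications. I would state this measurability point briefly and then carry out the slicing argument as above.
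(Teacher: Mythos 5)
Your proof is correct. The forward direction is identical to the paper's. For the converse, both arguments hinge on the same key observation---superlevel sets of an increasing function are upper sets---but the technical vehicle differs: the paper approximates a nonnegative increasing $f$ by the dyadic simple functions $\phi_n$ from Lecture 1 and applies an Abel-summation identity to rewrite $\int \phi_n\,d\mu$ as a nonnegative combination of the measures of the superlevel sets $\cup_{j \geq i} B_n^j = \{f \geq (i-1)2^{-n}\}$, then passes to the limit; you instead invoke the layer-cake formula $\int f\,d\mu = \int_0^\infty \mu(\{f>t\})\,dt$ and integrate the pointwise inequality in $t$. These are essentially the discrete and continuous versions of the same identity; the paper's route has the advantage of using only the definition of the Lebesgue integral constructed in Lecture 1, whereas yours imports the layer-cake representation as a black box (itself typically proved by Tonelli or by the very simple-function argument the paper uses), in exchange for a shorter and cleaner write-up. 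Your treatment of general $f$ is in fact more explicit than the paper's: where the paper only says ``as usual use $f = f^{+} - f^{-}$,'' you correctly observe that $f^{-}$ is \emph{decreasing}, its superlevel sets are lower sets whose complements are upper sets, and hence the inequality reverses for $\int f^{-}$, which is exactly what makes the subtraction go the right way. Your closing remark on measurability is also well taken: an increasing function on $\mathbb{R}^{n}$ with $n \geq 2$ need not be Borel measurable, so the quantification must be restricted to measurable increasing $f$ (equivalently, to Borel upper sets $B$), which is how the paper's definitions are implicitly set up via the clause ``whenever the integrals exist.''
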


\begin{proof}
    Suppose that $\mu_{X} \succeq_{FSD} \mu_{Y}$ and let $B$ be an upper set. Then the function $f = 1_{B}$ is an increasing function. Hence, $\mu_{X} (B) =  \int f(s) \mu_{X} (ds) \geq \int f(s)  \mu_{Y} (ds) = \mu_{Y} (B)$. 

    Now assume that $\mu_{X} (B) \geq \mu_{Y} (B) $ for every upper set $B \in \mathcal{B}(S)$. Let $f$ be an increasing function. Assuming first that $f$ is non-negative, consider the simple functions $\phi_n$ defined by 
\[
\phi_n = \sum_{i=1}^{n 2^n} 2^{-n}(i-1) \mathbf{1}_{B_n^i},
\]
with $B_n^i = \{s \in S : (i-1)2^{-n} \leq f(s) < i2^{-n}\}$ and $i(n) = n2^{n} + 1$, $B_{n}^{i(n)} = \{s \in S : (i(n)-1)2^{-n} \leq f(s) \}$.

Denote $\alpha_{i,n} = 2^{-n}(i-1)$, $\alpha_{0,n}=0$. We have
\begin{align*}
    \int \phi_{n} (s) \mu_{X} (ds) & = \sum _{i=1}^{i(n)} \alpha_{i,n} \mu_{X}  (B_{n}^{i}) =   \sum _{i=1}^{i(n)} (\alpha_{i,n} -\alpha_{i-1,n} ) \mu_{X}  \left (\cup _{i=j}^{i(n)} B_{n}^{j} \right) \\
   & \geq \sum _{i=1}^{i(n)} (\alpha_{i,n} -\alpha_{i-1,n} ) \mu_{Y}  \left (\cup _{i=j}^{i(n)} B_{n}^{j} \right) =  \int \phi_{n} (s) \mu_{Y} (ds),
\end{align*}
where the inequality follows because $\cup _{i=j}^{i(n)} B_{n}^{j}$ is an upper set for each $i$ because $f$ is increasing. Hence, taking the limit and using the integral definition in Lecture 1, we have 
\[
\int f(s) \mu_{X} (ds) =  \lim _{n \rightarrow \infty} \int \phi_{n} (s) \mu_{X} (ds) \geq    \lim _{n \rightarrow \infty} \int \phi_{n} (s) \mu_{Y} (ds) = \int f(s) \mu_{Y} (ds)
\]
which proves the result for non-negative functions. For a general $f$, as usual use $f = f^{+} - f^{-} $ (see Lecture 1). 
\end{proof}

\begin{remark}
    When $S \subseteq \mathbb{R}$,  Proposition \ref{prop:FSD} provides a very simple characterization for the stochastic dominance order $\succeq_{FSD}$ in terms of cumulative distribution functions (CDFs). We have $X \succeq _{FSD} Y$ if and only if $F_{X} (x) \leq F_{Y}(x) $ for every $x \in S$ where $F_{Z} (x) = \mathbb{P}(Z \leq x)$ is the CDF of a random variable $Z$. 
\end{remark}

\begin{remark}
    Note that the proof of Proposition \ref{prop:FSD} did not use the fact that $S \subseteq \mathbb{R}^{n}$ and can be easily generalized to a more general state space that is endowed with a partial order. 
\end{remark}

Intuitively, \(X\) is preferred to \(Y\) under FSD if \(X\) yields higher outcomes than \(Y\) with higher probabilities. This is equivalent to stating that every utility maximizer who has increasing preferences would prefer \(X\) over \(Y\).
We will say that $p$ is increasing if $g(s,a) = \int f(s') p(s,a,ds')$ is increasing  whenever $f$ is.

\begin{corollary} $p$ is increasing if and only if $p(s_{2},a_{2} , B ) \succeq _{FSD}  p(s_{1},a_{1} , B ) $ whenever $s_{2} \geq s_{1}$ and  $a_{2} \geq a_{1}$ and every upper set $B$. 
\end{corollary}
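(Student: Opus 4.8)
The plan is to unwind the two definitions involved and use Proposition \ref{prop:FSD} to pass between the ``increasing test function'' formulation of first-order stochastic dominance and the ``upper set'' formulation in which the corollary is phrased. Recall that, by definition, $p$ is increasing means that for every increasing $f:S\to\mathbb{R}$ (for which the integrals are defined) the map $g(s,a)=\int f(s')p(s,a,ds')$ is increasing on $S\times A$ with the product order, i.e.\ $g(s_2,a_2)\ge g(s_1,a_1)$ whenever $s_2\ge s_1$ and $a_2\ge a_1$; and recall that $p(s_2,a_2,\cdot)\succeq_{FSD}p(s_1,a_1,\cdot)$ means $\int f(s')p(s_2,a_2,ds')\ge\int f(s')p(s_1,a_1,ds')$ for every such increasing $f$.

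For the forward implication I would assume $p$ is increasing and fix $s_2\ge s_1$, $a_2\ge a_1$. For any increasing $f$, evaluating the definition of ``$p$ increasing'' at the comparable pair $(s_2,a_2)\ge(s_1,a_1)$ gives $\int f(s')p(s_2,a_2,ds')\ge\int f(s')p(s_1,a_1,ds')$; since this holds for all increasing $f$, it is exactly the statement $p(s_2,a_2,\cdot)\succeq_{FSD}p(s_1,a_1,\cdot)$. Proposition \ref{prop:FSD} then rephrases this as $p(s_2,a_2,B)\ge p(s_1,a_1,B)$ for every upper set $B$, which is the asserted conclusion.

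For the converse I would assume $p(s_2,a_2,\cdot)\succeq_{FSD}p(s_1,a_1,\cdot)$ whenever $s_2\ge s_1$ and $a_2\ge a_1$ (equivalently, by Proposition \ref{prop:FSD}, that the upper-set inequalities hold). Let $f$ be increasing and take $(s_2,a_2)\ge(s_1,a_1)$ in the product order, so in particular $s_2\ge s_1$ and $a_2\ge a_1$; unpacking the definition of $\succeq_{FSD}$ gives $g(s_2,a_2)=\int f(s')p(s_2,a_2,ds')\ge\int f(s')p(s_1,a_1,ds')=g(s_1,a_1)$. Hence $g$ is increasing, and since $f$ was an arbitrary increasing function, $p$ is increasing.

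I do not expect a genuine obstacle here: each step is a direct translation between definitions, and the forward and backward directions are essentially mirror images because both ``$p$ increasing'' and $\succeq_{FSD}$ are phrased in terms of integrating increasing functions. The only points requiring care are (i) invoking Proposition \ref{prop:FSD} to move between the two equivalent forms of FSD so that the ``every upper set $B$'' wording is justified, and (ii) observing that the product order on $S\times A$ is precisely what makes ``$s_2\ge s_1$ and $a_2\ge a_1$'' the correct comparability hypothesis, together with the routine remark that it suffices to test against bounded increasing $f$ so that all integrals in question are well defined.
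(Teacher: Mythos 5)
Your proposal is correct and follows essentially the same route as the paper: both directions are handled by translating between the ``integrate increasing functions'' and ``upper set'' formulations via Proposition \ref{prop:FSD}, with the forward implication ultimately resting on testing against indicator functions of upper sets (the paper plugs in $f=1_{B}$ directly, while you derive the full FSD relation first and then invoke Proposition \ref{prop:FSD}, which is the same content). No gaps.
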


\begin{proof}
    Suppose that $p$ is increasing and let $B$ be an upper set. Then by letting $f= 1_{B}$, $f$ is increasing so monotonicity of $p$ implies $p(s_{2},a_{2} , B ) \geq  p(s_{1},a_{1} , B)$. 

The other direction follows from the definition of FSD and Proposition \ref{prop:FSD}. 
\end{proof}

\subsection{Value Function Properties}
As in the last section, 
we will say that $p$ is increasing (concave, convex, continuous, concave and increasing in $s$ or $a$, etc.) if $g(s,a) = \int f(s') p(s,a,ds')$ is increasing (concave, convex, continuous, concave and increasing in $s$, increasing in $s$, etc.) whenever $f$ is.

 Recall that $S$ is a convex set if for all $\lambda \in (0,1)$, $\lambda s_{1} + (1-\lambda ) s_{2} \in S$ whenever $s_{1},s_{2} \in S$. 

 When $S$ is convex, recall that $f:S \rightarrow \mathbb{R}$ is called a convex function if $$\lambda f(s_{1}) + (1-\lambda) f(s_{2}) \geq f(\lambda s_{1} + (1-\lambda)s_{2})$$ for all $s_{1},s_{2} \in S$, $\lambda \in (0,1)$. $f$ is concave if $-f$ is convex. 

\begin{example}
    Consider linear transitions 
$$p(s,a,B) = \phi (\zeta : cs + da + \zeta \in B)$$
where $\zeta$ is a random variable on $\mathbb{R}$ with law $\phi$, $S=A = \mathbb{R}$, $c>0$,$d>0$. 

Then $p$ is increasing, concave, and convex. To see this, note that 
$$ \int f(s') p(s,a,ds') = \int f(cs+da+\zeta)  \phi(d\zeta) $$
is increasing (concave) (convex) when $f$ is increasing (concave) (convex). 
\end{example}

 The correspondence $\Gamma$ is called convex if  $a_{1} \in \Gamma (s_{1})$ and $a_{2} \in \Gamma (s_{2})$ implies $a_{\lambda} \in \Gamma (s_{\lambda})$ for all $s_{1},s_{2}$ and $\lambda \in (0,1)$ where   
 $s_{\lambda} = \lambda s_{1} +(1-\lambda) s_{2}$ and $a_{\lambda} = \lambda a_{1} +(1-\lambda) a_{2}$.

 We have the following theorem regarding the monotonicity and concavity of the value function:

 \begin{theorem} \label{Thm:value_prop}
       (1) (Monotonicity). Suppose that $r$ is increasing in $s$, $p$ is increasing in $s$, and $\Gamma(s_{1}) \subseteq \Gamma(s_{2})$ whenever $s_{1} \leq s_{2}$. Then $V$ is increasing.  

     (2) (Concavity). Suppose that $p$ is concave, $r$ is concave, $S$ is convex, and the correspondence $\Gamma$ is convex. Then $V$ is concave.
     
     In addition, if $r$ is strictly concave in $a$, then there is a unique optimal policy function. 
 \end{theorem}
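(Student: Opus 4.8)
The plan is to prove each structural property by means of Corollary~\ref{Cor:prop}: it suffices, for each property, to exhibit a set $D'$ that is closed in $B_m(S)$, is contained in the space $D$ of bounded u.s.c.\ functions, carries the property, and is mapped into itself by the Bellman operator $T$; then Corollary~\ref{Cor:prop} forces $V\in D'$. So the work splits into (i) checking closedness of the candidate $D'$, (ii) checking $T(D')\subseteq D'$, and for the last sentence (iii) a separate strict-concavity argument applied to the Bellman equation $TV=V$.

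For part~(1), take $D'$ to be the bounded, u.s.c., increasing functions. Closedness: a uniform limit of increasing functions is increasing, and $D$ is already complete (Lemma~\ref{Lem:complete}), so $D'$ is closed in $B_m(S)$. Invariance: given $f\in D'$, the map $g(s,a)=\int f(s')p(s,a,ds')$ is increasing in $s$ because $p$ is increasing in $s$, hence $h(s,a)=r(s,a)+\beta g(s,a)$ is increasing in $s$; and for $s_1\le s_2$, using $\Gamma(s_1)\subseteq\Gamma(s_2)$,
\[
Tf(s_1)=\sup_{a\in\Gamma(s_1)}h(s_1,a)\le\sup_{a\in\Gamma(s_1)}h(s_2,a)\le\sup_{a\in\Gamma(s_2)}h(s_2,a)=Tf(s_2),
\]
so $Tf$ is increasing; it is bounded and u.s.c.\ by the Lecture~2 analysis, so $Tf\in D'$. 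Corollary~\ref{Cor:prop} then gives that $V$ is increasing.

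For part~(2), take $D'$ to be the bounded, u.s.c., concave functions; this is again closed, since uniform limits preserve concavity. Given $f\in D'$, concavity of $p$ makes $g(s,a)=\int f(s')p(s,a,ds')$ jointly concave in $(s,a)$, so $h(s,a)=r(s,a)+\beta g(s,a)$ is jointly concave. The key step is that $Tf(s)=\sup_{a\in\Gamma(s)}h(s,a)$ is then concave: for $s_1,s_2$ and $\lambda\in(0,1)$, by Lemma~\ref{Lemma:USC_MAX} (each $h(s_i,\cdot)$ is u.s.c.\ on the compact set $\Gamma(s_i)$) pick maximizers $a_i\in\Gamma(s_i)$ with $Tf(s_i)=h(s_i,a_i)$; since $\Gamma$ is a convex correspondence, $a_\lambda:=\lambda a_1+(1-\lambda)a_2\in\Gamma(s_\lambda)$, whence
\[
Tf(s_\lambda)\ge h(s_\lambda,a_\lambda)\ge\lambda h(s_1,a_1)+(1-\lambda)h(s_2,a_2)=\lambda Tf(s_1)+(1-\lambda)Tf(s_2).
\]
Since $Tf$ is also bounded and u.s.c., $Tf\in D'$, and Corollary~\ref{Cor:prop} yields concavity of $V$. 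For the uniqueness claim, note that $\Gamma$ convex implies each $\Gamma(s)$ is convex (take $s_1=s_2=s$ in the definition), and that $a\mapsto\int V(s')p(s,a,ds')$ is concave in $a$ because $p$ is concave and $V$ is concave; hence, in the Bellman equation, $a\mapsto r(s,a)+\beta\int V(s')p(s,a,ds')$ is strictly concave on the convex set $\Gamma(s)$, so it has at most one maximizer. An optimal stationary policy exists by the Lecture~2 theorem, so it is unique.

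The only non-mechanical point—the main obstacle—is the preservation of concavity under the constrained supremum $\sup_{a\in\Gamma(s)}$: that argument genuinely needs both the existence of maximizers (u.s.c.\ together with compactness of $\Gamma(s)$) and the convexity of the correspondence $\Gamma$. Everything else reduces to routine closedness and monotonicity bookkeeping, plus the already-established fact from Lecture~2 that $T$ preserves boundedness and upper semi-continuity.
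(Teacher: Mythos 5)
Your proposal is correct and follows essentially the same route as the paper: both parts invoke Corollary \ref{Cor:prop} with $D'$ the bounded u.s.c.\ increasing (resp.\ concave) functions, prove invariance of $D'$ under $T$ by the same maximizer/convex-combination argument, and obtain uniqueness of the policy from strict concavity of $a \mapsto r(s,a)+\beta\int V(s')p(s,a,ds')$ on the convex set $\Gamma(s)$. Your added justifications (existence of maximizers via Lemma \ref{Lemma:USC_MAX}, convexity of each $\Gamma(s)$) are details the paper leaves implicit but do not change the argument.
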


\begin{proof}
    (1) Let $D'$ be the set of bounded, u.s.c, and increasing functions on $S$.  Because $D'$ is closed, it is enough to prove that $Tf \in D'$  whenever  $f \in D'$ according to Corollary \ref{Cor:prop} in Lecture 2.

    Given the results of Lecture 2, we only need to show that $Tf$ is increasing to prove that $V \in D'$.  

    Let $s_{2} \geq s_{1}$. Because $r$ is increasing in $s$, $f$ is increasing and $p$ is increasing in $s$, for all $a \in \Gamma(s_{2})$, we have 
    \begin{align*}
        Tf(s_{2}) & \geq  r(s_{2},a) + \beta \int f(s')p(s_{2},a,ds') \\
       & \geq r(s_{1},a) + \beta \int f(s')p(s_{1},a,ds').
    \end{align*}
Using the fact that $\Gamma(s_{1}) \subseteq \Gamma(s_{2})$ we can choose $a$ to be the action that achieves the maximum on the right-hand-side of the last inequality to conclude that $Tf(s_{2}) \geq Tf(s_{1})$, i.e., $Tf$ is increasing which concludes the proof.

(2) Given part (1) it is enough to prove that $Tf:S \rightarrow \mathbb{R} $ is concave whenever $f:S \rightarrow \mathbb{R}$ is concave to prove the result. 

Let $$h(s,a) = r(s,a)  + \beta \int f(s')p(s,a,ds') $$ and let $f$ be concave on $S$.  Because $r$ is concave, $p$ is concave, and the sum of concave functions is concave, then $h$ is concave. 

Let $s_{1},s_{2} \in S$ and $s_{\lambda} = \lambda s_{1} +(1-\lambda) s_{2}$, $\lambda \in (0,1)$, so $s_{\lambda} \in S$. Assume that $a_{i}$ attains the maximum of $h(s_{i},a)$ for $i=1,2$ and define $a_{\lambda} = \lambda a_{1} +(1-\lambda) a_{2}$. We have 
\begin{equation*}
    Tf(s_{\lambda }) \geq h(s_{\lambda},a_{\lambda}) \geq \lambda h(s_{1},a_{1}) + (1-\lambda)h(s_{2},a_{2}) = \lambda Tf(s_{1}) + (1-\lambda)Tf(s_{2})
    \end{equation*}
so $Tf$ is concave. The first inequality follows because $\Gamma$ is convex so $a_{\lambda} \in \Gamma (s_{\lambda})$, the second inequality from convexity of $h$, and the equality follows from the definition of $Tf$. 

 In addition, if $r$ is strictly concave in $a$, then $r(s,a) + \int V(s') p(s,a,ds')$ is strictly concave in $a$ so it has a unique maximizer for each $s$.   
\end{proof}

\begin{example}
    Consider the consumption-savings problem we presented in Lecture 2. If $u$ is strictly increasing and strictly concave, then $V$ is increasing and concave and the optimal policy is unique.

    To see this, note that $r(s,a) = u(s-a)$ is increasing in $s$, concave in $(s,a)$, and strictly concave in $a$. $\Gamma (s) = [0,s]$ is convex and satisfies $\Gamma(s_{1}) \subseteq \Gamma(s_{2})$ whenever $s_{1} \leq s_{2}$. In addition $p$ is increasing and concave. 

    This leads to intuitive insights that the value function is increasing and concave in wealth (wealth has a positive decreasing marginal value). This can also simplifies the optimization step in the value function iteration algorithm.  
\end{example}

To provide conditions for the differentiability of the value function, we cannot use Corollary \ref{Cor:prop} from Lecture 2, as the set of differentiable functions is not closed (e.g., consider the sequence of differentiable functions \( f_n = \sqrt{x^2 + \frac{1}{n}} \), which converges uniformly to \(|x|\), but \(|x|\) is not differentiable at $0$). Despite this, we can leverage concavity to provide conditions for differentiability for state variables that affect only the payoff function. This property can also be achieved through a change of variables in some applications.

\begin{theorem} (Differentiability). \label{thm:value_diff}
    Suppose that $S=S_{1} \times S_{2}$ for some convex sets $S_{1} \subseteq \mathbb{R}^{n_{1}}$  and $S_{2} \subseteq \mathbb{R}^{n_{2}}$. Assume that the transition function $p$ does not depend on $s_{1}$ and the reward function $r(s_{1},s_{2},a)$ is differentiable and concave in $s_{1}$.  

Assume that $(s_{1}(0),s_{2}(0),a^{*})$ belongs to the interior of the graph of  $\Gamma$ where $a^{*}$ is an optimal action under the state $(s_{1}(0),s_{2}(0))$. Then $V$ is differentiable at $(s_{1}(0),s_{2}(0))$ and the  derivative  is given by 
    $$\frac{\partial }{\partial s_{1} } V(s_{1}(0),s_{2}(0)) = \frac{\partial }{\partial s_{1} } r(s_{1}(0),s_{2}(0),a^{*}).$$
\end{theorem}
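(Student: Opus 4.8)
The plan is to run the Benveniste--Scheinkman envelope argument. Fix an optimal action $a^{*} \in \Gamma(s_{1}(0), s_{2}(0))$ at the point in question. Since $(s_{1}(0), s_{2}(0), a^{*})$ lies in the interior of $\operatorname{Gr}\Gamma$, there is an open neighborhood $N \subseteq S_{1}$ of $s_{1}(0)$ with $a^{*} \in \Gamma(s_{1}, s_{2}(0))$ for every $s_{1} \in N$. On $N$ define the auxiliary function
$$W(s_{1}) = r(s_{1}, s_{2}(0), a^{*}) + \beta \int_{S} V(s') \, p(s_{2}(0), a^{*}, ds'),$$
which is finite because $V$ is bounded, and whose integral term is a \emph{constant} in $s_{1}$ precisely because $p$ does not depend on $s_{1}$.

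Next I would record three facts. (i) $W(s_{1}(0)) = V(s_{1}(0), s_{2}(0))$: this is the Bellman equation $V = TV$ evaluated at the optimal action $a^{*}$. (ii) $W(s_{1}) \le V(s_{1}, s_{2}(0))$ for all $s_{1} \in N$: since $a^{*}$ is feasible at $(s_{1}, s_{2}(0))$, the quantity $Q(s_{1}, s_{2}(0), a^{*}, V)$, which equals $W(s_{1})$, is at most $TV(s_{1}, s_{2}(0)) = V(s_{1}, s_{2}(0))$. (iii) $W$ is differentiable at $s_{1}(0)$ with $\frac{\partial}{\partial s_{1}} W(s_{1}(0)) = \frac{\partial}{\partial s_{1}} r(s_{1}(0), s_{2}(0), a^{*})$, because $W$ is $r(\cdot, s_{2}(0), a^{*})$ plus a constant and $r$ is differentiable in $s_{1}$.

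The remaining ingredient is concavity of $V$ in the $s_{1}$-direction: $V(\cdot, s_{2}(0))$ is concave on the convex set $S_{1}$ --- this is the structural input the theorem leans on, and it holds, for instance, under the hypotheses of Theorem \ref{Thm:value_prop}(2) (note that $p$ not depending on $s_{1}$ and $r$ concave in $s_{1}$ are already assumed here). A finite concave function has a nonempty superdifferential at every interior point of its domain, so let $q$ be any supergradient of $V(\cdot, s_{2}(0))$ at $s_{1}(0)$, i.e.\ $V(s_{1}, s_{2}(0)) \le V(s_{1}(0), s_{2}(0)) + q \cdot (s_{1} - s_{1}(0))$ for $s_{1}$ near $s_{1}(0)$. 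Combining this with (i) and (ii) yields $W(s_{1}) \le W(s_{1}(0)) + q \cdot (s_{1} - s_{1}(0))$, so $q$ is also a supergradient of $W$ at $s_{1}(0)$; but $W$ is differentiable there, forcing $q = \frac{\partial}{\partial s_{1}} W(s_{1}(0))$. Hence the superdifferential of $V(\cdot, s_{2}(0))$ at $s_{1}(0)$ is the singleton $\{\frac{\partial}{\partial s_{1}} r(s_{1}(0), s_{2}(0), a^{*})\}$, which for a concave function is equivalent to differentiability at that point with that gradient, yielding the claimed formula (and, if $V$ is moreover jointly concave with its $s_{2}$-partials existing at the point, full differentiability at $(s_{1}(0), s_{2}(0))$).

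I expect the main obstacle to be exactly the concavity step: the envelope inequality (ii) alone only gives a one-sided bound on the increment of $V$, and it is concavity that supplies the matching supporting hyperplane from the other side to pin down the derivative --- so without concavity of $V$ the conclusion can fail, and the theorem is in effect being applied on top of a concavity result such as Theorem \ref{Thm:value_prop}(2). The rest --- checking that $W$ is well-defined, extracting the neighborhood $N$ from the interiority hypothesis, and verifying (i)--(iii) --- is routine bookkeeping with the Bellman equation.
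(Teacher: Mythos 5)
Your proposal is correct and is essentially the paper's own proof: the same envelope argument in which the fixed-action function $W$ (the paper's $h(\cdot,s_{2}(0),a^{*})$, whose integral term is constant in $s_{1}$ because $p$ does not depend on $s_{1}$) is sandwiched below $V(\cdot,s_{2}(0))$, touches it at $s_{1}(0)$, and absorbs any supergradient of $V$, which, since $W$ is differentiable, forces the superdifferential of $V(\cdot,s_{2}(0))$ to be the singleton $\{\partial r/\partial s_{1}\}$. Your explicit observation that concavity of $V(\cdot,s_{2}(0))$ is the structural input needed for the superdifferential to be nonempty is a point the paper leaves implicit (it simply posits a subgradient $y$ of $V(\cdot,s_{2}(0))$), but it does not change the argument.
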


\begin{proof}
 Recall that the set of subgradients of a concave function $f:X \rightarrow \mathbb{R}$ at a point $x_{0} \in X$ for some open set $X$ consists of all the vectors $y \in \mathbb{R}^{n}$ such that $ f(x) -f(x_{0}) \leq y \cdot (x-x_{0}) $ for all $x \in X$. This set is not empty and single-valued if and only if $f$ is differentiable at $x_{0}$ and the derivative equals the element in the set.

   Let $$h(s_{1},s_{2},a) =  r(s_{1},s_{2},a)  + \beta \int _{S} V(s_{1}',s_{2}')p(s_{2},a,d(s_{1}',s_{2}')) .$$ 
 Because $a^{*}$ is an optimal action we have $V(s_{1}(0),s_{2}(0)) = h(s_{1}(0),s_{2}(0),a^{*})$ and $V(s_{1},s_{2}) \geq  h(s_{1},s_{2},a)$ for all $a \in \Gamma(s_{1},s_{2})$.  

  In addition, $h$ is differentiable and concave in $s_{1}$ as $r$ is.
 
 Thus, if $y$ is a subgradient of $f:X \rightarrow \mathbb{R}$ given by $f(x) := V(x,s_{2}(0))$ defined on some neighborhood $X$ of $s_{1}(0)$ such that $(x,s_{2}(0),a^{*}) \in \operatorname{Gr} \Gamma$ for each $x \in X$ (there is such an $X$ by assumption) then 
$$h(x,s_{2}(0),a^{*}) - h(s_{1}(0),s_{2}(0),a^{*}) \leq f(x) -f(x_{0}) \leq y \cdot (x-x_{0})  $$
implies that $y$ is a subgradient of $h$. Because $h$ is differentiable in the first argument it has a unique subgradient at $s_{1}(0)$ which equals its derivative, i.e., 
 $$\frac{\partial }{\partial s_{1} } V(s_{1}(0),s_{2}(0)) =  \frac{\partial }{\partial s_{1} } h(s_{1}(0),s_{2}(0),a^{*}) = \frac{\partial }{\partial s_{1} } r(s_{1}(0),s_{2}(0),a^{*})$$
 which completes the proof. 
\end{proof}

Another important property is supermodularity. Recall that a partially ordered set is given by a set $X$ and a partial order on $X$. A partial order $\geq$ satisfies three properties - reflexivity: $a \geq a$ for each $a$, antisymmetric: $a \geq b$ and $b \geq a$ imply $a=b$, and transitivity: $a \leq b$ and $b \leq c$ imply $a \leq c$. A lattice $X$ is a partially ordered set such that each two elements $x,y \in X$ has a least upper bound (denoted by $x \lor y$) and a greatest lower bounded (denoted by $x \land y$). For simplicity, we will focus on the case that $X$ is in $\mathbb{R}^{k}$ and $x \lor y$ is just the pointwise maximum $x \lor y = (\max (x_{1} , y_{1}) , \ldots, \max (x_{n},y_{n}) )  $, $x \land y$ is the poinwise minimum $x \land y = (\min (x_{1} , y_{1}) , \ldots, \min (x_{n},y_{n}) )  $ . 

A function $f:X \rightarrow \mathbb{R}$ is said to be supermodular on a lattice $X$ if 
$$ f(x) + f(x') \leq f(x \lor x') + f(x \land x')$$
for all $x,x' \in X$. $f$ is submodular if $-f$ is supermodular 

It is easy to show that the sum of supermodular functions is supermodular and that supermodularity is closed under pointwise convergence. $p$ is called supermodular if $g(s,a) = \int f(s') p(s,a,ds')$ is supermodular whenever $f$ is. 

We will say that the correspondence $\Gamma$ is supermodular if $a' \in \Gamma(s')$ and $a'' \in \Gamma(s'')$ imply $a' \lor a'' \in \Gamma (s' \lor s'')$ and $a' \land a'' \in \Gamma (s' \land s'')$ for all $s',s''$. In other words, $\operatorname{Gr} \Gamma$ is a lattice in $S \times A$. 

We have the following result regarding the preservation of supermodularity by the maximum operator.
\begin{lemma} \label{lemma:maxsuper}
Suppose that $h:\operatorname{Gr} \Gamma \rightarrow \mathbb{R}$ is supermodular on the lattice  $\operatorname{Gr} \Gamma$.  Define $ m(s) = \sup _{a \in \Gamma(s)} h(s,a) $.  Then $m$ is supermodular. 
\end{lemma}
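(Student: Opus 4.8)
The plan is to follow the classical Topkis argument. Fix $s',s'' \in S$ and abbreviate $s_{\vee} = s' \lor s''$ and $s_{\wedge} = s' \land s''$; the goal is to establish $m(s') + m(s'') \le m(s_{\vee}) + m(s_{\wedge})$.

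First I would pick near-optimal feasible actions. For each $\epsilon > 0$ choose $a' \in \Gamma(s')$ and $a'' \in \Gamma(s'')$ with $h(s',a') \ge m(s') - \epsilon$ and $h(s'',a'') \ge m(s'') - \epsilon$. (In the u.s.c.\ setting of the course, $\Gamma(s)$ is compact and $h(s,\cdot)$ is u.s.c., so by Lemma \ref{Lemma:USC_MAX} the suprema are attained and one may simply take maximizers, dispensing with $\epsilon$.) Since $\operatorname{Gr}\Gamma$ is a lattice, $a' \lor a'' \in \Gamma(s_{\vee})$ and $a' \land a'' \in \Gamma(s_{\wedge})$, so $(s_{\vee}, a' \lor a'')$ and $(s_{\wedge}, a' \land a'')$ are feasible. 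By the definition of $m$ as a supremum over feasible actions,
\[
m(s_{\vee}) + m(s_{\wedge}) \ge h(s_{\vee}, a' \lor a'') + h(s_{\wedge}, a' \land a'').
\]

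Next I would invoke supermodularity of $h$. In the product lattice $S \times A$ one has $(s',a') \lor (s'',a'') = (s_{\vee}, a' \lor a'')$ and $(s',a') \land (s'',a'') = (s_{\wedge}, a' \land a'')$, and both of these points lie in $\operatorname{Gr}\Gamma$ by the previous step; hence supermodularity of $h$ on $\operatorname{Gr}\Gamma$ gives
\[
h(s_{\vee}, a' \lor a'') + h(s_{\wedge}, a' \land a'') \ge h(s',a') + h(s'',a'') \ge m(s') + m(s'') - 2\epsilon.
\]
Chaining this with the previous display and letting $\epsilon \downarrow 0$ yields $m(s_{\vee}) + m(s_{\wedge}) \ge m(s') + m(s'')$, i.e.\ $m$ is supermodular.

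The only point requiring a little care is the use of $\epsilon$-optimal actions when the supremum is not attained, which presupposes $m(s')$ and $m(s'')$ are finite; this holds whenever $h$ is bounded (as in our dynamic programming application, where $h = Q(\cdot,\cdot,f)$ with bounded $r$ and bounded $f$) or, more generally, whenever the suprema are attained. Everything else is routine: it is just the chaining of the defining inequalities together with the observation that the lattice operations on $\operatorname{Gr}\Gamma$ are the componentwise maxima and minima inherited from $S \times A$, which is exactly the hypothesis that $\Gamma$ is supermodular.
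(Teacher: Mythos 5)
Your proof is correct and is essentially the paper's own argument: both chain the feasibility of $(s'\lor s'', a'\lor a'')$ and $(s'\land s'', a'\land a'')$ (from supermodularity of $\Gamma$) with supermodularity of $h$ on $\operatorname{Gr}\Gamma$. The only cosmetic difference is that the paper takes arbitrary feasible $a',a''$ and then passes to the supremum, whereas you take $\epsilon$-optimal actions and let $\epsilon\downarrow 0$; your added remark about attainment in the u.s.c.\ compact setting is a nice but inessential refinement.
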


\begin{proof}
    Let $s',s'' \in S$, $a' \in \Gamma(s')$ and $a'' \in \Gamma(s'')$. Because $h$ is supermodular we have 
\begin{align*}
    h(s',a') + h(s'',a'') \leq h(s' \lor s'', a' \lor a'') + h(s' \land s'', a' \land a'') \leq m(s' \lor s'') + m(s' \land s'')
\end{align*}
where the second inequality follows because $\Gamma$ is supermodular so $a' \lor a'' \in \Gamma (s' \lor s'')$ and $a' \land a'' \in \Gamma (s' \land s'')$. 
\end{proof}

Hence, we apply again Corollary 1 from Lecture 2 to provide conditions that imply the supermodularity of the value function $V$. 

\begin{theorem} \label{Them:supermodular} (Supermodularity). 
    Suppose that $r$ is supermodular, $p$ is supermodular and $\Gamma$ is supermodular  when $A$ and $S$ are lattices in $\mathbb{R}^{k}$ and $\mathbb{R}^{n}$. Then $V$ is supermodular.
\end{theorem}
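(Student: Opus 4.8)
The plan is to invoke Corollary~\ref{Cor:prop} from Lecture~2, exactly as in the proofs of Theorems~\ref{Thm:value_prop} and~\ref{thm:value_diff}. Let $D'$ be the set of all bounded, u.s.c., and supermodular functions on $S$. First I would verify that $D'$ is a closed subset of $B_{m}(S)$ contained in $D$: membership in $D$ is immediate since $D$ is precisely the set of bounded u.s.c.\ functions; closedness under uniform convergence holds because a uniform limit of bounded u.s.c.\ functions is bounded and u.s.c.\ (the argument in Lemma~\ref{Lem:complete}) and because supermodularity is preserved under pointwise — hence under uniform — convergence, as noted in the text. Note also that $D'$ is nonempty, since constant functions are trivially supermodular, so the Corollary applies as soon as $T$ maps $D'$ into itself.

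The core step is to show that $f \in D'$ implies $Tf \in D'$. Boundedness and upper semi-continuity of $Tf$ are already guaranteed by the results of Lecture~2 under the maintained assumption (via Lemma~\ref{Lemma:max}), so it only remains to prove that $Tf$ is supermodular. Set
$h(s,a) = r(s,a) + \beta \int_{S} f(s')\,p(s,a,ds')$ on $\operatorname{Gr}\Gamma$. Since $r$ is supermodular, $p$ is supermodular (so that $(s,a)\mapsto \int f(s')p(s,a,ds')$ is supermodular whenever $f$ is), and the sum of supermodular functions is supermodular, $h$ is supermodular on the lattice $\operatorname{Gr}\Gamma$. Here I use the assumption that $\Gamma$ is supermodular, which is exactly the statement that $\operatorname{Gr}\Gamma$ is a sublattice of $S\times A$; consequently the joins and meets used to test supermodularity of $h$ on $\operatorname{Gr}\Gamma$ coincide with those in $S\times A$, so supermodularity of $r$ and of $p$ with respect to the product order transfers to $\operatorname{Gr}\Gamma$. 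Applying Lemma~\ref{lemma:maxsuper} to $h$ then gives that $Tf(s) = \sup_{a\in\Gamma(s)} h(s,a)$ is supermodular, i.e.\ $Tf \in D'$.

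Finally, Corollary~\ref{Cor:prop} yields $V \in D'$, so $V$ is supermodular, which completes the proof. I expect the only genuinely delicate point to be the bookkeeping around the lattice structure: one must be careful that the hypothesis ``$\Gamma$ supermodular'' is precisely what makes $\operatorname{Gr}\Gamma$ a lattice on which Lemma~\ref{lemma:maxsuper} can be applied, and that supermodularity of $r$ and $p$ — phrased relative to the product order on $S\times A$ — is inherited by $\operatorname{Gr}\Gamma$ as a sublattice. Everything else is a direct reuse of the Corollary~\ref{Cor:prop} template together with the preservation lemma, and no new estimates are needed.
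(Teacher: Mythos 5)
Your proposal is correct and follows essentially the same route as the paper: reduce to showing $Tf$ is supermodular whenever $f$ is, obtain supermodularity of $h(s,a)=r(s,a)+\beta\int f(s')p(s,a,ds')$ as a sum of supermodular functions (using supermodularity of $p$), apply Lemma~\ref{lemma:maxsuper} (which is where supermodularity of $\Gamma$ enters), and conclude via closedness of the class of supermodular functions and Corollary~\ref{Cor:prop}. Your added care about $\operatorname{Gr}\Gamma$ being a sublattice is a correct and slightly more explicit rendering of what the paper leaves implicit.
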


\begin{proof}
  Given the considerations of Theorem \ref{Thm:value_prop} and the fact that the set of supermodular functions is closed, it is enough to show that $Tf$ is supermodular whenever $f$ is.   Let $h(s,a) = r(s,a) + \beta \int f(s')p(s,a,ds') $ and let $f$ be supermodular. Then because $p$ is supermodular, $h$ is supermodular as the sum of two supermodular functions.  Lemma \ref{lemma:maxsuper} shows that $Tf$ is supermodular which concludes the proof.
\end{proof}

\subsection{Properties of the Optimal Policy Function}

In this section we will provide conditions that ensure that the optimal policy is increasing.

Let $E$ be a partially ordered set with an order $\succeq$. 
A function $f :S \times E \rightarrow \mathbb{R}$ is said to have increasing differences in $(s ,e)$ on $S \times E$ if for all $e_{2} ,e_{1} \in E$ and $s_{2} ,s_{1} \in S$ such that $e_{2} \succeq e_{1}$ and $s_{2} \geq s_{1}$, we have \begin{equation*}f(s_{2} ,e_{2}) -f(s_{2} ,e_{1}) \geq f(s_{1} ,e_{2}) -f(s_{1} ,e_{1}) .
\end{equation*} A function $f$ has decreasing differences if $-f$ has increasing differences. Note that $S$ and $E$ are not required to be  a lattice for the definition of increasing differences (it is clear that supermodularity on $S \times E $ implies increasing differences when $S$ and $E$ are lattices).

 Let $X_{i}$ be a lattice in $\mathbb{R}$ and consider the lattice $X=\times_{i=1}^{n} X_{i}$ in $\mathbb{R}^{n}$. Then $f$ has increasing differences on $X$ if it has increasing differences in each pair ($x_{i},x_{j}$) on $X_{i} \times X_{j}$, $i \neq j$, $1 \leq i,j \leq n$. 
Increasing differences is a natural concept in economics. Suppose that $f$ is a production function,  then  increasing differences can signify that the effectiveness of one input increases with the level of another input, i.e., the inputs are complementarities. As an example, $f(x) = x_{1}^{\alpha_{i} } \cdot .... \cdot x_{n}^{\alpha_{i}} $ has increasing differences on $\{x \in \mathbb{R}^{n}: x \geq 0 \}$ and is called the Cobb-Douglas production function.

 Increasing differences is equivalent to supermodularity under certain conditions as the following Proposition shows. 

\begin{proposition} \label{prop:supermodular}
    Let $X_{i}$ be a lattice in $\mathbb{R}$ and consider the lattice $X=\times_{i=1}^{n} X_{i}$ in $\mathbb{R}^{n}$ under the standard product order. Then $f:X \rightarrow \mathbb{R}$ has increasing differences on $X$ if and only if $f$ is supermodular on $X$. 
    
    If $f$ is twice  continuously differentiable then it has increasing differences if and only if $\partial ^{2} f / \partial x_{i} \partial x_{j} \geq 0$ for all $i \neq j$. 
\end{proposition}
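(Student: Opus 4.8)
The plan is to prove the two implications of the equivalence separately, and then handle the $C^{2}$ characterization.

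\emph{Supermodular $\Rightarrow$ increasing differences.} I would fix indices $i\neq j$ and common values for all the remaining coordinates, and take $s_i^{2}\geq s_i^{1}$ in $X_i$ and $s_j^{2}\geq s_j^{1}$ in $X_j$. Let $x$ be the point carrying $s_i^{2}$ in coordinate $i$ and $s_j^{1}$ in coordinate $j$, and let $x'$ carry $s_i^{1}$ in coordinate $i$ and $s_j^{2}$ in coordinate $j$, with $x$ and $x'$ equal on every other coordinate. Then $x\vee x'$ carries $(s_i^{2},s_j^{2})$ and $x\wedge x'$ carries $(s_i^{1},s_j^{1})$ in coordinates $(i,j)$, and the supermodularity inequality $f(x)+f(x')\leq f(x\vee x')+f(x\wedge x')$ rearranges into exactly the increasing-differences inequality in the pair $(x_i,x_j)$. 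Since $i,j$ and the fixed coordinates were arbitrary, $f$ has increasing differences on $X$.

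\emph{Increasing differences $\Rightarrow$ supermodular.} Given $x,y\in X$, set $a=x\wedge y$, $b=x\vee y$, $I=\{i:x_i>y_i\}$, $J=\{j:y_j>x_j\}$, so that $x$ agrees with $b$ on $I$, with $a$ on $J$, and with $y$ off $I\cup J$, while $y$ agrees with $a$ on $I$, with $b$ on $J$, and with $x$ off $I\cup J$. Enumerating $I=\{i_1,\dots,i_m\}$, I would build $w^{0}=y$ and $w^{k}$ from $w^{k-1}$ by switching coordinate $i_k$ from $a_{i_k}$ up to $b_{i_k}$, so $w^{m}=b$, and similarly $v^{0}=a$ and $v^{k}$ from $v^{k-1}$ by the same switch, so $v^{m}=x$. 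The desired inequality $f(b)+f(a)\geq f(x)+f(y)$ is equivalent to $\sum_{k=1}^{m}\bigl(f(w^{k})-f(w^{k-1})\bigr)\geq\sum_{k=1}^{m}\bigl(f(v^{k})-f(v^{k-1})\bigr)$, so it suffices to show $f(w^{k})-f(w^{k-1})\geq f(v^{k})-f(v^{k-1})$ for each $k$. For fixed $k$ one checks that $w^{k-1}$ and $v^{k-1}$ carry the same value $a_{i_k}$ in coordinate $i_k$ and differ only on $J$, with $w^{k-1}\geq v^{k-1}$; interpolating between them by switching the $J$-coordinates one at a time from their $a$-value to their $b$-value, the difference of the two $i_k$-increments telescopes into a sum whose generic term is precisely an instance of the increasing-differences hypothesis for a single pair $(i_k,j)$ with $j\in J$ (all other coordinates held fixed), hence nonnegative.

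I expect the bookkeeping in the second implication — a telescoping over $I$ nested inside a telescoping over $J$, with careful tracking of which coordinates have already been switched — to be the only delicate step; each reduction is forced, but the indices must be kept straight. An alternative that avoids the explicit double telescoping is an induction on the number of coordinates on which $x$ and $y$ differ, the base case being $x,y$ comparable, where both sides of the supermodularity inequality coincide. Finally, for the $C^{2}$ statement: holding all coordinates other than $i,j$ fixed, the second difference $f(s_i^{2},s_j^{2})-f(s_i^{2},s_j^{1})-f(s_i^{1},s_j^{2})+f(s_i^{1},s_j^{1})$ equals $\int_{s_j^{1}}^{s_j^{2}}\!\int_{s_i^{1}}^{s_i^{2}}\tfrac{\partial^{2} f}{\partial x_i\partial x_j}(u,t)\,du\,dt$ by applying the fundamental theorem of calculus first in $x_i$ and then in $x_j$ (continuity of the mixed partial justifies both the integral representation and the symmetry of the order). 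Nonnegativity of the mixed partials then gives nonnegativity of every such second difference, i.e.\ increasing differences in each pair and hence on $X$; conversely, dividing the second difference by $(s_i^{2}-s_i^{1})(s_j^{2}-s_j^{1})$ and shrinking the rectangle to an interior point recovers $\partial^{2} f/\partial x_i\partial x_j\geq 0$.
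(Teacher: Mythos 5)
The paper states this proposition without proof (it is a classical result, deferred to \cite{topkis1998supermodularity} in the bibliography note), so there is no in-paper argument to compare against. Your proof is correct and is essentially the standard Topkis argument: the forward direction by specializing the supermodularity inequality to two points differing in exactly two coordinates, and the converse by the double telescoping from $(x\wedge y, y)$ and $(x, x\vee y)$ along the coordinates of $I$, with each $i_k$-increment compared via a further telescoping over $J$ that reduces to pairwise increasing differences (note $i_k\in I$ and $j\in J$ are automatically distinct, so the pairwise hypothesis applies). The $C^{2}$ part via the double-integral representation of the second difference is also the standard argument; the only implicit caveat is that it presupposes each $X_i$ is a nondegenerate interval so that the integrals and the shrinking-rectangle limit make sense, which is the natural setting for the differentiable statement. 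No gaps.
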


Suppose that $X$ is a lattice with the order $\geq$. For two nonempty subsets of $X$, say $D$ and $E$, write $D \succeq _{set} E$ if $x \in D$ and $x' \in E$ imply $x \lor x' \in D$ and $x \land x' \in E$. A correspondence $\Lambda$ from $X$ to $E$ where $\Lambda$ is a (sub)-lattice for each $x$, is ascending if $\Lambda (x_{2}) \succeq_{set} \Lambda(x_{1})$ whenever $x_{2} \geq x_{1}$.  Note that if $\Lambda$ is supermodular then it is ascending.  

The following is a key result in comparing the optimal solutions of a parameterized optimization problem. 

\begin{proposition} \label{Prop:topkis}
    Suppose that $X  
    \subseteq \mathbb{R}^{n}$ is a lattice, $E$ is a partially ordered set, $\Lambda(e) \subseteq X$ is non-empty, compact for each $e \in E$, and ascending. $f:X \times E \rightarrow \mathbb{R}$ is supermodular in $x$, has increasing differences, and is u.s.c in $x$.  Let 
    $$G(e) = \operatorname{argmax} _{x \in \Lambda(e)} f(x,e)$$ 
    
    Then  $G(e)$ is non-empty and ascending.

\end{proposition}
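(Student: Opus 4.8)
The plan is to separate the two claims. Non-emptiness is immediate: for fixed $e \in E$, the set $\Lambda(e)$ is non-empty and compact and $x \mapsto f(x,e)$ is u.s.c.\ in $x$, so Lemma \ref{Lemma:USC_MAX} guarantees that the supremum defining $G(e)$ is attained; hence $G(e) \neq \emptyset$ and the argmax is a genuine maximum.

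For the ascending property I would fix $e_2 \succeq e_1$, pick arbitrary $a \in G(e_1)$ and $b \in G(e_2)$, and set $c = a \land b$ and $d = a \lor b$ (both in $X$ since $X$ is a lattice). Because $\Lambda$ is ascending, $\Lambda(e_2) \succeq_{set} \Lambda(e_1)$, which yields $c \in \Lambda(e_1)$ and $d \in \Lambda(e_2)$. Since $a$ is optimal over $\Lambda(e_1)$ and $b$ over $\Lambda(e_2)$, it then suffices to show $f(c,e_1) \geq f(a,e_1)$ and $f(d,e_2) \geq f(b,e_2)$; combining these with the reverse optimality inequalities gives $c \in G(e_1)$ and $d \in G(e_2)$, i.e.\ $G(e_2) \succeq_{set} G(e_1)$ (specializing to $e_1 = e_2$ also records that each $G(e)$ is a sublattice, so $G$ is ascending in the sense defined above).

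The core of the argument is a two-step chain for each of the two inequalities. To obtain $f(d,e_2) \geq f(b,e_2)$: first apply supermodularity of $f(\cdot,e_1)$ to the pair $(a,b)$, namely $f(c,e_1) + f(d,e_1) \geq f(a,e_1) + f(b,e_1)$, and combine with the optimality inequality $f(c,e_1) \leq f(a,e_1)$ to deduce $f(d,e_1) \geq f(b,e_1)$; then apply increasing differences to the comparison $d \geq b$, $e_2 \succeq e_1$, i.e.\ $f(d,e_2) - f(d,e_1) \geq f(b,e_2) - f(b,e_1)$, and rearrange using $f(d,e_1) \geq f(b,e_1)$ to conclude. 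The inequality $f(c,e_1) \geq f(a,e_1)$ is obtained symmetrically: use supermodularity of $f(\cdot,e_2)$ together with optimality of $b$ at $e_2$ to get $f(c,e_2) \geq f(a,e_2)$, then apply increasing differences to $a \geq c$, $e_2 \succeq e_1$ and rearrange.

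I do not expect a genuine obstacle here; the two things to be careful about are invoking Lemma \ref{Lemma:USC_MAX} so that $G$ is well-defined as an argmax, and tracking signs when rearranging the two increasing-differences inequalities, since a flipped inequality would break the chain. Note that supermodularity in $x$ and u.s.c.\ in $x$ enter only fiberwise (at a fixed $e$), whereas increasing differences is precisely what transports the comparison between $e_1$ and $e_2$.
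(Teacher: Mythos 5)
Your proof is correct and follows essentially the same route as the paper: non-emptiness via the u.s.c.\ maximum lemma, and the ascending property via the same combination of optimality, supermodularity in $x$, and increasing differences applied to $a \lor b$ and $a \land b$. The only cosmetic difference is that the paper chains all four inequalities into a single display squeezed between two zeros, so that $a \land b \in G(e_1)$ and $a \lor b \in G(e_2)$ drop out simultaneously, whereas you run two symmetric two-step arguments; both are valid.
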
 
\begin{proof}
The fact that $G$ is non-empty is immediate from Lecture 2.  

Note that $G(e)$ is a sublattice in $X$. 
To see this, let $x , y \in G(e)$ and observe that 
$$ 0 \leq f(x,e) - f(y \land x,e) \leq f(y \lor x , e) - f(y,e) \leq 0 $$
so $y \land x$ and $y \lor x$ belong to $G(e)$.

Now let $e'' \succeq e'$, $x' \in G(e')$ and $x'' \in G(e'')$. Then because $\Lambda$ is ascending, we have $x' \lor x'' \in \Lambda(e'')$ and $x' \land x'' \in \Lambda(e')$. We have 
\begin{align*}
   0 & \leq f(x',e') - f(x' \land x'',e')  \leq f(x' \lor x'',e') - f(x'',e')   \leq f(x' \lor x'',e'') - f(x'',e'') \leq 0.
\end{align*}
The first and last inequalities follow from the optimality $x'$ and $x''$. The second inequality follows because $f$ is supermodular in $x$. The third inequality follows because $f$ has increasing differences.  Thus, $x' \lor x'' \in G(e'')$ and $x' \land x'' \in G(e')$ which prove the result.

\end{proof}

\begin{example}
    Suppose that $f$ is a continuous increasing production function (see above) from $\mathbb{R}^{n}_{+}$ to $\mathbb{R}$. We can choose a vector of inputs $(x_{1},\ldots,x_{n})$ from some compact set $X$ and we are trying to maximize profits $pf(x_{1},\ldots,x_{n})-\sum _{i=1}^{n} c_{i} x_{i}$ where $c_{i} \geq 0$ is the cost of a unit of input $x_{i}$ and $p>0$ the price of the output we produce. From Proposition \ref{Prop:topkis}, if $f$ is supermodular, an increase in the price leads to an increase of all inputs. More precisely, $$G(p) = \operatorname{argmax}_{x \in X}pf(x_{1},\ldots,x_{n})-\sum _{i=1}^{n} c_{i} x_{i}$$     
is ascending (why)? 
\end{example}

Importantly, Proposition \ref{Prop:topkis} leads to the following Corollary regarding the set of optimal solutions for the dynamic programming problem.  

\begin{theorem} \label{Thm:Optimal Solutions}
    Suppose that $A$ and $S$ are lattices in $\mathbb{R}^{k}$ and $\mathbb{R}^{n}$, respectively. Suppose that $r$, $p$, and $\Gamma$   are supermodular. 
    Then the set of optimal solutions of the discounted dynamic programming problem 
    $$G(s) = \operatorname{argmax} _{a \in \Gamma(s)} r(s,a) + \beta \int V(s')p(s,a,ds')$$
    is ascending. 
\end{theorem}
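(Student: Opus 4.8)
The plan is to obtain this as a direct consequence of the parameterized‑optimization comparison result, Proposition \ref{Prop:topkis}. I would apply that proposition with parameter space $E=S$, choice space $X=A$, feasible‑set correspondence $\Lambda=\Gamma$, and objective $f(a,s)=h(s,a):=r(s,a)+\beta\int_{S}V(s')p(s,a,ds')$. All the work then lies in verifying the four hypotheses of Proposition \ref{Prop:topkis} in the present setting.

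First I would dispose of the correspondence hypotheses: $A\subseteq\mathbb{R}^{k}$ is a lattice by assumption; $S$ is a lattice, hence in particular a partially ordered set, which is all Proposition \ref{Prop:topkis} requires of $E$; $\Gamma(s)$ is non‑empty and compact for each $s$ by the standing assumption of Lecture 3; and $\Gamma$ is ascending because it is supermodular (a supermodular correspondence is ascending, as noted right after the definition of ``ascending'').

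Next, the objective. The key input is that $V$ is itself supermodular, which is exactly the content of Theorem \ref{Them:supermodular}, whose hypotheses — $r$, $p$, $\Gamma$ supermodular — are precisely what we are assuming. Since $p$ is supermodular, $\int_{S}V(s')p(s,a,ds')$ is supermodular on $S\times A$; adding the supermodular $r$ (and the positive factor $\beta$) shows $h$ is supermodular on the product lattice $S\times A$. From this single supermodular inequality for $h$ I can read off the two facts Proposition \ref{Prop:topkis} needs: evaluating it at a pair $(s,a'),(s,a'')$ with a common first coordinate gives that $h(s,\cdot)$ is supermodular in $a$ for each fixed $s$; evaluating it at $(s_{2},a_{1}),(s_{1},a_{2})$ with $s_{2}\geq s_{1}$ and $a_{2}\geq a_{1}$, whose join is $(s_{2},a_{2})$ and meet is $(s_{1},a_{1})$, gives the increasing‑differences inequality $h(s_{2},a_{2})-h(s_{2},a_{1})\geq h(s_{1},a_{2})-h(s_{1},a_{1})$. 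Finally, $h$ is u.s.c.\ in $a$: $r$ is u.s.c., and because $p$ is u.s.c.\ and $V$ is bounded and u.s.c., the function $(s,a)\mapsto\int_{S}V(s')p(s,a,ds')$ is u.s.c., so $h$ is u.s.c.\ in $(s,a)$ and a fortiori in $a$.

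With the hypotheses in place, Proposition \ref{Prop:topkis} yields that $G(s)=\operatorname{argmax}_{a\in\Gamma(s)}h(s,a)$ is non‑empty and ascending, which is the claim. I do not expect a genuine obstacle here; the one point requiring care is the logical dependence — the supermodularity of $V$ that feeds the argument is not assumed directly but imported from Theorem \ref{Them:supermodular}, so one should note its hypotheses hold verbatim. A minor secondary point is that ``increasing differences on $S\times A$'' must be handled correctly when $S$ and $A$ are higher‑dimensional lattices rather than chains; but since we only ever invoke the defining supermodular inequality of $h$ at explicit join/meet pairs, nothing beyond the stated hypotheses of Proposition \ref{Prop:topkis} is needed.
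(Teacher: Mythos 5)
Your proposal is correct and follows essentially the same route as the paper: invoke Theorem \ref{Them:supermodular} to get supermodularity of $V$, conclude that $h(s,a)=r(s,a)+\beta\int V(s')p(s,a,ds')$ is supermodular on $S\times A$, and then apply Proposition \ref{Prop:topkis}. The only cosmetic difference is that you extract increasing differences directly from the supermodular inequality at the join/meet pair $(s_2,a_1),(s_1,a_2)$, whereas the paper cites Proposition \ref{prop:supermodular}; both are fine.
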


  \begin{proof}
      From Theorem \ref{Them:supermodular} the value function $V$ is supermodular. Hence, 
      $r(s,a) + \beta \int V(s')p(s,a,ds') $ is supermodular because $p$ is supermodular and  the sum of supermodular functions is supermodular. This implies that  $r(s,a) + \beta \int V(s')p(s,a,ds') $ has increasing differences in $(s,a)$ from Proposition \ref{prop:supermodular}, so the result follows from Proposition \ref{Prop:topkis}.
      \end{proof}

\begin{example}
    Consider again the consumption-savings problem from Lecture 1 when the utility function $u$ is  increasing and concave. Then we can use Theorem \ref{Thm:Optimal Solutions} to show that savings are increasing with wealth (why?). 
\end{example}

\subsection{Parameterized Dynamic Programming}

In this section, we study a broader scope of dynamic programming models by incorporating parameters that influence the primitives of the model. We introduce a parameter \( e \) from a set \( E \) and to highlight the dependence on this parameter, we denote, with slight abuse of notations, \( \Gamma(s,e) \), \( r(s,a,e) \), and \( p(s,a,e,B) \), the set of feasible actions, the payoff function, and the transition function, respectively. 

The set \( E \) is assumed to be a partially ordered set, equipped with an order denoted by \( \succeq \). This ordering can represent  varying discount factors, a parameter that impacts the payoff function, or different transition functions ordered by first-order stochastic dominance. Such parameterization allows us to derive comparative statics results with respect to the primitives of the dynamic programming model.

We will need the following Lemma that provides conditions for the preservation of increasing differences for the maximum operator. 
\begin{lemma} \label{Lemma:Increasing_d}
   Let $S,A$ be lattices and $E$ a partially ordered set.
   
   Assume that the correspondence  $\Gamma:S \rightarrow 2^{A}$ is supermodular, $h:S \times A \times E \rightarrow \mathbb{R}$ is bounded, supermodular in $(s,a)$, and has increasing differences. Then 
   $$ m(s,e ) = \max _{a \in \Gamma(s)} h(s,a,e)$$
   has increasing differences. 
\end{lemma}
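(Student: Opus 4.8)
The plan is to mimic the proof of Lemma~\ref{lemma:maxsuper}, now carrying the parameter $e$ along. Fix $s_2 \geq s_1$ in $S$ and $e_2 \succeq e_1$ in $E$; by definition the goal is the inequality $m(s_1,e_2)+m(s_2,e_1) \leq m(s_1,e_1)+m(s_2,e_2)$. I would reduce this to the following pointwise claim: for every $a_1 \in \Gamma(s_1)$ and $a_2 \in \Gamma(s_2)$,
$$ h(s_1,a_1,e_2) + h(s_2,a_2,e_1) \;\leq\; h(s_1,\, a_1 \land a_2,\, e_1) \;+\; h(s_2,\, a_1 \lor a_2,\, e_2). $$
Granting this, observe that $a_1 \lor a_2 \in \Gamma(s_2)$ and $a_1 \land a_2 \in \Gamma(s_1)$ because $\Gamma$ is supermodular and $s_1 \lor s_2 = s_2$, $s_1 \land s_2 = s_1$; hence the right-hand side is at most $m(s_1,e_1) + m(s_2,e_2)$. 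Since that bound depends on neither $a_1$ nor $a_2$, taking the supremum over $a_1 \in \Gamma(s_1)$ and then over $a_2 \in \Gamma(s_2)$ on the left gives $m(s_1,e_2) + m(s_2,e_1) \leq m(s_1,e_1)+m(s_2,e_2)$, which is exactly increasing differences of $m$ in $(s,e)$. Boundedness of $h$ is used only to guarantee these suprema are finite, so attainment of the maxima is never needed.

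To prove the pointwise claim I would chain two of the hypotheses on $h$. First, apply supermodularity of $h(\cdot,\cdot,e_2)$ in $(s,a)$ to the pair $(s_1,a_1)$, $(s_2,a_2)$:
$$ h(s_1,a_1,e_2) + h(s_2,a_2,e_2) \;\leq\; h(s_1,\, a_1 \land a_2,\, e_2) \;+\; h(s_2,\, a_1 \lor a_2,\, e_2). $$
Subtracting $h(s_2,a_2,e_2)$ and adding $h(s_2,a_2,e_1)$ to both sides, the pointwise claim is seen to follow once we verify
$$ h(s_1,\, a_1 \land a_2,\, e_2) - h(s_1,\, a_1 \land a_2,\, e_1) \;\leq\; h(s_2, a_2, e_2) - h(s_2, a_2, e_1), $$
and this is precisely the increasing-differences property of $h$ applied to the ordered pair $(s_1, a_1 \land a_2) \leq (s_2, a_2)$ in the product lattice $S \times A$ (using $a_1 \land a_2 \leq a_2$ and $s_1 \leq s_2$) together with $e_1 \preceq e_2$.

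Each individual step is routine: the only care needed is the bookkeeping of which supermodularity inequality (the one at $e_2$, as above, or symmetrically the one at $e_1$) to pair with the increasing-differences step, and the verification that $a_1 \lor a_2 \in \Gamma(s_2)$ and $a_1 \land a_2 \in \Gamma(s_1)$ — which is where supermodularity of $\Gamma$ enters, exactly as in Lemma~\ref{lemma:maxsuper}. I do not anticipate any real obstacle; if one additionally assumes $h$ is u.s.c. in $a$ and $\Gamma(s)$ compact (as is typical in this setting), the suprema above are attained and $m$ is a genuine maximum, but the argument does not rely on that.
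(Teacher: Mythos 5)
Your proof is correct and is essentially the paper's own argument: both use supermodularity of $\Gamma$ to place $a_1\lor a_2\in\Gamma(s_2)$ and $a_1\land a_2\in\Gamma(s_1)$, then chain the supermodularity of $h(\cdot,\cdot,e_2)$ in $(s,a)$ with increasing differences applied to the ordered pair $(s_1,a_1\land a_2)\leq(s_2,a_2)$. The only difference is presentational — the paper bounds $m(s_2,e_2)+m(s_1,e_1)$ from below and maximizes the right-hand side, while you bound $h(s_1,a_1,e_2)+h(s_2,a_2,e_1)$ from above and take suprema on the left — which is the same inequality rearranged.
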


\begin{proof}
  Suppose that $s_{2} \geq s_{1} $,  $e_{2} \succeq e_{1}$, $s_{2},s_{1} \in S$, $e_{2},e_{1} \in E$, and $a_{2} \in \Gamma(s_{2})$, $a_{1} \in \Gamma(s_{1})$. We have
    \begin{align*}
        m(s_{2},e_{2}) + m(s_{1},e_{1}) & \geq h(s_{2},a_{2} \lor a_{1}, e_{2}) + h(s_{1},a_{2} \land  a_{1}, e_{1}) \\
        & = h(s_{2},a_{2} \lor a_{1}, e_{2}) + h(s_{1},a_{2} \land  a_{1}, e_{1}) \\
        & + h(s_{1},a_{2} \land a_{1}, e_{2}) - h(s_{1},a_{2} \land  a_{1}, e_{2}) \\
        & \geq  h(s_{2},a_{2} , e_{2}) + h(s_{1},a_{2} \land  a_{1}, e_{1}) \\
        & + h(s_{1}, a_{1}, e_{2}) - h(s_{1},a_{2} \land  a_{1}, e_{2}) \\
        & =  h(s_{2},a_{2} , e_{2}) + h(s_{1},a_{2} \land  a_{1}, e_{1}) + h(s_{2},a_{2},e_{1}) \\
        & + h(s_{1}, a_{1}, e_{2}) - h(s_{1},a_{2} \land  a_{1}, e_{2}) - h(s_{2},a_{2},e_{1}) \\ 
        & \geq  h(s_{1},a_{1},e_{2}) + h(s_{2},a_{2},e_{1}). 
    \end{align*}
    Taking the maximum on the right-hand-side yields that $m$ has increasing differences. The first inequality follows from the supermodularity of $\Gamma$. The second inequality follows from supermodularity of $h$. The third inequality follows from increasing differences of $h$. 
\end{proof}

\begin{remark}
    (i) Lemma \ref{Lemma:Increasing_d} holds also when $\Gamma(s)$ is not compact. The proof is the same where instead of maximum we use supremum.

    (ii) Lemma \ref{Lemma:Increasing_d} also holds when $\Gamma:S \times E \rightarrow 2^{A}$ as long as $a_{2} \lor a_{1} \in \Gamma(s_{2},e_{2})$ and $a_{2} \land a_{1} \in \Gamma(s_{1},e_{1})$ whenever $s_{2} \geq s_{1}$, $e_{2} \succeq e_{1}$, $a_{1} \in \Gamma(s_{1},e_{2})$, and $a_{2} \in \Gamma (s_{2},e_{1})$. The proof is the same but the first inequality and the last step require the assumption above. 

    (iii) The assumption that $S$ and $A$ belong to $\mathbb{R}^{n}$ and $\mathbb{R}^{k}$ is not required for the proof of Lemma \ref{Lemma:Increasing_d}. 
\end{remark}

Let \( G(s, e) \) be the optimal policy correspondence for the discounted dynamic programming model parameterized by \( e \), as described in Theorem \ref{Thm:Optimal Solutions}. The following theorem provides comparative statics results regarding changes in the discount factor and the payoff function.

\begin{theorem} \label{Thm:discount}
    Suppose that  $A$ and $S$ are lattices in $\mathbb{R}^{k}$ and $\mathbb{R}^{n}$, respectively.

     (i) (A change in the discount factor):    Suppose that $r$ is supermodular $(s,a)$ and is increasing in $s$, $p$ is increasing and supermodular in $(s,a)$, $\Gamma$ is supermodular, and $\Gamma(s_{1}) \subseteq \Gamma(s_{2})$ whenever $s_{1} \leq s_{2}$. 
    
   Then $G(s,\beta)$ is ascending. 

(ii) (A parameter that influences the payoff function): Let $c \in E$ be a parameter that influences the payoff function and we write $r(s,a,c)$ instead of $r(s,a)$ where $E$ is a partially ordered set. 
Suppose that $r$ in supermodular in $(s,a)$, has increasing differences in $(s,a,c)$, $p$ is  increasing and supermodular in $(s,a)$, and $\Gamma$ is supermodular. 

Then $G(s,c)$ is ascending. 
\end{theorem}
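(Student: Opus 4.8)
The plan is to verify, in both settings, the hypotheses of Proposition \ref{Prop:topkis} for the parameterized one-step objective
\[
Q(s,a,e) = r(s,a,e) + \beta \int_S V(s',e)\, p(s,a,ds'),
\]
where $e$ denotes the parameter ($e=\beta$ in (i), $e=c$ in (ii)) and $\beta$ is either $e$ itself (case (i)) or a fixed constant (case (ii)), taking $a$ as the decision variable and $(s,e)$ as the ordered parameter, so that $G(s,e)=\operatorname{argmax}_{a\in\Gamma(s)}Q(s,a,e)$ comes out ascending in $(s,e)$. Most of what Proposition \ref{Prop:topkis} needs is immediate: $\Gamma(s)$ is nonempty and compact by the standing assumption; $\Gamma$ supermodular makes $(s,e)\mapsto\Gamma(s)$ ascending (it does not depend on $e$); $r$ and $p$ u.s.c.\ together with $V(\cdot,e)$ u.s.c.\ (the value function of the model with $e$ fixed) make $Q(s,\cdot,e)$ u.s.c.; and $Q$ is supermodular in $a$ and has increasing differences in $(a,s)$ exactly as in the proofs of Theorems \ref{Them:supermodular} and \ref{Thm:Optimal Solutions}, using that $V(\cdot,e)$ is supermodular in $s$ for each fixed $e$ (apply Theorem \ref{Them:supermodular} to that fiber). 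The one genuinely new input is that $Q$ have increasing differences in $(a,e)$, and the first thing I would establish is the analogous property of $V$ itself.

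To get that, I would run value iteration in parallel over the parameter: set $W_0\equiv 0$ and $W_{n+1}(s,e)=\max_{a\in\Gamma(s)}\bigl[r(s,a,e)+\beta\int W_n(s',e)\,p(s,a,ds')\bigr]$, so that for each fixed $e$ the sequence $W_n(\cdot,e)=T_e^n 0$ converges uniformly in $s$ to $V(\cdot,e)$ by the value-iteration corollary of Theorem \ref{Thm:mainDP}. Then I would show by induction on $n$ that every $W_n$ is bounded, supermodular in $s$, and has increasing differences in $(s,e)$ --- and, in case (i) only, is also increasing in $s$. The inductive step uses Lemma \ref{Lemma:Increasing_d} applied to $h_n(s,a,e)=r(s,a,e)+\beta\int W_n(s',e)\,p(s,a,ds')$: $h_n$ is bounded; it is supermodular in $(s,a)$ because $r$ is and, $W_n(\cdot,e)$ being supermodular in $s'$ and $p$ supermodular, so is $\int W_n(\cdot,e)\,p(s,a,ds')$; and it has increasing differences in $((s,a),e)$ --- for the $r$-part this is the hypothesis on $r$, and for the transition part $\beta\int W_n(s',e)\,[p(s_2,a_2,ds')-p(s_1,a_1,ds')]$ (with $(s_2,a_2)\ge(s_1,a_1)$) the $e$-increment equals $\beta\int\bigl(W_n(\cdot,e_2)-W_n(\cdot,e_1)\bigr)\,[p(s_2,a_2,ds')-p(s_1,a_1,ds')]$, which is $\ge 0$: $p$ increasing in $(s,a)$ gives $p(s_2,a_2,\cdot)\succeq_{FSD}p(s_1,a_1,\cdot)$, and the inductive hypothesis makes $W_n(\cdot,e_2)-W_n(\cdot,e_1)$ an increasing function of $s'$; in case (i), where the multiplier $\beta=e$ varies, I would additionally use that $W_n$ is increasing in $s'$ so that $\int W_n(\cdot,e)\,[p(s_2,a_2,ds')-p(s_1,a_1,ds')]\ge 0$ before it is multiplied by the increasing factor $\beta=e$. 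Lemma \ref{Lemma:Increasing_d} then delivers increasing differences of $W_{n+1}$ in $(s,e)$; supermodularity of $W_{n+1}$ in $s$ follows from Lemma \ref{lemma:maxsuper}, and monotonicity in $s$ (case (i) only) from the monotone-maximum argument of Theorem \ref{Thm:value_prop}(1) --- here the hypotheses that $r$ is increasing in $s$, $p$ is increasing in $s$, and $\Gamma(s_1)\subseteq\Gamma(s_2)$ are used. Since all these properties are preserved under pointwise limits, $V(\cdot,e)$ inherits them.

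Feeding this back into $Q$: for $a_2\ge a_1$ and $e_2\succeq e_1$, the contribution of $r$ to $Q(s,a_2,e)-Q(s,a_1,e)$ is nondecreasing in $e$ by the increasing-differences hypothesis on $r$, and the remaining term $\beta\int V(s',e)\,[p(s,a_2,ds')-p(s,a_1,ds')]$ is nondecreasing in $e$ by the same signed-measure argument (using that $V(\cdot,e)$ has increasing differences in $(s,e)$, and in case (i) that $V(\cdot,e)$ is increasing in $s'$ so the bracket is $\ge0$ before multiplying by $\beta=e$); hence $Q$ has increasing differences in $(a,e)$. Combined with increasing differences of $Q$ in $(a,s)$ and the chain $Q(s_2,a_2,e_2)-Q(s_2,a_1,e_2)\ge Q(s_1,a_2,e_2)-Q(s_1,a_1,e_2)\ge Q(s_1,a_2,e_1)-Q(s_1,a_1,e_1)$, this gives increasing differences of $Q$ jointly in $(a,(s,e))$, so Proposition \ref{Prop:topkis} applies and $G(s,e)$ is ascending; specializing $e=\beta$ gives (i) and $e=c$ gives (ii).

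I expect the real obstacle to be the increasing-differences-in-$e$ claim for the transition term inside the induction: one integrates $W_n(\cdot,e)$ against the \emph{signed} measure $p(s_2,a_2,\cdot)-p(s_1,a_1,\cdot)$, so plain monotonicity of $W_n$ in $e$ does not suffice --- the first-order stochastic dominance $p(s_2,a_2,\cdot)\succeq_{FSD}p(s_1,a_1,\cdot)$ has to be combined with the inductive increasing-differences hypothesis, and in case (i) there is the extra bookkeeping caused by the variable multiplier $\beta=e$. Arranging the induction so that it carries exactly the auxiliary properties Lemma \ref{Lemma:Increasing_d} requires at each step --- boundedness, supermodularity in $s$, and (only for (i)) monotonicity in $s$ --- is the delicate part; everything else is routine manipulation with the order-theoretic lemmas already established.
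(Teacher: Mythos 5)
Your proposal is correct and follows essentially the same route as the paper: both establish that the value function has increasing differences in $(s,e)$ (and, for case (i), is increasing and supermodular in $s$) by showing these properties are preserved by the Bellman operator — your explicit value-iteration induction is the same mechanism as the paper's closed-set preservation argument — with the key signed-measure computation $\int\bigl(f(\cdot,e_2)-f(\cdot,e_1)\bigr)\bigl[p(s_2,a_2,ds')-p(s_1,a_1,ds')\bigr]\ge 0$ handled via FSD plus the inductive increasing-differences hypothesis, and the extra non-negativity step to absorb the varying multiplier $\beta=e$, exactly as in the paper. The conclusion via Lemma \ref{Lemma:Increasing_d} and Proposition \ref{Prop:topkis} also matches the paper's proof.
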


\begin{proof}
(i)  Let $E =(0 ,1)$ be the set of all possible discount factors, endowed with the standard order: $\beta _{2} \geq \beta _{1}$ if $\beta _{2}$ is greater than or equal to $\beta _{1}$. Assume that $\beta _{1} \leq \beta _{2}$. Let $f \in B(S \times E)$ and assume that $f$ has increasing differences in $(s ,\beta )$, is supermodular in $s$, and is increasing in $s$. Given the discussion in Theorem \ref{Thm:value_prop} and Theorem \ref{Them:supermodular}, and the results in Lecture 2, it is enough to show that $Tf$ has increasing differences in order to prove that $V$ has increasing differences in $(s,\beta)$, and is supermodular and increasing in $s$. 

Let $a_{2} \geq a_{1}$. Since $f$ has increasing differences, the function $f(s ,\beta _{2}) -f(s ,\beta _{1})$ is increasing in $s$. Since $p$ is monotone we have\begin{equation*}\int _{S}(f(s^{ \prime } ,\beta _{2}) -f(s^{ \prime } ,\beta _{1}))p(s ,a_{2} ,ds^{ \prime }) \geq \int _{S}(f(s^{ \prime } ,\beta _{2}) -f(s^{ \prime } ,\beta _{1}))p(s ,a_{1} ,ds^{ \prime }) .
\end{equation*}Rearranging the last inequality yields \begin{equation*}\int _{S}f(s^{ \prime } ,\beta _{2})p(s ,a_{2} ,ds^{ \prime }) -\int _{S}f(s^{ \prime } ,\beta _{2})p(s ,a_{1} ,ds^{ \prime }) \geq \int _{S}f(s^{ \prime } ,\beta _{1})p(s ,a_{2} ,ds^{ \prime }) -\int _{S}f(s^{ \prime } ,\beta _{1})p(s ,a_{1} ,ds^{ \prime }) .
\end{equation*}Since $f$ is increasing in $s$ and $p$ is increasing, the right-hand-side and the left-hand-side of the last inequality are non-negative. Thus, multiplying the left-hand-side of the last inequality by $\beta _{2}$ and the right-hand-side of the last inequality by $\beta _{1}$ preserves the inequality. Adding to each side of the last inequality $r(s,a_{2}) -r(s,a_{1})$ yields \begin{equation*}h(s ,a_{2} ,\beta _{2} ,f) -h(s ,a_{1} ,\beta _{2} ,f) \geq h(s ,a_{2} ,\beta _{1} ,f) -h(s ,a_{1} ,\beta _{1} ,f) .
\end{equation*}
where 
$$h(s,a,\beta,f) = r(s,a) + \beta \int f(s,\beta) p(s,a,ds').  $$
That is, $h$ has increasing differences in $(a ,\beta )$. An analogous argument shows that $h$ has increasing differences in $(s ,\beta )$. The proof of Theorem \ref{Thm:Optimal Solutions} shows that $h$ is supermodular in $(s,a)$, and hence, has increasing differences in $(s ,a)$. Thus, $Tf$ has increasing differences from Lemma \ref{Lemma:Increasing_d}.

We conclude that $V$ has increasing differences in $(s,\beta)$. This implies that $h(s,a,\beta,V)$ has increasing differences in $(s,a,\beta)$ as the sum of functions with increasing differences by the argument above. In addition, $V$ is supermodular from Theorem \ref{Them:supermodular}, and hence $h$ is supermodular in $(s,a)$ as the sum of supermodular functions.  Now we can apply Proposition \ref{Prop:topkis} to conclude the result. 

(ii) The proof of part (ii) is similar to the proof of part (i) and is therefore omitted. 
\end{proof}

\textbf{Bibliography note}. Theorems \ref{Thm:value_prop} and \ref{thm:value_diff} are well-known but the precise formulation above seems to be new. 
\cite{topkis1998supermodularity} provides a comprehensive study of supermodularity and related properties and results we present above. The proof of Lemma \ref{Lemma:Increasing_d} is adopted from \cite{lovejoy1987ordered} and \cite{hopenhayn1992}. The proof of Theorem \ref{Thm:discount} is a adopted from \cite{light2021stochastic}.

\subsection{Exercise 3}

\begin{exercise}
    Let $p$ be a transition kernel where $A$ and $S$ are lattices in $\mathbb{R}^{k}$ and $\mathbb{R}^{n}$, respectively.  For some of the comparative statics results we proved in class we needed the assumption that $p$ is increasing  and supermodular.
    
    Show that if $p(s,a,B)$ is supermodular and increasing in $(s,a)$ for every upper set $B \in \mathcal{B}(S)$  then $p$ is increasing and  supermodular. 

    Hint: Take a close look on the proof of Proposition 1 in Lecture 3.
\end{exercise}

\begin{exercise}
    Consider a portfolio optimization problem involving $n$ financial assets, similar to what we have discussed in class. Each asset offers a random, non-negative return, and the returns have a support $[0,\overline{R}]$ for some finite $\overline{R} > 0$ and are independent over time. 

    In each period, the agent's wealth is denoted by $x \in X = [0, \infty)$, where the agent chooses a non-negative investment $a_i$ for each asset $i$, such that the total investment does not exceed the available wealth: $\sum_{i=1}^n a_i \leq x$ (no borrowing).

    The agent also receives a stochastic income $Y(t)$ in each period that is independent of the returns and is modeled as a finite Markov chain with transition probabilities $Q_{ij}$. This means if the current income is $y_i$, then the next period's income will be $y_j$ with probability $Q_{ij}$ and $\mathcal{Y} = \{y_{1},\ldots,y_{m} \}$ are the possible income ordered $y_{i} < y_{j}$ iff $i<j$. 

    The wealth of the agent in the next period is given by:
    \[
    x(t+1) = \sum_{i=1}^n a_i R_i(t) + Y(t+1),
    \]
    where $R_i(t)$ represents the return of the $i$-th asset in period $t$.

    The agent's utility from consumption is represented by a bounded, strictly concave, continuous, and increasing real-valued utility function $u$. The payoff in each period is given by 
    $u(x - \sum_{i=1}^n a_i)$, 
    where $a = (a_1, \dots, a_n)$ represents the vector of investments in each asset. The agent's goal is to maximize discounted payoffs.

   (i) Write the state space and the Bellman Equation for this problem. Explain why the Bellman equation holds given the result in Lecture 2. 

   Hint: it may be beneficial to define $S = X \times \mathcal{Y}$.

   (ii) Show that the value function is concave in $x$ and the optimal policy is unique. 
   
   (iii) Suppose that $\sum _{j=i}^{m} Q_{kj}$ is increasing in $k$ for each $i$ (justify why this assumption makes sense in this model). Show that the value function is increasing. 

   (iv)  Let $g$ be the (unique) optimal policy and $g_{i}$ be the optimal investment in asset $a_{i}$.
   
   Explain why we can't use directly the results from Lecture 3 to show that $g$ is increasing in $x$? (is the correspondence $\Gamma$ ascending)?
   
 Despite this, show that $g_{i}$ is increasing in the wealth $x$.

   Hint: Consider the change of variables $\sum _{i=1}^{k} a_{i}  = z_{k}$ and try to use the results of Lecture 3.

   (v) Now suppose that there only two assets. One yield 5 percent for sure (a ``bond") and one yields $0$ percent with probability $1/2$ and $16$ percent with probability $1/2$ (a ``stock"). In addition for simplicity there are only two possible incomes $1$ or $1.5$ and $Q_{jj} = 0.7$ for $j=1,2$. Assume the utility function is $u(c) = c^{0.5}$. 

   Write a Python code and solve the problem using value function iteration and print or plot the policy functions. Note that the state space is continuous so you will need a grid. 

 Start with the following code and then apply value function iteration (consider using np.searchsorted or any other way to make sure that the next state remains on the grid in your value function iteration algorithm). 

\begin{verbatim}
    import numpy as np

# Initialize parameters
num_wealth_states = 30
max_wealth = 3  # maximum wealth considered
wealth_grid = np.linspace(0, max_wealth, num_wealth_states)

num_actions = 10  # Discretize the action space into 10 possible investments per asset
action_grid = np.linspace(0, max_wealth, num_actions)

num_income_states = 2
income_states = np.array([1, 1.5])
transition_probabilities = np.array([[0.7, 0.3], [0.3, 0.7]])  # Transition probabilities 

discount_factor = 0.95
return1 = 1.05
return2 = np.array([1.0, 1.16])  # Two possible returns for the second asset
\end{verbatim}

\end{exercise}

\begin{exercise}
    
Consider the problem of Bayesian sequential hypothesis testing, where you are deciding between two hypotheses based on sequentially observed data. 

You have two hypothesis $H_{0},H_{1}$. Hypothesis $H_{i}$ is that the data generated from a probability density function $f_{i}$, $i=0,1$ where $f_{0},f_{1}$ are density functions with support on $Z \subseteq \mathbb{R}$.   
The state of the system is the probability that \(H_0\) is true given all observations so far. We denote this probability by $p$.

In each period, you can 
   continue testing, receive a new observation and update your belief using Bayes' formula
   or stop testing and choose one of the hypothesis.  The reward  for choosing \(H_0\) is \(p\) and for choosing \(H_1\) is \(1-p\).

As usual, the objective is to maximize the expected discounted reward where 
the discount factor $\beta < 1$ reflects that continuing testing is costly.

(1) Write down the Bellman equation for this problem. Explain why it holds.

(2) Explain intuitively why the value function \( V \) is convex, using the following somewhat informal argument:

Suppose that powerful nature plays a game. 
Nature flips an unbiased coin. With probability \( \lambda \), the coin lands heads and  \( p_{0} \) is the probability that \( H_0 \) is correct and $1-p_{0}$ is the probability that $H_{1}$ is correct. With probability \( 1-\lambda \) the coin lands tails and  \( p_{1} \) is the probability that \( H_0 \) is correct and $1-p_{1}$ is the probability that $H_{1}$ is correct.

Hint: the more information the better... 

\end{exercise}

\begin{exercise}
Consider a firm   that has a production function \( f:\mathbb{R}^{n}_{+} \rightarrow \mathbb{R}_{+} \). It takes a non-negative input vector \((s_{1}, \ldots, s_{n})\) in $\mathbb{R}^{n}_{+}$ and generates a non-negative output \( f(s_{1}, \ldots, s_{n}) \) in $\mathbb{R}_{+}$. The state is represented by the vector of inputs \( s= (s_{1}, \ldots, s_{n})\). For example, this vector could represent different resources or factors of production such as labor, capital, raw materials, machinery, or technology.

At each period a firm can invest in increasing the inputs in order to produce more.  The action taken at each time period is a vector of non-negative investments \(a=(a_{1}, \ldots, a_{n})\).  Each \( a_{i} \) represents the amount invested in the \( i \)-th input.
The cost of investing in each input \( a_{i} \) is given by a  cost function \( c_{i}(a_{i}) \) from $\mathbb{R}_{+}$ to $\mathbb{R}_{+}$.

 The per-period payoff function $r$ is given by the current production output multiplied by the price \( p \), minus the total cost of investments, i.e., 
   \[
   r(s,a) = p f(s_{1}, \ldots, s_{n}) - \sum_{i=1}^{n} c_{i}(a_{i})
   \]
   Here, \( p f(s_{1}, \ldots, s_{n}) \) is the total revenue generated from the production, and \( \sum_{i=1}^{n} c_{i}(a_{i}) \) is the total cost of investments.

If the current state of input $i$ is $s_{i}$ and the firm invests $a_{i}$ then the next period's input is 
\begin{equation} \label{Eq:dynamicsExam}
     s_{i}' = ((1-\delta_{i}) s_{i} + d_{i}a_{i}) \epsilon_{i}
     \end{equation}
where $d_{i} \geq 0$ is a parameter that describes the effect of investment, $\delta_{i} \in (0,1]$ is the depreciation rate of input $i$, and $\epsilon_{i}$ is a non-negative random variable for each $i=1,\ldots,n$.   

The goal of the firm is to maximize the expected discounted payoffs with a discount factor $\beta$ (as in Lecture 2). 

Suppose that $f$ and $-c_{i}$ are upper semi-continuous for each $i$. Assume that $f$ is bounded, $f$ and $c_{i}$ are strictly increasing, and  $c_{i}$ is strictly convex for each $i$.

(i) Show that the value function $V$ for this problem satisfies the Bellman equation and provide the Bellman equation. 

(ii) Show that $V$ is increasing.

(iii) Suppose that $f$ is concave. Show that $V$ is concave and that the optimal policy is unique. 

(iv) Assume that $f$ is concave and differentiable in $s_{i}$  and $\delta_{i}=1$ for some $i$. Find the derivative of $V$ with respect to $s_{i}$ and provide an interpretation and intuition for this derivative.  

(v) Suppose that $f$ is supermodular. Show that when the price $p$ increases, the optimal investment correspondence is ascending in $p$ (in the sense of Lecture 3). 

(vi) Suppose that $f$ is supermodular and concave. By part (iii) there is a unique optimal policy $\lambda(s)=(\lambda_{1}(s),\ldots,\lambda_{n}(s))$ where $\lambda_{i}$ is the optimal investment in input $i$.

Assume $d_{i}>0$ and show that 
$$ \frac{ (1-\delta_{i})(s'_{i} - s_{i})}{d_{i}} \geq \lambda_{i}(s) - \lambda_{i}(s') $$
for  $i=1,\ldots,n$ whenever $s' \geq s$. 
\end{exercise}

\newpage 

\section{Lecture 4: Introduction to Reinforcement Learning}

Throughout the notes we consider a finite state space $S$ with a random payoff function $R(s,a,s')$ (see also Remark 2 in Lecture 2). So in the notation of Lecture 2, the payoff function is given by $r(s,a) = \sum _{s
 \in S} p(s,a,s')R(s,a,s')$. 

In many practical applications, the state space can be extremely large—sometimes exponentially large in the number of state variables. This complexity makes it impossible to compute and store the value function for every possible state, due to both computational constraints. The computational burden of solving the Bellman equation increases significantly with the size of the state space, making traditional methods like value function iteration impractical. This phenomenon is called ``the curse of dimensionality".  

In addition, in many applications the payoff and transition functions may be unknown. We will discuss some basic ideas to overcome these issues in this Lecture. In Section \ref{Section:StateAgg} we will present a state aggregation method to overcome large state spaces. In Sections \ref{Section:Martingales} and \ref{Section:Convergence} we will cover  material from probability theory and optimization such as conditional expectations, and stochastic gradient descent algorithms that are needed to analyze the Q-learning algorithm. In Section \ref{Section:Q-learning} we will present and analyze the Q-learning algorithm. In Section \ref{Section:Average} we will present average reward dynamic programming and in Section \ref{Section:Policy} we will discuss policy gradient methods.

\subsection{State Aggregation} \label{Section:StateAgg}

State aggregation is a method to reduce the dimensionality of the state space by grouping ``similar" states into aggregates or clusters. Each cluster is treated as a single state in the aggregated model, significantly reducing the size of the state space.

The value function is then defined over these aggregated states rather than individual states. This reduces the number of calculations and the amount of memory required to implement 
 dynamic programming algorithms.

Hence, aggregation decreases the number of states and makes it possible to solve large dynamic programming problems. However, it can have some serious limitations depending on the specific dynamic optimization problem. Sometimes state are too different and aggregation can lead to highly sub-optimal policies. 

There are various ways to approach state aggregation. Let's introduce a finite set $X$ of aggregate states that partitions the state space $S$.

For each aggregate state define the weight $d_{xi}$ as the degree to which $x$ is represented by $i$. We will assume that the support of $d_{xi}$ is on the states in the  corresponding aggregate state $x$ and $\sum_{i \in N(x)} d_{xi} = 1$ where $N(x)$ contains the original states clustered to the aggregate state $x$. For example, $N(x_{1}) = \{s_{1},s_{2},s_{3} \}$ in Figure \ref{fig:state-aggregation}. A possible choice is $d_{xi} = 1 / |N(x)|$ for all $i$. 

We also define the weight $e_{jx}$ which forms the weights of a basis function that may represent an approximation to the value function. We have $\sum _{x \in X} e_{jx} = 1  $
for $j \in S$.

\begin{figure}[ht]
\centering
\begin{tikzpicture}
    % Define the style for the boxes
    \tikzset{
        state/.style={rectangle, draw, minimum height=3cm, minimum width=3cm, fill=gray!30, text centered, font=\footnotesize},
        substate/.style={circle, draw, minimum size=0.5cm, inner sep=0pt, font=\tiny, anchor=center}
    }

    % Draw the aggregated states
    \node[state] (x1) { \( x_1 \) };
    \node[state, right=1cm of x1] (x2) { \( x_2 \) };
    \node[state, below=1cm of x1] (x3) { \( x_3 \) };
    \node[state, right=1cm of x3] (x4) { \( x_4 \) };

    % Draw the original states inside the aggregated states
    \begin{scope}[every node/.append style={substate}]
        \node at ([xshift=-0.8cm, yshift=0.8cm]x1.center) (s1) {\( s_1 \)};
        \node at ([xshift=0cm, yshift=0.8cm]x1.center) (s2) {\( s_2 \)};
        \node at ([xshift=0.8cm, yshift=0.8cm]x1.center) (s3) {\( s_3 \)};

        \node at ([xshift=-0.8cm, yshift=0.8cm]x2.center) (s4) {\( s_4 \)};
        \node at ([xshift=0cm, yshift=0.8cm]x2.center) (s5) {\( s_5 \)};
        \node at ([xshift=-0.8cm, yshift=0cm]x2.center) (s6) {\( s_6 \)};

        \node at ([xshift=-0.8cm, yshift=0.8cm]x3.center) (s7) {\( s_7 \)};
        \node at ([xshift=0cm, yshift=0.8cm]x3.center) (s8) {\( s_8 \)};
        \node at ([xshift=0.8cm, yshift=0.8cm]x3.center) (s9) {\( s_9 \)};

        \node at ([xshift=-0.8cm, yshift=0.8cm]x4.center) (s10) {\( s_{10} \)};
        \node at ([xshift=0cm, yshift=0.8cm]x4.center) (s11) {\( s_{11} \)};
        \node at ([xshift=-0.8cm, yshift=0cm]x4.center) (s12) {\( s_{12} \)};
        \node at ([xshift=0.8cm, yshift=0cm]x4.center) (s13) {\( s_{13} \)};
    \end{scope}
\end{tikzpicture}
\caption{Aggregated states diagram illustrating the partitioning of 13 original states into four aggregated states \(x_1\), \(x_2\), \(x_3\), and \(x_4\). Each square represents an aggregated state containing various original states.}
\label{fig:state-aggregation}
\end{figure}
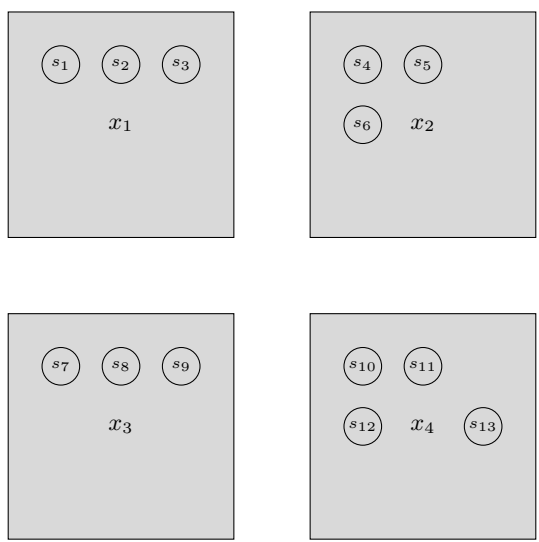

In the state aggregation method we solve the  ``aggregate" Bellman equation which needs to be solved on the state space $X$ instead of the large state space $S$. The aggregated Bellman equation is given by the fixed point of the operator $W$ defined by
$$Wf(x) = \sum _{s_{i} \in N(x) } d_{xs_{i}} \max _{ a \in \Gamma (s_{i})} \sum _{s_{j} \in S} p(s_{i},a,s_{j}) \left( R(s_{i},a,s_{j}) + \beta \sum _{y \in X} e_{s_{j}y} f(y) \right) .$$

Given the results in Lecture 2, it is easy to see that $W$ is a $\beta$-contraction so it has a unique fixed point. Once a fixed point is found $f^{*}$ (it can be found using value function iteration), we approximate the value function by $ \hat{V} = Ef^{*}$ where $E$ is the matrix generated by $e_{jy}$ and then we can find the (sub) optimal policy using this approximated value function. 

As in the previous lectures, let $V$ be the value function of the original problem (see Lecture 2). We have the following guarantee for state aggregation that, informally, shows that $f^{*}$ approximates $V$ well if the aggregation is done for states that are quite similar to each other in terms of their values. 

\begin{theorem} \label{thm:state-aggregation}
    Suppose that $e_{s_{j}y} = 1$ if $s_{j} \in N(y)$.  Let $f^{*}$ be the unique fixed point of $W$. Then we have 
    $$ |V(s_{i}) - f^{*}(x)|  \leq  \frac{\epsilon}{1-\beta}  $$
for all $x \in X$ and $s_{i} \in N(x)$ where 
$$ \epsilon = \max _{x \in X} \max _{s_{i},s_{j} \in N(x) } |V(s_{i}) - V(s_{j})|.$$
\end{theorem}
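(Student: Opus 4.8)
The plan is to compare $f^{*}$ with the $X$-indexed function obtained by averaging $V$ over each aggregate. Since the sets $N(x)$ partition $S$, the hypothesis that $e_{s_{j}y}=1$ when $s_{j}\in N(y)$ — together with $\sum_{x\in X}e_{s_{j}x}=1$ — forces $e_{s_{j}y}=0$ otherwise, so $\sum_{y\in X}e_{s_{j}y}f(y)=f(x(s_{j}))$, where $x(s_{j})$ is the unique aggregate containing $s_{j}$. Hence
\[
Wf(x)=\sum_{s_{i}\in N(x)}d_{xs_{i}}\max_{a\in\Gamma(s_{i})}\sum_{s_{j}\in S}p(s_{i},a,s_{j})\bigl(R(s_{i},a,s_{j})+\beta f(x(s_{j}))\bigr).
\]
Recall from Lecture 2 (finite state case) that $V$ satisfies $V(s_{i})=\max_{a\in\Gamma(s_{i})}\sum_{s_{j}}p(s_{i},a,s_{j})(R(s_{i},a,s_{j})+\beta V(s_{j}))$. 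Define $g:X\to\mathbb{R}$ by $g(x)=\sum_{s_{i}\in N(x)}d_{xs_{i}}V(s_{i})$; I will show $f^{*}$ is close to $g$, and $g$ is close to $V$.

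Two estimates carry the argument. (a) For $s\in S$ and $x=x(s)$, since $s,s_{k}\in N(x)$ and the $d_{xs_{k}}$ are a probability vector, $|g(x)-V(s)|=\bigl|\sum_{s_{k}\in N(x)}d_{xs_{k}}(V(s_{k})-V(s))\bigr|\le\epsilon$. (b) I claim $d_{\infty}(Wg,g)\le\beta\epsilon$. Fix $x$ and $s_{i}\in N(x)$. Applying (a) to each successor state $s_{j}$, for every $a\in\Gamma(s_{i})$,
\[
\Bigl|\sum_{s_{j}}p(s_{i},a,s_{j})\bigl(R(s_{i},a,s_{j})+\beta g(x(s_{j}))\bigr)-\sum_{s_{j}}p(s_{i},a,s_{j})\bigl(R(s_{i},a,s_{j})+\beta V(s_{j})\bigr)\Bigr|\le\beta\sum_{s_{j}}p(s_{i},a,s_{j})\epsilon=\beta\epsilon.
\]
Taking $\max_{a\in\Gamma(s_{i})}$ on both inner expressions, using $|\max_{a}\Phi(a)-\max_{a}\Psi(a)|\le\max_{a}|\Phi(a)-\Psi(a)|$ and the Bellman equation for the second term, gives $\bigl|\max_{a\in\Gamma(s_{i})}\sum_{s_{j}}p(s_{i},a,s_{j})(R(s_{i},a,s_{j})+\beta g(x(s_{j})))-V(s_{i})\bigr|\le\beta\epsilon$. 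Averaging over $s_{i}\in N(x)$ with the weights $d_{xs_{i}}$ yields $|Wg(x)-g(x)|\le\beta\epsilon$, proving (b).

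Finally combine. As $W$ is a $\beta$-contraction with fixed point $f^{*}$ (stated in the excerpt), $d_{\infty}(f^{*},g)=d_{\infty}(Wf^{*},g)\le d_{\infty}(Wf^{*},Wg)+d_{\infty}(Wg,g)\le\beta\,d_{\infty}(f^{*},g)+\beta\epsilon$, so $d_{\infty}(f^{*},g)\le\beta\epsilon/(1-\beta)$. Then for $x\in X$ and $s_{i}\in N(x)$, by (a) and this bound,
\[
|V(s_{i})-f^{*}(x)|\le|V(s_{i})-g(x)|+|g(x)-f^{*}(x)|\le\epsilon+\frac{\beta\epsilon}{1-\beta}=\frac{\epsilon}{1-\beta}.
\]
There is no deep obstacle here: the real content is choosing the comparison function $g$ — averaging $V$ with the $d$-weights is exactly what makes $Wg$ within $\beta\epsilon$ of $g$ — and carefully tracking that the $e$-weights collapse a successor $s_{j}$ to the single aggregate $x(s_{j})$ because the $N(x)$ partition $S$. (The same proof works verbatim if $g(x)$ is instead defined as $V$ evaluated at any fixed representative of $N(x)$.)
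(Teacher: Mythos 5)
Your proof is correct, but it takes a different route from the paper's. The paper argues one side at a time: it defines the comparison function $f(x)=\min_{s_i\in N(x)}V(s_i)+\epsilon/(1-\beta)$, verifies $Wf\le f$ using the Bellman equation for $V$, and then invokes the \emph{monotonicity} of $W$ to get $f^*=\lim_n W^nf\le f$, with a symmetric argument (a max-based sub-solution) for the lower bound. You instead build a single two-sided comparison function $g(x)=\sum_{s_i\in N(x)}d_{xs_i}V(s_i)$, show it is an approximate fixed point, $d_\infty(Wg,g)\le\beta\epsilon$, and then use the standard perturbation bound $d_\infty(f^*,g)\le d_\infty(Wf^*,Wg)+d_\infty(Wg,g)\le\beta d_\infty(f^*,g)+\beta\epsilon$ to conclude $d_\infty(f^*,g)\le\beta\epsilon/(1-\beta)$; combined with $|g(x)-V(s_i)|\le\epsilon$ this gives exactly $\epsilon/(1-\beta)$. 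Each step of yours checks out (the collapse $\sum_y e_{s_jy}f(y)=f(x(s_j))$, the estimate on successors, the $|\max_a\Phi-\max_a\Psi|\le\max_a|\Phi-\Psi|$ step, and the final arithmetic $\epsilon+\beta\epsilon/(1-\beta)=\epsilon/(1-\beta)$). What each approach buys: the paper's sub/super-solution argument needs monotonicity of $W$ in addition to the contraction property, but produces the slightly sharper one-sided statements $f^*(x)\le\min_{s_i\in N(x)}V(s_i)+\epsilon/(1-\beta)$ and its mirror; your argument needs only the contraction property, handles both directions simultaneously, and, as you note, is robust to the choice of comparison function (any fixed representative of $N(x)$ works in place of the $d$-weighted average).
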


\begin{proof}
    Let $f(x)= \min _{s_{i} \in N(x)} V(s_{i}) + \epsilon/(1-\beta)$ for $x \in X$. 
    
    Let $x \in X$. We have
\begin{align*}
    Wf(x) & = \sum _{s_{i} \in N(x) } d_{xs_{i}} \max _{ a \in \Gamma (s_{i})} \sum _{s_{j} \in S} p(s_{i},a,s_{j}) \left( R(s_{i},a,s_{j}) + \beta \sum _{y \in X} 1_{ \{s_{j} \in N(y) \} } f(y) \right) \\
    & \leq  \sum _{s_{i} \in N(x) } d_{xs_{i}} \max _{ a \in \Gamma (s_{i})} \sum _{s_{j} \in S} p(s_{i},a,s_{j}) \left( R(s_{i},a,s_{j}) + \beta \left ( V(s_{j}) + \frac{\epsilon } { 1- \beta} \right) \right) \\
    & =  \sum _{s_{i} \in N(x) } d_{xs_{i}} \left ( V(s_{i}) +\frac{\beta \epsilon } { 1- \beta} \right) \\ 
    & \leq \min_{s_{i} \in N(x)} V(s_{i}) + \epsilon + \frac{\beta \epsilon } { 1- \beta} = f(x)
    \end{align*}

    Thus, $Wf \leq f$. Hence, using the Banach fixed point theorem and using the monotonicity of $W$ we have $W^2 f \leq Wf \leq f$ so $f^{*} = \lim _{n \rightarrow \infty } W^{n}f \leq f$. Hence,
    $$f^{*} (x) \leq \min _{s_{i} \in N(x)} V(s_{i}) + \frac{ \epsilon } { 1- \beta} \leq V(s_{i}) + \frac{ \epsilon } { 1- \beta}$$
    for all $x \in X$ and $s_{i} \in N(x)$. The second inequality follows similarly. 
\end{proof}

Alternatively, we can explore identifying the optimal aggregate value function when only aggregate state information is available. This approach proves advantageous when integrating state aggregation with model-free methods such as \(Q\)-learning. The primary benefit of this method is that, with a well-constructed state aggregation and a manageable action space, the tabular \(Q\)-learning algorithm (described below) can be effectively utilized to determine the aggregate value function. 

Assume that $\Gamma (s) = A$ for each $s \in S$. We can define 
$$\overline{p}(x,a,y) = \sum _{s_{i} \in N(x)} d_{x s_{i}} \sum _{s_{j} \in S} p(s_{i},a,s_{j})e_{s_{j}y} \text { and } \overline{R}(x,a) =  \sum _{s_{i} \in N(x)} d_{x s_{i}} \sum _{s_{j} \in S} p(s_{i},a,s_{j}) R(s_{i},a,s_{j}) $$
where $\overline{p}$ and $\overline{R}$ are possibly unknown. We can now solve the Bellman equation in the aggregate space
$$ f^{*}(x) = \max _{a \in A} \overline{R}(x,a) + \beta \sum _{y \in X} \overline{p}(x,a,y) f^{*}(y) $$
to receive an approximation for the value function $V$ as before by $ \hat{V} = Ef^{*}$.

Before delving into the $Q$-learning algorithm we will need to discuss stochastic iterative methods that that underpin algorithms such as stochastic gradient descent for solving optimization problems.

\subsection{Conditional Expectation and Martingales} \label{Section:Martingales}

We will study algorithms of the form 
\begin{equation*}
x_{t+1} = x_{t} + \gamma_{t} Y_{t}
\end{equation*}
where $x_{t}$ is in $\mathbb{R}^{n}$, $\gamma_{t}$ is a non-negative stepsize in $\mathbb{R}$ and $Y_{t}$ is a random variable on $\mathbb{R}^{n}$. This iterative method includes powerful algorithms such as stochastic gradient descent. We will present conditions for the convergence of the algorithm above. We first need to discuss a few tools from probability theory. 
\subsubsection{Conditional Expectations}

Let \( (\Omega, \mathcal{F}, \mathbb{P}) \) be a probability space and let \( X \) be an integrable random variable, i.e., \( \mathbb{E}[|X|] < \infty \). Let \( \mathcal{G} \) be a sub-sigma-algebra of \( \mathcal{F} \). The conditional expectation of \( X \) given \( \mathcal{G} \), denoted \( \mathbb{E}[X | \mathcal{G}] \), is a \( \mathcal{G} \)-measurable random variable satisfying:
\[
\int_G \mathbb{E}[X | \mathcal{G}] \, d\mathbb{P} = \int_G X \, d\mathbb{P}, \quad \forall G \in \mathcal{G}.
\]

It can be shown that the conditional expectation exists\footnote{We will omit throughout the lectures the phrase ``with probability $1$" for most of the analysis.}  and is unique with probability 1.

To get some intuition regarding this definition, denote the inner product space $\langle X,Y \rangle = \mathbb{E}(XY)$ for two random variables $X,Y$ with a finite second moment. 
In this context, two random variables \( X \) and \( Y \) are orthogonal if their inner product is zero, i.e., \( \mathbb{E}[XY] = 0 \).

If \( 1_H \) is an indicator function of a set \( H \in \mathcal{G} \), then by the definition of conditional expectation  \( \mathbb{E}[X \mid \mathcal{G}] \) is chosen such that the ``error" term \( X - \mathbb{E}[X \mid \mathcal{G}] \) is orthogonal to \( 1_H \). That is, 
\[
\mathbb{E}[(X - \mathbb{E}[X \mid \mathcal{G}])1_H] = 0.
\]

This orthogonality property implies that the ``error" \( X - \mathbb{E}[X \mid \mathcal{G}] \) does not correlate with any information contained in \( \mathcal{G} \). This error cannot be further ``explained" by any more information contained solely in \( \mathcal{G} \). In other words, \( \mathbb{E}[X \mid \mathcal{G}] \) has extracted all the information about \( X \) that \( \mathcal{G} \) has to offer.

The conditional expectation of \( X \) given a random variable \( Y \), denoted \( \mathbb{E}[X | Y] \), is defined as the conditional expectation of \( X \) given the sigma-algebra generated by \( Y \), which we denote with abuse of notation by \( \sigma(Y) \). It is not hard to verify that $$\sigma(Y) = \{Y^{-1}(B) \in \mathcal{F}: B \text{ is measurable}\}.$$ 
Thus, we define
\[
\mathbb{E}[X | Y] := \mathbb{E}[X | \sigma(Y)].
\]

For a discrete random variable \( X \) taking values \( x_1, x_2, \ldots, x_n \) and a random variable \( Y \) taking values \( y_1, y_2, \ldots, y_m \), the conditional expectation defined in basic probability theory is given by the function \( h \) defined as:
\[
h(y_{j}) = \sum_{i=1}^n x_i \mathbb{P}(X = x_i | Y = y_j).
\]

This definition agrees with the earlier general definition. The sigma-algebra \( \sigma(Y) \) is generated by the events \( \{Y = y_j\} \). Since \( Y \) takes on a finite number of values, any \( \sigma(Y) \)-measurable function must be constant on each of these sets (why?). Thus, the conditional expectation \( \mathbb{E}[X | \sigma(Y)] \) is constant on each \( \{Y = y_j\} \).

For \( G = \{Y = y_j\} \in \sigma(Y) \),
\[
\int_{\{Y = y_j\}} X \, d\mathbb{P} = \sum_{i=1}^n x_i \mathbb{P}(X = x_i, Y = y_j) = \sum_{i=1}^n x_i \mathbb{P}(X = x_i | Y = y_j) \mathbb{P}(Y = y_j),
\]
where the second equality follows from the basic conditional probability definition for discrete random variables.

Now we can define \( h(y_{j}) = \sum_{i=1}^n x_i \mathbb{P}(X = x_i | Y = y_j) \), and therefore,
\[
\mathbb{E}[X | Y] = \mathbb{E}[X | \sigma(Y)] =  \sum_{j=1}^m h(y_{j}) \mathbf{1}_{\{Y = y_j\}}.
\]
which shows that the conditional expectation \( \mathbb{E}[X | Y] \) matches the definition from basic probability theory for discrete random variables.

Here are some basic properties of conditional expectations (proofs can be found in most advanced probability theory textbooks): 

1. Linearity: \( \mathbb{E}[aX + bY | \mathcal{G}] = a\mathbb{E}[X | \mathcal{G}] + b\mathbb{E}[Y | \mathcal{G}] \) for any constants \( a, b \). 

In addition, $\mathbb{E}(a|\mathcal{G}) = a$ for each scalar $a$. 

2. Independence: If $X$ is independent of $\mathcal{G}$ then $\mathbb{E} (X | \mathcal{G}) = \mathbb{E}(X)$ ($\mathcal{G}$ does not provide any information on $X$).\footnote{In this context independence means that $P(A)P(B) = P(A \cap B)$ for any $A \in \sigma(X)$ and $B \in \mathcal{G}$.} 

3. Taking out what is known: If \( Z \) is \( \mathcal{G} \)-measurable, then \( \mathbb{E}[XZ | \mathcal{G}] = Z\mathbb{E}[X | \mathcal{G}] \). 

In particular, \( \mathbb{E}[Z | \mathcal{G}] = Z \). 

4. Law of total expectation: If \( \mathcal{H} \subseteq \mathcal{G} \subseteq \mathcal{F} \), then \( \mathbb{E}[\mathbb{E}[X | \mathcal{G}] | \mathcal{H}] = \mathbb{E}[X | \mathcal{H}] \).

In particular, $\mathbb{E} [\mathbb{E}[X | Y ] = \mathbb{E}[X]$

\subsubsection{Filtration}
A filtration is a sequence of sigma-algebras that represent the accumulation of information over time. Formally, a filtration \( \{\mathcal{F}_n\}_{n \geq 0} \) is a collection of sigma-algebras such that:
\[
\mathcal{F}_0 \subseteq \mathcal{F}_1 \subseteq \mathcal{F}_2 \subseteq \cdots \subseteq \mathcal{F}.
\]
Each \( \mathcal{F}_n \) represents the information available up to time \( n \).

A filtration can be defined by random variables as the sigma-algebra generated from them. For a sequence of random variables \( \{X_n\}_{n \geq 0} \), the natural filtration \( \{\mathcal{F}_n\} \) is given by:
\[
\mathcal{F}_n = \sigma(X_0, X_1, \ldots, X_n),
\]
which is the smallest sigma-algebra with respect to which \( X_0, X_1, \ldots, X_n \) are measurable. We say that the sequence \( \{X_n\}_{n \geq 0} \) is measurable with respect to a filtration  \( \{\mathcal{F}_n\} \) if $X_{n}$ is $\mathcal{F}_{n}$-measurable for each $n$.  

For example, Suppose that \(X_n\) are Bernoulli coin flips, where \(X_n = 1\) if the coin flip results in heads (with some probability $p$), and \(X_n = 0\) if it results in tails and define 
$ \mathcal{F}_n = \sigma(X_0, X_1, \ldots, X_n)$.

  Then \(\mathcal{F}_0\) is generated by \(X_0\) and contains all possible information outcomes of the first coin flip. It consists of the sets \(\{\emptyset, \Omega, \{X_0 = 0\}, \{X_0 = 1\}\}\), where \(\Omega\) is the sample space of all possible outcomes.  \(\mathcal{F}_n\) includes information from all \(X_k\) for \(k \leq n\).  

For example, a random variable \(Y_n = X_0 + X_1 + \ldots + X_n\) (the total number of heads up to time \(n\)) is measurable with respect to \(\mathcal{F}_n\), as it is a function of the outcomes \(X_0, \ldots, X_n\) but is not measurable with respect to \(\mathcal{F}_{n-1}\).

\subsubsection{Martingales, Sub-Martingales, and Super-Martingales}

Martingales are a powerful tool in probability theory due to their convergence properties and wide range of applications. They provide a framework for modeling many different probabilistic settings and are instrumental in the analysis of stochastic processes. Martingales are extensively used in financial mathematics for modeling asset prices and in the development of pricing algorithms for derivatives. They are also crucial in stochastic approximation methods, where they help in the convergence analysis of iterative algorithms such as stochastic gradient descent. Moreover, martingales play a significant role in areas such as the theory of random walks, queuing theory, Bayesian learning, and other topics of interest. 

\begin{definition}
Let \( \{X_n\}_{n \geq 0} \) be a sequence of random variables that is measurable with respect to a filtration \( \{\mathcal{F}_n\}_{n \geq 0} \). The process \( \{X_n\}_{n \geq 0} \) is called:
\begin{enumerate}
    \item A \textbf{martingale} if \( X_n \) is \( \mathcal{F}_n \)-measurable, \( \mathbb{E}[|X_n|] < \infty \) for each \( n \), and
    \[
    \mathbb{E}[X_{n+1} | \mathcal{F}_n] = X_n \quad \text{for all } n \geq 0.
    \]
    \item A \textbf{sub-martingale} if \( X_n \) is \( \mathcal{F}_n \)-measurable, \( \mathbb{E}[|X_n|] < \infty \) for each \( n \), and
    \[
    \mathbb{E}[X_{n+1} | \mathcal{F}_n] \geq X_n \quad \text{for all } n \geq 0.
    \]
    \item A \textbf{super-martingale} if \( X_n \) is \( \mathcal{F}_n \)-measurable, \( \mathbb{E}[|X_n|] < \infty \) for each \( n \), and
    \[
    \mathbb{E}[X_{n+1} | \mathcal{F}_n] \leq X_n \quad \text{for all } n \geq 0.
    \]
\end{enumerate}
\end{definition}

Martingales have the property that their future expected value, given the present, is equal to their current value, capturing the idea of a ``fair game."

Sub-martingales represent processes where the future expected value, given the present, is at least as large as the current value, indicating a tendency to increase over time.

Super-martingales represent processes where the future expected value, given the present, is at most as large as the current value, indicating a tendency to decrease over time.

\begin{example} \label{Ex:martingale}
Let \( \{S_n\}_{n \geq 0} \) be the sum \( S_n = \sum_{i=0}^n X_i \) and \( X_i \) are independent random variables with $\mathbb{E}[X_{i}] =0$ and $\mathbb{E}[ | X_{i} | ] \leq M$ for some constant $M$ and each $i$. Then \( \{S_n\} \) is a martingale with respect to the natural filtration \( \{\mathcal{F}_n\} \) where \( \mathcal{F}_n = \sigma(X_{0},X_1, \ldots, X_n) \). This follows because:
\[
\mathbb{E}[S_{n+1} | \mathcal{F}_n] = \mathbb{E}[S_n + X_{n+1} | \mathcal{F}_n] = S_n + \mathbb{E}[X_{n+1} | \mathcal{F}_n] = S_n + \mathbb{E}[X_{n+1} ] = S_{n}.
\]
\end{example}

Martingales are extremely useful in proving many results in probability theory, stochastic approximation, and stochastic iterative methods. They enjoy the following convergence results. 

\begin{theorem}[Martingale Convergence Theorem] \label{Thm:Mart}
Let \( \{X_n\}_{n \geq 0} \) be a martingale with respect to a filtration \( \{\mathcal{F}_n\}_{n \geq 0} \) such that \( \sup_{n \geq 0} \mathbb{E}[|X_n|] \leq M \) for some constant $M$. Then there exists random variable \( X_\infty \) with finite expectation such that \( X_n \) converges with probability $1$ to \( X_\infty \). 
\end{theorem}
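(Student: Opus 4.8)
The plan is to follow the classical route via Doob's upcrossing inequality. Fix two rationals $a < b$ and, for each $n$, let $U_n[a,b]$ denote the number of times the finite sequence $X_0, X_1, \ldots, X_n$ ``upcrosses'' the interval $[a,b]$, i.e., the number of completed passages from below $a$ to above $b$. First I would define a predictable $\{0,1\}$-valued process $\{C_k\}$ (the ``betting strategy'') that equals $1$ precisely while the path is in the middle of an upcrossing attempt: $C_k$ switches on once the path drops below $a$ and switches off once it next rises above $b$. Since the value of $C_k$ is determined by $X_0, \ldots, X_{k-1}$, it is $\mathcal{F}_{k-1}$-measurable, so the martingale transform $(C \cdot X)_n = \sum_{k=1}^n C_k (X_k - X_{k-1})$ is again a martingale, and in particular $\mathbb{E}[(C \cdot X)_n] = 0$. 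The key combinatorial observation is that this transformed sum is bounded below by $(b-a)\,U_n[a,b] - (X_n - a)^-$: each completed upcrossing contributes at least $b - a$, and the only possible loss comes from a still-incomplete final upcrossing, which costs at most $(X_n - a)^-$. Taking expectations yields Doob's upcrossing inequality
\[
\mathbb{E}[U_n[a,b]] \leq \frac{\mathbb{E}[(X_n - a)^-]}{b - a} \leq \frac{M + |a|}{b - a}.
\]

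Next I would let $n \to \infty$. Since $U_n[a,b]$ is nondecreasing in $n$, the monotone convergence theorem gives $\mathbb{E}[U_\infty[a,b]] \leq (M+|a|)/(b-a) < \infty$, so $U_\infty[a,b] < \infty$ with probability $1$. Taking the (countable) union over all pairs of rationals $a < b$, there is an almost sure event on which $U_\infty[a,b] < \infty$ for every such pair simultaneously. On this event $X_n$ cannot oscillate: if $\liminf_n X_n < \limsup_n X_n$, one could interpose rationals $a < b$ strictly between these two quantities, forcing $U_\infty[a,b] = \infty$, a contradiction. Hence $\lim_n X_n$ exists in $[-\infty, +\infty]$ almost surely; denote this limit by $X_\infty$.

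Finally, to see that $X_\infty$ is finite and integrable, I would apply Fatou's lemma to the nonnegative sequence $|X_n|$:
\[
\mathbb{E}[|X_\infty|] = \mathbb{E}\Big[\liminf_n |X_n|\Big] \leq \liminf_n \mathbb{E}[|X_n|] \leq M < \infty,
\]
so $|X_\infty| < \infty$ with probability $1$, which completes the proof.

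I expect the main obstacle to be setting up the upcrossing count and the predictable process $\{C_k\}$ carefully enough that the inequality $(C \cdot X)_n \geq (b-a)\,U_n[a,b] - (X_n-a)^-$ is genuinely rigorous rather than merely intuitive — this bookkeeping, together with the verification that the martingale transform of a martingale by a bounded predictable process is again a martingale, is where the real work lies. Everything afterward (monotone convergence, the countable union over rationals, the anti-oscillation argument, and the Fatou step) is routine.
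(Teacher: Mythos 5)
Your proposal is correct. Note that the paper itself states this theorem without proof (the bibliography note defers to standard probability texts such as Dudley, Chapter 10), so there is no in-paper argument to compare against; what you have written is the classical Doob upcrossing proof, and it is the right one. The chain of steps is sound: the predictable process $C_k$, being determined by $X_0,\ldots,X_{k-1}$, makes the transform $(C\cdot X)_n$ a martingale with $\mathbb{E}[(C\cdot X)_n]=0$; the pathwise bound $(C\cdot X)_n \geq (b-a)U_n[a,b]-(X_n-a)^-$ then gives the upcrossing inequality, and the bound $\mathbb{E}[(X_n-a)^-]\leq \mathbb{E}[|X_n|]+|a|\leq M+|a|$ is exactly where the uniform $L^1$ hypothesis enters. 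The two places that genuinely require care are the ones you flag yourself: the recursive definition of $C_k$ (e.g.\ $C_k=1$ if either $C_{k-1}=1$ and $X_{k-1}\leq b$, or $C_{k-1}=0$ and $X_{k-1}<a$, which is visibly $\mathcal{F}_{k-1}$-measurable) and the telescoping argument showing each completed upcrossing contributes at least $b-a$ while the possibly incomplete final one costs at most $(X_n-a)^-$. The remaining steps — monotone convergence in $n$, the countable union over rational pairs, the anti-oscillation argument, and the Fatou step ensuring $\mathbb{E}[|X_\infty|]\leq M$ so that the limit is finite almost surely — are all correctly deployed. One small point worth making explicit in a final write-up: the oscillation argument only shows $\lim_n X_n$ exists in $[-\infty,+\infty]$, and it is precisely the Fatou estimate that upgrades this to a finite limit, so the two halves of your last paragraph should be presented in that order, as you have done.
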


\begin{theorem}[Supermartingale Convergence Theorem] \label{Thm:SuperMart}
Let \( \{X_n\}_{n \geq 0}, \{Y_n\}_{n \geq 0}, \{Z_n\}_{n \geq 0} \) be a non-negative sequences of random variables that are measurable with respect to a filtration \( \{\mathcal{F}_n\}_{n \geq 0} \).

Suppose that $\sum_{t=0}^{\infty} Z_{t}  < \infty$ and $\mathbb{E}[Y_{t+1}|\mathcal{F}_{t}] \leq Y_{t} - X_{t} + Z_{t}$ for each $t$. Then $\sum _{t=0}^{\infty} X_{t} < \infty$ and the sequence $Y_{t}$ converges with probability $1$ to a non-negative random variable $Y$ with finite expectation. 

\end{theorem}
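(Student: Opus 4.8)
The plan is to convert the hypothesis into a genuine non-negative supermartingale, invoke almost-sure convergence for such a process (which reduces to Theorem~\ref{Thm:Mart}), and then read off the two conclusions; throughout I will use the integrability implicit in the statement, so that $\mathbb{E}[Y_{t+1}\mid\mathcal{F}_t]$ is well defined ($Y_t\in L^1$) and likewise $X_t,Z_t\in L^1$. The first step is to set
$$ V_t \;=\; Y_t \;+\; \sum_{s=0}^{t-1}X_s \;-\; \sum_{s=0}^{t-1}Z_s \qquad (t\ge 0), $$
with empty sums equal to zero. Since $X_t$, $Z_t$ and the two partial sums are $\mathcal{F}_t$-measurable, the inequality $\mathbb{E}[Y_{t+1}\mid\mathcal{F}_t]\le Y_t-X_t+Z_t$ gives at once $\mathbb{E}[V_{t+1}\mid\mathcal{F}_t]\le V_t$, so $\{V_t\}$ is a supermartingale. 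It need not be non-negative, because of the term $-\sum_{s<t}Z_s$, and this is the main obstacle; I get around it by localization.

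For $k\in\mathbb{N}$ let $\tau_k=\inf\{t\ge 0:\sum_{s=0}^{t}Z_s>k\}$, which is a stopping time because $\sum_{s=0}^{t}Z_s$ is $\mathcal{F}_t$-measurable. Since $(t\wedge\tau_k)-1<\tau_k$ one has $\sum_{s=0}^{(t\wedge\tau_k)-1}Z_s\le k$, so $W^{(k)}_t:=V_{t\wedge\tau_k}+k\ge Y_{t\wedge\tau_k}\ge 0$, and $W^{(k)}_t$ is an integrable stopped supermartingale. A non-negative supermartingale converges almost surely: writing its Doob decomposition $W^{(k)}_t=M^{(k)}_t-A^{(k)}_t$ with $A^{(k)}$ non-decreasing, predictable and $A^{(k)}_0=0$, the identity $\mathbb{E}[A^{(k)}_t]=\mathbb{E}[W^{(k)}_0]-\mathbb{E}[W^{(k)}_t]\le\mathbb{E}[W^{(k)}_0]$ and monotone convergence give $A^{(k)}_\infty<\infty$ a.s., while $M^{(k)}_t=W^{(k)}_t+A^{(k)}_t\ge 0$ is an $L^1$-bounded martingale, hence convergent a.s.\ by Theorem~\ref{Thm:Mart}. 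Thus $V_{t\wedge\tau_k}$ converges a.s.\ for each $k$. Since $\sum_{s=0}^{\infty}Z_s<\infty$ a.s., for almost every $\omega$ there is a $k$ with $\tau_k(\omega)=\infty$, so that $V_{t\wedge\tau_k}(\omega)=V_t(\omega)$ for all $t$; taking the union over $k$ shows $V_t\to V_\infty$ a.s.\ for some finite $V_\infty$.

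It remains to extract the conclusions. Put $A_t=\sum_{s=0}^{t-1}X_s$, which is non-decreasing, so $A_t\uparrow A_\infty\in[0,\infty]$. Because $\sum_{s=0}^{t-1}Z_s\to\sum_{s=0}^{\infty}Z_s<\infty$ a.s., we get $Y_t+A_t=V_t+\sum_{s=0}^{t-1}Z_s\to V_\infty+\sum_{s=0}^{\infty}Z_s$, a finite limit; since $Y_t\ge 0$, the monotone sequence $A_t$ cannot diverge, whence $\sum_{s=0}^{\infty}X_s=A_\infty<\infty$ a.s. Consequently $Y_t=(Y_t+A_t)-A_t$ converges a.s.\ to $Y:=V_\infty+\sum_s Z_s-A_\infty$, and $Y\ge 0$ since each $Y_t\ge 0$. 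Finally, Fatou's lemma gives $\mathbb{E}[Y]\le\liminf_t\mathbb{E}[Y_t]$, and taking expectations in the hypothesis yields $\mathbb{E}[Y_t]\le\mathbb{E}[Y_0]+\sum_{s}\mathbb{E}[Z_s]$, so $\mathbb{E}[Y]<\infty$ provided $\sum_s\mathbb{E}[Z_s]<\infty$ — the form in which $\{Z_t\}$ enters the intended applications, and precisely what is needed for this last assertion. As indicated, the one genuinely delicate point is the sign of the compensating term $-\sum_{s<t}Z_s$, which blocks a direct appeal to a convergence theorem and forces the stopping-time localization.
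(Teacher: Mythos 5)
The paper states this theorem without proof (the bibliography note defers the material of this section to standard probability texts), so there is no in-text argument to compare yours against; judged on its own, your proof is correct and is the classical Robbins--Siegmund argument. The compensated process $V_t = Y_t + \sum_{s=0}^{t-1}X_s - \sum_{s=0}^{t-1}Z_s$ is indeed a supermartingale, the localization by $\tau_k = \inf\{t : \sum_{s=0}^{t}Z_s > k\}$ correctly neutralizes the one real obstruction (the compensator $-\sum_{s<t}Z_s$ is not uniformly bounded below), and your Doob-decomposition reduction of non-negative supermartingale convergence to the $L^1$-bounded martingale case keeps the argument within the toolkit these notes actually state, namely Theorem \ref{Thm:Mart}. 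The patching over the events $\{\tau_k=\infty\}$, whose union has full probability because $\sum_t Z_t<\infty$ a.s., and the extraction of $\sum_t X_t<\infty$ from the boundedness of the monotone sequence $A_t \le Y_t + A_t$, are both clean.

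The one substantive issue is the final clause, and you have identified it correctly: the claim that the limit $Y$ has finite expectation does not follow from the hypothesis that $\sum_t Z_t<\infty$ almost surely, even granting $Y_t, Z_t \in L^1$ for each $t$. For instance, take $Z_t$ independent with $Z_t = 4^t$ with probability $2^{-t}$ and $Z_t=0$ otherwise, $X_t\equiv 0$, $Y_0=0$ and $Y_t=\sum_{s<t}Z_s$: then $\sum_t Z_t<\infty$ a.s.\ by Borel--Cantelli and every hypothesis holds with each $Y_t$ integrable, yet $\mathbb{E}[\lim_t Y_t]=\sum_t 2^{t}=\infty$. So that clause genuinely requires $\sum_t\mathbb{E}[Z_t]<\infty$, exactly as your Fatou argument indicates. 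This is a defect of the statement rather than of your proof, and it is harmless for the paper's purposes: in the proof of Theorem \ref{thm:convergence} only the conclusions $\sum_t X_t<\infty$ and the almost-sure convergence of $Y_t$ are invoked.
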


As a brief demonstration of the convergence results above consider the following examples.

\begin{example} (Strong Law of Large Numbers). \label{Example:LLN}
    Consider the setting of Example \ref{Ex:martingale}. We want to prove that $\lim _{n \rightarrow \infty} S_{n}/n = 0$ with probability $1$  (the strong law of large numbers). From Example \ref{Ex:martingale}, $S_{n}$ is a martingale. From Theorem \ref{Thm:Mart}, $S_{n}$ converges to a finite random variable on some set $B$ that has probability $1$. Hence, $ \lim _{n \rightarrow \infty} S_{n} (\omega) /n = 0$  for each $\omega \in B$, i.e.,  $\lim _{n \rightarrow \infty} S_{n}/n = 0$ with probability $1$.
\end{example}

\begin{example} \label{Ex:product_martingale}
Let \( \{S_n\}_{n \geq 1} \) be the product \( S_n = \prod_{i=1}^n X_i \) where \( X_i \) are independent random variables with \(\mathbb{E}[X_i] = 1\). Then \( \{S_n\} \) is a martingale with respect to the natural filtration \( \{\mathcal{F}_n\} \) where \( \mathcal{F}_n = \sigma(X_1, X_2, \ldots, X_n) \). This follows because:
\[
\mathbb{E}[S_{n+1} | \mathcal{F}_n] = \mathbb{E}\left[\left(\prod_{i=1}^n X_i\right) X_{n+1} \Bigg| \mathcal{F}_n\right] = \left(\prod_{i=1}^n X_i\right) \mathbb{E}[X_{n+1} | \mathcal{F}_n] = S_n \cdot \mathbb{E}[X_{n+1}] = S_n.
\]
Thus, we can use the Martingale Convergence Theorem to show that $S_{n}$ converges. 
\end{example}

\subsection{
Convergence Result for Stochastic Iterative Methods} \label{Section:Convergence}

We are now ready to state the main result regarding stochastic iterative methods.

Consider the sequence 
\begin{equation}\label{eq:Iterative}
x_{t+1} = x_{t} + \gamma_{t} Y_{t},
\end{equation}
where $x_{t}$ is in $\mathbb{R}^{n}$, $\gamma_{t}$ is a non-negative stepsize in $\mathbb{R}$ and $Y_{t}$ is a random variable on $\mathbb{R}^{n}$.  We assume that \( x_{0} \) is known and we denote the natural filtration \( \mathcal{F}_{t} = \sigma (Y_{0}, \ldots, Y_{t}) \). Informally, \( \mathcal{F}_{t} \) contains the values \( x_{0}, \ldots, x_{t+1} \) of the sequence.

For the following result, we let \( \Vert \cdot \Vert \) denote the Euclidean norm defined by \( \Vert x \Vert ^{2} = \sum x_{i}^2 = x'x \), where \( x'x \) is the usual inner product on \( \mathbb{R}^{n} \) and \( x' \) is the transpose of \( x \).  The goal is to provide conditions that imply that the sequence defined in Equation (\ref{eq:Iterative}) converge to a stationary point of $f$.

\begin{theorem} \label{thm:convergence}
Consider the sequence defined in Equation (\ref{eq:Iterative}). Let $f: \mathbb{R}^{n} \rightarrow \mathbb{R}$ be a function that is non-negative, i.e. $f(x) \geq 0$ for all $x \in \mathbb{R}^{n}$. Assume the following four conditions hold: 

(i) Lipschitz:  $f$ is continuously differentiable and $\nabla f$ is Lipschitz continuous, i.e., 
$$ \Vert \nabla f(x) - \nabla f(y) \Vert \leq L \Vert x - y\Vert $$
    for some constant $L > 0$ and all $x,y \in \mathbb{R}^{n}$. 

        (ii) Mean Square: There exist $K_{1},K_{2} > 0$ such that  for all $t$,
    $$\mathbb{E}[\Vert Y_{t} \Vert ^{2} | \mathcal{F}_{t-1} ]\leq K_{1} + K_{2} \Vert \nabla f(x_{t}) \Vert ^{2}.  $$

    (iii) Gradient Direction: There exists $c >0$ such that for all $t$, 
    $$ c\Vert \nabla f(x_{t}) \Vert ^{2} \leq  - \nabla f(x_{t}) '  \mathbb{E}[Y_{t}|\mathcal{F}_{t-1}].  $$

(iv) Stepsize: the stepsizes are non-negative and satisfy 
$$ \sum_{t=0}^{\infty} \gamma_{t} = \infty \text{ and }  \sum _{t=0}^{\infty} \gamma_{t}^{2} < \infty$$

Then the following holds: 

(a) The sequence $f(x_{t})$ converges and we have $\lim _{t \rightarrow \infty } \nabla f(x_{t}) = 0$. 

(b) Every limit point of $x_{t}$, say $x^{*}$  is a stationary point of $f$, i.e., $\nabla f(x^{*}) = 0$. 

In particular, if $f$ has a unique stationary point $x^{*}$ and $x_{t}$ is bounded, then $x_{t}$ converges to $x^{*}$. 

\end{theorem}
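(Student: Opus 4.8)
The plan is to marry the classical ``descent lemma'' for functions with Lipschitz gradient with the Supermartingale Convergence Theorem (Theorem \ref{Thm:SuperMart}), which is exactly the tool built up for this purpose. First I would record the descent inequality: since $\nabla f$ is $L$-Lipschitz, integrating $\nabla f$ along the segment from $x$ to $y$ gives $f(y)\le f(x)+\nabla f(x)'(y-x)+\tfrac{L}{2}\Vert y-x\Vert^2$ for all $x,y$. Applying this with $x=x_t$ and $y=x_{t+1}=x_t+\gamma_t Y_t$ yields $f(x_{t+1})\le f(x_t)+\gamma_t\nabla f(x_t)'Y_t+\tfrac{L\gamma_t^2}{2}\Vert Y_t\Vert^2$. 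Taking $\mathbb{E}[\,\cdot\mid\mathcal F_{t-1}]$ (note $x_t$ and $\gamma_t$ are $\mathcal F_{t-1}$-measurable) and inserting condition (iii), $\nabla f(x_t)'\mathbb{E}[Y_t\mid\mathcal F_{t-1}]\le -c\Vert\nabla f(x_t)\Vert^2$, and condition (ii), $\mathbb{E}[\Vert Y_t\Vert^2\mid\mathcal F_{t-1}]\le K_1+K_2\Vert\nabla f(x_t)\Vert^2$, produces the key recursion
\[
\mathbb{E}[f(x_{t+1})\mid\mathcal F_{t-1}]\ \le\ f(x_t)-\Big(c\gamma_t-\tfrac{LK_2}{2}\gamma_t^2\Big)\Vert\nabla f(x_t)\Vert^2+\tfrac{LK_1}{2}\gamma_t^2 .
\]
Since $\sum_t\gamma_t^2<\infty$ forces $\gamma_t\to 0$, there is a finite $T_0$ so that for all $t\ge T_0$ the bracketed coefficient is $\ge\tfrac{c}{2}\gamma_t\ge 0$.

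Next I would invoke Theorem \ref{Thm:SuperMart} on the tail $t\ge T_0$, with the non-negative sequence played by $f(x_{t+1})$ (non-negative because $f\ge 0$ by hypothesis), the subtracted term $X_t$ by $\tfrac{c}{2}\gamma_{t+1}\Vert\nabla f(x_{t+1})\Vert^2\ge 0$, and $Z_t$ by $\tfrac{LK_1}{2}\gamma_{t+1}^2\ge 0$, which satisfies $\sum_t Z_t\le\tfrac{LK_1}{2}\sum_t\gamma_t^2<\infty$. The conclusion is that $f(x_t)$ converges (almost surely) to a finite limit and that $\sum_t\gamma_t\Vert\nabla f(x_t)\Vert^2<\infty$. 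Combined with $\sum_t\gamma_t=\infty$ from condition (iv), this immediately gives $\liminf_t\Vert\nabla f(x_t)\Vert=0$: otherwise $\Vert\nabla f(x_t)\Vert^2$ would be bounded below by a positive constant for large $t$ and the weighted sum would diverge.

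The remaining and genuinely delicate step is upgrading $\liminf_t\Vert\nabla f(x_t)\Vert=0$ to $\lim_t\Vert\nabla f(x_t)\Vert=0$; I expect this to be the main obstacle. As a preparatory estimate, $\sum_t\gamma_t^2K_1<\infty$ and $\sum_t\gamma_t^2K_2\Vert\nabla f(x_t)\Vert^2\le K_2(\sup_s\gamma_s)\sum_t\gamma_t\Vert\nabla f(x_t)\Vert^2<\infty$, so $\sum_t\mathbb{E}[\gamma_t^2\Vert Y_t\Vert^2\mid\mathcal F_{t-1}]<\infty$; applying Theorem \ref{Thm:SuperMart} once more to the partial sums of the non-negative quantities $\gamma_t^2\Vert Y_t\Vert^2$ (with $X_t\equiv 0$ and $Z_t$ the conditional expectations) gives $\sum_t\gamma_t^2\Vert Y_t\Vert^2<\infty$, hence $\gamma_t\Vert Y_t\Vert\to 0$ and therefore $\Vert\nabla f(x_{t+1})-\nabla f(x_t)\Vert\le L\gamma_t\Vert Y_t\Vert\to 0$. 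Now suppose, for contradiction, that $\limsup_t\Vert\nabla f(x_t)\Vert=2\epsilon>0$ on a set of positive probability. One extracts infinitely many ``crossing'' blocks $m_k<n_k$ with $\Vert\nabla f(x_{m_k})\Vert\le\epsilon/2$, $\Vert\nabla f(x_{n_k})\Vert\ge\epsilon$, and $\epsilon/2\le\Vert\nabla f(x_t)\Vert\le\epsilon$ for $m_k<t<n_k$. On each block $\sum_{t=m_k}^{n_k-1}\gamma_t\Vert\nabla f(x_t)\Vert^2\ge(\epsilon/2)^2\sigma_k$ with $\sigma_k:=\sum_{t=m_k}^{n_k-1}\gamma_t$, so $\sigma_k\to 0$; meanwhile the gradient changes by at least $\epsilon/2$, so $L\sum_{t=m_k}^{n_k-1}\gamma_t\Vert Y_t\Vert\ge\epsilon/2$. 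But on these blocks $\mathbb{E}[\Vert Y_t\Vert^2\mid\mathcal F_{t-1}]\le K_1+K_2\epsilon^2$, so $\sum_{t=m_k}^{n_k-1}\gamma_t\mathbb{E}[\Vert Y_t\Vert^2\mid\mathcal F_{t-1}]\le(K_1+K_2\epsilon^2)\sigma_k\to 0$, and Cauchy–Schwarz gives $\sum_{t=m_k}^{n_k-1}\gamma_t\Vert Y_t\Vert\le\sqrt{\sigma_k}\,\big(\sum_{t=m_k}^{n_k-1}\gamma_t\Vert Y_t\Vert^2\big)^{1/2}\to 0$ (the second factor being controlled by another partial-sums supermartingale argument), contradicting the lower bound $\epsilon/(2L)$. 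The fiddly point here is that the blocks $m_k,n_k$ are random, not deterministic, so the passage from conditional to unconditional summability along them must be made rigorous by recasting $m_k,n_k$ as stopping times and applying the conditional estimates across them; this is the standard stochastic-approximation bookkeeping.

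Finally, part (b) is immediate from part (a): if $x_{t_j}\to x^*$ along a subsequence, then continuity of $\nabla f$ gives $\nabla f(x^*)=\lim_j\nabla f(x_{t_j})=0$. If in addition $f$ has a unique stationary point $x^*$ and $\{x_t\}$ is bounded, then every subsequence has a further subsequence converging to some limit point, which must be $x^*$, and hence $x_t\to x^*$.
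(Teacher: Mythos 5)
Your argument for the convergence of $f(x_t)$ and for $\liminf_t \Vert\nabla f(x_t)\Vert = 0$ is essentially identical to the paper's: the descent inequality of Lemma \ref{Lemma:Dominate}, the conditional recursion combining conditions (ii) and (iii), and an application of Theorem \ref{Thm:SuperMart} to obtain $\sum_t \gamma_t\Vert\nabla f(x_t)\Vert^2 < \infty$ and hence the liminf statement via $\sum_t\gamma_t = \infty$. The only cosmetic difference is that you handle the possibly negative drift coefficient $c\gamma_t - \tfrac{LK_2}{2}\gamma_t^2$ by restricting to a deterministic tail $t\ge T_0$, whereas the paper folds the bad indices into the $Z_t$ term with the indicator $1_{\{\gamma_t LK_2 > 2c\}}$; both are valid since the stepsizes are deterministic. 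Note that this is all the paper itself proves -- it explicitly stops at the liminf claim and refers to \cite{bertsekas1996neuro} for the upgrade to a full limit and for part (b).

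Your attempt to supply that upgrade is the right strategy (the crossing-block argument of Bertsekas--Tsitsiklis), but it has a concrete gap at the final Cauchy--Schwarz step. You bound $\sum_{t=m_k}^{n_k-1}\gamma_t\Vert Y_t\Vert \le \sqrt{\sigma_k}\bigl(\sum_{t=m_k}^{n_k-1}\gamma_t\Vert Y_t\Vert^2\bigr)^{1/2}$ and assert that the second factor is ``controlled by another partial-sums supermartingale argument'' from the bound on $\sum_{t}\gamma_t\chi_t\,\mathbb{E}[\Vert Y_t\Vert^2\mid\mathcal F_{t-1}]$. Passing from summability of the conditional expectations to pathwise control of $\sum_t\gamma_t\chi_t\Vert Y_t\Vert^2$ is not automatic: the natural martingale $\sum_t\gamma_t\chi_t(\Vert Y_t\Vert^2 - \mathbb{E}[\Vert Y_t\Vert^2\mid\mathcal F_{t-1}])$ has increments whose variances involve fourth moments of $Y_t$, which are not assumed, and Theorem \ref{Thm:SuperMart} does not apply because $\sum_t\gamma_t\chi_t$ need not be deterministically summable. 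The standard repair avoids second moments of $\Vert Y_t\Vert$ along blocks entirely: write $Y_t = \mathbb{E}[Y_t\mid\mathcal F_{t-1}] + (Y_t - \mathbb{E}[Y_t\mid\mathcal F_{t-1}])$, bound the predictable part by $\Vert\mathbb{E}[Y_t\mid\mathcal F_{t-1}]\Vert \le \sqrt{K_1+K_2\epsilon^2}$ on blocks (Jensen) so that its weighted sum is $O(\sigma_k)\to 0$, and observe that $M_N=\sum_{t\le N}\gamma_t\chi_t(Y_t-\mathbb{E}[Y_t\mid\mathcal F_{t-1}])$ is an $L^2$-bounded martingale (conditional increment variances $\le\gamma_t^2(K_1+K_2\epsilon^2)$, summable by (iv)), hence converges a.s., so its increments over blocks vanish. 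That yields $\Vert x_{n_k}-x_{m_k}\Vert\to 0$ directly and closes the contradiction. Part (b) as you state it is indeed immediate from part (a).
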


The Gradient Direction condition 
ensures that on average, the updates \( Y_{t} \) are making progress in reducing the function \( f \). Intuitively, the term \( \nabla f(x_{t})' \mathbb{E}[Y_{t} | \mathcal{F}_{t-1}] \) represents the slope of $f$ at $x_{t}$ along the expected update direction $\mathbb{E}[Y_{t} | \mathcal{F}_{t-1}]$. When this slope is negative, it indicates that the expected update direction is aligned with the negative gradient, thus pointing towards a direction of descent. The condition ensures that the magnitude of this descent is proportional to the square of the gradient norm, meaning that the larger the gradient norm, the larger the expected decrease in the function \( f \). 

To get further intuition, suppose that $Y_{t} = w_{t}$ is deterministic in each period. Then from Taylor's expansion theorem we have  
$$f(x_{t+1}) = f(x_{t} + \gamma_{t}w_{t}) = f(x_{t}) + \gamma_{t}\nabla f(x_{t})'w_{t} +\overline{R}(x_{t},w_{t},\gamma_{t})$$
where $\overline{R}$ is the remainder which is, intuitively, negligible for small $\gamma$. Hence, intuitively, substantial changes in the slope  of $f$ along the direction $w_{t}$ are critical for the convergence of the iterative method. 

Importantly, the result do not assume convexity, which is typically used to ensure that any local minimum is also a global minimum. Without convexity, \( f \) may have multiple local minima, saddle points, etc. Nevertheless, the conditions ensure that the iterative method will find points that can't be improved locally. 

We will first need the following lemma: 

\begin{lemma} \label{Lemma:Dominate}
    Suppose that condition (i) of Theorem \ref{thm:convergence} holds. Then for all $x,y \in \mathbb{R}^{n}$ we have  
    \begin{equation*} \label{Eq:Lip1}
         f(y) \leq f(x) +\nabla f(x) ' (y-x) + \frac{L}{2} \Vert y-x \Vert^{2}
          \end{equation*}
          \end{lemma}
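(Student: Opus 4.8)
Proof proposal for Lemma \ref{Lemma:Dominate} (the "descent lemma" / quadratic upper bound from Lipschitz gradient).

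The plan is to use the fundamental theorem of calculus along the segment from $x$ to $y$, then bound the resulting integral using the Lipschitz property of $\nabla f$. First I would define $g(t) = f(x + t(y-x))$ for $t \in [0,1]$, which is differentiable since $f$ is continuously differentiable, with $g'(t) = \nabla f(x + t(y-x))'(y-x)$. Then $f(y) - f(x) = g(1) - g(0) = \int_0^1 g'(t)\,dt = \int_0^1 \nabla f(x+t(y-x))'(y-x)\,dt$.

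Next I would write
\[
f(y) - f(x) = \nabla f(x)'(y-x) + \int_0^1 \left( \nabla f(x+t(y-x)) - \nabla f(x) \right)'(y-x)\,dt,
\]
simply by adding and subtracting $\nabla f(x)'(y-x) = \int_0^1 \nabla f(x)'(y-x)\,dt$. The integral term is then bounded, via Cauchy--Schwarz applied pointwise in $t$ and then the Lipschitz condition (i), by
\[
\int_0^1 \Vert \nabla f(x+t(y-x)) - \nabla f(x)\Vert \, \Vert y-x\Vert \, dt \leq \int_0^1 L\, t\, \Vert y-x\Vert^2 \, dt = \frac{L}{2}\Vert y-x\Vert^2.
\]
Combining these gives $f(y) \leq f(x) + \nabla f(x)'(y-x) + \frac{L}{2}\Vert y-x\Vert^2$, as desired.

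There is no serious obstacle here; this is a standard calculation. The only points requiring a little care are (a) justifying differentiability of $g$ and the validity of the fundamental theorem of calculus, which follow immediately from $f \in C^1$, and (b) the pointwise application of the Cauchy--Schwarz inequality inside the integral before integrating, which is routine. The non-negativity hypothesis on $f$ is not needed for this lemma (it will be used later in the proof of Theorem \ref{thm:convergence}).
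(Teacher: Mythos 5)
Your proposal is correct and follows essentially the same route as the paper's proof: parametrize along the segment, apply the fundamental theorem of calculus with the chain rule, add and subtract $\nabla f(x)'(y-x)$, and bound the remainder via Cauchy--Schwarz and the Lipschitz condition to obtain the $\tfrac{L}{2}\Vert y-x\Vert^{2}$ term. The only cosmetic difference is that the paper works with $z$ and substitutes $z=y-x$ at the end, while you use $y-x$ throughout.
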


       \begin{proof}
          Let $x,z$ be two elements in $\mathbb{R}^{n}$.  Define $g(t) = f(x+tz)$ where $t$ is a scalar. We have 
$$ f(x+z) - f(x) = g(1) - g(0) = \int_{0}^{1} \frac{d g(t) }{dt} dt = \int _{0}^{1} z' \nabla f(x+tz) dt$$
    where the last equality follows from the chain rule.  Now note that 
    \begin{align*}
        \int _{0}^{1} z' \nabla f(x+tz) dt  & =    \int _{0}^{1} z' \nabla f(x) dt +   \int _{0}^{1} z' \nabla f(x+tz) dt -    \int _{0}^{1} z' \nabla f(x) dt   \\
        & \leq  z' \nabla f(x)  + \int _{0}^{1} \Vert z \Vert \cdot \Vert \nabla f(x+tz)  - \nabla f(x) \Vert dt \\
        & \leq  z' \nabla f(x) +\Vert z \Vert  \int _{0}^{1} Lt\Vert z \Vert dt = z' \nabla f(x) +\frac{L}{2} \Vert z \Vert ^{2}
         \end{align*}
where in the first inequality we used the Cauchy–Schwarz inequality. Now let $z=y-x$ to conclude the proof the Lemma. 
       \end{proof}

\begin{proof}[Proof of Theorem \ref{thm:convergence}]
    We  only prove that $f(x_{t})$ converges and $\liminf_{t \rightarrow \infty} \Vert \nabla f(x_{t} ) \Vert = 0$. The rest of the proof can be found, for example, in \cite{bertsekas1996neuro}. We have
 \begin{align*}
     \mathbb{E}[f(x_{t+1}) | \mathcal{F}_{t-1}  ] & \leq \mathbb{E}[f(x_{t}) | \mathcal{F}_{t-1}  ] + \gamma_{t} \mathbb{E}[ \nabla f(x_{t}) ' Y_{t}  | \mathcal{F}_{t-1} ]  + \frac{L}{2}  \gamma_{t}^{2} \mathbb{E}[\Vert Y_{t} \Vert^{2} | \mathcal{F}_{t-1}  ]  \\
     & = f(x_{t}) + \gamma_{t} \nabla f(x_{t}) ' \mathbb{E}[ Y_{t}  | \mathcal{F}_{t-1} ]  + \frac{L}{2}  \gamma_{t}^{2} \mathbb{E}[\Vert Y_{t} \Vert^{2} | \mathcal{F}_{t-1}  ] \\
     & \leq f(x_{t}) + \gamma_{t} \nabla f(x_{t}) ' \mathbb{E}[ Y_{t}  | \mathcal{F}_{t-1} ]  + \frac{L}{2}  \gamma_{t}^{2} \left (K_{1} + K_{2} \Vert \nabla f(x_{t}) \Vert ^{2} \right ) \\
     & \leq  f(x_{t}) - \gamma_{t} \left ( c - \frac{\gamma_{t}LK_{2}}{2} \right ) \Vert \nabla f(x_{t}) \Vert ^{2} + \frac{LK_{1}\gamma_{t}^{2}}{2}.
 \end{align*}
     The first inequality follows from Lemma \ref{Lemma:Dominate} with $y=x_{t+1}$ and $x = x_{t}$ and conditioning on $\mathcal{F}_{t-1}$. The equality follows because $x_{t}$ is measurable with respect to $\mathcal{F}_{t-1}$. The second and third inequalities follows from conditions (ii) and (iii) of the theorem. 

     Now define for each $t$,
     $$ X_{t-1} =  \gamma_{t} \left ( c - \frac{\gamma_{t}LK_{2}}{2} \right ) \Vert \nabla f(x_{t}) \Vert ^{2}1_{ \{ \gamma_{t}LK_{2} \leq 2c \} } \text{,    } W_{t-1} = f(x_{t})$$
     and
     $$Z_{t-1} = \frac{LK_{1}\gamma_{t}^{2}}{2}1_{ \{ \gamma_{t}LK_{2} \leq 2c \} } + \left (  - \gamma_{t} \left ( c - \frac{\gamma_{t}LK_{2}}{2} \right ) \Vert \nabla f(x_{t}) \Vert ^{2} + \frac{LK_{1}\gamma_{t}^{2}}{2} \right )   1_{ \{ \gamma_{t}LK_{2} > 2c \} }  $$
     and note that $W_{t-1},X_{t-1},Z_{t-1}$ are measurable with respect to $\mathcal{F}_{t-1}$ because $x_{t}$ is measurable with respect to $\mathcal{F}_{t-1}$. In addition, $W_{t}, Z_{t}$ and $X_{t}$ are non-negative and in the inequalities above we have shown that 
     $$\mathbb{E}[W_{t}|\mathcal{F}_{t-1}] \leq W_{t-1} - X_{t-1} + Z_{t-1}$$ 
     for each $t$ and  $\sum_{t=0}^{\infty} Z_{t}  < \infty$ follows easily from the assumption that $\sum _{t=0}^{\infty} \gamma_{t}^{2} < \infty $. 

Thus, we can use Theorem \ref{Thm:SuperMart} to conclude that $W_{t-1}=f(x_{t})$ converges with probability $1$ to a non-negative random variable and $\sum_{t=0}^{\infty} X_{t} < \infty $. 
Because $\gamma_{t}$ converges to $0$ (as $\sum _{t=0} ^{\infty} \gamma_{t}^{2} < \infty $) we have $LK_{2} \gamma_{t} \leq c$ for any $t \geq t_{0}$ for some finite $t_{0}$. Hence, 
\begin{equation} \label{Eq:SumXt}
    \infty > \sum _{t=0}^{\infty}  \gamma_{t} \Vert \nabla f(x_{t}) \Vert ^{2}.  
    \end{equation} 

Now, if there exists some time $t_{0}$ and $\delta > 0$ such that $\Vert \nabla f(x_{t}) \Vert ^{2} \geq \delta $ for all $t \geq t_{0}$ then the fact that $\sum _{t=0}^{\infty} \gamma_{t} = \infty$ leads to a contradiction given Inequality (\ref{Eq:SumXt}). 

We conclude that for every $t_{0}$, and $\delta > 0$, there exists $t(t_{0}) \geq t_{0} $ such that $\Vert \nabla f(x_{t(t_{0})}) \Vert  < \delta $, i.e., $\liminf _{t \rightarrow \infty} \Vert \nabla f(x_{t}) \Vert =0  $ 
\end{proof}

\begin{remark}
    The proof of the last theorem also follows in the case that $K_{1}$ is a bounded (with probability $1$) random variable that is measurable with respect to $\mathcal{F}_{t-1}$. 
\end{remark}

\subsubsection{Stochastic Gradient Descent Variants} \label{Sec:SGD}

We present a few important variants of stochastic gradient descent algorithms that are a special case of Equation (\ref{eq:Iterative}). 

We will assume for the rest of this section that the stepsizes are non-negative and satisfy 
$$ \sum_{t=0}^{\infty} \gamma_{t} = \infty \text{ and }  \sum _{t=0}^{\infty} \gamma_{t}^{2} < \infty.$$

\textbf{Noisy Gradient Descent:} Consider the following noisy gradient descent algorithm 
\begin{equation} \label{Eq:NoisyGrad}
    x_{t+1} = x_{t} - \gamma_{t} ( \nabla f(x_{t}) + Z_{t}). 
\end{equation}
which corresponds to Equation (\ref{eq:Iterative}) with $Y_{t} = - \nabla f(x_{t}) - Z_{t}$. 
We assume that $f$ is non-negative, has a Lipschitz continuous gradient (see Assumption (i) in Theorem \ref{thm:convergence}), 
$$\mathbb{E} [Z_{t} | F_{t-1} ] = 0 \text { and } \mathbb{E} [ \Vert Z_{t}  \Vert ^{2}| F_{t-1} ] \leq K_{1}' + K_{2}' \Vert \nabla f(x_{t})  \Vert ^{2}$$
for some constants $K_{1}', K_{2}'$.

Note that 
$$ - \nabla f(x_{t})' \mathbb{E} [Y_{t} | F_{t-1} ] = \nabla f(x_{t})' (\nabla f(x_{t}) +  \mathbb{E} [Z_{t} | F_{t-1} ] ) = \Vert \nabla f(x_{t})  \Vert ^{2} $$
so Assumption (iii) of Theorem \ref{thm:convergence} holds with $c=1$. 

In addition, given the assumption on $Z_{t}$ we have
  $$\mathbb{E}[\Vert Y_{t} \Vert ^{2} | \mathcal{F}_{t-1} ] \leq  K_{1}' + (K_{2}' +1)\Vert \nabla f(x_{t}) \Vert ^{2} $$
so Assumption (ii) of Theorem \ref{thm:convergence} holds.

Thus, we can apply Theorem \ref{thm:convergence} to the noisy gradient problem to conclude that $f(x_{t})$ converges and $\nabla f(x_{t})$ converges to $0$. 

\textbf{Stochastic Approximation:} \label{SA}
Now assume that we have a sequence of I.I.D random variables $V_{t}$ with a law $V$ and a finite mean $\mu$ that is unknown. We consider the stochastic approximation algorithm 
$$ x_{t+1} = (1-\gamma_{t})x_{t} + \gamma_{t} V_{t}$$
so we use a single sample at each time from the law $V$ and we hope that $x_{t}$ converges to the unknown mean $\mu$. 

Define $-Z_{t} = V_{t} - \mu$ and note that we get 
$$ x_{t+1} = x_{t} - \gamma_{t} (x_{t} - \mu) - \gamma _{t} Z_{t}$$
so stochastic approximation is the noisy gradient descent with $f(x) = 0.5 \Vert x-\mu \Vert ^{2} $. In particular, Theorem \ref{thm:convergence} applies under the conditions on $Z_{t}$ assumed in the noisy gradient part.  

As a special case that will be used in the analysis of the Q-learning algorithm, consider $f(x)=x^{2}/2$ on  $\mathbb{R}$ so the derivative of $f$ is exactly $x$. So if $\mathbb{E} [Z_{t} | F_{t-1} ] = 0 \text { and } \mathbb{E} [  Z_{t}  ^{2}| F_{t-1} ] \leq K_{1}' + K_{2}' x_{t}^{2}$ for some $K_{1}' , K_{2}'  >0$, the sequence $x_{t}$ converges to $0$ from the noisy gradient algorithm analysis.

\textbf{Stochastic Gradient Descent (SGD):} 
Stochastic Gradient Descent (SGD) algorithm has become a fundamental algorithm for training large-scale models. Unlike traditional gradient descent, which computes the gradient using the entire dataset to update the model parameters once per iteration, SGD uses only a single or a few training examples at a time. This approach significantly reduces the computational burden, making it feasible to train on large datasets and can also introduce a beneficial level of noise into the gradient estimates, which helps to avoid local minima. 

The popularity of SGD has surged in recent years due to its effectiveness in handling massive datasets. It is particularly favored in the training of deep neural networks, where the size of the datasets and the complexity of the models can make traditional optimization methods computationally impractical.

The typical setting consists of a differentiable in $\theta$ loss function \( L(k_\theta(x_{i}), y_i) \) where \( \{(x_i, y_i)\}_{i=1}^N \) represent training data instances and $\theta$ are parameters (e.g., weights of a neural network). Our objective is to minimize the function \( f \) defined by

\[ 
f(\theta) = \frac{1}{N} \sum_{i=1}^{N} L(k_\theta(x_i), y_i).
\]

A common approach would be to use deterministic gradient descent, updating the parameters \( \theta \) according to the formula:

\[ 
\theta_{t+1} = \theta_t - \gamma_t \frac{1}{N} \sum_{i=1}^{N} \nabla_\theta L(k_\theta(x_i), y_i).
\]

However, this approach can be computationally expensive because it requires processing the entire dataset at each iteration. As discussed above, an alternative is to update the parameters based on a single randomly selected data point per iteration, significantly reducing the computational burden.

Formally, let \( U(t) \) be a sequence of i.i.d. random variables uniformly distributed over \( \{1, \ldots, N\} \), representing the indices of the data points. The SGD update at each step is then given by:

\[
\theta_{t+1} = \theta_t - \gamma_t \nabla_\theta L(k_\theta(x_{U(t)}), y_{U(t)}),
\]

where \( \gamma_t \) is the learning rate at step \( t \). This stochastic update reduces computational costs by approximating the true gradient of \( f \) based on a single observation per iteration.

We note that we can write:

\[
\theta_{t+1} = \theta_t - \gamma_t \nabla f(\theta_{t})  - \gamma_t Z_{t},
\]

with 

\[
Z_{t} = \nabla_\theta L(k_\theta(x_{U(t)}), y_{U(t)}) -  \nabla f(\theta_{t}),
\]

so we can use the noisy gradient descent analysis to study SGD. Note that $\mathbb{E}[Z_{t} | \mathcal{F}_{t-1} ] = 0$ because $U(t)$ is uniformly distributed  
and it can be shown that the mean squared condition is satisfied if
\[
\max _{i} \Vert \nabla_\theta L(k_\theta(x_i), y_i) \Vert ^{2} \leq K_{1}' + K_{2}' \Vert \nabla f(\theta) \Vert ^{2}
\]

for all \(\theta\) and some constants \( K_{1}',K_{2}' \) (see exercises).

\subsection{Q-Learning} \label{Section:Q-learning}

 For simplicity we assume that $\mathcal{A} = \Gamma(s)$ for all state $s \in S$ where $\mathcal{A}$ is the set of probability measures on some finite set of actions $A = \{a_{1},\ldots,a_{k} \}$ which we denote by $A$ with some abuse of notation (all the results in this section will also hold for the case that the feasible actions at state $s$ are given by $\Gamma(s)$ as in Lecture 2). 

The general idea of Q-learning is to mimic value function iteration using simulation. 

Q-learning is a ``model-free" algorithm in the sense that it can be used whenever there is explicit model of the system or the payoffs. The general idea of Q-learning is to mimic value function iteration algorithm using simulation. The algorithm operates through direct interactions with the environment and learns from the experience derived from the enviorment.

The core of Q-learning is the Q-function or Q-values, denoted as \( Q(s, a) \). The objective of the Q-learning algorithm is to determine the Q-values. 

In this section we will introduce the $Q$-learning algorithm and provide a convergence result for the tabular case (without function approximation for the Q-function).

 Consider the operator $H:B(S \times A) \rightarrow B(S \times A)$ by
$$HQ(s,a) = \sum _{s' \in S} p(s,a,s') \left ( R(s,a,s') + \beta \max_{a' \in A} Q(s' , a') \right )$$

\begin{lemma} \label{Lem:H is contraction}
The operator $H$ is a $\beta$-contraction when $B(S \times A)$ is endowed with maximum metric $d$. 
\end{lemma}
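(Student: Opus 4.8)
The plan is to verify the contraction inequality directly from the definition of $H$ and of the maximum metric $d(Q_1,Q_2) = \sup_{(s,a) \in S \times A} |Q_1(s,a) - Q_2(s,a)|$. First I would fix two functions $Q_1, Q_2 \in B(S \times A)$ and an arbitrary state-action pair $(s,a)$ and subtract: the reward term $R(s,a,s')$ appears identically in both $HQ_1(s,a)$ and $HQ_2(s,a)$ and cancels, leaving
$$HQ_1(s,a) - HQ_2(s,a) = \beta \sum_{s' \in S} p(s,a,s')\left(\max_{a' \in A} Q_1(s',a') - \max_{a' \in A} Q_2(s',a')\right).$$
Taking absolute values, using the triangle inequality (the sum is finite since $S$ is finite), and using $p(s,a,s') \ge 0$ gives
$$|HQ_1(s,a) - HQ_2(s,a)| \le \beta \sum_{s' \in S} p(s,a,s')\,\Bigl|\max_{a'} Q_1(s',a') - \max_{a'} Q_2(s',a')\Bigr|.$$

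The one elementary fact I need is the ``difference of maxima'' estimate $\bigl|\max_{a'} Q_1(s',a') - \max_{a'} Q_2(s',a')\bigr| \le \max_{a'} |Q_1(s',a') - Q_2(s',a')|$, valid for any two bounded functions on a common (here finite) domain. I would prove it by writing $Q_1(s',a') \le Q_2(s',a') + \max_{a''}|Q_1(s',a'') - Q_2(s',a'')|$ for every $a'$, taking the maximum over $a'$ on the left, then repeating with the roles of $Q_1$ and $Q_2$ interchanged. Applying this and then the crude bound $\max_{a'} |Q_1(s',a') - Q_2(s',a')| \le d(Q_1,Q_2)$ yields
$$|HQ_1(s,a) - HQ_2(s,a)| \le \beta\, d(Q_1,Q_2) \sum_{s' \in S} p(s,a,s') = \beta\, d(Q_1,Q_2),$$
where the final equality uses that $p(s,a,\cdot)$ is a probability measure and so sums to $1$.

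Finally I would take the supremum over all $(s,a) \in S \times A$ on the left to conclude $d(HQ_1,HQ_2) \le \beta\, d(Q_1,Q_2)$, i.e.\ $H$ is a $\beta$-contraction. Since $\beta < 1$ and $B(S\times A)$ is a complete metric space under $d$ (it is of the form $B(\cdot)$), Proposition \ref{prop:Banach} then guarantees that $H$ has a unique fixed point, which is the ``optimal $Q$-function'' targeted by the algorithm. There is no genuine obstacle here: the proof is routine once the difference-of-maxima estimate is in hand, and the only points to state carefully are that the maxima over the finite set $A$ are attained and that $p(s,a,\cdot)$ has total mass one.
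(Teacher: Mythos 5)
Your proof is correct and follows essentially the same route as the paper's: cancel the reward term, apply the triangle inequality over the finite sum, invoke the difference-of-maxima estimate $|\max_i x_i - \max_i y_i| \leq \max_i |x_i - y_i|$, bound by $d(Q_1,Q_2)$ using that $p(s,a,\cdot)$ sums to one, and take the supremum over $(s,a)$. The only difference is that you spell out the proof of the difference-of-maxima inequality, which the paper simply cites as a known fact.
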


\begin{proof}
    Let $Q$ and $W$ be two functions in $B(S \times A)$. Then 
    \begin{align*}
        | HQ(s,a) - HW(s,a) | & =    \beta \left | \sum _{s' \in S}p(s,a,s')\left ( \max_{a' \in A} Q(s' , a') - \max_{a' \in A} W(s' , a')\right ) \right | \\
        & \leq  \beta  \sum _{s' \in S}p(s,a,s') \left | \max_{a' \in A} Q(s' , a') - \max_{a' \in A} W(s' , a') \right | \\
          & \leq  \beta  \sum _{s' \in S}p(s,a,s') \max_{a' \in A} \left |  Q(s' , a') -  W(s' , a') \right | \\
           & \leq  \beta  \sum _{s' \in S}p(s,a,s') d(Q,W)  =  \beta d(Q,W). 
    \end{align*}
Taking the maximum over $(s,a)$ on the left-hand-side yields the result.

In the second inequality we use $| \max x_{i} - \max y_{i} | \leq \max_{i} |x_{i} - y_{i}| $ for two vectors $x,y$. 
\end{proof}

Hence, from the Banach fixed point theorem $H$ has a unique fixed point $Q^{*}$ (see Lecture 1).

It is easy to see that the value function is given by $V(s) = \max _{a \in A} Q^{*}(s,a)$ by noticing that $Tf (s) =  \max _{a \in A} HQ (s,a)$ where $f(s) = \max _{a \in A} Q(s,a)$. 
Thus, we can apply value function iteration for $H$ to obtain the value function. Working directly with $H$  function rather than $T$ increases the state space but simplifies the implementation of value function iteration. Specifically, the advantage of using $Q$ functions lies in the encapsulation of the action selection within the expectation calculation, i.e., directly choosing the action that maximizes the future reward as part of the expectation calculation itself.

A more general version of this value function iteration algorithm is to consider 
$$ Q(s,a) \leftarrow (1-\gamma) Q(s,a) +  \gamma \sum _{s' \in S} p(s,a,s') \left ( R(s,a,s') + \beta \max_{\Tilde{a} \in A} Q(s' , \Tilde{a}) \right )  $$
for some step size parameter $\gamma \in (0,1]$ that may change overtime. The $Q$-learning algorithm is an approximation of this method using simulation where we replace the unknown transition probability function by a single observed sample, i.e., 
$$ Q(s,a) \leftarrow (1-\gamma) Q(s,a) +  \gamma  \left ( R(s,a,s') + \beta \max_{\Tilde{a} \in A} Q(s' , \Tilde{a}) \right )  $$
where the next period's state $s'$ and the payoff $R$ are generated from $(s,a)$ by simulating the enviorment. That is, to enable $Q$-learning it is assumed that we have an access to a simulator where for each policy and each state, we can obtain the next period's state using $p$ and the payoff $R$ at that period (without knowing $p$ and $R$). 

More precisely, let $t$ be an index where we update our $Q$-functions. Then the $Q$-learning updates according to 
\begin{equation} \label{Eq:Q}
     Q_{t+1} (s,a) = (1- \gamma_{t}(s,a) ) Q_{t}(s,a) + \gamma_{t}(s,a) \left (R(s,a,s')  + \beta \max_{\Tilde{a} \in A} Q_{t}(s' , \Tilde{a})\right )
     \end{equation}
where $s'$ is sampled randomly  according to $p(s,a,s')$ and $\gamma_{t}(s,a) =0$ for $(s,a)$ that are not updated. For simplicity, we assume that we update only $(s_{t},a_{t})$ (the state-action pair that was observed in period $t$) and do not update the $Q$ values of the other state-action pairs (this is called asynchronous $Q$-learning). 

\textbf{Remark:} In Q-learning, the selection of actions \(a_t\) and the updating of Q-values represent two distinct aspects of the learning process that are designed to balance exploration with exploitation.

\begin{itemize}
    \item \textbf{Action Selection Policy (exploration):} Actions in Q-learning are selected according to a policy that allows for exploration of the state-action space. Note $a_{t}$ is not chosen by the $Q$-learning algorithm so we will need to provide a policy ($Q$-learning is called an off-policy algorithm). \\ 
    The policy is crucial as actions need to be explored sufficiently over time. This is critical for the convergence of Q-learning algorithm as we will show in this section. 
 \item \textbf{Q-value Update Rule (exploitation):} The update rule for Q-values in Q-learning is inherently greedy. 
       \(\max_{\Tilde{a}} Q(s_{t+1}, \Tilde{a})\) represents the highest Q-value achievable in the next state \(s_{t+1}\), reflecting the greedy aspect of the update.

    \end{itemize}

The main result of this section shows that if the algorithm visits every state-action pair infinitely often, then the $Q$-learning converges to the optimal $Q^{*}$ function. 

\begin{theorem}
    Suppose that 

    $$ \sum _{t=0}^{\infty} \gamma_{t} (s,a) = \infty \text { and } \sum _{t=0}^{\infty} \gamma_{t} ^{2} (s,a) < \infty$$
    then $Q_{t}$ derived from the Q-learning described in Equation (\ref{Eq:Q}) converges with probability $1$ to $Q^{*}$.  
\end{theorem}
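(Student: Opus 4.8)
The plan is to reduce the statement to a scalar stochastic approximation result of the kind developed in Section~\ref{Section:Convergence}, driven by the $\beta$-contraction property of $H$ from Lemma~\ref{Lem:H is contraction}. Set $\Delta_t(s,a) = Q_t(s,a) - Q^*(s,a)$; since $S$ and $A$ are finite, $\Delta_t$ lives in a finite-dimensional space and it suffices to prove $\Vert \Delta_t \Vert := \max_{(s,a)} |\Delta_t(s,a)| \to 0$ with probability $1$. Let $\{\mathcal{F}_t\}$ be the natural filtration of the algorithm, write the sampled target as $F_t(s,a) = R(s,a,s') + \beta \max_{\tilde a} Q_t(s',\tilde a)$ with $s' \sim p(s,a,\cdot)$, and note that $\mathbb{E}[F_t(s,a)\mid \mathcal{F}_t] = HQ_t(s,a)$ while $HQ^* = Q^*$. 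Putting $w_t(s,a) = F_t(s,a) - HQ_t(s,a)$, which has zero $\mathcal{F}_t$-conditional mean, the update $(\ref{Eq:Q})$ becomes
\begin{equation*}
\Delta_{t+1}(s,a) = \big(1-\gamma_t(s,a)\big)\Delta_t(s,a) + \gamma_t(s,a)\Big(\big(HQ_t(s,a)-HQ^*(s,a)\big) + w_t(s,a)\Big),
\end{equation*}
with $\gamma_t(s,a) = 0$ for $(s,a) \neq (s_t,a_t)$.

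Two preliminary facts are needed. First, boundedness: $R$ takes finitely many values, so $|R| \le M$ for some $M$; since $Q_{t+1}$ is a convex combination of $Q_t$ and $F_t$ and $\Vert F_t \Vert \le M + \beta \Vert Q_t \Vert$, an easy induction gives $\Vert Q_t \Vert \le \max\{\Vert Q_0 \Vert,\, M/(1-\beta)\}$ for all $t$, hence $\Vert \Delta_t \Vert$ is bounded by a deterministic constant $D_0$. Second, noise control: consequently $|w_t(s,a)| \le 2\big(M + \beta \Vert Q_t \Vert\big)$ is uniformly bounded, so $\mathbb{E}[w_t(s,a)^2 \mid \mathcal{F}_t] \le K$ for a constant $K$. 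Finally Lemma~\ref{Lem:H is contraction} supplies the contraction estimate $|HQ_t(s,a) - HQ^*(s,a)| \le \beta \Vert \Delta_t \Vert$.

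The heart of the argument is a nested shrinking-bounds scheme. Fix $\rho$ with $\beta < \rho < 1$ and set $D_{k+1} = \rho D_k$, so $D_k \to 0$. I claim that for every $k$ there is an almost surely finite time $t_k$ with $\Vert \Delta_t \Vert \le D_k$ for all $t \ge t_k$; iterating yields $\Delta_t \to 0$, and the base case $k=0$ holds with $t_0=0$. For the inductive step, assume $\Vert \Delta_t \Vert \le D_k$ for $t \ge t_k$ and introduce, componentwise in $(s,a)$, the deterministic envelope $X_{t+1}(s,a) = (1-\gamma_t(s,a))X_t(s,a) + \gamma_t(s,a)\beta D_k$ with $X_{t_k}(s,a) = D_k$, and the noise accumulator $W_{t+1}(s,a) = (1-\gamma_t(s,a))W_t(s,a) + \gamma_t(s,a) w_t(s,a)$ with $W_{t_k}(s,a) = 0$. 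Tracking $\Delta_t(s,a) - W_t(s,a)$, whose drift is $HQ_t - HQ^*$ and is therefore dominated in absolute value by $\beta D_k$, a short induction gives $|\Delta_t(s,a)| \le X_t(s,a) + |W_t(s,a)|$ for all $t \ge t_k$. Since $0 \le \gamma_t(s,a) \le 1$ and $\sum_t \gamma_t(s,a) = \infty$, the product $\prod_{\tau \ge t_k}(1-\gamma_\tau(s,a))$ vanishes, so $X_t(s,a) \to \beta D_k$; and since $w_t(s,a)$ is a zero-mean martingale difference with bounded conditional second moment and the stepsizes satisfy $\sum_t \gamma_t(s,a) = \infty$, $\sum_t \gamma_t(s,a)^2 < \infty$, the scalar instance of Theorem~\ref{thm:convergence} with $f(x)=x^2/2$ (as discussed after that theorem, resting on Theorem~\ref{Thm:SuperMart}) applied to $W_t(s,a)$ gives $W_t(s,a) \to 0$ with probability $1$. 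Because there are finitely many pairs $(s,a)$, for all $t$ large enough $X_t(s,a) + |W_t(s,a)| \le \beta D_k + (\rho-\beta)D_k = D_{k+1}$ simultaneously over all $(s,a)$, which furnishes $t_{k+1}$ and closes the induction.

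The main obstacle is the bookkeeping in the nested-bounds step: cleanly isolating the contractive drift from the martingale noise in the asynchronous, componentwise recursion so that the comparison $|\Delta_t(s,a)| \le X_t(s,a) + |W_t(s,a)|$ holds, and invoking the stepsize hypotheses correctly for each component — in particular, noting that $\sum_t \gamma_t(s,a) = \infty$ already forces every $(s,a)$ to be updated infinitely often. By contrast, the boundedness of $Q_t$ — which is exactly what keeps the noise variance bounded and hence lets the stochastic approximation result apply — is comparatively routine, since each $Q_{t+1}$ is a convex combination of the bounded quantities $Q_t$ and $F_t$.
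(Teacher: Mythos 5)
Your proof is correct and, in its core mechanism, is the same as the paper's: you decompose the update into a contractive drift $HQ_t-HQ^*$ plus a zero-mean noise term, drive the accumulated noise $W_{t;t_k}$ to zero via the scalar stochastic approximation instance of Theorem \ref{thm:convergence} with $f(x)=x^2/2$, and then run the nested shrinking-bounds induction $D_{k+1}=\rho D_k$ with a deterministic envelope $X_t\to\beta D_k$ and the two-sided comparison $-X_t+W_t\le \Delta_t\le X_t+W_t$ — this is exactly the paper's Steps 1 and 3. The one place you genuinely diverge is boundedness (the paper's Step 2). You obtain it by a one-line convex-combination induction, $\Vert Q_{t+1}\Vert\le(1-\gamma)\Vert Q_t\Vert+\gamma(M+\beta\Vert Q_t\Vert)\le\max\{\Vert Q_0\Vert,\,M/(1-\beta)\}$, which is valid in the Lecture 4 setting because $R(s,a,s')$ is a deterministic function on the finite set $S\times A\times S$ and hence almost surely bounded; the paper instead runs the more involved rescaling argument with the random scales $G_t$ and the normalized noise $Y_{t+1}=Z_{t+1}/G_t$, which only needs the conditional second-moment bound $\mathbb{E}[Z_{t+1}^2\mid\mathcal{F}_t]\le K_1+K_2\Vert Q_t\Vert^2$ and therefore also covers rewards that are random given $(s,a,s')$ with bounded variance but unbounded support. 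Your route is simpler; the paper's is more general. Two small points to tighten: your uniform bound requires $\gamma_t(s,a)\in[0,1]$ for all $t$, which the hypotheses do not literally guarantee (only that $\gamma_t\le 1$ eventually, since $\sum_t\gamma_t^2<\infty$) — boundedness survives, but the constant must absorb the finitely many early steps; and the inductive comparison should be stated in the signed two-sided form before passing to $|\Delta_t|\le X_t+|W_t|$, since $(1-\gamma)\Delta_t\le(1-\gamma)(X_t+|W_t|)$ does not follow directly from the absolute-value form.
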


\begin{proof} For the proof we will use $\Vert \Vert$ for the maximum norm.  Without loss of generality we assume that $Q^{*} =0$. We omit the phrase with probability 1 throughout the proof.

    Let $\mathcal{F}_{t}$ be the sigma algebra generated from the history up to time $t$  (see Lecture 2). 
 Let us write 
 \begin{equation} 
     Q_{t+1} (s,a) = (1- \gamma_{t}(s,a) ) Q_{t}(s,a) + \gamma_{t}(s,a) \left (HQ_{t}(s,a)  + Z_{t+1} (s,a) \right )
     \end{equation}
     where 
     $$ Z_{t+1} (s,a) = R(s,a,s') + \beta \max_{\Tilde{a} \in A} Q_{t}(s' ,\Tilde{a}) -  HQ_{t}(s,a) $$

     Note that $\mathbb{E} [Z_{t+1} (s,a) | \mathcal{F}_{t} ] =0$ and $\mathbb{E} [Z_{t+1}^{2} (s,a) | \mathcal{F}_{t} ] \leq K_{1} + K_{2} \Vert Q_{t} \Vert^{2} $  for some constants $K_{1},K_{2}$.

     We are interested in proving that 
     $$Q_{t+1}(i) = (1- \gamma_{t}(i) )Q_{t}(i) + \gamma_{t}(i) (HQ_{t}(i) + Z_{t+1} (i) )$$
converges to the fixed point of $H$ given by $Q^{*} = 0$ with probability $1$ where we denote $(s,a) = i$ for simplicity.. 

Let $\epsilon$ be such that $\beta (1+\epsilon) =1 $. Let $G_{0} = \max \{ \Vert Q_{0} \Vert  , 1 \}$ and define recursively 
\begin{equation*}
    G_{t+1} =
    \begin{cases}
        G_{t} & \text{if } \Vert Q_{t+1} \Vert \leq (1+\epsilon) G_{t} \\
        G_{0}(1+\epsilon)^{k} & \text{if } \Vert Q_{t+1} \Vert > (1+\epsilon) G_{t}
    \end{cases}
\end{equation*} where $k$ is chosen such that $(1+\epsilon)^{k-1} G_{0} <\Vert Q_{t+1} \Vert   \leq (1+\epsilon)^{k} G_{0} = G_{t+1} $. 

Note that $G_{t}$ is a sequence of non-decreasing random variables that are measurable with respect to $\mathcal{F}_{t}$ such that  $\Vert Q_{t} \Vert   \leq (1+\epsilon) G_{t} $ and $\Vert Q_{t} \Vert   \leq  G_{t} $ if $G_{t-1} < G_{t}$.  

We have
\begin{align} \label{Eq:Q_proof_Gt}
  \Vert HQ_{t} \Vert   \leq \beta \Vert Q_{t} \Vert \leq  \beta (1+\epsilon) G_{t} = G_{t}
\end{align}
where the first inequality follows because $H$ is contraction (see Lemma \ref{Lem:H is contraction}) and using $HQ^{*}=Q^{*} = 0$ and the equality from the definition of $\epsilon$.

We are now rescaling $Z_{t}(i)$ be defining $Y_{t+1}(i) = Z_{t+1}(i)/G_{t}$. This ensures that $\mathbb{E} [Y_{t+1} (i) | F_{t} ] =0$ and $\mathbb{E} [Y_{t+1}^{2} (i) | F_{t} ] \leq K $ for some constant $K$ so we can apply Theorem \ref{thm:convergence} in Step 1 below.  

     We proceed by proving the following two steps:

     \textbf{Step 1}. (i) For all $i$, $t_{0} \geq 0$, define  $W_{t_{0}; t_{0} } = 0$ and
     $$ W_{t+1;t_{0}} (i) = (1-\gamma_{t}(i)) W_{t;t_{0}} (i) + \gamma_{t}(i) Y_{t+1} (i) , \text{ } t \geq t_{0} $$

     Then $\lim _{t\rightarrow \infty} W_{t; t_{0}}(i) = 0$.

(ii) For every $\delta >0 $ there exists some $t_{0}$ such that $|W_{t;t_{0}}| \leq \delta$  for all $t \geq t_{0}$
 
     \textbf{Proof of Step 1.}
(i) From the SGD analysis of 
(see the stochastic approximation paragraph in Section \ref{Sec:SGD}) $W_{t;0}$ is a noisy gradient descent on $\mathbb{R}$ with the function $k(x) = x^{2} /2$ and the conditions of the convergence result (see Theorem \ref{thm:convergence}) hold which implies that  $\lim _{t\rightarrow \infty} W_{t;0}(i) = 0$. 
The case $\lim _{t\rightarrow \infty} W_{t;t_{0}}(i) = 0$ follows from a similar argument.

(ii) Note that for all $t \geq t_{0}$ we have 
$W_{t;0}(i) - \prod _{\tau = t_{0} } ^{t-1} (1-\gamma_{\tau}(i))W_{t_{0};0}(i)= W_{t;t_{0} }$. Now use part (i) to conclude the result. 

\textbf{Step 2.} The sequence $Q_{t}$ is bounded. 

\textbf{Proof of Step 2.} Assume in contradiction that the sequence $Q_{t}$ is unbounded. This means that $G_{t}$ is unbounded. The fact that  $\Vert Q_{t} \Vert   \leq  G_{t} $ if $G_{t-1} < G_{t}$ implies that $\Vert Q_{t} \Vert   \leq  G_{t} $ holds for infinitely many values of $t$. 

Using Step 1, and the fact that $\gamma_{t}(i)$ converges to $0$ we conclude that there exists $t_{0}$ such that $\Vert Q_{t_{0} } \Vert   \leq  G_{t_{0}}$, $|W_{t;t_{0} } | \leq \epsilon$ and $\gamma_{t}(i) \leq 1$ for all $t \geq t_{0}$. 

We now claim that $G_{t} = G_{t_{0}} $ and $$-G_{t_{0}} + G_{t_{0}}W_{t;t_{0} }(i) \leq Q_{t } (i)  \leq G_{t_{0}} + G_{t_{0}}W_{t;t_{0} }(i)$$ for all $t \geq t_{0}$ which is a contradiction to  the assumption that $G_{t}$ is unbounded, and hence, it shows that $Q_{t}$ is bounded.

The proof is by induction. The case $t=t_{0}$ is immediate from $W_{t_{0} ; t_{0} } =0$ and $\Vert Q_{t_{0} } \Vert   \leq  G_{t_{0}}$. Assume that it holds for some $t \geq t_{0}$. 

We have 
\begin{align*}
    Q_{t+1}(i) & = (1- \gamma_{t}(i) )Q_{t}(i) + \gamma_{t}(i) (HQ_{t}(i) + Z_{t+1} (i) ) \\
   & \leq (1- \gamma_{t}(i) ) ( G_{t_{0}} + G_{t_{0}}W_{t;t_{0} }(i) ) + \gamma_{t}(i) HQ_{t}(i) +  \gamma_{t}(i) G_{t_{0}} Y_{t+1} (i) \\
   & \leq (1- \gamma_{t}(i) )  ( G_{t_{0}} + G_{t_{0}}W_{t;t_{0} }(i) ) +  \gamma_{t}(i)G_{t_{0}}  + \gamma_{t}(i) G_{t_{0}} Y_{t+1} (i) \\
   & = G_{t_{0}} +W_{t+1;t_{0} }(i) G_{t_{0}}.  
\end{align*}
The first inequality follows from the induction hypothesis. The second inequality follows from inequality (\ref{Eq:Q_proof_Gt}). A symmetrical argument yields $-G_{t_{0}} +W_{t+1;t_{0} }(i) G_{t_{0}} \leq Q_{t+1}(i)$. Using $|W_{t;t_{0} } | \leq \epsilon$ we have $|Q_{t+1}(i)| \leq G_{t_{0}} (1+\epsilon)$ which implies that $G_{t+1}=G_{t} = G_{t_{0}}$ by the definition of the sequence $G_{t}$ and the induction hypothesis. This completes the induction and proof of Step 2.

\textbf{Step 3}. The sequence $Q_{t}$ converges to $0$ with probability $1$. 

\textbf{Proof of Step 3.} Step 2 shows that $Q_{t}$ is bounded, say by $D_{0}$ (that can be random). Thus $\Vert Q_{t} \Vert \leq D_{0}$ for each $t$. 

Now suppose that $\Vert Q_{t} \Vert \leq D_{k} $ for some random $D_{k}$ and all $t \geq t_{k}$ such that $\gamma_{t}(i) \leq 1$. We want to show that $\Vert Q_{t} \Vert \leq \beta D_{k+1} $ for all $t \geq t_{k+1} $ for some $t_{k+1} \geq t_{k}$.  

Define $V_{t_{k}} (i) = D_{k} $ and consider the sequence 
$$V_{t+1}(i) = (1-\gamma_{t}(i)) V_{t}(i) + \gamma_{t} (i) \beta D_{k} \text{ for all } t\geq t_{k}. $$

 We now claim that $$-V_{t}(i) + \overline{W}_{t;t_{k}} (i) \leq Q_{t} (i) \leq V_{t}(i) + \overline{W}_{t;t_{k}} (i) $$ where as in Step 1 we define
  for all $i$, $t_{0} \geq 0$, define  $\overline{W}_{t_{0}; t_{0} } = 0$ and
     $$\overline{W}_{t+1;t_{0}} (i) = (1-\gamma_{t}(i)) \overline{W}_{t;t_{0}} (i) + \gamma_{t}(i) Z_{t+1} (i) , \text{ } t \geq t_{0}. $$
 The proof is by induction and is similar to Step 2. For $t= t_{k}$ it is immediate by the fact that $\overline{W}_{t_{k} ; t_{k} } = 0$. Now assume it holds for some $t \geq t_{k}$. Then from inequality (\ref{Eq:Q_proof_Gt}) we have $HQ_{t}(i) \leq \beta \Vert Q_{t} \Vert \leq \beta D_{k}$. Thus,
\begin{align*}
    Q_{t+1}(i) & = (1- \gamma_{t}(i) )Q_{t}(i) + \gamma_{t}(i) (HQ_{t}(i) + Z_{t+1} (i) ) \\
   & \leq (1- \gamma_{t}(i) ) ( V_{t}(i) + \overline{W}_{t;t_{k}} (i)  ) + \gamma_{t}(i) \beta D_{k} +  \gamma_{t}(i)  Z_{t+1} (i) \\
   & = V_{t+1}(i) + \overline{W}_{t+1;t_{k}} (i)
\end{align*}
and the other inequality follows from a symmetrical argument. 

 $V_{t}$ converges to $\beta D_{k} $ (this follows from the analysis of Step 1 part (i) or can be shown directly). From Step 2, $Q_{t}$ is bounded, and hence, $\lim _{t \rightarrow \infty} \overline{W}_{t;t_{k}} = 0$ as the mean square condition holds (see the proof of Step 1 and recall that $\mathbb{E} [Z_{t+1}^{2} (s,a) | \mathcal{F}_{t} ] \leq K_{1} + K_{2} \Vert Q_{t} \Vert^{2} $). Hence, using  $-V_{t}(i) + \overline{W}_{t;t_{k}} (i) \leq Q_{t} (i) \leq V_{t}(i) + \overline{W}_{t;t_{k}} (i) $ we conclude that $\limsup_{t\rightarrow \infty} \Vert Q_{t} \Vert \leq \beta D_{k}$. Therefore, there is $t_{k+1}$ such that $\Vert Q_{t} \Vert \leq \beta D_{k}$ for all $t \geq t_{k+1}$. 

Hence, we can generate an increasing sequence, $t_{k},\ldots,t_{k+m},\ldots$ such that $\Vert Q_{t} \Vert \leq \beta ^{m} D_{k}$ for all $t \geq t_{k+m}$ which means that $Q_{t}$ converges to $0$ as claimed. 
\end{proof}

\subsubsection{Exploration}
There are various policies for balancing exploration and exploitation, which are crucial for the effectiveness of Q-learning. We now present several basic popular exploration policies. 

\textbf{Epsilon-Greedy Policy}.
The \(\epsilon\)-greedy strategy is straightforward and widely used. In this policy the agent chooses the best known action with probability \(1 - \epsilon\) and a random action (typically uniformly distributed on the action set) with probability \(\epsilon\), facilitating both exploration and exploitation, i.e., 
\[
a_t = 
\begin{cases} 
\arg\max_a Q(s_t, a) & \text{with probability } 1 - \epsilon_{t}, \\
\text{random action} & \text{with probability } \epsilon_{t}.
\end{cases}
\]

\textbf{Softmax Policy}.
The Softmax or Boltzmann exploration policy selects actions based on the relative value of their action-value functions, using a Boltzmann distribution. That is, 
\[
\text{Pr}(a_t = a | s_t = s) = \frac{\exp \left ( Q(s, a)/\tau  \right ) }{\sum_{a' \in A} \exp \left ( Q(s, a')/\tau \right ) },
\]
where \(\tau\) denotes the temperature parameter that moderates the exploration level.

\textbf{Upper Confidence Bound (UCB)}. 
The UCB policy selects actions based on both their average reward and the uncertainty. One option is to choose actions according to 
\[ a_t = \arg\max_a \left( Q(s_t, a) + c \sqrt{\frac{\log t}{N_t(s_t, a)}} \right),
\]
where \(N_t(s_t, a)\) is the number of times action \(a\) has been selected in state \(s_t\) up to time \(t\), and \(c\) is a tunable parameter that controls the level of exploration.

In practice, ensuring sufficient exploration is challenging. It's essential for the $Q$-learning algorithm to explore well to succeed in practical problems. There are many approaches for enhanced exploration and this is a topic of interest.

\subsubsection{SARSA: On-Policy Q-learning Style Algorithm}

There are many variants of Q-learning. A popular one is SARSA (State-Action-Reward-State-Action). 
In contrast to Q-learning, which is an off-policy method, SARSA is an on-policy algorithm that updates its Q-values using the actions taken by the policy it is currently learning, rather than the greedy policy. This approach integrates the exploration strategy directly into the policy's evaluation and is inherently more conservative, particularly in environments where certain actions might lead to significantly negative consequences. Thus, if we care for some reasons about the rewards during the simulation, then SARSA looks like a more suitable option than Q-learning. 

SARSA updates its Q-values based on the equation:
\[
Q_{t+1}(s, a) = (1 - \gamma_t(s, a)) Q_t(s, a) + \gamma_t(s, a) \left( R(s, a, s') + \beta Q_t(s', a') \right),
\]
so it uses \(Q_t(s', a')\) where \(a'\) is the actual action taken as opposed to the Q-learning, which includes the term \(\max_{a'} Q(s', a')\) in its update rule. 

It can be shown that SARSA converges to the optimal $Q$-functions as we shown above for $Q$-learning if the policy converges to a greedy policy.

\subsubsection{Q-Learning with Function Approximation} \label{Sec:Qapprox}

When dealing with large state spaces or continuous state spaces, directly learning a Q-value for every state-action pair becomes computationally infeasible. In this case, we need to represent the $Q$-function using a function approximation. The most basic approximation is linear function approximation. Other options are possible, including nonlinear neural networks to represent the $Q$-function which is sometimes called ``deep $Q$-learning".

More precisely, in Q-learning with function approximation, we aim to approximate the Q-values \( Q^\ast(s, a) \) using a parameterized function \( Q(s, a; \theta) \). The parameters \( \theta \) are adjusted during learning to minimize the prediction error of the Q-values. Typically the parameters \( \theta \) are adjusted during learning to minimize the prediction error of the Q-values.

The learning objective is typically  minimizing the squared error between the predicted Q-value and the target Q-value:
\[ \quad \min_\theta \; \mathbb{E} \left[ \left( y_t - Q(s_t, a_t; \theta) \right)^2 \right] \] 
where \( y_t \) is the target value for the Q-function at time \( t \), defined by:
\[ y_t = R(s_{t},a_{t},s_{t+1}) + \beta \max_{a'} Q(s_{t+1}, a'; \theta^-) \]

Here, \( \theta^- \) denotes the parameters of the Q-function used to compute the target value, often fixed or updated less frequently to stabilize learning (one way is to periodically update $\theta ^{-}$ by setting it as $\theta$ every $N$ periods). 
The parameters \( \theta \) are typically updated using gradient descent. The gradient of the objective with respect to \( \theta \) is:
\[ \nabla_\theta \left( \left( y_t - Q(s_t, a_t; \theta) \right)^2 \right) = -2 \left( y_t - Q(s_t, a_t; \theta) \right) \nabla_\theta Q(s_t, a_t; \theta) \]

Using this gradient, the update rule for \( \theta \) becomes:
\begin{equation} \label{eq:updatin_thetha}
     \theta_{t+1} = \theta_t + \gamma_t \left( y_t - Q(s_t, a_t; \theta_t) \right) \nabla_\theta Q(s_t, a_t; \theta_t) 
     \end{equation}
where \( \gamma_t \) is the learning rate at time \( t \).

Through this systematic update of \( \theta \), the Q-learning algorithm aims to converge to the optimal policy, provided the function approximator is capable of representing the Q-function accurately and ``good" exploration is maintained.

There are two important cases of function approximation.  

1. \textbf{Linear function approximation.} In linear function approximation we assume that the Q-values are a linear combination of features derived from the states and actions. Mathematically, the Q-function is approximated by:
\[
Q(s,a; \theta) = \theta^T \phi(s,a)
\]
where \( \theta \) is a parameter vector and \( \phi(s,a) \) is a known  feature vector derived from the state \( s \) and action \( a \). The effectiveness of function approximation depends largely on how well these features represent the important characteristics of the state and action that are relevant to predicting rewards and making decisions. There are various ways to determine $\phi(s,a)$ in practice (domain knowledge, auto-encoders, etc) and it depends on the specific application. 

The Q-learning update rule with linear function approximation is:
\[
\theta_{t+1} = \theta_t + \gamma_{t} \delta_t \phi(s_t, a_t)
\]
where \( \delta_t \) is the temporal difference error, defined as:
\[
\delta_t = R(s_{t},a_{t},s_{t+1}) + \beta \max_{a'} Q(s_{t+1}, a'; \theta^{-}) - Q(s_t, a_t; \theta_{t})
\]

2. \textbf{Neural networks approximation (deep $Q$-learning)}. In contrast to linear function approximation, deep Q-learning employs neural networks to approximate the Q-function. This approach is particularly useful in complex environments where the relationship between state, action, and reward cannot be adequately captured by simple linear models. The Q-function in deep Q-learning is represented as:
\[
Q(s, a; \theta) = \text{NN}(s, a; \theta)
\]
where \( \theta \) represents the weights of a neural network, and \( \text{NN}(s, a; \theta) \) denotes the neural network's output for state \( s \) and action \( a \). The neural network can be thought of as a highly non-linear function approximator that can learn to capture complex patterns in the data.

Consider a simple multi-layer perceptron (MLP) as an instance of \( \text{NN}(s, a; \theta) \). This MLP could be structured as follows:
  \[
    \mathbf{h}^{(l)} = \sigma(\mathbf{W}^{(l)} \mathbf{h}^{(l-1)} + \mathbf{b}^{(l)})
    \]
    where \( \mathbf{W}^{(l)} \) and \( \mathbf{b}^{(l)} \) are the weights and biases of the \( l \)-th layer, \( \mathbf{h}^{(l-1)} =  \sigma(\mathbf{W}^{(l-1)} \mathbf{h}^{(l-2)} + \mathbf{b}^{(l-1)})\) is the output of the previous layer (starting with \( \mathbf{h}^{(0)} \) which is the combined input features of state and action), and \( \sigma \) is called a non-linear activation function such as the ReLU function $\sigma(x) = \max \{ x,0\}$. MLPs consist of multiple layers of ``neurons", each connected to the next with a set of weights and biases. This structure allows MLPs to build complex representations of input data by combining the outputs of previous layers in a sophisticated way. Non-linear activation functions allow MLPs to capture non-linear relationships between input and output, which is essential to approximate general functions.
    Theoretically, under certain conditions, MLPs can approximate any continuous function on compact subsets of $\mathbb{R}^n$. This means that MLPs can theoretically learn to approximate the Q-function if the weights are well chosen.  
    
In this setup, \( \theta \) consists of all the weights and biases across all layers $ \theta = (\mathbf{W}^{(1)}, \mathbf{b}^{(1)},\ldots, ) $. The final layer output of the network directly corresponds to \( Q(s, a; \theta) \).

To apply Equation (\ref{eq:updatin_thetha}) we can use the backpropagation algorithm  to compute the gradient of $ Q(s, a; \theta) $  with respect to $\theta$  efficiently 
 using the chain rule (in a smart way).

To conclude, $Q$-learning with function approximation reduces the dimension of the problem, allowing Q-learning to be applied to practical problems. It also allows the agent to generalize from seen to unseen state-action pairs through the features.

\subsubsection{Online Search with Function Approximation }

In function approximation using, say, neural networks, 
using only the neural network function approximation to make decisions can be seen as an ``offline player" as it corresponds to leveraging a pre-trained model to make decisions or predictions in real-time (see also Section \ref{Section:Policy}). In this context, ``offline" refers to the fact that the neural network has been trained on historical data or through prior simulations and does not adapt or learn during actual online gameplay. It uses the learned policy and value functions to make decisions  relying entirely on the knowledge it acquired during the training phase.

There are ways to combine the ``offline" function approximation technique and an ``online player".

For example, a search algorithm such as Monte Carlo Tree Search 
(MCTS), can be considered an ``online solver" because it performs real-time computation to explore possible future states from the current state (say, game position in board games such as Chess). The key feature of MCTS is that it dynamically explores and evaluates potential future moves during each turn of the game, making it highly adaptive to the current state of the game. MCTS  actively explores different possibilities by simulating sequences of actions from the current state instead of full relying on the pre-trained model.

When MCTS is integrated with a function approximation such as a neural network, the approach combines the strengths of both methods:

Online Component (MCTS): MCTS explores possible moves by simulating outcomes and building a tree of possibilities. This helps in assessing the immediate decisions that need to be made based on the current game state. 
The tree search can be guided by the neural network's policy predictions (the policy network), which suggest promising avenues of exploration that are on extensive prior training involving many games or simulations.

Offline Component: After exploring a few steps ahead with MCTS, the neural network's value estimates (the value network) help evaluate positions that are too far in the future to practically reach through direct simulation using the online component.

In essence, MCTS augmented with neural networks is like having an online player that can think a few steps ahead by itself (MCTS) and then consult an expert (the neural network) for its opinion on the resulting positions. This approach has shown promising results in various games such as Chess, Go, etc.

\begin{figure}[h!]
  \centering
  \includegraphics[width=0.7\textwidth]{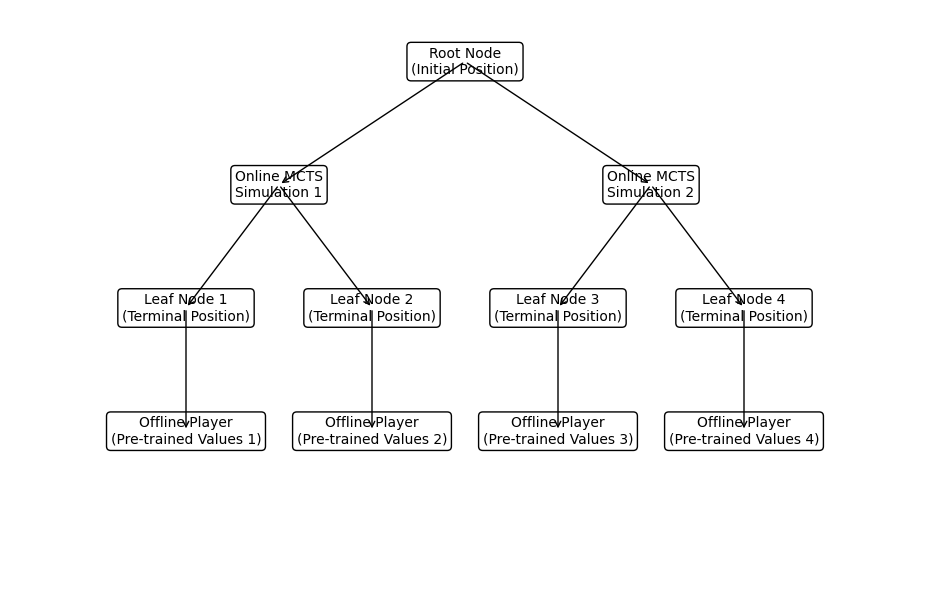}
  \caption{Online search combined with ``offline" pre-trained values.}
  \label{MTCS}
\end{figure}

\subsection{Average Reward Dynamic Programming} \label{Section:Average}

In various real-world applications, the natural objective is the average reward over the horizon instead of  maximizing the cumulative discounted rewards. This is especially relevant in domains where the system operates continually without a discernible endpoint, such as process control, robotics, and in games like \emph{Call of Duty}.

In such environments, an agent continuously interacts within a dynamic setting and the primary objective often revolves around maximizing the agent's effectiveness over time, rather than emphasizing future discounted returns. The average reward framework is apt for such scenarios because it optimizes for steady-state performance, offering an optimal average long-term strategy.

The average reward model focuses on maximizing the long-term average of the rewards received at each time step. For simplicity, we will assume in  this section that the average reward is always well defined (see Assumption \ref{Assumpt:AverageReward}). A policy $\pi(s,a)$ is a probability measure on the (finite) set $A$ for each state $s \in S$ (see Lecture 2 for a discussion on the optimality of stationary policies). We note that each policy $\pi$ generates a Markov chain on the state space $S$ where the probability moving from state $s$ to state $s'$ denoted by $K_{\pi} (s,s')$ is given by 
$$ K_{\pi}(s,s') = \sum _{a \in A} \pi(s,a) p(s,a,s').  $$

We will consider the class of ergodic policies  throughout Section \ref{Section:Average}. Recall that $K_{\pi}$ is irreducible and aperiodic if $K_{\pi}^{t}(s,s') > 0$ for some $t >0$ and all $(s,s') \in S$.  We will say that a policy $\pi$ is ergodic if the Markov chain generated by the policy $K_{\pi}$ is irreducible and aperiodic. To avoid technical complications we will assume that all policies are ergodic throughout this section. In particular, an ergodic policy  $\pi$ generates a Markov chain that has a unique invariant distribution, which we denote by $\lambda_{\pi}$. 

\begin{assumption} \label{Assumpt:AverageReward}
   All possible policies are ergodic. In particular, for any ergodic policy $\pi$, the average reward generated by $\pi$ is well defined, does not depend on the initial state, and is given by
\[
\rho(\pi) = \lim_{T \to \infty} \frac{1}{T} \mathbb{E}_{\pi} \left[\sum_{t=1}^T R(s(t), a(t),s(t+1)) \right]
\]
(see Lecture 2 for the definitions  $(s(t),a(t))$ and $\mathbb{E}_{\pi} $).  
\end{assumption} 

Let $\Pi$ be the set of ergodic policies and define the optimal policy by $\rho = \sup _{\pi \in \Pi} \rho(\pi )$.

For ergodic policies, the law of large numbers for Markov chains holds. That is, if $\lambda_{\pi}$ is the unique stationary distribution generated by the policy $\pi$, then we have  
\begin{equation} \label{Eq:LawofLargeNumbers}
 \rho(\pi) =  \mathbb{E}_{s \sim \lambda_{\pi}, a \sim \pi} \left [  \sum _{s' \in S} p(s,a,s')    R(s,a,s')  \right ] = \sum _{a \in A} \sum _{s \in S} \sum _{s' \in S} \lambda_{\pi}(s) \pi(s,a) p(s,a,s') R(s,a,s').
\end{equation}
We don't formally prove this result, but the intuition behind this result can be understood by analogy with the law of large numbers for independent random variables (see Example \ref{Example:LLN}). For each state $s \in S$, consider the stochastic process
$X_{\tau_{0,s} }, X_{\tau_{1,s} }, \ldots , X_{\tau_{t,s}}, \ldots $  where $\tau_{t}$ represents the $t$-th visit to state $s \in S$ and $X_{\tau_{t,s} }$ represents the payoff at the $t$ visit. Under the assumption of ergodicity, each state $s \in S$ is revisited infinitely often so $\tau_{t,s}$ is finite (with probability $1$). In addition $ \{ X_{\tau_{t,s} } \}_{t=0}^{\infty}$ are independent because of the Markovian structure.

\subsubsection{Bellman Equation for Average Reward DP} \label{Sec:Bellman:average}
In the context of average reward dynamic programming, the structure of the Bellman equation differs from that of the traditional discounted reward framework we studied in Lecture 2. This divergence stems from the objective of optimizing long-term average returns rather than focusing on cumulative discounted rewards.

Although we won't provide a full derivation of the Bellman equation for average rewards (interested readers may refer to \cite{bertsekas2012dynamic} for a comprehensive  formal treatment), we will provide an intuitive explanation. In Lecture 2, we extensively covered the derivation of the Bellman equation for the discounted reward scenario and established that the value function 
$V$ uniquely satisfies this equation.

Moreover, an important link between discounted and average reward formulations exists, articulated through Tauberian theorems in dynamic programming. These theorems show that under specific conditions, the solutions to discounted problems converge to those of average reward problems as the discount factor approaches one. 

More precisely, the Tauberian theorems show that under certain conditions we have
\[
\rho(\pi) (s) = \lim_{\beta \to 1} (1-\beta) \cdot V^\beta_\pi(s)
\]
for each $s \in S$, where $V^\beta_\pi(s)$ is the discounted reward under policy $\pi$ with a discount factor $\beta$ (see Lecture 2). Let $V^{\beta} $ be the value function given a discount factor $\beta$ and define the differential value function $$ h^{\beta} (s) = V^{\beta} (s) - V^{\beta} (s_0)$$ 
where \( s_0 \in S \) is some reference state. From Lecture 2, $V^{\beta}$ satisfies the Bellman equation, so 
$$ V^{\beta} (s)  = \max _{a \in \Gamma(s) }\sum _{s' \in S} p(s,a,s') \left (   R(s,a,s') + \beta  V^{\beta} (s') \right).  $$
Subtracting $V^{\beta} (s_{0})$ from each side of the last equality yields 
$$ (1-\beta) V^{\beta} (s_{0})  + h^{\beta} (s) = \max _{a \in \Gamma(s) }\sum _{s' \in S} p(s,a,s') \left (   R(s,a,s') + \beta  h^{\beta} (s') \right).$$

Under the ergodicity assumption, the average reward satisfies $\rho(s) = \rho$ for each $s \in S$. If we take the limit as \(\beta \to 1\), and it can be argued that the value function \(h^{\beta}\) converges to some function \(h\), (actually it can be argued that the optimal policy remains the same for \(\beta\) sufficiently close to 1, a characteristic referred to as the ``Blackwell optimality") we can use the Tauberian theorem to derive the Bellman equation for the average reward setting. By employing the concept of a ``differential" value function \(h\), the Bellman equation in the average reward context is given by: 
\[
\rho + h(s)  = \max _{a \in \Gamma(s) }\sum _{s' \in S} p(s,a,s') \left (   R(s,a,s') +  h (s') \right).
\]
That is, the Bellman equation for the average reward is given by
\begin{equation} \label{Bellman:Average}
 h(s) = \max _{a \in \Gamma(s) }\sum _{s' \in S} p(s,a,s') \left (   R(s,a,s') - \rho +  h (s') \right)  
\end{equation}
where $h$ is a function from $S$ to $\mathbb{R}$ and $\rho$ is the maximal average reward. A strategy that achieves the maximum in Equation (\ref{Bellman:Average}) is optimal.

We can define as in the previous section the $Q$-function by 
\begin{equation} \label{Eq:Q-functionaverage}
    Q(s,a) = \sum _{s' \in S} p(s,a,s') \left (   R(s,a,s') - \rho +  h (s') \right)
\end{equation}
We also let $h_{\pi }$ be the $h$ function that plays always  according to $\pi$ instead of choosing the optimal action in Equation (\ref{Bellman:Average}) so 
$$ h_{\pi} (s)  = \sum _{a \in A} \pi(s,a) \sum _{s' \in S} p(s,a,s') \left (   R(s,a,s') - \rho(\pi) +  h_{\pi } (s') \right) = \sum _{a \in A} \pi(s,a)Q_{\pi}(s,a)  $$
where
$$Q_{\pi}(s,a) :=  \sum _{s' \in S} p(s,a,s') \left (   R(s,a,s') - \rho (\pi) +  h_{\pi } (s') \right). $$

\subsection{Policy Gradient Methods} \label{Section:Policy}
As we previously discussed, when faced with large or continuous state spaces, traditional dynamic programming techniques often fall short. Policy gradient methods, which adjust policy parameters through gradient ascent to maximize the expected average reward, provide an interesting alternative. These methods are suitable for environments where the policy can be parametrically represented and differentiated with respect to its parameters.

We will focus on average reward dynamic programming with ergodic policies throughout this section. 

Let $\pi_{\theta}$ be a policy that is parameterized by some vector $\theta$ so $\pi_{\theta}(s,a)$ is the probability playing $a \in A$ when the state is $s \in S$ and parameters vector is $\theta$. We assume that $\pi_{\theta}$ is continuously differentiable.

The idea of policy gradient methods within the average reward framework is to identify policy parameters $\theta$ that maximize the average reward. The gradient of the average reward concerning the policy parameters, $\nabla_\theta \rho(\pi_\theta)$, is used to guide the updates to these parameters. Typically, this gradient is estimated via simulation, making it feasible for implementation in real-time (such as in gaming environments). 

The policy gradient update rule in this context is expressed as:
\[
\theta_{k+1} = \theta_k + \gamma_k \nabla_\theta \rho(\pi_{\theta_k})
\]
where $\gamma_k$ denotes the learning rate at iteration $k$. The assumption is that $\pi_{\theta}$ is a probability measure for each $\theta$. A typical choice is the softmax policy  
$$ \pi_{\theta} (s,a ) = \frac{\exp (\theta \cdot \phi(s,a) ) }{\sum_{a' \in A} \exp (\theta \cdot \phi(s,a') )  }$$
where $\phi(s,a)$ is a known feature vector. 

We can also consider projected gradient update  where $\theta$ is projected to the space of probability measures (we will discuss it in the next section). 

Importantly, there is a simple formula to compute $\nabla_\theta \rho(\pi_{\theta_k})$ that can be used to sample from the gradient (see Section \ref{Sec:PolicySimulation}). 
To determine how the optimal value $\rho$ changes with respect to $\theta$, we provide the following Lemma which holds for ergodic policies and is simple to prove by directly differentiating. This result is sometimes called Policy Gradient Formula or Policy Gradient Theorem.  
\begin{lemma} \label{Lemma:PolicyGrad} (Policy Gradient Lemma): Let $\pi_{\theta}$ be an ergodic policy with (unique) stationary distribution $\lambda _{ \pi_{\theta}}$. Assume that $\pi_{\theta}$, $\rho(\pi_{\theta} )$, and $Q_{\pi_{\theta}}(s)$ are differentiable with respect to the parameters $\theta$.  
We have 
\begin{equation} \label{Eq:PolicyGrad}
    \frac{\partial }{\partial \theta} \rho (\pi_{\theta} ) = \sum_{s \in S } \lambda_{{\pi _{ _{\theta}} } } (s) \sum_{a \in A}  \frac{\partial \pi_\theta(s, a)}{\partial \theta} Q_{\pi_\theta}(s, a).
\end{equation}
    
\end{lemma}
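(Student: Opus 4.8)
The plan is to differentiate the average reward $\rho(\pi_\theta)$ directly, using the Bellman equation \eqref{Bellman:Average} (in its policy-evaluation form) together with the fact that $\lambda_{\pi_\theta}$ is the stationary distribution of the Markov chain $K_{\pi_\theta}$. First I would write, for the reference state $s_0$, the identity $\rho(\pi_\theta) = \sum_{a\in A}\pi_\theta(s_0,a)Q_{\pi_\theta}(s_0,a) - h_{\pi_\theta}(s_0) + h_{\pi_\theta}(s_0)$; more usefully, from the policy-evaluation Bellman equation we have for every state $s$
\[
\rho(\pi_\theta) + h_{\pi_\theta}(s) = \sum_{a\in A}\pi_\theta(s,a)\Big(r(s,a) + \sum_{s'\in S}p(s,a,s')h_{\pi_\theta}(s')\Big),
\]
where $r(s,a)=\sum_{s'}p(s,a,s')R(s,a,s')$. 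Differentiating both sides with respect to $\theta$ and using the product rule gives
\[
\frac{\partial \rho}{\partial\theta} + \frac{\partial h_{\pi_\theta}(s)}{\partial\theta}
= \sum_{a}\frac{\partial\pi_\theta(s,a)}{\partial\theta}Q_{\pi_\theta}(s,a)
+ \sum_{a}\pi_\theta(s,a)\sum_{s'}p(s,a,s')\frac{\partial h_{\pi_\theta}(s')}{\partial\theta},
\]
where I have recognized $r(s,a)+\sum_{s'}p(s,a,s')h_{\pi_\theta}(s') = Q_{\pi_\theta}(s,a) + \rho(\pi_\theta)$ and absorbed the $\rho$ term, and used that $r$ and $p$ do not depend on $\theta$.

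The second step is to average this identity against the stationary distribution $\lambda_{\pi_\theta}$. Summing over $s$ with weights $\lambda_{\pi_\theta}(s)$, the left side becomes $\frac{\partial\rho}{\partial\theta} + \sum_s \lambda_{\pi_\theta}(s)\frac{\partial h_{\pi_\theta}(s)}{\partial\theta}$ (since $\sum_s\lambda_{\pi_\theta}(s)=1$). On the right side, the last term is $\sum_{s}\lambda_{\pi_\theta}(s)\sum_{a}\pi_\theta(s,a)\sum_{s'}p(s,a,s')\frac{\partial h_{\pi_\theta}(s')}{\partial\theta} = \sum_{s'}\big(\sum_s\lambda_{\pi_\theta}(s)K_{\pi_\theta}(s,s')\big)\frac{\partial h_{\pi_\theta}(s')}{\partial\theta} = \sum_{s'}\lambda_{\pi_\theta}(s')\frac{\partial h_{\pi_\theta}(s')}{\partial\theta}$, by stationarity $\lambda_{\pi_\theta}K_{\pi_\theta}=\lambda_{\pi_\theta}$. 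Hence this term cancels exactly with the $\sum_s\lambda_{\pi_\theta}(s)\frac{\partial h_{\pi_\theta}(s)}{\partial\theta}$ term on the left, leaving
\[
\frac{\partial\rho}{\partial\theta} = \sum_{s\in S}\lambda_{\pi_\theta}(s)\sum_{a\in A}\frac{\partial\pi_\theta(s,a)}{\partial\theta}Q_{\pi_\theta}(s,a),
\]
which is exactly \eqref{Eq:PolicyGrad}.

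I expect the main obstacle to be purely technical rather than conceptual: one must justify that $h_{\pi_\theta}$ and $\lambda_{\pi_\theta}$ are indeed differentiable in $\theta$ (the lemma assumes differentiability of $\pi_\theta$, $\rho(\pi_\theta)$, and $Q_{\pi_\theta}$, so I would lean on those hypotheses, noting that ergodicity plus finiteness of $S$ makes $\lambda_{\pi_\theta}$ a rational — hence smooth — function of the entries of $K_{\pi_\theta}$, which are smooth in $\theta$), and that the interchange of differentiation with the finite sums is legitimate, which is immediate since $S$ and $A$ are finite. A secondary point to handle carefully is the bookkeeping of the $\rho(\pi_\theta)$ terms when passing from the Bellman equation to its differentiated form: since $\sum_a \pi_\theta(s,a) = 1$ identically, $\sum_a \frac{\partial\pi_\theta(s,a)}{\partial\theta} = 0$, so any additive constant (in $a$) inside $Q$ — such as the $-\rho$ shift — drops out of $\sum_a \frac{\partial\pi_\theta(s,a)}{\partial\theta}Q_{\pi_\theta}(s,a)$, which is what makes the final formula clean and is worth stating explicitly.
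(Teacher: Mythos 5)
Your proposal is correct and follows essentially the same route as the paper: differentiate the policy-evaluation Bellman identity (the paper writes it as $h_{\pi_\theta}(s)=\sum_a\pi_\theta(s,a)Q_{\pi_\theta}(s,a)$ and expands $Q$, which is the same equation you start from), use $\sum_a\partial_\theta\pi_\theta(s,a)=0$ to dispose of the $\rho$ shift, then average against $\lambda_{\pi_\theta}$ and cancel the $\partial_\theta h$ terms via stationarity. No substantive differences.
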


\begin{proof}
    Applying the product rule,
 we have
 \begin{align*}
  \frac{ \partial h_{\pi_{\theta}}(s) } {\partial \theta} & =    \frac{ \partial  } {\partial \theta}\sum _{a \in A} \pi _{\theta}(s,a) Q_{\pi_{\theta} } (s,a) \\
  & =  \sum_{a \in A} \left( \frac{\partial \pi_\theta(s, a)}{\partial \theta} Q_{\pi_\theta}(s, a) + \pi_\theta(s, a) \frac{\partial Q_{\pi_\theta}(s, a)}{\partial \theta} \right) \\
  & =  \sum_{a \in A} \left( \frac{\partial \pi_\theta(s, a)}{\partial \theta} Q_{\pi_\theta}(s, a) + \pi_\theta(s, a) \frac{\partial }{\partial \theta} \left (\sum _{s' \in S} p(s,a,s') \left (R(s,a,s') - \rho (\pi_{\theta} ) + h_{\pi _{ _{\theta}}  }(s') \right ) \right) \right) \\
   & =  \sum_{a \in A} \left( \frac{\partial \pi_\theta(s, a)}{\partial \theta} Q_{\pi_\theta}(s, a)   + \pi_\theta(s, a)   \sum _{s' \in S} p(s,a,s') \frac{\partial }{\partial \theta}h_{\pi _{ _{\theta}} }(s') \right) - \frac{\partial }{\partial \theta}\rho (\pi_{\theta} )
 \end{align*}
where in the last equality we used the fact that $\sum _{a \in A} \pi_{\theta} (s,a) = 1$ and the payoff function does not depend on $\theta$. Now because $\lambda_{{\pi _{ _{\theta}} } } (s)$ is the stationary distribution of the Markov chain generated by the policy $\pi _{\theta} $ we have
$$\lambda_{{\pi _{ _{\theta}} } } (s') = \sum _{a \in A} \sum _{s \in S} \pi _{\theta} (s,a) p(s,a,s') \lambda_{{\pi _{ _{\theta}} } } (s) . $$ Multiplying each side of the last inequality by $\lambda_{{\pi _{ _{\theta}} } } (s)$ and summing yields 
\begin{align*}
   \frac{\partial }{\partial \theta}\rho (\pi_{\theta} ) & =  \sum_{s \in S } \lambda_{{\pi _{ _{\theta}} } } (s) \sum_{a \in A}  \frac{\partial \pi_\theta(s, a)}{\partial \theta} Q_{\pi_\theta}(s, a)   +  \sum _{s' \in S}  \sum_{a \in A} \sum_{s \in S } \lambda_{{\pi _{ _{\theta}} } } (s) 
 \pi_\theta(s, a)   p(s,a,s') \frac{\partial }{\partial \theta}h_{\pi _{ _{\theta}} }(s')  \\
 & - \sum_{s \in S } \lambda_{{\pi _{ _{\theta}} } } (s) \frac{\partial }{\partial \theta}h_{\pi _{ _{\theta}} }(s) \\
&  = \sum_{s \in S } \lambda_{{\pi _{ _{\theta}} } } (s) \sum_{a \in A}  \frac{\partial \pi_\theta(s, a)}{\partial \theta} Q_{\pi_\theta}(s, a).
\end{align*}
\end{proof} 

\begin{remark}
    A similar formula to Equation (\ref{Eq:PolicyGrad}) holds for the discounted case but $\lambda$ is a suitable discounted version of the stationary distribution.   
\end{remark}

Thus, the policy gradient method uses the update 
\[
\theta_{k+1} = \theta_k + \gamma_k \nabla_\theta \rho(\pi_{\theta_{k} } ) = \theta_{k} + \gamma_k \sum_{s \in S } \lambda _{\pi _{\theta _{k}} } (s) \sum_{a \in A}  \frac{\partial \pi_{\theta_{k}} (s, a)}{\partial \theta} Q_{\pi_{\theta_{k} } }(s, a)
\]
so we need to estimate $\lambda$ and $Q$ to apply the policy gradient method. This can be done using simulation (see Section \ref{Sec:PolicySimulation}).

\subsubsection{Policy Gradient with Simulation} \label{Sec:PolicySimulation}
Note that if $\pi_{\theta}$ is positive, we can also write 
$$\frac{\partial }{\partial \theta}\rho (\pi_{\theta} )  = \sum_{s \in S } \lambda_{{\pi _{ _{\theta}} } } (s) \sum_{a \in A}  \frac{\partial \pi_\theta(s, a)}{\partial \theta} Q_{\pi_\theta}(s, a) = \sum_{s \in S } \lambda_{{\pi _{ _{\theta}} } } (s) \sum_{a \in A} \frac{\partial \log ( \pi_\theta(s, a))}{\partial \theta}  \pi_\theta(s, a) Q_{\pi_\theta}(s, a). $$

That is,
\[
\frac{\partial }{\partial \theta}\rho (\pi_{\theta} ) = \mathbb{E}_{s \sim \lambda_{\pi_{\theta}}, a \sim \pi_{\theta}} \left[ \frac{\partial \log (\pi_\theta(s, a))}{\partial \theta} Q_{\pi_\theta}(s, a) \right]
\]
which can be useful for a few reasons. First, sometimes computing the log derivative form \( \frac{\partial \log (\pi_\theta(s, a))}{\partial \theta} \) simplifies the computation of gradients, e.g., for softmax policies described in the last section. 
Second, this formulation leverages the expected value under the stationary distribution generated by the policy so it allows to apply standard simulation tools to evaluate expectation from the Markov chains literature such as Monte Carlo. 

This provides an idea of simulation with policy gradient methods (such as an algorithm known as REINFORCE).

%Given the policy $\pi_{\theta}$ parameterized by $\theta$, we can estimate the gradient using sampled trajectories, integrating Monte Carlo methods for approximating the action-value function.
%The gradient of the expected average reward for an ergodic policy parameterized by $\theta$ can expressed approximately as 
%\[
%\frac{\partial }{\partial \theta}\rho (\pi_{\theta} )  = \mathbb{E}\left[\sum_{t=0}^{H} \frac{\partial \log (\pi_\theta(s_t, a_t))}{\partial \theta} Q_{\pi_\theta}(s_t, a_t)\right]
%\]
%for a long horizon $H$. 

Suppose that we can simulate the enviorment as in the Q-learning algorithm section. We can use Monte Carlo simulations to estimate the action-value function by sampling complete trajectories as per $\pi_\theta$. The return from state-action pairs sampled during these trajectories provides an  estimate of $Q_{\pi_\theta}(s_t, a_t)$, say $G_{t}$. The gradient of the expected average reward for an ergodic policy parameterized by $\theta$ can be estimated by
\[
\frac{\partial }{\partial \theta}\rho (\pi_{\theta} )  =  \sum_{t=0}^{H} \frac{\partial \log (\pi_\theta(s_t, a_t))}{\partial \theta}  G_t.   
\]

Given that \( k \) indexes the iteration of the parameter update, the update rule can be written as:

\[
\theta_{k+1} = \theta_k + \gamma_k \sum_{t=0}^{H} \frac{\partial \log (\pi_\theta(s_t, a_t))}{\partial \theta_k} G_t
\]
where $\gamma_{k}$ is the learning rate. 
Note that this formulation assumes that the gradient of the log-policy \( \log (\pi_\theta(s_t, a_t)) \) with respect to the parameters \( \theta \) is computable. Also note that in each iteration, we use a new set of trajectories to compute $G_{t}$ and we have to choose a large $H$ to have a reliable estimate.

To summarize this approach, in each instance we have parameters $\theta$ and we can compute \( \log (\pi_\theta(s_t, a_t)) \). We generate a trajectory $s_{0},a_{0}, R_{0} \ldots, R_{H}$ following the policy $\pi_{\theta }$. We obtain an estimate $G_{t}$ and then update the parameters by the gradient descent equation above.

To choose $G_{t}$ we can set $G_{H+1}=0$, $\bar{\rho} = \sum _{k=1}^{H} R_{k}/H$ and use 
\[ G_t = R_{t} + G_{t+1} - \bar{\rho}.
\]
(compare with Section \ref{Sec:Qapprox}).  
In this setting, we treat $\bar{\rho}$ as a constant that describes the long average payoff under the policy $\pi_{\theta_{k}}$ and the idea is that if  $\mathbb{E} (G_{t+1} |\mathcal{F}_{t+1} ) = Q_{\theta_{k}}(s_{t+1},a_{t+1}) $ (which holds for $t+1=H $) then 
\begin{align*}
    \mathbb{E} [ G_{t} | \mathcal{F}_{t} ] & = \sum _{s_{t+1} \in S} p(s_{t},a_{t},s_{t+1}) R(s_{t},a_{t},s_{t+1}) +  \mathbb{E} [ G_{t+1} | \mathcal{F}_{t} ]- \bar{\rho}  \\
    & =  \sum _{s_{t+1} \in S} p(s_{t},a_{t},s_{t+1})R(s_{t},a_{t},s_{t+1}) +  \mathbb{E} [\mathbb{E} [ G_{t+1} | \mathcal{F}_{t+1} ]| \mathcal{F}_{t} ]- \bar{\rho}   \\
    & =  \sum _{s_{t+1} \in S} p(s_{t},a_{t},s_{t+1})R(s_{t},a_{t},s_{t+1}) +  \mathbb{E} [Q_{\theta_{k}}(s_{t+1},a_{t+1})| \mathcal{F}_{t} ]- \bar{\rho}  \\
    & =  \sum _{s_{t+1} \in S} p(s_{t},a_{t},s_{t+1}) \left ( R(s_{t},a_{t},s_{t+1}) + \sum_{a_{t+1} \in A} \pi_{\theta_{k}} (s_{t+1},a_{t+1}) Q_{\theta_{k}}(s_{t+1}, a_{t+1})- \bar{\rho} \right ) \\ 
    & = Q_{\theta_{k}}(s_{t},a_{t}).
    \end{align*} 
where $\mathcal{F}_{t}$ is the sigma-algebra generated by the history up to time $t$  (see Lecture 2 for more details).

The use of \( G_t \) might introduce high variance in the gradient estimates. There are ways to reduce variance. For instance, introducing an unbiased baseline $b(s)$ can  reduce this variance by using  the update 
\[
\theta_{k+1} = \theta_k + \gamma_k \sum_{t=0}^{H} \frac{\partial \log (\pi_\theta(s_t, a_t))}{\partial \theta_k} (G_t-b(s)).
\]

\subsubsection{Convergence of the Policy Gradient Method}
Suppose that  $\frac{\partial }{\partial \theta} \rho (\pi_{\theta} )$ satisfies condition (i) of Theorem \ref{thm:convergence}. Hence, if the stepsizes satisfies condition (iv) of that theorem, then $\frac{\partial }{\partial \theta} \rho (\pi_{\theta_{k}} )$  converges to $0$ (see the Noisy Gradient Descent discussion in Section \ref{Sec:SGD}). Thus, the policy gradient method converges to a stationary point. 

We can say much more than that for  the tabular case where we update the policy directly $\pi(s,a)$ with $\theta = (s,a)$ as parameters, i.e., we store the policy values for all the state-action pairs at each iteration. This is called direct parameterization and, in this case, $\frac{\partial }{\partial (s,a)} \pi(s,a) =1$. Note that we need to make sure that the policy function remains in the space of probability measures when updating the policy. Hence, we will consider the projected policy gradient algorithm instead of the standard policy gradient algorithm. 
  
  Let $\Delta(X)$ be the set of probability measures over a finite set $X$. Recall that the projection on a closed set $X$ is given by $P_{X}(x)=\operatorname{argmin}_{y \in X} \Vert y-x \Vert ^{2} $. We will consider 
  the projected policy gradient update rule which is expressed as:
\begin{equation} \label{Algo:projectedgradient}
\pi_{k+1} = P_{\Delta (S \times A) } ( \pi_k + \gamma_k \nabla_\pi \rho({\pi_k}) )
\end{equation}
where $\gamma_k$ denotes the learning rate at iteration $k$  and $P_{\Delta (S \times A) }$ is the projection operator in the standard Euclidean norm (which can be computed efficiently). 

The key result of this section is that policy gradient methods in the tabular case converges to a global optimum under certain regularity conditions.  We first need the following result from optimization that has the flavor of a deterministic version of Theorem \ref{thm:convergence} but for projected gradient update rule (see Theorem 10.15 in \cite{beck2017first} for a proof).  

\begin{proposition} \label{prop:projected}
    Let $f:X \rightarrow \mathbb{R}$ be a  function for some compact and convex set $X \subseteq \mathbb{R}^{n}$. Suppose that $\nabla f$ is Lipschitz continuous on $X$ with a constant $L$ (see condition (i) of Theorem \ref{thm:convergence}). Consider the problem $\max _{x \in X} f(x)$. 
    
    Let $x_{k+1}=P_{X} (x_{k} + \alpha \nabla f(x_{k}))$ be the sequence generated by the projected gradient algorithm with $\alpha \in (0,1/L]$. Then $x_{k}$ has a limit point and any limit point $x_{\infty}$ is a stationary point of $f$, i.e., $ (x-x_{\infty})' \nabla f(x_{\infty}) \leq 0$ for all $x \in X$. 
\end{proposition}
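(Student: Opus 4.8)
\textbf{Proof proposal for Proposition \ref{prop:projected}.}

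The plan is to prove this standard result from first-order optimization using the classical ``sufficient decrease'' argument adapted to the projected gradient setting. First I would recall the key fact about the projection operator onto a closed convex set: for any $y \in \mathbb{R}^n$ and any $x \in X$, the point $z = P_X(y)$ satisfies the variational inequality $(x - z)'(y - z) \leq 0$. Equivalently, $(x - z)'(z - y) \geq 0$ for all $x \in X$. Applying this with $y = x_k + \alpha \nabla f(x_k)$ and $z = x_{k+1} = P_X(y)$ gives $(x - x_{k+1})'(x_{k+1} - x_k - \alpha \nabla f(x_k)) \geq 0$ for all $x \in X$. Taking $x = x_k$ in particular yields $(x_k - x_{k+1})'(x_{k+1} - x_k) \geq \alpha (x_k - x_{k+1})' \nabla f(x_k)$, i.e., $\nabla f(x_k)'(x_{k+1} - x_k) \geq \frac{1}{\alpha}\Vert x_{k+1} - x_k\Vert^2$.

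Next I would invoke the descent lemma analogue of Lemma \ref{Lemma:Dominate} (applied to $-f$, whose gradient is also $L$-Lipschitz): $f(x_{k+1}) \geq f(x_k) + \nabla f(x_k)'(x_{k+1} - x_k) - \frac{L}{2}\Vert x_{k+1} - x_k\Vert^2$. Combining this with the projection inequality from the previous step gives
\[
f(x_{k+1}) \geq f(x_k) + \left(\frac{1}{\alpha} - \frac{L}{2}\right)\Vert x_{k+1} - x_k\Vert^2 \geq f(x_k) + \frac{L}{2}\Vert x_{k+1} - x_k\Vert^2,
\]
where the last inequality uses $\alpha \leq 1/L$, so that $1/\alpha \geq L$. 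Hence $\{f(x_k)\}$ is nondecreasing, and since $X$ is compact and $f$ is continuous, $f(x_k)$ converges to a finite limit. Telescoping the inequality then forces $\sum_k \Vert x_{k+1} - x_k\Vert^2 < \infty$, so in particular $\Vert x_{k+1} - x_k\Vert \to 0$.

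Finally, since $X$ is compact, the sequence $\{x_k\}$ has a limit point $x_\infty$; pass to a subsequence $x_{k_j} \to x_\infty$. Because $\Vert x_{k+1} - x_k\Vert \to 0$, we also have $x_{k_j + 1} \to x_\infty$. For each $j$ and each $x \in X$, the projection variational inequality reads $(x - x_{k_j+1})'(x_{k_j+1} - x_{k_j} - \alpha \nabla f(x_{k_j})) \geq 0$; letting $j \to \infty$ and using continuity of $\nabla f$ gives $(x - x_\infty)'(-\alpha \nabla f(x_\infty)) \geq 0$, i.e., $(x - x_\infty)'\nabla f(x_\infty) \leq 0$ for all $x \in X$, which is precisely the stationarity condition. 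The main obstacle is getting the projection inequality oriented and signed correctly (the result maximizes rather than minimizes $f$, so one must be careful that the step $\alpha \leq 1/L$ enters with the right direction); once that is pinned down, the rest is the routine sufficient-decrease-plus-compactness argument. Since the paper explicitly cites Theorem 10.15 of \cite{beck2017first}, I would also note that one may simply quote that reference, but the self-contained argument above is short enough to include.
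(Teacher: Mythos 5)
Your proof is correct. Note that the paper itself does not prove Proposition \ref{prop:projected}; it defers entirely to Theorem 10.15 of \cite{beck2017first}, so there is no in-paper argument to compare against. Your self-contained argument is the standard one for ascent-type projected gradient: the projection variational inequality $(x - x_{k+1})'(x_{k+1} - x_k - \alpha \nabla f(x_k)) \geq 0$ specialized at $x = x_k$, combined with the descent lemma (Lemma \ref{Lemma:Dominate} applied to $-f$), gives the sufficient-increase bound $f(x_{k+1}) \geq f(x_k) + (\tfrac{1}{\alpha} - \tfrac{L}{2})\Vert x_{k+1}-x_k\Vert^2$, and the telescoping plus compactness argument then delivers stationarity of every limit point. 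All the signs and the role of $\alpha \leq 1/L$ are handled correctly. One small point worth a half-sentence if you write this up: Lemma \ref{Lemma:Dominate} is stated with $\nabla f$ Lipschitz on all of $\mathbb{R}^n$, whereas the proposition assumes Lipschitz continuity only on $X$; since $X$ is convex, the segment from $x_k$ to $x_{k+1}$ lies in $X$ and the integral-along-the-segment proof of that lemma still applies, so nothing breaks.
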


    We also need the following simple formula for the difference of the average rewards between two different policies. This result, which is sometimes called the Performance Difference Lemma, can be useful to compare between different policies if we can compute the $Q$ functions and calculate the stationary distributions (this may be done using simulations as we discuss in Section \ref{Sec:PolicySimulation}). Let $$\mathcal{A}_{\pi} (s,a) = Q_{\pi}(s,a) - h_{\pi}(s)$$ be the difference between the $Q$ values and $h$ values which is called the advantage function.

As before, we denote by $\lambda_{\pi}$ the unique stationary distribution over $S$ generated by an ergodic policy $\pi$. 

\begin{lemma} (Performance Difference Lemma). \label{Lemma:performance}
Let $\pi$ and $\pi'$ be any two ergodic policies. Then 
$$ \rho(\pi') - \rho(\pi) =   \mathbb{E}_{s \sim \lambda_{\pi'}, a \sim \pi'} [ \mathcal{A}_{\pi} (s,a) ]. $$    
\end{lemma}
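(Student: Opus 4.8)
The plan is to start from the right-hand side and unfold the definition of the advantage function, then exploit the Bellman-type equation that defines $h_{\pi}$ together with the stationarity of $\lambda_{\pi'}$. Concretely, I would write $\mathcal{A}_{\pi}(s,a) = Q_{\pi}(s,a) - h_{\pi}(s)$ and substitute the formula
\[
Q_{\pi}(s,a) = \sum_{s' \in S} p(s,a,s')\bigl( R(s,a,s') - \rho(\pi) + h_{\pi}(s') \bigr)
\]
from Section \ref{Sec:Bellman:average}. Taking the expectation over $s \sim \lambda_{\pi'}$ and $a \sim \pi'(s,\cdot)$, the term $\sum_{s',a,s} \lambda_{\pi'}(s)\pi'(s,a) p(s,a,s') R(s,a,s')$ is exactly $\rho(\pi')$ by the law of large numbers identity in Equation (\ref{Eq:LawofLargeNumbers}), and the constant $-\rho(\pi)$ contributes $-\rho(\pi)$ since $\lambda_{\pi'}$ and $\pi'$ are probability distributions.

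The key cancellation is between the $\sum_{s'} (\cdots) p(s,a,s') h_{\pi}(s')$ term and the $-h_{\pi}(s)$ term. Under expectation with respect to the stationary distribution $\lambda_{\pi'}$, the quantity $\mathbb{E}_{s\sim\lambda_{\pi'}, a\sim\pi'}\bigl[\sum_{s'} p(s,a,s') h_{\pi}(s')\bigr]$ equals $\sum_{s'} \bigl(\sum_{s,a} \lambda_{\pi'}(s)\pi'(s,a) p(s,a,s')\bigr) h_{\pi}(s') = \sum_{s'} \lambda_{\pi'}(s') h_{\pi}(s')$, where I used the invariance relation $\lambda_{\pi'}(s') = \sum_{s,a} \lambda_{\pi'}(s)\pi'(s,a) p(s,a,s')$ (i.e. $\lambda_{\pi'} K_{\pi'} = \lambda_{\pi'}$). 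This is the same distribution, so it exactly cancels $\mathbb{E}_{s\sim\lambda_{\pi'}}[h_{\pi}(s)] = \sum_s \lambda_{\pi'}(s) h_{\pi}(s)$. What remains is precisely $\rho(\pi') - \rho(\pi)$, which is the claim.

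I would organize this as a short chain of displayed equalities: (1) expand $\mathcal{A}_{\pi}$ and plug in the $Q_{\pi}$ formula; (2) split the expectation into the reward term, the $-\rho(\pi)$ term, the $h_{\pi}(s')$ term, and the $-h_{\pi}(s)$ term; (3) identify the reward term as $\rho(\pi')$ via Equation (\ref{Eq:LawofLargeNumbers}); (4) use stationarity of $\lambda_{\pi'}$ to telescope the two $h_{\pi}$ terms to zero. The main (minor) obstacle is bookkeeping the summation order carefully when pushing the expectation through $p(s,a,s')$ and invoking the invariance of $\lambda_{\pi'}$ — this is where a sign error or a mismatched index would creep in — but conceptually nothing deep is needed beyond the average-reward Bellman identity and the defining property of the stationary distribution. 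No fixed-point or contraction machinery is required here since everything is an identity among finite sums.
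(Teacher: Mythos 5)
Your proposal is correct and follows essentially the same route as the paper's proof: expand $\mathcal{A}_{\pi}=Q_{\pi}-h_{\pi}$ via the average-reward Bellman identity, identify the reward term as $\rho(\pi')$ through Equation (\ref{Eq:LawofLargeNumbers}), and cancel the two $h_{\pi}$ terms using the invariance $\lambda_{\pi'}K_{\pi'}=\lambda_{\pi'}$. No substantive differences to note.
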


\begin{proof}
    We have 
    \begin{align*}
        \mathbb{E}_{s \sim \lambda_{\pi'}, a \sim \pi'} [ \mathcal{A}_{\pi}  (s,a)] & =  \mathbb{E}_{s \sim \lambda_{\pi'}, a \sim \pi'} \left [  \sum _{s' \in S} p(s,a,s') \left (   R(s,a,s') - \rho (\pi) +  h_{\pi } (s') \right) - h_{\pi} (s) \right ] \\
         & =  \rho(\pi') - \rho(\pi)  + \mathbb{E}_{s \sim \lambda_{\pi'}, a \sim \pi'}  \sum _{s' \in S} p(s,a,s')  h_{\pi }  (s') - \mathbb{E}_{s \sim \lambda_{\pi'}} h_{\pi}(s) 
    \end{align*}
    where the equality follows from Equation (\ref{Eq:LawofLargeNumbers}). 
Now we can use stationarity to conclude that 
    $$ \sum _{a \in A} \sum _{s \in S} \pi' (s,a)\lambda_{ \pi' } (s)  \sum _{s' \in S}  p(s,a,s') h_{\pi}(s') =  \sum _{s' \in S} \lambda_{{\pi' } }  (s')  h_{\pi}(s')   $$
    which proves the lemma.   
\end{proof}

\begin{theorem} \label{Theorem:Policyconvergence}
     Let $c=  \max _{\pi' , \pi \in \Pi^{*}} \max _{s \in S} \lambda _{\pi' }(s) /  \lambda _{\pi } (s) $ where $\Pi^{*}$ is the set of stationary points of $\rho$ and assume that $c$ is finite. Suppose that 
     $ \nabla \rho ( \pi)$ is Lipschitz continuous on $P_{\Delta (S \times A ) }$ with a constant $L$ and the stepsize $\gamma_{k} = \alpha \in (0,1/L]$ is constant. Then $\pi_{k}$ has a limit point and any limit point of the sequence $\pi_{k}$ generated from the projected gradient algorithm (Equation (\ref{Algo:projectedgradient})) is an optimal policy $\pi^{*}$. 
\end{theorem}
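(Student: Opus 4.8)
The plan is to combine the deterministic projected‑gradient convergence statement (Proposition \ref{prop:projected}) with a ``gradient domination'' argument that upgrades first‑order stationarity to global optimality, the bridge being the Performance Difference Lemma (Lemma \ref{Lemma:performance}) together with the finiteness of the distribution‑mismatch constant $c$.

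First I would set the stage. The feasible set for the projection in \eqref{Algo:projectedgradient} is the set of stationary policies $X=\prod_{s\in S}\Delta(A)$, which is compact and convex; under Assumption \ref{Assumpt:AverageReward} every $\pi\in X$ is ergodic, so $\rho$ and $\lambda_\pi$ are defined on all of $X$, $\rho$ is continuous and hence attains its maximum at some optimal $\pi^{*}$, and $\lambda_\pi(s)>0$ for all $s$; differentiability of $\rho$ and of the relevant quantities in $\theta=\pi$ also follows from ergodicity, so Lemma \ref{Lemma:PolicyGrad} applies. Under direct parameterization $\partial\pi(s,a)/\partial\theta$ is a standard basis vector, so Lemma \ref{Lemma:PolicyGrad} gives the explicit gradient $(\nabla\rho(\pi))_{(s,a)}=\lambda_\pi(s)\,Q_\pi(s,a)$. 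With $\nabla\rho$ Lipschitz on $X$ and $\alpha\in(0,1/L]$, Proposition \ref{prop:projected} applied to $f=\rho$ yields that $\pi_k$ has a limit point and that every limit point $\pi_\infty$ is stationary, i.e.
$$(\pi-\pi_\infty)'\nabla\rho(\pi_\infty)=\sum_{s\in S}\lambda_{\pi_\infty}(s)\sum_{a\in A}\bigl(\pi(s,a)-\pi_\infty(s,a)\bigr)Q_{\pi_\infty}(s,a)\le 0\qquad\text{for all }\pi\in X.$$
The same inequality holds with $\pi^{*}$ in place of $\pi_\infty$ by first‑order optimality of a maximizer over a convex set, so $\pi^{*}\in\Pi^{*}$ and $\pi_\infty\in\Pi^{*}$.

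The main step is to show $\rho(\pi_\infty)=\rho$. Applying Lemma \ref{Lemma:performance} with $\pi'=\pi^{*}$ and $\pi=\pi_\infty$, and using $h_{\pi_\infty}(s)=\sum_a\pi_\infty(s,a)Q_{\pi_\infty}(s,a)$, I would rewrite
$$\rho(\pi^{*})-\rho(\pi_\infty)=\sum_{s\in S}\lambda_{\pi^{*}}(s)\sum_{a\in A}\bigl(\pi^{*}(s,a)-\pi_\infty(s,a)\bigr)Q_{\pi_\infty}(s,a)\le\sum_{s\in S}\lambda_{\pi^{*}}(s)\,\delta(s),$$
where $\delta(s):=\max_{a}Q_{\pi_\infty}(s,a)-h_{\pi_\infty}(s)\ge 0$, the inequality holding because $\sum_a\pi^{*}(s,a)Q_{\pi_\infty}(s,a)\le\max_aQ_{\pi_\infty}(s,a)$. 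Since $\delta\ge 0$, $\lambda_{\pi_\infty}>0$, and $\pi^{*},\pi_\infty\in\Pi^{*}$, the change of measure $\lambda_{\pi^{*}}(s)\le c\,\lambda_{\pi_\infty}(s)$ gives $\rho(\pi^{*})-\rho(\pi_\infty)\le c\sum_{s}\lambda_{\pi_\infty}(s)\delta(s)$. Finally, testing the stationarity inequality above with $\pi=\pi^{g}$ a greedy policy with respect to $Q_{\pi_\infty}$ (that is, $\pi^{g}(s,\cdot)$ supported on $\operatorname{argmax}_{a}Q_{\pi_\infty}(s,a)$) yields exactly $\sum_{s}\lambda_{\pi_\infty}(s)\delta(s)\le 0$, hence $\rho(\pi^{*})-\rho(\pi_\infty)\le 0$; combined with $\rho(\pi^{*})\ge\rho(\pi_\infty)$ this forces $\rho(\pi_\infty)=\rho$, so $\pi_\infty$ is optimal.

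The routine pieces — compactness and convexity of $X$, checking the hypotheses of Proposition \ref{prop:projected}, and the elementary bounds on $\delta$ — I expect to be short. The main obstacle is the gradient‑domination passage from a first‑order stationary point to a global optimum: the policy‑gradient stationarity condition is weighted by the ``wrong'' occupancy measure $\lambda_{\pi_\infty}$ rather than $\lambda_{\pi^{*}}$, and reconciling the two is exactly where ergodicity (ensuring $\lambda_{\pi_\infty}$ is strictly positive) and the assumed finiteness of the mismatch constant $c$ are needed, mirroring the role of the distribution‑mismatch coefficient in the discounted tabular analysis.
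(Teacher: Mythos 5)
Your proposal is correct and follows essentially the same route as the paper: Proposition \ref{prop:projected} gives stationarity of limit points, and your inlined argument (Performance Difference Lemma, the change of measure via $c$, and testing the stationarity inequality against the greedy policy, whose value is exactly $\max_{\pi'}(\pi'-\pi_\infty)^{T}\nabla\rho(\pi_\infty)$) is precisely the paper's Gradient Domination Lemma. The only difference is presentational — the paper isolates that step as a separate lemma — and you in fact add the small check, glossed over in the paper, that $\pi^{*}\in\Pi^{*}$ so that the constant $c$ applies to the pair $(\pi^{*},\pi_\infty)$.
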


\begin{proof}
    The proof follows from the gradient domination lemma below. Indeed, suppose that the lemma holds. From Proposition \ref{prop:projected}, any limit point of $\pi_{k}$ converges to a stationary point $\pi$ of $\rho$. Using the lemma,
    $$\rho(\pi^{*}) - \rho ( \pi) \leq c \max _{\pi' \in \Delta(S \times A)} (\pi' - \pi) ^{T} \nabla \rho ( \pi) \leq 0.$$ That is, $\pi$ is optimal. 
    \end{proof}

  \begin{lemma} (Gradient Domination Lemma). 
   Let $\pi^{*}$ be an optimal policy. For each $\pi \in \Pi^{*}$ we have $$\rho(\pi^{*}) - \rho ( \pi) \leq c \max _{\pi' \in \Delta(S \times A)} (\pi' - \pi) ^{T} \nabla \rho ( \pi) $$
      \end{lemma}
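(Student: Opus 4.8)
## Proof Proposal for the Gradient Domination Lemma

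The plan is to combine the Performance Difference Lemma (Lemma \ref{Lemma:performance}) with the explicit component-wise form of the gradient under direct parameterization supplied by the Policy Gradient Lemma (Lemma \ref{Lemma:PolicyGrad}), and then to absorb the change of measure from $\lambda_{\pi^{*}}$ to $\lambda_{\pi}$ into the constant $c$.

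First I would record two preliminary facts. Since $\pi^{*}$ is a global maximizer of $\rho$ over the convex feasible set, it is in particular a stationary point, so $\pi^{*} \in \Pi^{*}$; hence, by the definition of $c$, we have $\lambda_{\pi^{*}}(s)/\lambda_{\pi}(s) \le c$ for every $s \in S$ (here we use $\pi \in \Pi^{*}$ as well). Also, under direct parameterization $\theta=\pi$ one has $\partial \pi(s,a)/\partial \pi(s',a') = \mathbf{1}[(s,a)=(s',a')]$, so Lemma \ref{Lemma:PolicyGrad} collapses to the component-wise formula $[\nabla \rho(\pi)]_{(s,a)} = \lambda_{\pi}(s)\, Q_{\pi}(s,a)$.

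Next I would apply Lemma \ref{Lemma:performance} with the ordered pair $(\pi,\pi^{*})$ to write $\rho(\pi^{*})-\rho(\pi) = \sum_{s}\lambda_{\pi^{*}}(s)\sum_{a}\pi^{*}(s,a)\,\mathcal{A}_{\pi}(s,a)$. Expanding $\mathcal{A}_{\pi}(s,a)=Q_{\pi}(s,a)-h_{\pi}(s)$ and using $\sum_{a}\pi^{*}(s,a)=1$ together with the identity $h_{\pi}(s)=\sum_{a}\pi(s,a)Q_{\pi}(s,a)$ (so that $\sum_{a}\pi(s,a)\mathcal{A}_{\pi}(s,a)=0$), the $h_{\pi}(s)$ terms cancel and I obtain
$$\rho(\pi^{*})-\rho(\pi) \;=\; \sum_{s}\lambda_{\pi^{*}}(s)\sum_{a}\big(\pi^{*}(s,a)-\pi(s,a)\big)Q_{\pi}(s,a).$$
For each fixed $s$, since $\pi^{*}(s,\cdot)$ lies in the simplex over $A$, the inner sum is at most $M_{\pi}(s):=\max_{p\in\Delta(A)}\sum_{a}(p(a)-\pi(s,a))Q_{\pi}(s,a)$, and $M_{\pi}(s)\ge 0$ because $p=\pi(s,\cdot)$ is feasible. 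Since $M_{\pi}(s)\ge 0$, replacing $\lambda_{\pi^{*}}(s)$ by $c\,\lambda_{\pi}(s)$ only increases the sum, giving $\rho(\pi^{*})-\rho(\pi)\le c\sum_{s}\lambda_{\pi}(s)M_{\pi}(s)$. Finally I would identify the right-hand side with the gradient expression: $\lambda_{\pi}(s)M_{\pi}(s)=\max_{p\in\Delta(A)}\sum_{a}(p(a)-\pi(s,a))[\nabla\rho(\pi)]_{(s,a)}$, and since the maximization decouples across the per-state simplices, summing over $s$ yields $\sum_{s}\lambda_{\pi}(s)M_{\pi}(s)=\max_{\pi'\in\Delta(S\times A)}(\pi'-\pi)^{T}\nabla\rho(\pi)$, which is exactly the claimed bound.

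The step I expect to require the most care is the sign argument underlying the change of measure: one cannot pull the factor $c$ out of $\sum_{s}\lambda_{\pi^{*}}(s)(\cdots)$ term by term at the level of $\sum_{a}(\pi^{*}(s,a)-\pi(s,a))Q_{\pi}(s,a)$, since these per-state summands need not be nonnegative. It is precisely the passage to the optimal per-state response, whose value $M_{\pi}(s)$ is guaranteed to be $\ge 0$, that makes the inequality $\lambda_{\pi^{*}}(s)M_{\pi}(s)\le c\,\lambda_{\pi}(s)M_{\pi}(s)$ valid; everything else is bookkeeping, modulo checking that the direct-parameterization gradient formula is applied correctly and that "$\pi'\in\Delta(S\times A)$" is read as ranging over the product of per-state action simplices.
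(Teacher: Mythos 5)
Your proposal is correct and follows essentially the same route as the paper's proof: Performance Difference Lemma, cancellation of the $h_{\pi}(s)$ terms via $\sum_{a}\pi(s,a)\mathcal{A}_{\pi}(s,a)=0$, the observation that the per-state best response value is nonnegative (your $M_{\pi}(s)\geq 0$ is exactly the paper's $\max_{a'}\mathcal{A}_{\pi}(s,a')\geq 0$) to justify the change of measure by the factor $c$, and the direct-parameterization form of the Policy Gradient Lemma to identify $\lambda_{\pi}(s)Q_{\pi}(s,a)$ with $[\nabla\rho(\pi)]_{(s,a)}$. The only difference is the order of the bookkeeping steps, and you are slightly more careful than the paper in noting explicitly that $\pi^{*}\in\Pi^{*}$ is needed for the definition of $c$ to apply.
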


     \begin{proof} We have
      \begin{align*}
          \rho(\pi^{*}) - \rho ( \pi) & =  \sum_{s \in S} \sum _{a \in A} \lambda _{\pi^{*} }(s) \pi^{*}(s,a)  \mathcal{A}_{\pi} (s,a)   \\
          & \leq \sum_{s \in S} \lambda _{\pi^{*} }(s) \max _{a' \in A} \mathcal{A}_{\pi} (s,a') \\
         & \leq c  \sum_{s \in S} \lambda _{\pi}(s) \max _{a' \in A} \mathcal{A}_{\pi} (s,a') \\ 
         & =  c  \sum_{s \in S} \lambda _{\pi}(s) \max _{\mu \in \Delta(A)} \sum _{a \in A} \mu(a)  \mathcal{A}_{\pi} (s,a) \\
         & = c \max _{\pi' \in \Delta(S \times A)} \sum_{s \in S}  \sum _{a \in A} \lambda _{\pi}(s)  \pi'(s,a) \mathcal{A}_{\pi} (s,a)\\
        & =  c \max _{\pi' \in \Delta(S \times A)} \sum_{s \in S}  \sum _{a \in A} \lambda _{\pi}(s)  (\pi'(s,a) - \pi(s,a) )\mathcal{A}_{\pi} (s,a)  \\
         & = c \max _{\pi' \in \Delta(S \times A)} \sum_{s \in S}  \sum _{a \in A}  \lambda _{\pi}(s)  (\pi'(s,a) - \pi(s,a) ) Q_{\pi } (s,a) \\
          & = c \max _{\pi' \in \Delta(S \times A)} \sum_{s \in S}  \sum _{a \in A}  (\pi'(s,a) - \pi(s,a) )   \frac{\partial }{\partial (s,a)}\rho (\pi)  
      \end{align*}
 The first equality follows from Lemma \ref{Lemma:performance}. The second inequality follows because $\max_{a'} \mathcal{A}_{\pi} (s,a') \geq 0$ for each policy $\pi$ and state $s$ (why?). The fourth and fifth equalities use the facts that $ \sum _{a \in A} \pi(s,a) \mathcal{A}_{\pi} =0$ and  $\sum _{a \in A}    (\pi'(s,a) - \pi(s,a) ) h_{\pi}(s) = 0$. The last equality follows from the policy gradient lemma (Lemma \ref{Lemma:PolicyGrad}).  Thus, 
 $$\rho(\pi^{*}) - \rho ( \pi) \leq c \max _{\pi' \in \Delta(S \times A)} (\pi' - \pi)^{T} \nabla \rho ( \pi) $$
 which completes the proof. 
           \end{proof}

\subsubsection{Policy Gradient with Function Approximation}

A popular approach, known as ``actor-critic methods", combines function approximation (see Section \ref{Sec:Qapprox}) and policy gradient methods. It can be preformed on the value function  directly or on the $Q$-function. We consider the $Q$-function actor critic. 
We utilize a parameterized $Q$-function \( Q(s, a; w) \), with the parameters \( w \) adjusted during learning to minimize the prediction error of the Q-values.

We initialize parameters for the policy function $\theta$ and for the value function $w$. An arbitrary state $s_0$ is chosen from the state space, and the average reward $\rho_0$ is initialized to zero. The simulation of the system at each time period $t$ involves action selection according to the policy $\pi_{\theta_t}(\cdot, s_{t})$, observation of the subsequent state $s_{t+1}$, and receipt of reward $R_{t+1}$, then selecting an action $a_{t}'$ according to the policy $\pi_{\theta_t}(\cdot, s_{t+1})$, and updating the weights of the $Q$ function and policy function $w$ and $\theta$.

As in Section \ref{Sec:Qapprox}, the learning objective aims to minimize the squared error between the predicted and target Q-values:
\[
\min_w \mathbb{E} \left[ \left( y_t - Q(s_t, a_t; w) \right)^2 \right],
\]
where \( y_t \) is the target defined as:
\[
y_t = R(s_{t},a_{t},s_{t+1}) + Q(s_{t+1}, a_{t}'; w) - \rho_t.
\]
(see the last part of previous section).  
In this setting, \( y_t \) can be seen as a bootstrap estimate as we update estimates based on sampled data in each step.

The parameters \( w \) are updated using the gradient descent rule (critic update):
\[
w_{t+1} = w_t + \gamma_t \delta_{t} \nabla_w Q(s_t, a_t; w_t),
\]
where \( \delta_t \) is the temporal difference error:
\[
\delta_t = R(s_{t},a_{t},s_{t+1}) - \rho_t + Q(s_{t+1}, a_{t}'; w_{t}) - Q(s_t, a_t; w_{t}),
\]
and the average reward is updated as:
\[
\rho_{t+1} = (1-\epsilon_t) \rho_t + \epsilon_t R_{t+1}.
\]

Policy parameters \(\theta\) are updated using gradient ascent leveraging the policy gradient lemma above (actor update):
\[
\theta_{t+1} = \theta_t + \alpha_t \nabla_\theta \log \pi_{\theta_{t}}(s_t, a_t) Q(s_{t},a_{t};w_{t}),
\]
where \(\alpha_t\) is the learning rate for policy updates. Instead of using directly $ Q(s_{t},a_{t};w_{t})$ we can use an estimate $y_{t}$ or $\delta_{t}$. See Algorithm 1 below for a summary of the method. 

This actor-critic framework facilitates continuous adaptation and improvement of both policy and value function, leveraging the latest experiences from interaction with the environment. There are many possible variants of this approach. We may also consider an episodic setting where we sample a trajectory and update $\theta$ and $w$ at the end of the episode (similar to the approach in the last section).

\begin{algorithm} \label{Algorithm:Actor}
\caption{Q-function Actor-Critic Method}
\begin{algorithmic}[1]
\State Initialize policy parameters \(\theta\), Q-function parameters \(w\), and \(\rho_0 = 0\)
\State Select initial state \(s_0\) from the state space
\State \textbf{repeat} 
    \State \quad \textbf{for each} step \(t\) \textbf{do}
        \State \quad\quad Select action \(a_t\) using policy \(\pi_{\theta_t}(s_t, \cdot)\)
        \State \quad\quad Execute action \(a_t\), observe reward \(R_{t+1}\) and next state \(s_{t+1}\)
        \State \quad\quad Select action \(a_{t}'\) for next state using policy \(\pi_{\theta_t}(s_{t+1}, \cdot)\)
        \State \quad\quad Compute temporal difference error \(\delta_t\):
        \State \quad\quad \(\delta_t = R_{t+1} - \rho_t + Q(s_{t+1}, a_{t}'; w_t) - Q(s_t, a_t; w_t)\)
        \State \quad\quad Update Q-function parameters \(w\):
        \State \quad\quad \(w_{t+1} = w_t + \gamma_t \delta_t \nabla_w Q(s_t, a_t; w_t)\)
        \State \quad\quad Update policy parameters \(\theta\) using policy gradient:
        \State \quad\quad \(\theta_{t+1} = \theta_t + \alpha_t \nabla_\theta \log \pi_{\theta_{t} }(s_t, a_t) Q(s_t, a_t; w_t)\)
        \State \quad\quad Update average reward estimate:
        \State \quad\quad \(\rho_{t+1} = (1 - \epsilon_t) \rho_t + \epsilon_t R_{t+1}\)
        \State \quad\quad Update state:
        \State \quad\quad \(s_t = s_{t+1}\)
    \State \textbf{until} convergence 
\end{algorithmic}
\end{algorithm}

\textbf{Bibliography note}.
A comprehensive introduction to reinforcement learning (RL) can be found in \cite{sutton2018reinforcement}. Detailed information on state aggregation, as discussed in Section \ref{Section:StateAgg}, is available in Section 6.5 of \cite{bertsekas2012dynamic}, Volume II. 
The material covered in Section \ref{Section:Martingales} is well-documented in advanced probability textbooks, such as Chapter 10 of \cite{dudley2018real}. The technical content in Section \ref{Section:Q-learning} is primarily derived from \cite{bertsekas1996neuro}. The proof of the convergence of $Q$-learning provided here is somewhat simpler and less general than that in \cite{bertsekas1996neuro}. 
For a thorough treatment of average reward dynamic programming, as presented in Section \ref{Section:Average}, refer to Chapter 5 of \cite{bertsekas2012dynamic}, Volume II. The convergence results for tabular policy gradient methods in the discounted case are detailed in \cite{agarwal2021theory} and \cite{bhandari2024global}. 
Note that in the operations and optimal control communities, RL is sometimes referred to as approximate dynamic programming.

\subsection{Exercise 4}

\begin{exercise} 
    Let $\Vert x \Vert^{2} = x'x$ be the Euclidean Norm and consider the stochastic iterative scheme
    $$ x_{t+1} = (1-\gamma_{t})x_{t} + \gamma_{t}(W(x_{t}) + Z_{t})$$
    where $x_{0} \in \mathbb{R}^{n}$ and  $W:\mathbb{R}^{n} \rightarrow \mathbb{R}^{n}$ satisfies $\Vert W(x) \Vert \leq \beta \Vert x \Vert$ for all $x \in \mathbb{R}^{n}$ and some $\beta \in (0,1)$. Assume that $W(0)=0$ so the vector $0$ is a fixed point of $W$. 

    $\{Z_{t}\}$ is a sequence of random variables and $\mathcal{F}_{t} = \sigma (Z_{0} , \ldots, Z_{t})$.  Assume that   $$\mathbb{E}[\Vert Z_{t} \Vert ^{2} | \mathcal{F}_{t-1} ]\leq K_{1} + K_{2} \Vert x_{t} \Vert ^{2} \text { and } \mathbb{E}[ Z_{t}  | \mathcal{F}_{t-1} ] = 0 $$
for some $K_{1}$ and $K_{2}$ and the usual stepsize condition (condition (iv) in the convergence theorem in Lecture 4). 

    Show that $x_{t} $ converges to $0$ (the fixed point of $W$). 
\end{exercise}

\begin{exercise} (State Aggregation). 
    Consider a discounted dynamic programming model with action set $A = \{1, 2, 3\}$ and state space $S = \{1, \dots, 100\}$. We explore state aggregation technique discussed in Lecture 4. Assume that the set of aggregate states is $X=\{x_{1},\ldots,x_{10} \}$ and suppose that  $e_{s_{j}y} = 1$ if $s_{j} \in N(y)$ and $d_{xs_{i}} = 1/|N(x)|$.

   (i) Initially, assume that states are aggregated into ten groups sequentially (e.g., $N(x_1) = \{1, \dots, 10\}$, $N(x_2) = \{11, \dots, 20\}$, etc.), with each aggregate state \(x_i\) corresponding to these consecutive states. The reward function is defined by $R(s, a) = sa$, and the transition probabilities are:
    \[ p(s, a, \min(s + 1, 100)) = \frac{1}{0.1s + a} \]
    \[ p(s, a, \max(1, s - 1)) = 1 - p(s, a, \min(s + 1, 100)) \]
    Discuss why this initial sequential aggregation makes sense for this setting. 
    
    Implement a Python script for aggregated value function iteration based on this model.

    You can start with the following code: 

 \begin{lstlisting}
import numpy as np

# Set parameters
num_states = 100
num_actions = 3
num_aggregated_states = 10
discount_factor = 0.95

# Define state transition probabilities and reward function
transitions = np.zeros((num_states, num_actions, num_states))
rewards = np.zeros((num_states, num_actions))

for s in range(num_states):
    for a in range(num_actions):
        action = a + 1  # Actions are 1, 2, 3
        next_state_up = min(s + 1, 99)  # Ensure state indices are within bounds
        next_state_down = max(0, s - 1)
        
        # Define transition probabilities
        prob_up = 1 / (0.1 * (s + 1) + action)  # Adjusted s + 1
        transitions[s, a, next_state_up] = prob_up
        transitions[s, a, next_state_down] = 1 - prob_up
        
        # Define reward function
        rewards[s, a] = (s + 1) * action  # Adjust s+1 for correct state value

# Define aggregation mapping from state to aggregated state
aggregation_map = np.repeat(np.arange(num_aggregated_states), 
                            num_states // num_aggregated_states)
 \end{lstlisting}

   (ii) Assume now that both the rewards and transition probabilities are unknown and must be simulated. Moreover, each state is associated with a randomly generated feature vector in $\mathbb{R}^{2}$ for simplicity. Use the K-means algorithm to determine the aggregation of states based on features, which may not result in consecutive numerical groupings.

    Develop a Python script that employs this data-driven clustering approach to compute the optimal aggregated value function. Start with the following code to cluster the states.  

    Then define the aggregate value function iteration to compute the aggregate value function. 
   \begin{lstlisting}
import numpy as np
from sklearn.cluster import KMeans
import matplotlib.pyplot as plt

# Initialize parameters
num_states = 100
num_actions = 3
num_aggregated_states = 10
discount_factor = 0.95

# Set random seed for reproducibility
np.random.seed(42)

# Generate transition probabilities and rewards
transitions = np.random.rand(num_states, num_actions, num_states)
rewards = np.random.rand(num_states, num_actions)

# Normalize transition probabilities
transitions /= np.sum(transitions, axis=2, keepdims=True)

# Generate features with only two dimensions
features = np.random.rand(num_states, 2)
kmeans = KMeans(n_clusters=num_aggregated_states, random_state=42)
clusters = kmeans.fit_predict(features)

# Plot the features colored by cluster assignment
plt.figure(figsize=(10, 6))
plt.scatter(features[:, 0], features[:, 1], c=clusters, cmap='viridis', marker='o', edgecolor='k', s=50)
for i, center in enumerate(kmeans.cluster_centers_):
    plt.text(center[0], center[1], str(i), fontdict={'weight': 'bold', 'size': 12})
plt.colorbar()
plt.title('State Features Clustered by K-Means')
plt.xlabel('Feature 1')
plt.ylabel('Feature 2')
plt.grid(True)
plt.show()
    \end{lstlisting}
    
    Describe how the clustering outcome might differ from the sequential aggregation and discuss the potential benefits of this data-driven approach.
\end{exercise}

\begin{exercise} (Stochastic Gradient Descent). 
    Consider the stochastic gradient descent algorithm presented in Lecture 4. Suppose that there are constants $K_{1}',K_{2}'$ such  that
\[
\max _{i} \Vert \nabla_\theta L(k_\theta(x_i), y_i) \Vert ^{2} \leq K_{1}' + K_{2}' \Vert \nabla f(\theta) \Vert ^{2}
\]

for all \(\theta\) and some constants \( K_{1}',K_{2}' \).

(i) Show that the convergence theorem in Lecture 4 applies, and hence, $\nabla f(x_{t}) $ converges to $0$. 

Hint: use the analysis of the noisy gradient descent algorithm. 

(ii) Consider a dataset \( \{(x_i, y_i)\}_{i=1}^N \) where \( x_i \in \mathbb{R}^d \) are the input features and \( y_i \in \mathbb{R} \) are the target values. Our goal is to find the parameter vector \( \theta \) that minimizes the least squares error in fitting a linear function \( k_\theta(x) = \theta^T x \).

The objective is to minimize the function \( f \) defined by

\[
f(\theta) = \frac{1}{N} \sum_{i=1}^{N} L(k_\theta(x_{i}), y_i) = \frac{1}{N} \sum_{i=1}^{N} \left( y_i - \theta^T x_i \right)^2 
\]

where the loss function \( L \) is given by the squared error.

Show that the sequence $\theta_{t}$ generated using SGD converges to the unique minimizer of $f$.

(iii) Write a python code implementing SGD. Simulate random data first and then run stochastic gradient descent. Show that the algorithm converges. 

 Start with the following code: 

\begin{lstlisting}
import numpy as np
import matplotlib.pyplot as plt

# Generate synthetic data for demonstration
np.random.seed(0)
N = 1000  # number of data points
d = 2    # number of features
X = np.random.randn(N, d)
true_theta = np.array([2, -3.5])
y = X @ true_theta + np.random.randn(N) 
learning_rate = 0.01
n_iterations = 500
\end{lstlisting}

(iv) Discuss a possible disadvantage of using one sample in each iteration as compared to the classical gradient descent algorithm. 

Hint: variance..

- As a fix for the issue you identified, propose a stochastic gradient descent with batching where we  sample a random subset of data points (instead of one) in each iteration and explains why we can still use the convergence theorem for this case (you don't need to code / prove this part just explain in words).

- Discuss why in practice it may be useful to start with small batches and then increase them overtime.

\end{exercise}

\begin{exercise} (Q-learning). 
        Consider a robot navigating a 8x8 grid world. The robot starts in a random position and needs to reach a target position while avoiding obstacles. The robot can perform four actions at each state: move up, down, left, or right. Actions that would move the robot off the grid leave it in its current location.
    Note that the transitions are deterministic, meaning that chosen actions lead predictably to the next state.

The grid has one designated goal state. The reward is 10 for reaching the goal, $-1$ for hitting an obstacle.
and $-0.1$ for each step taken. 

 In this exercise you will implement episodic $Q$-learning algorithm as studied in Lecture 4 with the parameters below  and an \(\epsilon\)-greedy policy for action selection. Note that the learning rate is constant 0.1. 

(i) First, check the code below. What is the number of states and what is the meaning of a state in terms of the 8x8 grid?

Run episodic $Q$-learning algorithm with 1000 episodes and 200 steps for each episode. In each episode we use the updated $Q$-function from the previous episode, randomize the initial state and apply $Q$-learning with 200 steps. That is, a new episode starts with a randomized initial state but retains the Q-values learned from previous episodes.

(ii)  What is the advantage of this episodic method over using 1000*200 steps in one episode?

(iii) Do you reach the goal? Plot the policy you find for each state in the grid (up, down, left, right, etc). 

 Start with the code below:

\begin{lstlisting}
import numpy as np
import random

# Initialize parameters
grid_size = 8
num_states = grid_size * grid_size
num_actions = 4  # up, down, left, right
discount_factor = 0.99
learning_rate = 0.1
epsilon = 0.1  # exploration rate
episodes = 1000  # number of episodes for training
max_steps = 200  # maximum steps per episode

# Action mapping
actions = {
    0: (-1, 0),  # up
    1: (1, 0),   # down
    2: (0, -1),  # left
    3: (0, 1)    # right
}

# Define rewards and obstacles
target_position = (3, 7)
obstacles = [(1, 2), (1, 3), (3, 1), (3, 2), (3, 6), (6, 6)]
reward_matrix = np.full((grid_size, grid_size), -0.1)
reward_matrix[target_position] = 10
for obs in obstacles:
    reward_matrix[obs] = -1

# Initialize Q-table
Q = np.zeros((num_states, num_actions))

def get_state(row, col):
    return row * grid_size + col

def get_position(state):
    return state // grid_size, state % grid_size

def is_valid_position(row, col):
    return 0 <= row < grid_size and 0 <= col < grid_size

\end{lstlisting}

\end{exercise}

\begin{exercise} (Q-learning for ridesharing matching example). 
   In this exercise you will solve a rideshare matching problem where there are two types of drivers and two types of requests. Each type of driver can match with each type of request, and the rewards for these matches are different. 

   The state of the system at any time \( t \) is represented by a vector \( (d_1, d_2, r_1, r_2) \), where \( d_i \) indicates whether there is a driver of type $i$ available $i=1,2$, ($d_{i}=1$ if available, $0$ if not) and \( r_i \) indicates whether there is a request of type $i$ waiting ($1$ if waiting, $0$ if not).

   The actions correspond to the possible matches between drivers and requests, as well as the option to make no match at all (note that we can match driver of type $i$ with request of type $j$ only if both $d_{i}=1$ and $r_{j}=1$).
\begin{itemize}
    \item \( a_{11} \): Match driver type 1 with request type 1.
    \item \( a_{12} \): Match driver type 1 with request type 2.
    \item \( a_{21} \): Match driver type 2 with request type 1.
    \item \( a_{22} \): Match driver type 2 with request type 2.
    \item \( a_0 \): No match (the system waits for the next time step).
    \item \( a_3 \): Match both drivers with both requests simultaneously. Note that this action is only available in the state 
(1,1,1,1). 
\end{itemize}

When a matching action is taken, the matched driver and request are removed from the system, transitioning the state vector components corresponding to the matched driver and request from 1 to 0. However, after each time step, there is a probability \(p_{id}\) that a removed driver \(d_i\) will reappear in the system (transitioning from 0 to 1) and a probability \(p_{ir}\) that a removed request \(r_i\) will reappear (transitioning from 0 to 1). If the no match action (\(a_0\)) is chosen, the state remains the same, or a driver \(d_i\) may reappear with probability \(p_{id}\) if it was absent, and a request \(r_i\) may reappear with probability \(p_{ir}\) if it was absent.  When \(a_3\) is taken, the system transitions directly to state \((0,0,0,0)\), removing all drivers and requests from the system. After this action, each driver and request has a probability \(p_{id}\) and \(p_{ir}\) of reappearing, transitioning the respective components of the state vector from 0 to 1.

The reward from action $a_{0}$ is zero, from $a_{3}$ is $x_{3}$ and from $a_{ij}$ is $x_{ij}$ for $i=1,2$, $j=1,2$. 

The discount factor is $\beta \in (0,1)$. Assume that $\beta = 0.99$. 

(i) Formulate the ridesharing matching problem above as a discounted dynamic programming problem as in Lecture 2. Define the state space, action space, etc formally and clearly. 

(ii) Write down the Bellman equation for this problem and explains why it holds. 

(iii) Solve the problem using Q-learning with epsilon-greedy and a single long episode.

 Use the parameters in the code below and print the optimal policy. 

How does the optimal policy change when the reward from action $a_{3}$ is $20$?  Explain. 

\begin{lstlisting}
    # Define the parameters
# Rewards
reward_a11 = 3  # Reward for matching driver type 1 with request type 1
reward_a12 = 2  # Reward for matching driver type 1 with request type 2
reward_a21 = 4  # Reward for matching driver type 2 with request type 1
reward_a22 = 3  # Reward for matching driver type 2 with request type 2
reward_a3 = 10  # Reward for matching both drivers with both requests simultaneously
reward_a0 = 0   # Reward for no match

# Probabilities
p_id = [0.5, 0.6]  # Probabilities for drivers d1 and d2 to reappear
p_ir = [0.45, 0.7]  # Probabilities for requests r1 and r2 to reappear

# Hyperparameters
alpha = 0.1  # Learning rate
beta = 0.99  # Discount factor
epsilon = 0.1  # Exploration rate
n_steps = 10000  # Number of steps in the single long episode

# Mapping actions to indices
action_map = {'a11': 0, 'a12': 1, 'a21': 2, 'a22': 3, 'a3': 4, 'a0': 5}


\end{lstlisting}

\end{exercise}

\begin{exercise} (Policy Gradient with State Aggregation).
     You are solving a dynamic programming problem using the average reward criterion, as discussed in Lecture 4. The action space $A$ is relatively small, but the state space $S = \{1, 2, \ldots, 1000\}$ is quite large. To manage the large state space, you decide to apply a tabular policy gradient method (as introduced in Lecture 4) on a reduced state space $S' = \{100, 200, \ldots, 1000\}$, which is a subset of $S$. In this approach, when your policy is $\pi$, for each state from $1$ to $100$, you play according to $\pi(100, \cdot)$, for each state from $101$ to $200$ you play according to $\pi(200, \cdot)$ and similarly for other states. Let $\Pi$ represent the space of such policies defined on $S' \times A$.
        
        We assume that all possible policies are ergodic.
        Show that if $\pi^{*}$ is a stationary point for maximizing the average reward over the set $\Pi$, i.e., $ (\pi-\pi^{*})' \nabla \rho (\pi^{*}) \leq 0$ for all $\pi \in \Pi$, then the following equation holds:
        \[
        \mathbb{E} \, h_{\pi^{*}}(S) = \max_{\pi \in \Pi} \mathbb{E} \left[ \sum_{a \in A} \pi(S,a) Q_{\pi^{*}}(S,a) \right]
        \]
        where $S$ is a random variable and the expectation is taken with respect to the stationary distribution $\lambda_{\pi^{*}}$ (see Lecture 4 for the definitions of $h$, $Q$, $\lambda$).

        Provide an interpretation of the equation above in the context of your approach to solve this problem. You can discuss its meaning in words and / or how it relates to the Bellman equation for the average reward criterion.

\end{exercise}

\begin{exercise} (Combinatorial Optimization as Dynamic Programming). 
An integer programming (IP) problem can be formulated as:
\begin{equation}
\begin{aligned}
& \text{minimize} & & c^T x \\
& \text{subject to} & & Ax \leq b, \text{ }  x \in \mathbb{Z}^n 
\end{aligned}
\end{equation}
where \( c \in \mathbb{R}^n \) is the cost vector, \( A \in \mathbb{R}^{m \times n} \) is the constraint matrix, \( b \in \mathbb{R}^m \) is the constraint vector, and \( x \) is the vector of decision variables constrained to take integer values.

The cutting plane method for solving integer programming begins by solving the linear relaxation of the integer programming problem. This relaxation is obtained by dropping the integrality constraints \( x \in \mathbb{Z}^n \) in the original problem:
\begin{equation} \label{Eq:2}
\begin{aligned}
& \text{minimize} & & c^T x  \\
& \text{subject to} & & Ax \leq b, \text{ } x \in \mathbb{R}^n 
\end{aligned}
\end{equation}
Let \( x^* \) be the optimal solution of this linear relaxation. If \( x^* \) is integer, then it is also optimal for the original integer program. Otherwise, a cutting plane (which is an inequality of the form $a^{T}x \leq \beta$) is generated. A cutting plane is a linear inequality that is satisfied by all feasible integer solutions of the original problem but violated by the current non-integer solution \( x^* \). The cutting plane (which can be shown to always exist) is added to the linear relaxation, creating a new linear program. This new linear program is solved to obtain a new optimal solution. The process of solving the linear relaxation and adding cutting planes is repeated until an integer solution is found or no further progress can be made. 

Assume that for each liner program, there is a finite set of cutting planes $D$ that can be added to the current linear program with a known method (e.g., Gomory’s  cuts for those studied Integer Programming). 

Formulate the cutting plane method to solve integer programming problems as a discounted dynamic programming model. 

Hint: Let $C(1)$ be the original relaxation in Equation (\ref{Eq:2}). Now define $s(t) = (C(t),x^{*}(t), D(t))$ as the state at time $t$ where $C(t)$ is the linear program at state $t$, $x^{*}(t)$ is its solution and $D(t)$ is the set of feasible cuts given the linear program $C(t)$. 

Define the action set as $D(t)$ at stage $t$, i.e., we choose feasible cuts at each stage. 

Define a reward function $R(s,a,s')$ and a discount factor. What is the interpretation of the discount factor? 
What are the transitions?
\end{exercise}

\begin{remark}
    Integer programming has a variety of applications in scheduling, production planning,  matching, etc. However, they are (provably) known to be hard to solve for large state problems. Using reinforcement learning algorithms for the dynamic programming model above, (see \cite{tang2020reinforcement}) or other possible formulations,   can be an interesting direction to solve these problems. 
\end{remark}

\bibliographystyle{ecta}
\bibliography{Lecture}

\begin{thebibliography}{20}
\newcommand{\enquote}[1]{``#1''}
\expandafter\ifx\csname natexlab\endcsname\relax\def\natexlab#1{#1}\fi

\bibitem[\protect\citeauthoryear{Agarwal, Kakade, Lee, and Mahajan}{Agarwal
  et~al.}{2021}]{agarwal2021theory}
\textsc{Agarwal, A., S.~M. Kakade, J.~D. Lee, and G.~Mahajan} (2021):
  \enquote{On the theory of policy gradient methods: Optimality, approximation,
  and distribution shift,} \emph{Journal of Machine Learning Research}, 22,
  1--76.

\bibitem[\protect\citeauthoryear{Aliprantis and Border}{Aliprantis and
  Border}{2006}]{aliprantis2006infinite}
\textsc{Aliprantis, C.~D. and K.~Border} (2006): \emph{Infinite Dimensional
  Analysis: a hitchhiker's guide}, Springer.

\bibitem[\protect\citeauthoryear{Aliprantis and Burkinshaw}{Aliprantis and
  Burkinshaw}{1998}]{aliprantis1998principles}
\textsc{Aliprantis, C.~D. and O.~Burkinshaw} (1998): \emph{Principles of real
  analysis}, Gulf Professional Publishing.

\bibitem[\protect\citeauthoryear{Beck}{Beck}{2017}]{beck2017first}
\textsc{Beck, A.} (2017): \emph{First-order methods in optimization}, SIAM.

\bibitem[\protect\citeauthoryear{Bertsekas}{Bertsekas}{2012}]{bertsekas2012dynamic}
\textsc{Bertsekas, D.} (2012): \emph{Dynamic programming and optimal control:
  Volume I}, vol.~4, Athena scientific.

\bibitem[\protect\citeauthoryear{Bertsekas and Shreve}{Bertsekas and
  Shreve}{1996}]{bertsekas1996stochastic}
\textsc{Bertsekas, D. and S.~E. Shreve} (1996): \emph{Stochastic optimal
  control: the discrete-time case}, vol.~5, Athena Scientific.

\bibitem[\protect\citeauthoryear{Bertsekas and Tsitsiklis}{Bertsekas and
  Tsitsiklis}{1996}]{bertsekas1996neuro}
\textsc{Bertsekas, D. and J.~N. Tsitsiklis} (1996): \emph{Neuro-dynamic
  programming}, Athena Scientific.

\bibitem[\protect\citeauthoryear{Bhandari and Russo}{Bhandari and
  Russo}{2024}]{bhandari2024global}
\textsc{Bhandari, J. and D.~Russo} (2024): \enquote{Global optimality
  guarantees for policy gradient methods,} \emph{Operations Research}.

\bibitem[\protect\citeauthoryear{Blackwell}{Blackwell}{1965}]{blackwell1965discounted}
\textsc{Blackwell, D.} (1965): \enquote{Discounted dynamic programming,}
  \emph{The Annals of Mathematical Statistics}, 36, 226--235.

\bibitem[\protect\citeauthoryear{Dudley}{Dudley}{2018}]{dudley2018real}
\textsc{Dudley, R.~M.} (2018): \emph{Real analysis and probability}, Chapman
  and Hall/CRC.

\bibitem[\protect\citeauthoryear{Hopenhayn and Prescott}{Hopenhayn and
  Prescott}{1992}]{hopenhayn1992}
\textsc{Hopenhayn, H.~A. and E.~C. Prescott} (1992): \enquote{Stochastic
  Monotonicity and Stationary Distributions for Dynamic Economies,}
  \emph{Econometrica}, 1387--1406.

\bibitem[\protect\citeauthoryear{Light}{Light}{2021}]{light2021stochastic}
\textsc{Light, B.} (2021): \enquote{Stochastic Comparative Statics in Markov
  Decision Processes,} \emph{Mathematics of Operations Research}, 46, 797--810.

\bibitem[\protect\citeauthoryear{Light}{Light}{2024}]{light2024principle}
---\hspace{-.1pt}---\hspace{-.1pt}--- (2024): \enquote{The Principle of
  Optimality in Dynamic Programming: A Pedagogical Note,} \emph{Operations
  Research Letters}, 107164.

\bibitem[\protect\citeauthoryear{Lovejoy}{Lovejoy}{1987}]{lovejoy1987ordered}
\textsc{Lovejoy, W.~S.} (1987): \enquote{Ordered solutions for dynamic
  programs,} \emph{Mathematics of Operations Research}, 12, 269--276.

\bibitem[\protect\citeauthoryear{Maitra}{Maitra}{1968}]{maitra1968discounted}
\textsc{Maitra, A.} (1968): \enquote{Discounted dynamic programming on compact
  metric spaces,} \emph{Sankhy{\=a}: The Indian Journal of Statistics, Series
  A}, 211--216.

\bibitem[\protect\citeauthoryear{Savage}{Savage}{1965}]{savage1965gamble}
\textsc{Savage, L. D.~L.} (1965): \enquote{How to gamble if you must,}
  \emph{McGraw-Hill}.

\bibitem[\protect\citeauthoryear{Sutton and Barto}{Sutton and
  Barto}{2018}]{sutton2018reinforcement}
\textsc{Sutton, R.~S. and A.~G. Barto} (2018): \emph{Reinforcement learning: An
  introduction}.

\bibitem[\protect\citeauthoryear{Tang, Agrawal, and Faenza}{Tang
  et~al.}{2020}]{tang2020reinforcement}
\textsc{Tang, Y., S.~Agrawal, and Y.~Faenza} (2020): \enquote{Reinforcement
  learning for integer programming: Learning to cut,} in \emph{International
  conference on machine learning}, PMLR, 9367--9376.

\bibitem[\protect\citeauthoryear{Topkis}{Topkis}{1978}]{topkis1978minimizing}
\textsc{Topkis, D.~M.} (1978): \enquote{Minimizing a submodular function on a
  lattice,} \emph{Operations research}, 26, 305--321.

\bibitem[\protect\citeauthoryear{Topkis}{Topkis}{1998}]{topkis1998supermodularity}
---\hspace{-.1pt}---\hspace{-.1pt}--- (1998): \emph{Supermodularity and
  complementarity}, Princeton university press.

\end{thebibliography}

\section{Solutions to Exercises}

*Solutions may be partial or missing.

\subsection{Exercise 1 Solutions}
\begin{exercise}
    Consider the example from Lecture 1 of the optimization problem $\max _{ \boldsymbol{x} \in A  } x_{1} \cdot x_{2} \cdot \ldots \cdot x_{n}$
where $A = \{ \boldsymbol{x} \in \mathbb{R}^{n} : \sum x_{i} = a, x_{i} \geq 0 \}$. Assume that $n$ and $a$ are non-negative integers. 

1. Write in Python a dynamic programming algorithm to solve this problem for general non-negative integers $n,a$.

2.  Explain, intuitively, why this method is more efficient than a brute-force approach that would involve generating all possible ways to partition the sum 
$a$ into $n$
non-negative parts and then calculating the product for each partition. 

If you studied computer science can you make your argument more formal (what is the (time) complexity of both methods)?
\end{exercise}

1.
\begin{verbatim}
    def max_product(n, a):
    # Edge case: if the total is zero, the product of any numbers must be zero
    if a == 0:
        return 0
    # Edge case: if we only have one number to adjust, it must be 'a'
    if n == 1:
        return a

    # Initialize table where v[i][j] is the max product for i elements summing to j
    v = [[0] * (a + 1) for _ in range(n + 1)]

    # Base case: product of one part is the number itself
    for j in range(a + 1):
        v[1][j] = j

    # Fill DP table
    for i in range(2, n + 1):  # from 2 to n parts
        for j in range(1, a + 1):  # from 1 to a total sum
            # Find the best split
            for k in range(1, j + 1):  # split point, ensuring non-zero products
                product = k * v[i-1][j-k]
                if product > v[i][j]:
                    v[i][j] = product

    return v[n][a]

# Example
n = 1
a = 5
print("Maximum product for n =", n, "and a =", a, "is:", max_product(n, a))

\end{verbatim}

2.  The dynamic programming approach breaks down the problem into simpler subproblems. Each subproblem is solved just once and its solution is stored, allowing these solutions to be reused whenever the same subproblem arises again. This significantly reduces the amount of redundant calculations compared to brute-force methods.
On the other hand, the brute-force approach would generate every possible way to partition the sum $a$ into $n$
 parts and calculate the product for each partition separately. This method does not  reuse overlapping subproblems.

The number of ways to partition 
$a$ into  $n$ parts is
$\binom{a+n-1}{a}$  and for each partition calculating the product takes $O(n)$ so the brute-force approach complexity can be approximated by $O (\binom{a+n-1}{a} n)$. On the other hand, the dynamic programming algorithm has a polynomial complexity $O(na^{2})$ (outer loop runs $n$ times, now count the number of operations for the inner loops).  

\begin{exercise} 
  (a)  Show that $(B(S),d_{\infty})$ is indeed a metric space. 

  (b) Show that the metric space $(B(S),d_{\infty})$ is complete. 
\end{exercise}

We denote $d_{\infty} $ by $d$.

(a) To see that $(B(S),d)$ is a metric space we will show that the triangle inequality holds (the other two properties are immediate). To see this, note that for any $f,g,h \in B(S)$ and each $\omega \in \Omega$ we have 
$$|f(\omega) - g(\omega) | \leq |f(\omega) - h(\omega)| + |h(\omega) - g(\omega)| \leq d(f,h) + d(h,g) $$
so $d(f,g) \leq  d(f,h) + d(h,g)$. 

(b) Let $\{ f_{n} \}$ be a Cauchy sequence in $B(S)$ and let $\epsilon >0$. Then there exists $N$ such that $d(f_{n},f_{m}) < \epsilon$ for all $n,m>N$. For every $\omega \in \Omega$ the inequalities $|f_{n}(s) - f_{m}(s)| \leq d(f_{n},f_{m}) < \epsilon$ imply that $\{ f_{n}(s) \}$ is a Cauchy sequence in $\mathbb{R}$. Hence, $\{ f_{n}(s) \}$ converges to a limit $f(\omega)$. We conclude that $|f_{n}(s) - f(\omega) | \leq \epsilon$ for all $\omega \in \Omega$ and all $n > N$ which in turn implies that $f \in B(S)$ and $d(f_{n},f) \leq \epsilon$ for all $n>N$, i.e., $(B(S),d)$ is complete.

\begin{exercise} \label{Ex: lim A closed}
    Let $A \subseteq X$ be a closed subset of a metric space $(X,d)$. Then $x \in A$ if and only if there exists a sequence $\{x_{n} \}$ of $A$ such that $\lim x_{n} = x$. 
\end{exercise}

Let $x \in A$. For each $n$ pick $x_{n} =x$ so $x_{n} \in A$ and $\lim x_{n} = x$.

Now suppose that the sequence  $\{x_{n} \}$ of $A$ satisfies $\lim x_{n} = x$. Then $B(x,r) \cap A \neq \emptyset$ for each $r >0$. On the other hand, if $x$ belongs to the open set $X \setminus A$ then there exists $r >0$ such that $B(x,r) \subseteq A$ which leads to a contradiction. Hence, $x \in A$.

\begin{exercise}
    Let $(X,d)$ be a complete metric space. Then a subset $A \subseteq X$ is closed if and only if $(A,d)$ is a complete metric space. 
\end{exercise}

Let $A$ be closed. Take a Cauchy sequence $\{x_{n} \}$ of $A$. Then  $\{x_{n} \}$  is a Cauchy sequence of the complete metric space $(X,d)$, and hence, there exists $x \in X$ such that $\lim x_{n} =x$. But since $A$ is closed  Exercise \ref{Ex: lim A closed} implies that $x \in A$ so $(A,d)$ is a complete metric space. 

Now assume that $(A,d)$ is a complete metric space. Suppose that a sequence $\{x_{n} \}$ of $A$ satisfies $\lim x_{n} =x$. Then $\{x_{n} \}$ of $A$  is Cauchy sequence of $A$, so it converges to a unique element of $A$ that must be $x$. Hence, $x \in A$ and $A$ is closed.

\begin{exercise}
   In this exercise we will provide an alternative proof for Banach fixed-point theorem that is based on Cantor's intersection theorem.

Define the diameter of a set $A$ by $d(A) = \sup \{ d(x,y): x,y \in A\}$.

   (a) Let $(X,d)$ be a complete metric space and let $\{A_{n}\}$ be a sequence of closed, non-empty subsets of $X$ such that $A_{n+1} \subseteq A_{n}$ for each $n$ and $\lim d(A_{n}) = 0 $. Then $A = \cap _{n=1}^{\infty} A_{n}$ consists of precisely one element.

(b) Use part (a) to prove Banach fixed-point theorem. 

Hint: Define the function $f(x) = d(x,Tx)$ and consider the sets $A_{n} = \{x \in X: f(x) \leq 1/n \} $ where $T$ is the contraction operator defined in the statement of the Banach fixed-point theorem. 
\end{exercise}

(a) $A$ consists at most one element: If $x,y \in A$ then $d(x,y) \leq d(A_{n})$ for each $n$ so $d(x,y)=0$. 

$A$ consists at least one element: Take $x_{n} \in A_{n}$ for each $n$. It easily follows that $\{x_{n} \}$ is a Cauchy sequence in $X$. Thus, there exists $x \in X$ such that $\lim x_{n} =x$. Because  each $A_{n}$ is closed conclude that $x \in A_{n}$ for each $n$, so $x \in A$. 

(b) $f$ is continuous and $ \lim f(T^{n}x) =0$ for each $x \in X$ because $T$ is contraction. Conclude that $A_{n}$ is closed and not-empty. Now if $x,y \in A_{n}$ then 
$$ d(x,y) \leq d(x,Tx) + d(Tx,Ty) + d(Ty,y) \leq 2/n  + L d(x,y).$$
Hence, $d(A_{n}) \leq 2/n(1-L)$ so $\lim d(A_{n}) =0$. Clearly $A_{n+1} \subseteq A_{n}$. Hence, we can use part (a) to conclude that $A = \cap A_{n}$ consists of exactly one element $z$. Conclude that $z$ is the unique fixed-point of $T$.

\begin{exercise}
     Let $X$ be a non-empty set and let $T: X \rightarrow X$, $L \in (0,1)$. Then the following statements are equivalent: 

     (i) There exists a complete metric $d$ for $X $ such that $d(Tx,Ty) \leq L d(x,y)$ for all $x,y \in X$. 

     (ii) There exists a function $f:X \rightarrow \mathbb{R}_{+}$ such that $f^{-1}(0)$ is a singleton and $f(Tx) \leq L f(x)$. 

     Hint: For $(i) \rightarrow (ii)$ use Banach fixed-point theorem. 

     For $(ii) \rightarrow (i)$ consider $d(x,y) = f(x) + f(y)$ for $x \neq y$ and $d(x,x) = 0$. 
\end{exercise}

Assume that (i) holds. Then $T$ has a unique fixed point $z$. Now define $f(x) = d(x,z)$. Note that  $f^{-1}(0)$ is a singleton and $f(Tx) = d(Tx,z)  = d(Tx, Tz) \leq Ld(x,z) = Lf(x) $. 

Assume that (ii) holds. Define $d(x,y)$ as in the hint. Show that $d$ is indeed a metric on $X$. In addition for $x \neq y$, $d(Tx,Ty) = 0 $ or $d(Tx,Ty) = f(Tx) + f(Tx) \leq Lf(x) + Lf(y) = d(x,y)$ and for $x=y$, $d(Tx, Ty) = 0$. Thus, $T$ is $L$-contraction operator. It remains to show that $d$ is complete. 

Take a Cauchy sequence $\{x_{n} \}$ in $X$. If $\{x_{n}: n \geq 1 \}$ is finite then clearly $x_{n}$ converges so assume that it is infinite. Then there exists a subsequence of $\{x_{n} \}$ denoted also by $x_{n}$ of distinct elements such that $d(x_{n},x_{m}) = f(x_{n}) + f(x_{m})$ for $n \neq m$. Thus, $ \lim f(x_{n}) =0$. From (ii) there exists a $z \in X$ such that $f(z) = 0$. We conclude that $\lim d(x_{n},z) = 0$, i.e., $\lim x_{n} =z$. Hence, each Cauchy sequence converges to $z$ which completes the proof.

 \begin{exercise} Show that for any collection of sets $\mathfrak{B}$ the sigma-algebra that is generated by  $\mathfrak{B}$ exists. 
 \end{exercise}

Let $W = \{ \mathfrak{F}: \mathfrak{B} \subseteq \mathfrak{F},  \mathfrak{F}  \text{ is a sigma-algebra } \} $. 
Now $W$ is not empty ($2^{\Omega}$ belongs to $W$). And one can check that $\cap W$ is the sigma-algebra that is generated by  $\mathfrak{B}$.

   \begin{exercise} 
 Find a Borel measurable function that is not continuous 
 
 Hint: consider the indicator function on the set of rational numbers. 
 \end{exercise}

 As the hint suggests, consider $f = 1_{Q}$ where $Q$ is the set of rational numbers and $1_{Q}$ is the indicator function, i.e., $1_{Q}(x) = 1$ if $x$ is rational and $1_{Q}(x)$ if $x \in \mathbb{R} \setminus Q$. Then $f$ is obviously not continuous. 
 
 The set of rationals is Borel measurable as a countable union of single point sets. Now show that the indicator function of a Borel measurable set is a Borel measurable function. 

\begin{exercise} 
Show that the law of a random variable $X$ is indeed a probability measure. 
\end{exercise}

Let $\mu (A) = \mathbb{P} (\{ \omega \in \Omega : X(\omega) \in A  \} ) =  \mathbb{P} ( X^{-1}(A) ) $ be the law of $X$. $\mu (\Omega) = 1$ follows immediately.  
If $A_{i}$ are pairwise disjoint, justify the following 
equalities 
$$\mu (\cup A_{i} ) = \mathbb{P} ( X^{-1} (\cup  A_{i} ) ) =  \mathbb{P} (  \cup X^{-1}(A_{i}) ) = \sum \mathbb {P} ( X^{-1}(A_{i}) ) = \sum \mu (A_{i}) .$$

\begin{exercise}
    Let $f:\Omega \rightarrow \mathbb{R}$ be a measurable function such that $f(\omega) \geq 0$ for all $\omega$. A measurable function $\phi : \Omega \rightarrow \mathbb{R}$ is called simple if it assumes only finite number of values. 

Show that there exists a sequence of non-negative increasing simple $\phi_{n}$ functions that converges to $f$ pointwise and satisfies $\phi _{n} \leq f$ for all $n$.

Hint: Consider $B_{n}^{i} = \{ \omega \in \Omega: (i-1)2^{-n} \leq f(\omega) < i2^{-n} \} $ and $\phi _{n} = \sum _{i=1}^{n2^n} 2^{-n}(i-1) 1_{B_{n}^{i} } $. 
\end{exercise}

Consider $\phi_{n}$ as in the hint. Note that $\{\phi_{n} \}$ is a sequence of simple functions such that $ 0 \leq \phi_{n}(\omega) \leq \phi_{n+1}(\omega) \leq f(\omega)$ for all $\omega$ and $n$. In addition, use $ 0 \leq f(\omega) - \phi_{n}(\omega) \leq 2^{-n}$ to conclude that $\phi _{n} (\omega) $ converges to $f$ pointwise.

\begin{exercise} 
Suppose that $f,g$ are measurable. 

(1) Show that $ \{ \omega \in \Omega: f(\omega) > g(\omega) \}$, $ \{ \omega \in \Omega: f(\omega) \geq g(\omega) \}$ and $\{ \omega \in \Omega: f(\omega) = g(\omega) \}$ are all measurable. 

(2) Show that $f + g$ is measurable. 

(3) Show that $fg$ is measurable.

Hint: use  $fg = 0.5 ( (f+g)^2 - f^2 - g^2)$.

(4) Show that $f \circ g$ is measurable, i.e.,  the composition of Borel measurable functions is Borel measurable. 

(5)  Suppose that $\{k_{n} \}$ is a sequence of measurable functions. Show that if $k_{n}$ converges to $k$ pointwise then $k$ is measurable. 
\end{exercise}

(1) Let $q_{1},q_{2},\ldots$ be an enumeration of the rational numbers of $\mathbb{R}$. Then 
$$ \{\omega \in \Omega: f(\omega) > g(\omega) \} = \cup _{n=1}^{\infty} \left [ \{ \omega \in \Omega: f(\omega) > q_{n} \} \cap \{ \omega \in \Omega: g(\omega) < q_{n} \}  \right ]$$ 
(why?) is a measurable as a countable union of measurable sets. Now use this to prove the other claims.

(2) For $a \in \mathbb {R}$ we have 
$$ (f+g)^{-1}([a,\infty)) = \{s: f(\omega) +g(\omega) \geq a \} = \{s: f(\omega) \geq a -g(\omega) \} $$
which is measurable as $h(\omega) := a-g(\omega)$ is measurable.

(3) Show that $f^{2}$ and $cf$ are measurable for any $c \in \mathbb{R}$ when $f$ is measurable. Now use the hint. 

(4) Let $f,g$ be Borel measurable functions and let $O$ be an open set.  Then $g^{-1}(O)$ is Borel measurable so $(g \circ f)^{-1} (O ) = f^{-1} (g^{-1}(O))$ is Borel measurable.

(5) Note that $k^{-1}((a,\infty)) = \cup _{n=1}^{\infty} \cap _{i=n}^{\infty} k_{i}^{-1} ((a+1/n,\infty))$ and use the measurability of each $k_{i}$.

\begin{exercise}
    Suppose that $(X,F)$ and $(Y,G)$ are two measurable spaces. The product sigma-algebra denoted by $F \otimes G$ is given by 
    $$ F \otimes G = \sigma  ( \{ A \times B: A \in F, B \in G \} ) $$

where $\sigma$ is defined in Lecture 1. 
    Show that $\mathcal{B} (\mathbb{R}^{n+m}) =\mathcal{B} (\mathbb{R}^{n}) \otimes 
 \mathcal{B} (\mathbb{R}^{m})   $ for $n , m \geq 1$ 
where   $\mathcal{B} (\mathbb{R}^{k})$ is the Borel sigma-algebra on $\mathbb{R}^{k}$. 
\end{exercise}

Let $O(\mathbb{R}^{n+m})$ be the collection of  bounded open intervals in $\mathbb{R}^{n+m}$ (a bounded open interval $I$ means that $I= \prod I_{i}$ where $I_{i}$ is a bounded open interval in $\mathbb{R}$).  Since every open set can be written as a countable union of  bounded open intervals we have  $\mathcal{B} (\mathbb{R}^{n+m}) = \sigma (O(\mathbb{R}^{n+m}))$. Combining this with $O(\mathbb{R}^{n+m}) \subseteq \mathcal{B} (\mathbb{R}^{n}) \otimes 
 \mathcal{B} (\mathbb{R}^{m})  $ implies that $\mathcal{B} (\mathbb{R}^{n+m})  \subseteq \mathcal{B} (\mathbb{R}^{n}) \otimes 
 \mathcal{B} (\mathbb{R}^{m})  $.

 For the second inclusion, consider $M = \{ A \subseteq \mathbb {R}^{m} : A \times \mathbb {R}^{n} \in \mathcal{B} (\mathbb{R}^{n+m}) \}$. Show that $M$ is a sigma algebra that contains all the open sets.  Conclude that $ A \times \mathbb {R}^{n} \in \mathcal{B} (\mathbb{R}^{n+m})$ for every $A \in \mathcal{B} (\mathbb{R}^{m})$. Similarly $ \mathbb {R}^{m} \times B \in \mathcal{B} (\mathbb{R}^{n+m})$ for every $B \in \mathcal{B} (\mathbb{R}^{n})$. Conclude that $ \mathcal{B} (\mathbb{R}^{n}) \otimes 
 \mathcal{B} (\mathbb{R}^{m})  \subseteq  \mathcal{B} (\mathbb{R}^{n+m}) $.

\subsection{Exercise 2 Solutions}

\textbf{Exercise 1}. The Bellman equation is given by 
$$V((\alpha_{1},\beta_{1}),\ldots,(\alpha_{n},\beta_{n})) = \max _{i \in \{1,\ldots,n \}  } \left ( \frac{ \alpha_{i} } {\alpha _{i} + \beta_{i} } + \beta  \left (  \frac{ \alpha_{i} } {\alpha _{i} + \beta_{i} } V (s_{S}) + \left (1-  \frac{ \alpha_{i} } {\alpha _{i} + \beta_{i} } \right )V (s_{F} ) \right ) \right ) $$
where $s_{S} $ (success) is equal to the current state  $(\alpha_{1},\beta_{1}),\ldots,(\alpha_{n},\beta_{n})$ expect that $\alpha_{i}$ is replaced by $\alpha_{i}+1$ and $s_{F} $ (failure) is equal to the current state  $(\alpha_{1},\beta_{1}),\ldots,(\alpha_{n},\beta_{n})$ expect that $\beta_{i}$ is replaced by $\beta_{i}+1$.

The state space is countable and payoffs are bounded so the Bellman Equation holds given the results in Lecture 2.

\textbf{Exercise 2}.
(i) One way to formulate this MDP is the following. Define the state space as $X = \{S, W \}$ where $W = \{Y,N \}$, $Y$ means that the house was sold and $N$ means that it wasn't. The action space is $\{0,1\}$ where $1$ means accept and $0$ means reject. Now we can define $r(i,N,1) = R(i)$,  $r(i,N,0) = 0$, $r(i,Y,1) = r(i,Y,0) = 0$. The discount factor is $\beta$.

You can easily complete the transition probabilities.

(ii)  Clearly, $V(i,Y)=0$. With abuse of notation let's write $V(i)$ instead of $V(i,N)$. Then the Bellman equation is given by
$$ V(i) = \max \{R(i) , \beta \sum_{j \in S} P_{ij} V(j) \}$$

(iii)  When the seller is more patient then the optimal policy is to sell from a higher threshold compared to the impatient seller. 
\begin{verbatim}
import numpy as np
import matplotlib.pyplot as plt

# Parameters
N = 10  # Number of states
beta = 0.99  # Discount factor

# Define the transition probability matrix P
P = np.zeros((N, N))
# Transitions for state 1
P[0, 0] = 1/3  # Stay
P[0, 1] = 2/3  # Move up
# Transitions for states 2 through N-1
for i in range(1, N-1):
    P[i, i-1] = 1/3  # Move down
    P[i, i] = 1/3    # Stay
    P[i, i+1] = 1/3  # Move up
# Transitions for state N
P[N-1, N-2] = 2/3  # Move down
P[N-1, N-1] = 1/3  # Stay

# Reward function R(i) = i (0-indexed, so add 1)
R = np.arange(1, N + 1)

# Initialize value function
V = np.zeros(N)

# Value iteration
tolerance = 1e-6
delta = float('inf')

while delta > tolerance:
    delta = 0
    V_new = np.zeros(N)
    for s in range(N):
        # Compute the value of accepting the offer
        accept = R[s]
        # Compute the value of rejecting the offer
        reject = beta * np.dot(P[s, :], V)
        # Bellman optimality equation
        V_new[s] = max(accept, reject)
        
        delta = max(delta, abs(V_new[s] - V[s]))
    V = V_new.copy()

# Print final values of the value function
print("Final values of the value function:")
for s in range(N):
    print(f"V({s+1}) = {V[s]:.2f}")

# Determine the optimal policy
policy = np.zeros(N, dtype=int)
for s in range(N):
    accept = R[s]
    reject = beta * np.dot(P[s, :], V)
    policy[s] = 1 if accept >= reject else 0  # 1 for accept, 0 for reject

# Plotting the optimal policy
plt.figure(figsize=(10, 6))
plt.subplot(2, 1, 1)  # Subplot 1 for policy
plt.plot(np.arange(1, N + 1), policy, marker='o', linestyle='-', color='b')
plt.title('Optimal Policy as a Function of State (1=Accept, 0=Reject)')
plt.xlabel('State')
plt.ylabel('Action (1=Accept, 0=Reject)')
plt.xticks(np.arange(1, N + 1))
plt.grid(True)

# Plotting the value function
plt.subplot(2, 1, 2)  # Subplot 2 for value function
plt.plot(np.arange(1, N + 1), V, marker='o', linestyle='-', color='r')
plt.title('Value Function as a Function of State')
plt.xlabel('State')
plt.ylabel('Value Function')
plt.xticks(np.arange(1, N + 1))
plt.grid(True)

plt.tight_layout()
plt.show()

\end{verbatim}

(iv) We can use an estimate and optimize approach. We use the samples to estimate the transition probabilities where we define $\hat{P}_{ij} = n_{ij}/\sum_{k=1}^{10} n_{ik} $ where $n_{ij}$ is the number of times state moved from $i$ to $j$. Now we can solve the problem with $\hat{P}_{ij}$ instead of $P_{ij}$. 

\textbf{Exercise 3}.
(i) Note that if $f \geq Tf$ then $f \geq T^{k}f \rightarrow V$ because of the monotonicity of $T$.

So  for a vector $\boldsymbol{x}$ that satisfies the constraints we have $\boldsymbol{x} \geq V$. 

 From the results in Lecture 2, the linear program has a solution where the constraints inequalities hold as equalities. 
 
 Now use the arguments above to conclude that part (i) holds.

 (ii) $|S|$ variables and at most $|S| \times |A|$ constraints.

 (iii) We can solve the linear program with only $n$ variables $w_{1},\ldots,w_{n}$, 
    $$ \min \sum _{s \in S} \sum _{k=1}^{n} w_{k} \phi_{k} (s) \text{  s.t  }  \sum _{k=1}^{n} w_{k} \phi_{k} (s_{i}) \geq r(s_{i},a) + \beta \sum_{s_{j} \in S}p(s_{i},a,s_{j})  \sum _{k=1}^{n} w_{k} \phi_{k} (s_{j}) \text{ for all } a \in \Gamma(s_{i}), \text { } s_{i} \in S. $$

\textbf{Exercise 4}.
As in Lecture 2, we can define the value function for the no discounting case by  $V(s) = \sup _{\sigma} \mathbb{E} _{\sigma} \sum_{t=1}^{T} r(s(t),a(t)) $, $s(1)=s$ where the strategies and expectations are defined similarly to Lecture 2 (with finite $T$). 

(i) No. In this case the Bellman operator is not a contraction and we don't need to use the Banach fixed-point theorem.

The proof follows from the same arguments as in Lecture 2 by showing that the value function is the fixed point of the Bellman equation (verify).

(ii) No. But we do need to assume some integrability conditions so that the sum $\mathbb{E} _{\sigma} \sum_{t=1}^{T} r(s(t),a(t)) $ is well-defined.  For example, we can assume that $r$ is bounded from below (which is also needed for the Remark above).

\textbf{Exercise 5}.
    Let $f_{0} \in B(S)$ and note that $$d(f_{k},T^{k}f_{0}) \leq \sum_{i=0}^{k-1} d(T^{k-i-1}f_{i+1},T^{k-i}f_{i}) \leq \sum _{i=0}^{k-1} \delta \beta ^{i} = \delta \frac{1 - \beta ^{k} } {1 - \beta } $$
    where $T^{0} f = f$. The first inequality follows from the triangle inequality, the second inequality follows from the fact that $T$ is $\beta$-contraction. Now we can use the fact that $T^{k} f_{0} \rightarrow V$ to conclude the result by taking $k \rightarrow \infty$. 

\textbf{Exercise 6}.
\begin{verbatim}
def sumN(numbers, amount):
    # Large number to signify unachievable amount
    max_amount = amount + 1
    V = [max_amount] * (amount + 1)
    V[0] = 0
    
    for i in range(1, amount + 1):
        for number in numbers:
            if i >= number:
                V[i] = min(V[i], V[i - number] + 1)
    
    return V[amount] if V[amount] != max_amount else -1

# Test the function
numbers = [4, 3, 5, 25]
amount = 22
print(sumN(numbers, amount))  



\end{verbatim}

   \textbf{Exercise 7}.

\begin{verbatim}
def word_change(w1, w2):
    m, n = len(w1), len(w2)
    V = [[0] * (n + 1) for _ in range(m + 1)]

    for i in range(m + 1):
        for j in range(n + 1):
            if i == 0:
               V[i][j] = j  # Insert all characters of w2
            elif j == 0:
                V[i][j] = i  # Remove all characters of w1
            elif w1[i-1] == w2[j-1]:
                V[i][j] = V[i-1][j-1]
            else:
                V[i][j] = 1 + min(V[i-1][j], V[i][j-1], V[i-1][j-1])

    return V[m][n]

# Test the function
w1 = "dan"
w2 = "fang"
print(word_change(w1, w2))  

\end{verbatim}

\subsection{Exercise 3 Solutions}

\textbf{Exercise 1}.
Using the hint and because the sum of supermodular functions is supermodular we can construct a sequence of functions $k_{n}(s,a)$ as in Lecture 3  that converges to $z(a,s) := \int f(s')p(s,a,ds')$ such that $k_{n}$ is increasing and supermodular whenever $f$ is. Taking the limit and using the fact that supermodularity and monotonicity is preserved under limits yields the result.

\textbf{Exercise 2}.
A sketch of the solution: 

(i) The state space is $S = X \times \mathcal{Y}$. The Bellman Equation is given by 
$$Tf(x,y_{k}) = \max _{\sum_{i=1}^{n} a_{i} \leq x, a_{i} \geq 0} u \left (x- \sum_{i=1}^{n} a_{i} \right ) + \beta \int  \sum _{j=1}^{m} Q_{kj} f \left ( \sum_{i=1}^{n} R_{i} a_{i} +y_{j} , y_{j} \right ) \phi(dR_{1},\ldots,dR_{n})$$
where $\phi$ is the joint distribution of $R_{i}$'s. 

The Bellman equations holds from the upper semi-continuous dynamic programming we discussed in Lecture 2 (justify..)

(ii) We need to use Theorem 1 part (ii) from Lecture 3  on $(x,a_{1},\ldots,a_{n})$. Note that $X$ is convex and the correspondence $\Gamma$ is convex. The function $u(x-\sum a_{i})$ is jointly concave in $(x,a)$ as the composition of a concave and a linear function. In addition, $\int  \sum _{j=1}^{m} Q_{kj} f \left ( \sum_{i=1}^{n} R_{i} a_{i} +y_{j} , y_{j} \right )  \phi(dR_{1},\ldots,dR_{n})$ is concave in $(a_{1},\ldots,a_{n})$ as the composition of concave and linear functions when $f$ is concave in the first argument (note that concavity is preserved under integration). Hence, we apply   Theorem 1  part (ii) from Lecture 3. 

(iii) We need to use Theorem 1 part (i) from Lecture 3. $u(x-\sum a_{i})$ is increasing in $x$ and the correspondence clearly satisfy the required monotonicity condition. Now if $f$ is increasing then $f \left ( \sum_{i=1}^{n} R_{i} a_{i} +y , y \right )$ is increasing in $y$ for all $R_{i},a_{i}$'s. Hence, $\sum _{j=1}^{m} Q_{kj}   f \left ( \sum_{i=1}^{n} R_{i} a_{i} +y_{j} , y_{j} \right )$ is increasing in $k$ because of the assumption on $Q$ and the characterization of stochastic dominance in Lecture 3. Integrate over $(R_{1},\ldots,R_{n})$ to conclude that $p$ is increasing in $y$. Now use Theorem 1 part (1) Lecture 3. 

(iv) Consider the change variable as in the hint $\sum _{i=1}^{k} a_{i} = z_{k}$. Now 
let $\Gamma(x) = \{z: 0 \leq z_{1} \leq \ldots \leq z_{n} \leq x \}$ and note that it is supermodular (why?). The Bellman equation is given by
$$Tf(x,y_{k}) = \max _{z \in \Gamma(x)} u \left (x- z_{n} \right ) + \beta \int  \sum _{j=1}^{m} Q_{kj} f \left ( \sum_{i=1}^{n} R_{i} (z_{i} - z_{i-1}) +y_{j} , y_{j} \right ) \phi(dR_{1},\ldots,dR_{n})$$ and the optimal policy $\mu$ is given by
 $\mu_{k} (x,y) = \sum _{i=1}^{k} g_{i}(x,y)$ (why)?

Now show that $p$ is supermodular in $x$ and conclude that $\mu_{1} = g_{1}$ is increasing in $x$. So by symmetry we can prove the result. 

(v)
\begin{lstlisting}[language=Python]
    import numpy as np

# Initialize parameters
num_wealth_states = 30
max_wealth = 10  # maximum wealth considered
wealth_grid = np.linspace(0, max_wealth, num_wealth_states)

num_actions = 10  # Discretize the action space into 10 possible investments per asset
action_grid = np.linspace(0, max_wealth, num_actions)

num_income_states = 2
income_states = np.array([1, 2])
transition_probabilities = np.array([[0.7, 0.3], [0.3, 0.7]])  # Transition probabilities for income states

discount_factor = 0.95
return1 = 1.05
return2 = np.array([1.0, 1.16])  # Two possible returns for the second asset

# Utility function
def utility(consumption):
    if consumption <= 0:
        return float('-inf')  # Avoid negative square root and non-positive consumption
    return np.sqrt(consumption)  # Example: square root utility

# Value function initialization
V = np.zeros((num_wealth_states, num_income_states))

# Policy storage
policy = np.full((num_wealth_states, num_income_states, 2), -1, dtype=int)  # Initialize with -1 

# Value iteration function
def value_iteration(V, iterations=1000):
    for _ in range(iterations):
        V_new = np.copy(V)
        for s in range(num_wealth_states):
            for y in range(num_income_states):
                wealth = wealth_grid[s]
                income = income_states[y]
                best_value = float('-inf')
                best_action = None
                for a1 in range(num_actions):
                    for a2 in range(num_actions):
                        investment1 = action_grid[a1]
                        investment2 = action_grid[a2]
                        if investment1 + investment2 > wealth:
                            continue  # Skip over-investment scenarios
                        consumption = wealth - investment1 - investment2
                        future_value = 0
                        for next_y, prob in enumerate(transition_probabilities[y]):
                            for r_index, r_prob in enumerate([0.5, 0.5]):
                                expected_wealth = investment1 * return1 + investment2 * return2[r_index] + income
                                next_s = np.searchsorted(wealth_grid, expected_wealth, side='right') - 1
                                next_s = max(0, min(next_s, num_wealth_states - 1))
                                future_value += prob * r_prob * V[next_s, next_y]
                        total_value = utility(consumption) + discount_factor * future_value
                        if total_value > best_value:
                            best_value = total_value
                            best_action = [a1, a2]
                if best_action is not None:
                    V_new[s, y] = best_value
                    policy[s, y] = best_action  # Update policy only if a valid action was found
        if np.max(np.abs(V_new - V)) < 1e-4:
            break
        V = V_new
    return V, policy

# Run value iteration
optimal_values, optimal_policy = value_iteration(V)

# Output results
print("Optimal Values:\n", optimal_values)
print("Optimal Policy Indices:\n", optimal_policy)

\end{lstlisting}

\textbf{Exercise 3.}
(i)    The Bellman equation for this dynamic programming problem is:
\[
V(p) = \max \left( p, 1-p, \beta \int V\left(\frac{pf_0(z)}{pf_0(z) + (1-p)f_1(z)}\right) d\mu(z) \right)
\]
where
 \(V(p)\) is the value function, representing the maximum expected reward starting from belief \(p\).
   The integral term calculates the expected value if you continue testing and the measure $\mu(z)$  has a density $pf_0(z) + (1-p)f_1(z)$ (so $\mu(dz) = [pf_0(z) + (1-p)f_1(z)]dz$). The Bellman equation holds from Lecture 2 (why?)

(ii) 
To intuitively explain why the value function \( V(p) \) is convex, consider the following scenario where nature plays a game.
Nature flips an unbiased coin.
    With probability \( \lambda \), the coin lands heads, and the belief that \( H_0 \) is correct is \( p_0 \).
   With probability \( 1 - \lambda \), the coin lands tails, and the belief that \( H_0 \) is correct is \( p_1 \).

Before knowing the outcome of the coin flip, our intermediate belief \( p \) about \( H_0 \) can be represented as a weighted average of \( p_0 \) and \( p_1 \):
\[
p = \lambda p_0 + (1 - \lambda) p_1
\]

To show that \( V(p) \) is convex, we need to show that:
\[
V(\lambda p_0 + (1 - \lambda) p_1) \leq \lambda V(p_0) + (1 - \lambda) V(p_1)
\]

Knowing the actual outcome of the coin flip (either \( p_0 \) or \( p_1 \)) provides more precise information about the true state of the world than the intermediate belief \( p \).
    With more precise information, we can make better decisions, leading to higher expected rewards.

Before the coin flip, our expected value based on the intermediate belief \( p \) is \( V(p) \).
     After the coin flip, knowing the specific outcome (either \( p_0 \) or \( p_1 \)) allows us to achieve either \( V(p_0) \) or \( V(p_1) \).
    The expected reward, knowing the coin flip outcome, is:
    \[
    \lambda V(p_0) + (1 - \lambda) V(p_1)
    \]
   This expected reward is higher than or equal to the reward based on the intermediate belief \( p \), because having more information (knowing \( p_0 \) or \( p_1 \)) improves payoffs. 

   \subsection{Exercise 4 Solutions}

\textbf{Exercise 1}
  Define $f(x) = \Vert x \Vert ^{2}$ and $Y_{t} = W(x_{t}) - x_{t} + Z_{t}$ so $x_{t+1} = x_{t} + \gamma_{t}Y_{t}$ as in Lecture 4. 

  Note that $\nabla f(x) = x$ is Lipschitz continuous and from the convergence theorem $\nabla f(x_{t}) = x_{t}$ will converge to $0$ if the gradient direction and mean square errors condition hold. 
  
  For every $x$ we have 
  $$ W(x) \cdot x \leq \Vert W(x) \Vert \cdot \Vert x \Vert \leq \beta  \Vert x \Vert ^{2}$$
  where the first inequality follows from Cauchy-Schwarz inequality and the second from the exercise assumption.  Now subtract $\Vert x \Vert ^{2}$ from both sides to get 
  $$ x \cdot (W(x) - x) \leq - (1-\beta)\Vert x \Vert ^{2}. $$
  Let $c= (1-\beta)$ and note that $\mathbb{E}[ Y_{t}  | \mathcal{F}_{t-1} ] = W(x_{t}) - x_{t}$ to conclude from the last inequality that 
  $$- \nabla f(x_{t}) \cdot  \mathbb{E}[ Y_{t}  | \mathcal{F}_{t-1} ]  \geq c \Vert \nabla f(x_{t}) \Vert ^{2} $$
  so the gradient direction condition holds. Now verify that the mean square condition holds to conclude.

  \textbf{Exercise 3}. 
    (i) Show that the conditions of the noisy gradient algorithm applies. In particular, $\mathbb{E}[ Z_{t}  | \mathcal{F}_{t-1} ] = 0$ (why?) and the mean square condition holds. 

    (ii) Unique minimizer follows from strict convexity of $f$ as the sum of strictly convex functions. Now use the convergence theorem to conclude that $x_{t}$ converges to the unique minimizer $x^{*}$.

    (iii) \begin{lstlisting}
import numpy as np
import matplotlib.pyplot as plt

# Generate synthetic data for demonstration
np.random.seed(0)
N = 1000  # number of data points
d = 2    # number of features
X = np.random.randn(N, d)
true_theta = np.array([2, -3.5])
y = X @ true_theta + np.random.randn(N) 
learning_rate = 0.01
n_iterations = 500

# loss function
def compute_loss(theta, X, y):
    return np.mean((y - X @ theta) ** 2)

# Stochastic Gradient Descent 
def stochastic_gradient_descent(X, y, learning_rate=0.01, n_iterations=500):
    N, d = X.shape
    theta = np.random.randn(d)  # Initial guess 
    loss_values = []

    for t in range(n_iterations):
        i = np.random.randint(N)  # Randomly select an index 
        xi = X[i]
        yi = y[i]
        gradient = -2 * xi * (yi - xi @ theta)  # Gradient of L(theta, z_i)
        theta = theta - learning_rate * gradient  # Update step
        loss_values.append(compute_loss(theta, X, y))

    return theta, loss_values

# Run stochastic gradient descent
theta_min, loss_values = stochastic_gradient_descent(X, y, learning_rate, n_iterations)

# Plot the convergence of the loss function
plt.plot(loss_values)
plt.xlabel('Iteration')
plt.ylabel('Loss')
plt.title('Convergence of Stochastic Gradient Descent')
plt.show()

print(f'The estimated parameters found by SGD: {theta_min}')
    \end{lstlisting}

    (iv) In stochastic gradient descent (SGD), updating the parameters using only one sample per iteration can introduce a high variance in the updates. This is because each update direction is highly dependent on a single randomly chosen data point, which may not represent the overall data distribution well. This high variance can lead to instability of the algorithm. 

To mitigate the high variance issue, we can use a  \textit{mini-batch} stochastic gradient descent. Instead of using a single sample, we use a random subset of the data, called a \textit{mini-batch}, to compute the gradient at each iteration. This approach reduces the variance of the updates while still being computationally efficient compared to classical gradient descent.

Formally, let \( B_t \) be a mini-batch of data samples at iteration \( t \). The parameter update rule becomes:
\begin{equation*}
    x_{t+1} = x_t - \gamma_t \frac{1}{|B_t|} \sum_{z_i \in B_t} \nabla L(x_t, z_i),
\end{equation*}
where \( |B_t| \) is the size of the mini-batch \( B_t \).

In practice, it can be beneficial to start with small mini-batches and gradually increase their size over time. This can lead to fast initial learning and stable convergence as increasing the batch size reduces the variance of the updates.

One way to implement this is to use a schedule for the mini-batch size. For example, start with a small batch size \( B_0 \) and increase it according to a predefined schedule, such as linearly or exponentially, as the number of iterations increases.

\textbf{Exercise 4.}
By starting each episode with a random initial state, the agent is more likely to explore different parts of the state space. This ensures better exploration of the environment, which can lead to better learning. One the other hand, in a single long episode, the agent might get stuck exploring only a subset of the state space, especially if it falls into loops. 
\begin{lstlisting}
import numpy as np
import random

# Initialize parameters
grid_size = 8
num_states = grid_size * grid_size
num_actions = 4  # up, down, left, right
discount_factor = 0.99
learning_rate = 0.1
epsilon = 0.1  # exploration rate
episodes = 1000  # number of episodes for training
max_steps = 200  # maximum steps per episode

# Action mapping
actions = {
    0: (-1, 0),  # up
    1: (1, 0),   # down
    2: (0, -1),  # left
    3: (0, 1)    # right
}

# Define rewards and obstacles
target_position = (3, 7)
obstacles = [(1, 2), (1, 3), (3, 1), (3, 2), (3, 6), (6, 6)]
reward_matrix = np.full((grid_size, grid_size), -0.1)
reward_matrix[target_position] = 10
for obs in obstacles:
    reward_matrix[obs] = -1

# Initialize Q-table
Q = np.zeros((num_states, num_actions))

def get_state(row, col):
    return row * grid_size + col

def get_position(state):
    return state // grid_size, state % grid_size

def is_valid_position(row, col):
    return 0 <= row < grid_size and 0 <= col < grid_size

for episode in range(episodes):
    state = random.randint(0, num_states - 1)
    row, col = get_position(state)
    for step in range(max_steps):
        if random.uniform(0, 1) < epsilon:
            action = random.randint(0, num_actions - 1)
        else:
            action = np.argmax(Q[state, :])
        
        new_row, new_col = row + actions[action][0], col + actions[action][1]
        if not is_valid_position(new_row, new_col):
            new_row, new_col = row, col
        
        new_state = get_state(new_row, new_col)
        reward = reward_matrix[new_row, new_col]
        
        Q[state, action] = Q[state, action] + learning_rate * (
            reward + discount_factor * np.max(Q[new_state, :]) - Q[state, action]
        )
        
        state, row, col = new_state, new_row, new_col

# Print the optimal policy
policy = np.argmax(Q, axis=1).reshape((grid_size, grid_size))
policy_arrows = np.full((grid_size, grid_size), ' ')
for i in range(grid_size):
    for j in range(grid_size):
        if (i, j) == target_position:
            policy_arrows[i, j] = 'G'
        elif (i, j) in obstacles:
            policy_arrows[i, j] = 'X'
        else:
            action = policy[i, j]
            if action == 0:
                policy_arrows[i, j] = 'U'
            elif action == 1:
                policy_arrows[i, j] = 'D'
            elif action == 2:
                policy_arrows[i, j] = 'L'
            elif action == 3:
                policy_arrows[i, j] = 'R'

print("Optimal policy (G: Goal, X: Obstacle):")
print(policy_arrows)


\end{lstlisting}

\textbf{Exercise 5.}

\begin{lstlisting}
    import enum
import random
import numpy as np
from typing import List, Tuple

# Reward system for actions
class RewardValues(enum.IntEnum):
    match_11 = 3
    match_12 = 2
    match_21 = 4
    match_22 = 3
    match_both = 10
    no_match = 0

# Mapping actions to rewards
ACTION_REWARD_MAP = {
    0: RewardValues.match_11,
    1: RewardValues.match_12,
    2: RewardValues.match_21,
    3: RewardValues.match_22,
    4: RewardValues.match_both,
    5: RewardValues.no_match
}

# Probabilities of drivers and requests reappearing
REAPPEARANCE_PROBABILITIES = [0.5, 0.6, 0.45, 0.7]

# Dictionary mapping action names to indices
ACTION_IDX = {'match_11': 0, 'match_12': 1, 'match_21': 2, 'match_22': 3, 'match_both': 4, 'no_match': 5}

# Total number of possible states (2^4 = 16)
STATE_COUNT = 2 ** 4

# Q-learning algorithm
def q_learning_process() -> np.ndarray:
    q_values = np.zeros((STATE_COUNT, len(ACTION_IDX)))
    learning_rate = 0.1
    discount_factor = 0.99
    exploration_rate = 0.1
    steps = 10000  # Adjusted for quicker execution

    for _ in range(steps):
        state_idx = random.randint(0, STATE_COUNT - 1)
        state = decode_state(state_idx)
        possible_actions = get_available_actions(state)

        if random.uniform(0, 1) < exploration_rate:
            chosen_action = random.choice(possible_actions)
        else:
            q_for_actions = q_values[state_idx, possible_actions]
            chosen_action = possible_actions[np.argmax(q_for_actions)]

        next_state_idx, reward_val = perform_action(state_idx, chosen_action)
        q_values[state_idx, chosen_action] += learning_rate * (
            reward_val + discount_factor * np.max(q_values[next_state_idx]) - q_values[state_idx, chosen_action]
        )

    return q_values

# Function to print the optimal policy
def display_optimal_policy(q_values: np.ndarray) -> None:
    print("Computed optimal policy:")
    for i in range(STATE_COUNT):
        current_state = decode_state(i)
        actions_available = get_available_actions(current_state)
        best_action_idx = np.argmax(q_values[i, actions_available])
        best_action = actions_available[best_action_idx]
        action_name = [key for key, val in ACTION_IDX.items() if val == best_action][0]
        print(f"State {current_state}: {action_name}")

# Function to decode state index into state representation (d1, d2, r1, r2)
def decode_state(index: int) -> List[int]:
    driver1 = (index & 8) >> 3
    driver2 = (index & 4) >> 2
    request1 = (index & 2) >> 1
    request2 = index & 1
    return [driver1, driver2, request1, request2]

# Determine valid actions for a given state
def get_available_actions(current_state: List[int]) -> List[int]:
    d1, d2, r1, r2 = current_state
    actions = [5]  # 'no_match' is always possible
    if d1 and r1:
        actions.append(0)
    if d1 and r2:
        actions.append(1)
    if d2 and r1:
        actions.append(2)
    if d2 and r2:
        actions.append(3)
    if d1 and d2 and r1 and r2:
        actions.append(4)
    return actions

# Perform the action and determine the next state and reward
def perform_action(current_state_idx: int, action: int) -> Tuple[int, int]:
    state_rep = decode_state(current_state_idx)
    updated_state = state_rep[:]

    if action == 0:
        updated_state = [0, state_rep[1], 0, state_rep[3]]
    elif action == 1:
        updated_state = [0, state_rep[1], state_rep[2], 0]
    elif action == 2:
        updated_state = [state_rep[0], 0, 0, state_rep[3]]
    elif action == 3:
        updated_state = [state_rep[0], 0, state_rep[2], 0]
    elif action == 4:
        updated_state = [0, 0, 0, 0]
    
    # Simulate reappearance of drivers and requests
    updated_state = [
        1 if updated_state[0] == 0 and random.random() < REAPPEARANCE_PROBABILITIES[0] else updated_state[0],
        1 if updated_state[1] == 0 and random.random() < REAPPEARANCE_PROBABILITIES[1] else updated_state[1],
        1 if updated_state[2] == 0 and random.random() < REAPPEARANCE_PROBABILITIES[2] else updated_state[2],
        1 if updated_state[3] == 0 and random.random() < REAPPEARANCE_PROBABILITIES[3] else updated_state[3]
    ]
    
    next_state_idx = encode_state(updated_state)
    reward = ACTION_REWARD_MAP[action]
    
    return next_state_idx, reward

# Encode state back into index form
def encode_state(state: List[int]) -> int:
    driver1, driver2, req1, req2 = state
    return (driver1 << 3) | (driver2 << 2) | (req1 << 1) | req2

# Run the Q-learning process and display the optimal policy
q_matrix = q_learning_process()
display_optimal_policy(q_matrix)

\end{lstlisting}

 \end{document}